\newcommand\MTkillspecial[1]{
  \bgroup
  \catcode`\&=9
  \let\\\relax%
  \scantokens{#1}%
  \egroup
}
\newcommand\DeclarePairedDelimiterMultiline[3]{
  \DeclarePairedDelimiter{#1}{#2}{#3}
  \reDeclarePairedDelimiterInnerWrapper{#1}{star}{
    \mathopen{##1\vphantom{\MTkillspecial{##2}}\kern-\nulldelimiterspace\right.}
    ##2
    \mathclose{\left.\kern-\nulldelimiterspace\vphantom{\MTkillspecial{##2}}##3}}
}
\DeclarePairedDelimiterMultiline{\abs}{\lvert}{\rvert}
\DeclarePairedDelimiterMultiline{\norm}{\lVert}{\rVert}
\DeclarePairedDelimiterMultiline{\pare}{(}{)}
\newcommand{\dd}{\mathop{}\!\mathrm{d}}
\newcommand{\inter}{\overset{\circ}} 
\newcommand{\Ptop}{P_{\mathrm{top}}}
\newcommand{\Pvar}{P_{\mathrm{var}}}
\newcommand{\PGur}{P_{\mathrm{Gur}}}
\renewcommand{\epsilon}{\varepsilon}
\newcommand{\calN}{{\mathcal{N}}}
\newcommand{\calP}{{\mathcal{P}}}
\newcommand{\calF}{\mathcal{F}}
\newcommand{\calK}{{\mathcal{K}}}
\newcommand{\calH}{\mathcal{H}}
\newcommand{\calS}{\mathcal{S}}
\newcommand{\calM}{\mathcal{M}}
\newcommand{\bbN}{\mathbb{N}}
\newcommand{\R}{\mathbb{R}}
\newcommand{\bbZ}{\mathbb{Z}}
\newcommand{\bbR}{\mathbb{R}}
\newcommand{\bbP}{\mathbb{P}}
\newcommand{\ds}{\displaystyle}
\renewcommand{\phi}{\varphi}
\renewcommand{\tilde}{\widetilde}
\numberwithin{figure}{section}
\DeclareMathOperator*{\esssup}{ess\,sup}
\DeclareMathOperator{\diam}{diam}
\DeclareMathOperator{\Leb}{Leb}
\newtheorem{theo}{Theorem}[section]
\newtheorem{lemm}[theo]{Lemma}
\newtheorem{prop}[theo]{Proposition}
\newtheorem{rema}[theo]{Remark}
\newtheorem{defi}[theo]{Definition}
\newtheorem{coro}[theo]{Corollary}
\title{Pressure at infinity and strong positive recurrence in negative curvature}
\author{Sébastien Gouëzel,    Barbara Schapira, Samuel Tapie}
\date{February 26, 2023}
\begin{document}
\maketitle
\begin{center}
With an appendix by Felipe Riquelme  \end{center}

 \begin{abstract} In the context of geodesic flows of noncompact negatively curved manifolds, we propose three different definitions of entropy and pressure at infinity, through
growth of periodic orbits, critical exponents of Poincaré series, and entropy (pressure) of invariant measures. We show that these notions coincide.

Thanks to these entropy and pressure at infinity, we investigate thoroughly the notion of strong positive recurrence in this geometric context.
A potential is said to be strongly positively recurrent
when its pressure at infinity is strictly smaller than the full topological pressure. We show in particular that if a potential is strongly positively recurrent,
then it admits a finite Gibbs measure.
We also provide easy criteria allowing to build such strong positively recurrent potentials and many examples.
\end{abstract}

(\footnote{Keywords: Entropy at infinity, pressure at infinity, strong
positive recurrence, noncompact negatively curved manifolds, thermodynamic
formalism})
(\footnote{MSC 2020 classification: 37A35, 37B40, 37C35, 37D20, 37D35, 37D40,
37F32 })

\section{Introduction}

The geodesic flow of a compact connected negatively curved Riemannian manifold $M$ is the typical
geometrical example of an {\em Anosov flow}. Its chaotic behavior reveals
itself in particular through the existence of infinitely many possible
different behaviors of orbits. 

A {\em Gibbs measure} is an ergodic invariant probability measure associated
with a given continuous map $F:T^1M\to \bbR$, with respect to which almost
all orbits will spend most of their time in the subsets of $T^1M$ where the
potential $F$ is large (see Section~\ref{sec:Gibbs}   for the precise
definition). In particular, 
the fact that there exists a
Gibbs measure for all Hölder-continuous maps is a quantified way to express
that numerous behaviors of orbits are indeed realized as typical trajectories
with respect to the Gibbs measures of some Hölder-continuous potentials.

When the manifold $M$ is no longer assumed to be compact, a geometric construction
developed in~\cite{PPS} allows to build good candidates for Gibbs
measures. However, due to noncompactness of $M$ and $T^1M$, these measures are not necessarily finite,
and therefore not always extremely useful.

In~\cite{PS16}, Pit and Schapira characterized the finiteness of these
measures in terms of the convergence of some geometric series.
In~\cite{ST19}, in the case of the zero potential $F=0$, building
on~\cite{PS16}, Schapira and Tapie proposed a criterion, called {\em strong
positive recurrence}, which implies the finiteness of the associated measure,
known as the {\em Bowen-Margulis-Sullivan measure}. This criterion is the
following one. If $\Gamma=\pi_1(M)$, recall that the {\em critical exponent of
$\Gamma$} is the exponential growth rate of any orbit of $\Gamma$ acting on
the universal cover $\tilde{M}$ of $M$. By a result of Otal and
Peigné~\cite{OP}, it also coincides with the topological entropy of the
geodesic flow on $T^1M$. In~\cite{ST19}, a {\em critical exponent at
infinity} $\delta^\infty_\Gamma$ is defined, and the authors prove that a
critical gap $\delta_\Gamma^\infty<\delta_\Gamma$ implies that the
Bowen-Margulis-Sullivan measure is finite. This had been previously shown by
Dal'bo, Otal and Peigné in~\cite{DOP} for \emph{geometrically finite
manifolds}, for which the critical exponent at infinity is the maximum of the
critical exponents among parabolic subgroups. In general, this critical
exponent at infinity should be seen as a kind of {\em entropy at infinity}.
Other striking applications of this critical gap have been proved
in~\cite{CDST}.

The main goal of this paper is to produce a complete study of strong positive
recurrence in negative curvature. First, in
Sections~\ref{sec:pressures-at-infinity},~\ref{sec:cinq}
and~\ref{sec:ErgoPressure}, we compare this critical exponent at infinity
with other, new and old, possible definitions of entropy at infinity and show
that they all coincide. At the same time, considering pressures and pressures
at infinity instead of entropies, we generalize this study to all Gibbs
measures studied in~\cite{PPS, PS16}. In a second part
(Section~\ref{sec:SPR}), we give a detailed study of strong positive
recurrence in negative curvature. The appendix by F. Riquelme proves
important properties of entropy, that are classical in the compact case, but
need a careful proof in the noncompact case.

Analogous results have been known for years in the context of symbolic
dynamics over a countable alphabet, see~\cite{Gurevic,
Gurevic2,GS,Sa99,Sarig01,Ruette,BBG06,BBG}.

Let us present our results with more details.

The {\em topological pressure} of a Hölder-continuous potential $F:T^1M\to
\bbR$ is a weighted version of entropy. For a dynamical system on a compact
space, there are a lot of different definitions, which all coincide, see for
example~\cite[Ch 9]{Walters} or~\cite{Bowen75}. In the noncompact setting,
some of these definitions are meaningless. In~\cite{PPS}, following the works
of~\cite{Roblin, OP} on entropy, three definitions were compared. The {\em
Gurevič Pressure} $\PGur(F)$ is the weighted exponential growth rate of the
periodic orbits of the geodesic flow which cross a fixed compact set. The {\em variational pressure}
$\Pvar(F)$
 is the supremum over all invariant probability measures of their measure-theoretic pressures, that is a weighted version of their
Kolmogorov-Sinai entropies. The {\em geometric pressure} $\delta_\Gamma(F)$,
a geometric notion specific to geodesic flows also known as {\em critical
exponent of $(\Gamma, F)$}, is the  weighted  exponential growth rate of the
orbits of the fundamental group $\Gamma$ of $M$ acting on its universal
cover~$\tilde M$.

\medskip
All the previous discussion applies to the larger setting when $\tilde M$ is
still a complete simply connected Riemannian manifold with pinched negative
curvature and bounded derivatives of the curvature, and $\Gamma$ is a
discrete group of isometry acting properly on $\tilde M$ possibly with fixed
points. In this case, the stabilizer of any point has finite order and $M =
\tilde M/\Gamma$ is a \emph{good orbifold}. As considered in~\cite{PPS}, the
\emph{unit tangent bundle} $T^1M$ is then the set of parameterized
bi-infinite geodesics on $M$ with its natural projection from $T^1 \tilde M$
and geodesic flow. A Hölder/smooth map on $M$ (resp.\ on $T^1M$) is a map on
$M$ (resp.\ $T^1M$) whose lift to $\tilde M$ (resp.\ $T^1 \tilde M$) is
Hölder/smooth.  In the sequel, all the results which we present for smooth
manifolds can be adapted verbatim to this good orbifold setting. When slight
adaptations are required for this generalization, we will specify it in the
proof. In the appendix (only), we will restrict to the case where $\Gamma$
has a subgroup of finite index without torsion. Note that since $\Gamma$ may
not be finitely generated, this is not automatic in our setting.

\medskip

{\bf Standing assumptions in the paper:} {\em We fix once and for all a
nonelementary complete connected Riemannian manifold (or good orbifold) $M$ with pinched
negative sectional curvature, and bounded first derivative of the sectional curvature.}   For all
the statements of this section, let us also fix $F:T^1M\to \bbR$ a
Hölder-continuous potential.

It has been shown in~\cite{Roblin,OP} when $F\equiv 0$ and~\cite[Thm
1.1]{PPS} for general potentials that all these pressures coincide.

\begin{theo}[Roblin, Otal-Peigné, Paulin-Pollicott-Schapira]\label{th:Variationnel}
All notions of pressure coincide:
 \begin{equation*}
\delta_\Gamma(F) = \Pvar(F) = \PGur(F).
\end{equation*}
We denote this common value by $\Ptop(F)$, and we call it the {\em topological pressure} of $F$.
\end{theo}

The terminology differs slightly from~\cite{PPS}, where $\Pvar$ was called
topological pressure. In retrospect, we consider now that the above
terminology is better.

We propose  here  three notions of pressure at infinity, whose precise
definitions will be given in Section~\ref{sec:pressures-at-infinity}. The
{\em Gurevič pressure at infinity} $\PGur^\infty(F)$ measures the weighted
exponential growth rate of periodic orbits staying most of the time outside
any given compact set. The {\em variational pressure at infinity}
$\Pvar^\infty(F)$ is the least upper bound of measure-theoretic pressures of
invariant probability measures supported mostly outside any given compact
set. The {\em geometric pressure at infinity} $\delta_\Gamma^\infty(F)$
measures the  weighted exponential growth rate of those orbits of the
fundamental group $\Gamma$ corresponding to excursions outside any given
compact set.

The first main result of this article is the following one.

\begin{theo}\label{th:AllPressionEquivalent} All notions of pressures at infinity coincide:
\[
\delta_\Gamma^\infty(F) = \Pvar^\infty(F) = \PGur^\infty(F).
\]
We denote this common value by $\Ptop^{\infty}(F)$, and we call it the {\em topological pressure at infinity} of $F$.
\end{theo}

In the special case where $F$ tends to a constant at infinity, the equality
$\delta_\Gamma^\infty(F) = \Pvar^\infty(F)$ has also been announced
in~\cite{Velozo} using different methods.

As already implicitly or explicitly noticed for example
in~\cite{EK,EKP,IRV,Riquelme-Velozo}, this pressure at infinity is deeply
related to the phenomenon of loss of mass at infinity. In the vague topology, on a
noncompact space, a sequence of probability measures   may
converge to a finite measure with smaller total mass. As proven by the above
authors, if these probability measures have a larger Kolmogorov-Sinai entropy than the entropy
at infinity, then they cannot lose the whole mass and converge to the zero
measure. In this spirit, as a corollary of
Theorem~\ref{th:PressureMassInfty}, we obtain in
Corollary~\ref{coro:PressureMassInfty} the following result.

\begin{theo}\label{theo:coro4.8} Let $(\mu_n)$ be a sequence
of invariant probability measures on $T^1M$ converging in the vague topology
to a finite measure $\mu$, with mass $0\le\norm{\mu}\le 1$. Assume that $\int
\inf(F, 0) \dd\mu_n >- \infty$ for all $n$. Then their Kolmogorov-Sinai
entropies $h_{KS}(\mu_n)$ satisfy the following inequality:
\[
\limsup_{n\to \infty} \left(\,h_{KS}(\mu_n)+\int F \dd\mu_n\,\right) \le (1-\norm{\mu}) \Ptop^\infty(F)+\norm{\mu} \Ptop(F)\,.
\]
\end{theo}

In the geometrically finite case,~\cite{IRV,Riquelme-Velozo} obtain an
improvement of the conclusion of the theorem, with $P_\mu(F)$ instead of
$\Ptop(F)$ on the right, but only for the particular class of potentials $F$
which converge to $0$ at infinity, for which $\Ptop^\infty(F) =
\Ptop^\infty(0)$.  An extension of the results of~\cite{Riquelme-Velozo} to
general manifolds has been announced in~\cite{Velozo-phd}, cf also~\cite[Thm
1.1]{Velozo}. The strategy used in these papers is different from  ours, and
does not work yet in general. It would be interesting to obtain their sharper
inequality under our weaker assumptions, see~\cite[Conjecture 5.5]{Velozo}.

\bigskip

Once Theorem~\ref{th:AllPressionEquivalent} is proven, we can say that a
potential $F$ is {\em strongly positively recurrent} (SPR) when the following
{\em pressure gap} holds:
\begin{equation*}
\Ptop^\infty(F)<\Ptop(F)\,.
\end{equation*}
We refer the reader to Section~\ref{sec:SPR} for   the notions of recurrence,
positive recurrence, strong positive recurrence.

An analogous notion of {\em pressure gap} for potentials on nonpositively
curved manifolds, with respect to the set of singular vectors instead of
infinity, has been introduced in~\cite{BCFT}.

As in~\cite[Thm 7.1]{ST19} when $F=0$, we prove the following extremely
useful property of SPR potentials.

\begin{theo}\label{theo:SPR-implies-PR}  If the potential $F$ is  strongly
positively recurrent, then it admits a finite Gibbs measure.
\end{theo}

For potentials which vanish at infinity, this has also been announced
in~\cite[Theorem 1.3]{Velozo} using a different strategy. We will show that,
on any negatively curved manifold, there exist strongly positively recurrent
potentials, see Corollary~\ref{coro:existence-pot-SPR}. This implies the
following new result.

\begin{coro}
There exists a Hölder-continuous potential on $T^1M$ which admits a finite Gibbs
measure.
\end{coro}

It may be worth pointing out that with their current proofs, all results
of~\cite{Velozo} quoted above actually rely on the existence of such a
potential with finite Gibbs measure (see Lemma~3.9 of~\cite{Velozo}).
Nevertheless to our knowledge, this fact had not been established beyond
geometrically finite manifolds.

\medskip

We also establish other useful properties. Let $m$ be a  finite or infinite
Radon measure, invariant under the geodesic flow $(g^t)$. For a given compact
subset $K$ in $ M$, and $T\ge T_0$,  consider the set $V_{T_0,T}(K)$ of
vectors $v \in T^1K$, such that for any $t\in [T_0,T]$, the vector $g^t v$
does not belong to $T^1K$. These sets $\left(V_{T_0,T}(K)\right)_{T>T_0}$
decrease when $T\to +\infty$. We say that the flow $(g^t)$ is {\em
exponentially recurrent} with respect to the measure $m$  if there exist a
compact set $K\subset M$ whose interior intersects a closed geodesic, and
constants $C,\alpha,T_0>0$ such that for all $T>T_0$,
\begin{equation*}
m(V_{T_0,T}(K))\le Ce^{-\alpha T}\,.
\end{equation*}
In Section~\ref{exp-rec}, we establish the following theorem.

\begin{theo} \label{theo:exp-rec} Assume that $F$ has finite topological pressure and
finite Gibbs measure $m^F$. Then $F$ is  strongly positively recurrent if and
only if  the geodesic flow $(g^t)$ is exponentially recurrent with respect to
the Gibbs measure $m^F$.
\end{theo}

Strong positive recurrence says that
there exists a compact subset $K$ of $M$ such that the weighted exponential
growth rate of the excursions outside $K$ is strictly smaller than the
topological pressure. We finish this work with
Theorem~\ref{theo:indep-compact}, showing that strong positive recurrence
does not really depend on the chosen compact set $K$, in the following sense:
We show in Theorem~\ref{theo:indep-compact} that if the potential $F$ is
strongly positively recurrent, then for any compact subset $K$  of $M$, as
soon as the interior of $K$ intersects a closed geodesic, this exponential
growth rate of excursions outside $K$ is strictly smaller than the
topological pressure.

\medskip

The first two Sections~\ref{sec2} and~\ref{sec:trois} contain preliminaries,
first on negatively curved geometry and dynamics, and second on thermodynamic
formalism,  in particular all different notions of pressures, and the
construction of the Gibbs measure $m^F$.

Sections~\ref{sec:pressures-at-infinity},~\ref{sec:cinq}
and~\ref{sec:ErgoPressure}  on the one hand, and  Section~\ref{sec:SPR} on
the other hand can be read independently.

Section~\ref{sec:pressures-at-infinity} contains three different definitions
of pressures at infinity. In Section~\ref{sec:cinq}, we give upper bounds on
the growth of certain sets of periodic orbits in terms of entropy and entropy
at infinity. We deduce equality of the geometric and Gurevič pressures at
infinity $\delta_\Gamma^\infty(F)$ and $\PGur^\infty(F)$. In
Section~\ref{sec:ErgoPressure}, we show that geometric and variational
pressures at infinity $\delta_\Gamma^\infty(F)$ and $\Pvar^\infty(F)$
coincide. These sections are the technical heart of the paper.

Section~\ref{sec:SPR}  is more conceptual. We investigate the notion of
strongly positively recurrent potentials in our geometric context, and prove
Theorems~\ref{theo:SPR-implies-PR} and~\ref{theo:exp-rec}.

The appendix by Felipe Riquelme (Theorem~\ref{theo:entropies-coincide}) shows
that different possible definitions of measure-theoretic entropy, the
Kolmogorov-Sinai entropy, the Brin-Katok entropy, and the Katok entropy
coincide in our geodesic flow context. This result is well known in the
compact case, but not obvious at all without compactness.

\bigskip

The authors thank warmly both Jérôme Buzzi for numerous enlightening
discussions about strong positive recurrence, and the referee for his very
detailed reading and his comments that improved a lot our text. We
acknowledge the support of the Centre Henri Lebesgue ANR-11-LABX- 0020-01 and
ANR grant CCEM (ANR-17-CE40-0034).


\section{Negative  curvature, geodesic flow}\label{sec2}

\subsection{Geometric preliminaries}\label{sec21}
Our assumptions and notations are close to those of~\cite{PPS,PS16, ST19}.

Let $(M,g)$ be a smooth complete connected noncompact Riemannian manifold
with pinched negative sectional curvature  $-b^2 \leq K_g \leq -a^2$, for
some $a,b>0$, and bounded first derivative of the sectional curvature.
 Let $\tilde M$ be its universal cover,
$\Gamma=\pi_1(M)$ its fundamental group, and $p_\Gamma:\tilde M\to M=\tilde
M/\Gamma$ the quotient map. We assume that the group $\Gamma$ is
nonelementary, i.e., that the geodesic flow admits at least three different
periodic orbits on $T^1M$. In particular $\Gamma$ contains a free group (see
for instance~\cite{Bowditch}). We denote by $T^1M$ and $T^1\tilde M$ the unit
tangent bundles of $M$ and $\tilde M$, and by $\pi:T^1M \to M$ or $\pi :
T^1\tilde M\to \tilde M$ the canonical bundle projection. By abuse of
notation, we also write $p_\Gamma:T^1\tilde M\to T^1M$ for the differential
of $p_\Gamma$.

Given any two points $x,y\in \tilde M$, the set $[x,y]\subset\tilde M$ will
denote the (unique) geodesic segment between $x$ and $y$.

 We  fix arbitrarily a point $o\in \tilde M$ that we call \emph{origin}.
The boundary at infinity $\partial \tilde M$ is the set of equivalence
classes of geodesic rays staying at bounded distance one from another. The
{\em limit set} $\Lambda_\Gamma\subset\partial \tilde M$ is the set of
accumulation points $\Lambda_\Gamma=\overline{\Gamma o}\setminus\Gamma o$ of
the orbit of $o$. As shown by Eberlein~\cite{Eberlein}, the nonwandering set
$\Omega\subset T^1M$ of the geodesic flow is the union of geodesic orbits which
admit a lift whose negative and positive endpoints belong to
$\Lambda_\Gamma$. The {\em radial limit set}
$\Lambda^{\mathrm{rad}}_\Gamma\subset\Lambda_\Gamma$ is the set of endpoints
of geodesics whose images through $p_\Gamma$ return infinitely often in some
compact set:
\[
\Lambda_{\Gamma}^{\mathrm{rad}} \coloneqq
\{\xi\in\Lambda_\Gamma,\exists C>0,\exists (\gamma_n)\in\Gamma^\bbN, \gamma_n o\to \xi, d(\gamma_n o,[o\xi))\le C \}\,.
\]

We denote by $(g^t)_{t\in \bbR}$  the geodesic flow acting on $T^1M$ or
$T^1\tilde M$. The metric $g$ induces a distance on $M$ and $\tilde M$ that
we will simply denote by $d$. We will also denote by $d$ the distance on
$T^1M$ (resp.\ on $T^1 \tilde M$) defined as follows: for all $v,w\in T^1M$
(resp.\ in $T^1\tilde M)$, let
\[
d(v,w)\coloneqq \sup_{t\in [-1, 1]} d(\pi (g^t v), \pi (g^t w)).
\]
This distance is not Riemannian but it is equivalent to the standard Sasaki
metric on $T^1M$ (resp.\ on $T^1\tilde M$), see~\cite[Chap.\ 2]{PPS} for a
discussion on the subject.

The Busemann cocycle is defined for all $ \xi\in\partial\tilde M$ and $x,y
\in \tilde M$, by
\begin{equation}\label{eq:Busemann}
\beta_\xi(x,y) = \lim_{z\in [x,\xi),\,z\to \xi} d(x,z) - d(y,z).
\end{equation}
We will sometimes also write, for all $x,y,z\in \tilde M$,
\[
\beta_z(x,y) = d(x,z) - d(y,z).
\]
The set of oriented geodesics of $\tilde M$ can be identified with
\[
\partial^2 \tilde M = (\partial \tilde M \times \partial \tilde M) \backslash {\rm Diag}\,.
\]
For all $v\in T^1\tilde M$, denote by  $v^\pm$ the negative and positive
endpoints in $\partial \tilde M$ of the geodesic tangent to $v$.
 The unit tangent bundle $T^1\tilde M$ is homeomorphic to $\partial^2\tilde M \times \bbR$ via the \emph{Hopf parametrization}
\begin{equation}\label{Hopf}
\calH : \left\{\begin{array}{ccc}
T^1\tilde M & \to & \partial^2 \tilde M \times \bbR\\
v & \mapsto & (v^-, v^+, \beta_{v^+}(o, \pi (v)))
\end{array}\right.\,.
\end{equation}
The geodesic flow acts by translation in these coordinates: for all $v =
(v^-, v^+, s)$ and   $t\in \bbR$,
\[
g^t(v^-, v^+, s) = (v^-, v^+, t+s)\,.
\]
The group $\Gamma$ acts in these coordinates by
\[
\gamma(v^-,v^+,s)=\left(\gamma v^-,\gamma v^+, s+\beta_{v^+}(\gamma^{-1}o,o)\right)\,.
\]
In terms of these Hopf coordinates, the nonwandering set $\Omega$ is
identified with $((\Lambda_\Gamma^2\backslash {\rm Diag})\times
\bbR)/\Gamma$. We denote its lift $p_\Gamma^{-1}\Omega$ by $\tilde\Omega$.

Recall that an isometry $\gamma\in \Gamma$ is {\em hyperbolic} when it admits
exactly two fixed points in $\partial\tilde M$. In this case, it acts by
translation on the geodesic joining them. The set $\mathcal{P}$ of periodic
orbits of the geodesic flow on $T^1M$ is in $1-1$ correspondence with the set
of conjugacy classes of hyperbolic elements of $\Gamma$. Indeed, a periodic
orbit $p$ with period $\ell(p)$ can be lifted to a collection $p_\Gamma^{-1}(p)$ of geodesic orbits of
$T^1\tilde M$, and each of them, once projected on $\tilde M$, is the
oriented  translation axis of a unique hyperbolic element $\gamma_p$, which acts by
translation in the positive direction on the axis, with translation length
equal to $\ell(p)$. By construction, all these elements are conjugated one to
another.

Not all elements of $\Gamma$ are hyperbolic. However, the following lemma
from~\cite[lemma 2.6]{PS16}, variant of the well known point of view, due to
Margulis, of counting elements of $\Gamma$ inside cones, will allow us to
consider only hyperbolic elements.

\begin{lemm}\label{lem:Pit-Schapira2.6}
Let $\tilde K$ be a compact subset of $\tilde M$ whose interior intersects
$\tilde \Omega$. There exist finitely many elements $g_1,\dotsc, g_k$  in
$\Gamma$  such that for every $\gamma\in\Gamma$, there exist $g_i, g_j$ among
them such that $g_j^{-1}\gamma g_i$ is hyperbolic, and its translation axis
intersects $\tilde K$.
\end{lemm}
\begin{proof}
By Lemma~\cite[lemma 2.6]{PS16} applied with $\tilde W$ the interior of $\tilde K$, there exist  finite sets $F=\{g_1,\dotsc,
g_k\}$ and  $S = \{s_1, \dotsc, s_j\}$ in $\Gamma$ such that every $\gamma \in \Gamma\setminus S$ satisfies the
conclusion of the lemma with respect to $F$. Consider a hyperbolic element $h$ whose axis
intersects $\tilde K$. Then the set $F' = \{g_1,\dotsc, g_k, s_1,\dotsc, s_j,
h\}$ works for every $\gamma \in \Gamma$. Indeed, it works for $\gamma\notin
S$ by assumption, and for $\gamma=s_i\in S$ then $s_i^{-1} \gamma h = h$ has
a translation axis intersecting $\tilde K$, with $s_i, h \in F'$.
\end{proof}

The following elementary lemma  will be used several times.

\begin{lemm}\label{lm:NegCurvTriangle}
Consider $x,y,z$ three points in a geodesic metric space $\tilde M$, and denote by $[y,z]$ a geodesic between $y$ and
$z$. Then
\[
d(y,x)+d(x,z)-2d(x,[y,z])\le d(y,z)\le d(y,x)+d(x,z)\,.
\]
\end{lemm}

We will often need more precise distance estimates, which rely on a negative
upper bound of the curvature. The next lemma follows
from~\cite[Lemma~2.5]{PPS}.

\begin{lemm}\label{lm:NegCurv4Points}
For all $D>0$ and all $\epsilon>0$, there exists $T_0 = T_0(D, \epsilon)>D$
such that if $x,x',y,y'\in \tilde M$  satisfy $d(x,x')\leq D$, $d(y,y')\leq
D$ and $d(x,y)\geq 2T_0$, then there exists $s_0\in [-T_0, T_0]$ such that,
if $v_{xy}$ (resp.\ $v_{x'y'}$) denotes the unit tangent vector based at $x$
(resp.\ $x'$) tangent to the segment $[x,y]$ (resp.\ $[x',y']$), then for all
$t\in [T_0, d(x,y) - T_0]$,
\[
d(g^t v_{xy}, g^{t + s_0} v_{x'y'})\leq \epsilon.
\]
\end{lemm}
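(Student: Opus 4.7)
The plan is to relate $[\pi x, \pi y]$ and $[\pi x', \pi y']$ through an intermediate geodesic sharing one endpoint with each, and apply twice the exponential divergence estimate for geodesics in CAT$(-a^2)$ spaces quoted as~\cite[Lemma~2.5]{PPS}.

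First I would recall the following fellow-traveling estimate, which follows from comparison with the model space of constant curvature $-a^2$: for every $D > 0$ and every $\epsilon' > 0$, there exists $T_1 = T_1(D, \epsilon')$ such that if $p, q, q' \in \widetilde M$ satisfy $d(q, q') \le D$, and if $w_{pq}, w_{pq'}$ denote the unit vectors at $p$ tangent to $[p,q]$ and $[p,q']$, then $d(g^t w_{pq}, g^t w_{pq'}) \le \epsilon'$ for every $t \in [0, \min(d(p,q), d(p,q')) - T_1]$. This reflects the classical fact that in pinched negative curvature two geodesics issuing from the same point with endpoints at distance at most $D$ stay arbitrarily close until they come within bounded distance of their endpoints.

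Next, with $\epsilon' = \epsilon/2$, I would apply this estimate twice using the auxiliary segment $[\pi x, \pi y']$. A first application at the common starting point $\pi x$ with the pair $(\pi y, \pi y')$ yields $d(g^t v_{xy}, g^t v_{xy'}) \le \epsilon/2$ on $[0, d(\pi x, \pi y) - T_1]$, where $v_{xy'}$ is the unit vector at $\pi x$ tangent to $[\pi x, \pi y']$. A second application at the common endpoint $\pi y'$ with the pair $(\pi x, \pi x')$ gives that $[\pi x, \pi y']$ and $[\pi x', \pi y']$ fellow-travel when parametrized from $\pi y'$; after reversing time this translates into an estimate of the form $d(g^{t} v_{xy'}, g^{t + s_0} v_{x'y'}) \le \epsilon/2$ on a corresponding range, where $s_0$ is the parametrization shift needed to align the two segments measured from their left endpoints $\pi x$ and $\pi x'$ respectively.

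Combining the two estimates via the triangle inequality in $T^1\widetilde M$ then yields $d(g^t v_{xy}, g^{t+s_0} v_{x'y'}) \le \epsilon$ on an interval of the form $[T_0, d(\pi x, \pi y) - T_0]$. The shift $s_0$ equals $d(\pi x, \pi y') - d(\pi x', \pi y')$ up to an error controlled by $T_1$, so $|s_0| \le 2D + O(T_1)$; setting $T_0 = T_0(D, \epsilon)$ to dominate both this bound and $T_1(D, \epsilon/2)$ closes the argument. The main delicate point is the bookkeeping of the parametrization shift $s_0$ and ensuring it lies in $[0, T_0]$ rather than $[-T_0, T_0]$; by symmetry of the hypotheses one may swap the roles of $(x,y)$ and $(x',y')$ to fix the sign, and absorb the resulting shift into the definition of $T_0$.
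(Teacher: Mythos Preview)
Your proposal is correct and matches the paper's approach: the paper does not give a proof but simply states that the lemma follows from~\cite[Lemma~2.5]{PPS}, which is precisely the single-shared-endpoint fellow-traveling estimate you invoke, and your two-step reduction via the intermediate segment $[\pi x,\pi y']$ is the standard way to derive the four-point statement from it.

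One small comment on the sign of $s_0$: your fix by ``swapping the roles of $(x,y)$ and $(x',y')$'' does not literally work, since the statement is asymmetric in the two ordered pairs. In fact the natural shift $s_0 = d(\pi x',\pi y') - d(\pi x,\pi y')$ can have either sign, so the conclusion one actually obtains is $s_0\in[-T_0,T_0]$; this is how the lemma is used throughout the paper, and the restriction to $[0,T_0]$ in the statement appears to be a minor imprecision rather than something you need to enforce.
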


We will also need the following lemma which allows to approximate broken geodesics by axes of hyperbolic elements.
If $x\neq y\in \tilde M$, let $v_{xy}$ denote the (oriented and unitary) tangent vector of the geodesic segment $[x,y]$ at $x$.
 If $v,w\in T^1_x\tilde M$, set $\measuredangle (v,w)\in [0, \pi]$ for their geometric angle.
  If $v\in T^1_x \tilde M$ and $w\in T^1_y \tilde M$, denote by $\measuredangle (v,w)\in [0, \pi]$
 the geometric angle between $v$ and the image
of $w$ through the parallel transport from $y$ to $x$ along $[y,x]$.

\begin{lemm}\label{lm:GeodBrisee}
For all $\theta\in (0, \pi)$, and all $\epsilon>0$, there exists $C =
C(\theta, \epsilon)>0$ such that the following holds. Let $x,y,z,b\in \tilde
M$ and $\gamma\in \Gamma$ be such that $d(x,y), d(y,z)$ and $d(z, b)$ are at
least $2C$, and $d(b, \gamma x) \leq 1/C$. Assume moreover that the angles
$\measuredangle \left(v_{yx}, v_{yz}\right)$, $\measuredangle \left(v_{zy},
v_{zb}\right)$,
 and $\measuredangle \left(\gamma v_{xy},v_{bz}\right)$ are at least $\theta$.
 Then $\gamma$ is hyperbolic, the piecewise geodesics $[x,y]\cup [y,z] \cup [z,b]$ is in the $\epsilon$-neighborhood of its axis
except in the $C$-neighborhood of the points $x,y,z$ and $b$. Moreover, the
period $T_\gamma$ of $\gamma$ satisfies
\[
T_\gamma - (6C+1) \leq d(x,y) + d(y,z) + d(z,b) \leq T_\gamma + 6C+1.
\]
\end{lemm}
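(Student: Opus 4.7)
The plan is to construct a $\gamma$-invariant bi-infinite quasi-geodesic from the broken path, shadow it by a true geodesic line that must then be the axis of $\gamma$, and carefully compare lengths.

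\emph{Constructing the axis.} The first step is to juxtapose the translates $\gamma^n\bigl([x,y]\cup[y,z]\cup[z,b]\bigr)$ for $n\in\bbZ$, linked by the short segments $[\gamma^n b,\gamma^{n+1}x]$ of length at most $1$. The angle hypotheses together with $d(b,\gamma x)\le 1$ ensure that, once $C$ is large enough, every corner of the resulting bi-infinite piecewise geodesic $P$ has angle bounded below by some $\theta'=\theta'(\theta)>0$ and every geodesic segment has length at least $2C$. A classical fact in pinched negatively curved Hadamard manifolds then shows that $P$ is a quasi-geodesic, and the Morse lemma provides a unique bi-infinite geodesic $\tilde\alpha$ within bounded Hausdorff distance $D=D(\theta',a,b)$ of $P$. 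By uniqueness and the $\gamma$-invariance of $P$, $\tilde\alpha$ is $\gamma$-invariant, so $\gamma$ is hyperbolic with axis $\tilde\alpha$. To upgrade to the $\epsilon$-neighborhood claim, I would apply Lemma~\ref{lm:NegCurv4Points} to each of the three segments with parameter $D$: taking $C\ge T_0(D,\epsilon)$ at the outset guarantees fellow-traveling within $\epsilon$ outside the $T_0$-neighborhood, hence the $C$-neighborhood, of each segment's endpoints.

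\emph{Translation length estimate.} Set $L=d(x,y)+d(y,z)+d(z,b)$, and let $p^*$ denote the nearest-point projection of $p$ onto $\tilde\alpha$. Since $\tilde\alpha$ is $\gamma$-invariant, the equivariance $(\gamma p)^*=\gamma(p^*)$ holds, and since projection onto a geodesic is $1$-Lipschitz,
\[
T_\gamma=d(x^*,\gamma x^*)\le d(x,\gamma x)\le L+d(b,\gamma x)\le L+1,
\]
which already gives $T_\gamma-1\le L$, even better than required. For the opposite inequality $L\le T_\gamma+6C+1$, the middle portions of the three segments, obtained by removing the $C$-neighborhoods of $x,y,z,b$, have total length $L-6C$ (the removed neighborhoods contribute $C$ at $x$, $2C$ at $y$, $2C$ at $z$, and $C$ at $b$, with no overlaps since each segment has length at least $2C$), and lie in the $\epsilon$-neighborhood of $\tilde\alpha$. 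A standard negative curvature estimate implies that the projection of each middle portion onto $\tilde\alpha$ is nearly isometric (up to an $O(\epsilon)$ error) and the projections do not backtrack, so together they cover a subinterval of $\tilde\alpha$ of length at least $L-6C-O(\epsilon)$. This subinterval is contained in the axis arc from $x^*$ to $b^*$, of length at most $d(x^*,b^*)\le T_\gamma+d(b^*,\gamma x^*)\le T_\gamma+d(b,\gamma x)\le T_\gamma+1$ by the $1$-Lipschitz projection. Absorbing the $O(\epsilon)$ error into a slightly larger choice of $C$ from the outset yields the claim.

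\emph{Main obstacle.} The delicate point is ensuring that the projections of the middle portions of the broken path onto $\tilde\alpha$ behave nearly isometrically and without backtracking, so that their combined length translates into a genuine lower bound on $d(x^*,b^*)$. This non-backtracking property is precisely what forces $C$ to be chosen large relative to the shadowing constant $T_0(D,\epsilon)$ of Lemma~\ref{lm:NegCurv4Points}, and is the step where the pinched negative curvature assumption is essential.
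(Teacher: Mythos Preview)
Your argument is correct in outline, with one small oversight: the linking segments $[\gamma^n b,\gamma^{n+1}x]$ have length at most $1$, not at least $2C$, so the assertion that ``every geodesic segment has length at least $2C$'' is literally false for the path $P$ as written. This is easily repaired---either absorb each short bridge into an adjacent long segment (replace $[z,b]\cup[b,\gamma x]$ by $[z,\gamma x]$, noting that $v_{z,\gamma x}$ differs from $v_{zb}$ by $O(e^{-a\cdot 2C})$ so the angle at $z$ is still bounded below), or invoke a version of the local-to-global quasi-geodesic lemma that tolerates bounded-length connectors between long pieces.

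The paper's own proof takes a different route. Rather than building a bi-infinite $\gamma$-invariant quasi-geodesic and invoking the Morse lemma, it (following \cite[p.~98]{PPS}) first straightens the finite broken path $[x,y]\cup[y,z]\cup[z,b]$ to the single segment $[x,\gamma x]$, deduces that $v_{\gamma x,x}$ and $\gamma v_{x,\gamma x}$ make an angle bounded below by roughly $\theta/2$, and uses this directly to exclude the parabolic case (a parabolic element would force those two vectors to nearly coincide, both pointing toward the parabolic fixed point). The shadowing by the axis and the period estimate are then simply referred back to \cite{PPS}. Your approach is more self-contained: you never need to argue separately against the parabolic case, since the existence of a $\gamma$-invariant geodesic line (from Morse plus uniqueness) gives hyperbolicity immediately. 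On the other hand, your length estimate requires the projection/non-backtracking argument you flag as the main obstacle, whereas the paper avoids this by outsourcing everything to the PPS reference. Both arguments are standard; yours would be the better choice in a text aiming for self-containment.
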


\begin{figure}[ht!]\label{geodbrisee}
\begin{center}
\input{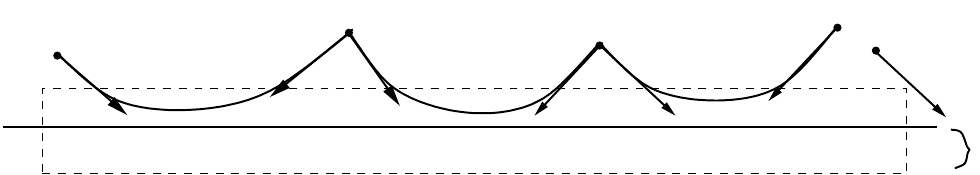_t}
\caption{Broken geodesic close to a hyperbolic axis}
\end{center}
\end{figure}
\begin{proof}[Sketch of proof]
By the arguments presented in~\cite[p.~98]{PPS}, the geodesics from $x$ to
$b$ and from $x$ to $\gamma x$ are uniformly close to the union of segments
$[x,y]\cup [y,z]\cup[z,b]$, so that $v_{xy}$ and $v_{x,\gamma x}$ on the one
hand, and $v_{bz}$ and $v_{\gamma x,x}$ on the other hand, are uniformly
close. In particular, adjusting the constants, it implies that the angle
between $v_{\gamma x,x}$ and $\gamma v_{x,\gamma x}$ is uniformly bounded
from below by, say, $\theta/2$.

When $C$ is large enough, this prevents $\gamma$ to be parabolic.
Indeed, in this case, $v_{\gamma x,x}$ and $\gamma v_{x,\gamma x}$ would be  close to the vector  from  $\gamma x$
 to the parabolic fixed point of $\gamma$, and therefore very close one from another.

The rest of the proof is an immediate adaptation of arguments
of~\cite[p.~98]{PPS}.
\end{proof}


\subsection{Dynamical properties of the geodesic flow}


In restriction to its nonwandering set $\Omega$, the geodesic flow satisfies nice dynamical properties. It is  {\em transitive}
in the sense that  for all nonempty open sets
    $U,V\subset \Omega$, there exists $T>0$ such that $g^TU\cap
    V\neq\emptyset$.
    And it satisfies a {\em closing lemma} (see for instance Eberlein in~\cite[Prop.\ 4.5.15]{Eb96}): for every compact subset $K\subset \Omega$ and all
    $\varepsilon>0$, there exist $\eta>0$ and $T = T(K, \epsilon)>0$, such that for all $v\in K$,
    and $t>T$ such that $d(g^tv,v)\le \eta$, there exists a periodic vector
    $p$ whose period $\ell(p)$ satisfies $\abs{\ell(p)-t}\le \varepsilon$, and for
    all $0\le s\le t$, $d(g^tp,g^tv)\le \varepsilon$.


However, we will need similar properties for vectors close to $\Omega$ but
that may be wandering, and we will also need to make sure that the glued
orbit enters an a priori fixed ball. In this direction, we will use several
times the following proposition.

\begin{prop}[Connecting lemma]\label{lem:connecting}
Let $K$ and $K'$ be compact subsets of $M$ whose interiors intersect $\pi
(\Omega)$, and $\tilde K$ a compact subset of $\tilde M$   such that
$p_\Gamma(\tilde K)= K$. For all $\epsilon>0$, there exist $T_0 = T_0(\tilde
K, K', \varepsilon)>0$ and $C_0 = C_0(\tilde K, K', \varepsilon)>0$ such that
the following holds. There exists a construction that associates to any $T\ge
2T_0$ and any $v\in T^1K$ such that $g^T v\in T^1K$ a periodic orbit $\wp(v,T)$
that satisfies the following assertions.
\begin{enumerate}
\item[(1)] (Shadowing) The periodic orbit $\wp(v,T)$ has a period belonging
    to $[T, T+T_0]$, it  intersects the interior of $T^1K'$, and there
    exists a periodic vector $u$ on this periodic orbit, such that for all
    $t\in [T_0, T-T_0]$, we have $d(g^tv, g^tu )\leq \varepsilon$.
\item[(2)] (Bounded multiplicity) For  each periodic orbit $p$ with period
    $T=\ell(p)\geq 2T_0$ going through $T^1 K$, choose arbitrarily a periodic vector $v_p \in
     T^1 K \cap p$ on $p$, and denote by $\wp(v_p, \ell(p))$ the corresponding new periodic orbit  associated with  $v_p$ by our
     specific construction  in (1). Then, given any periodic
     orbit $\wp_0$, the number of periodic orbits $p$ such that $\wp(v_p,
     \ell(p))=\wp_0$ is bounded by $C_0 \ell(\wp_0)$.
\end{enumerate}
\end{prop}

\begin{rema}
\label{rmk:closing_hard} The first assertion of the above proposition is  a
standard consequence of transitivity, local product structure, and closing
lemma when $v\in \Omega$, but needs a proof otherwise.

The second assertion is more subtle than other similar statements that hold
in a compact setting. When $M$ is compact, one usually simply bounds the
number of periodic orbits that $\epsilon$-shadow a fixed orbit $\wp_0$ during
most of their period. However, when the manifold (or orbifold) $M$ is not compact, its
injectivity radius is not necessarily bounded from below. Therefore,
uniformly bounded multiplicity for the number of closed geodesics that stay
in a fixed $\varepsilon$-neighborhood of $\wp_0$ is not true in general,
notably when $\wp_0$ crosses parts of the manifold where the injectivity
radius is much smaller than $\epsilon$.  That is the reason why we consider
only those periodic orbits that are constructed through a given procedure,
detailed in the proof below, for which we are able to bound the multiplicity.
\end{rema}

In the proof and later on, we will need the following notation.  As in~\cite{PS16}, if $\tilde K\subset \tilde M$ is a compact subset, let
us denote by  $n_{\tilde K }(\wp )$ the number of lifts
$\tilde \wp $ of a given periodic orbit $\wp $ to $T^1\widetilde M$ such that $\pi(\tilde \wp )$ intersects $\tilde
K $.

\begin{proof}

The construction of the orbit $\wp(v, T)$ will be explained inside the proof
of the first assertion, and the specificities of the construction will be
used in the proof of the second assertion.

{\em Proof of Assertion (1)}. The reader may follow the proof on
Figure~\ref{connecting}. We can assume that $2\epsilon$ is smaller than $1$.
We fix once for all a vector $w$ in the intersection of $\Omega$ and the
interior of $T^1 K'$. Up to reducing $\epsilon$, we can assume that $B(\pi
(w), 2 \epsilon) \subset K'$.

By compactness of $\tilde K$, as $\Lambda_\Gamma$ is not reduced to a single point, there exists $\theta =
\theta(\tilde K)>0$ such that for all $y\in\tilde K$ and $\tilde v\in
T^1_y\tilde K$, there exists $\xi\in\Lambda_\Gamma$ such that $\measuredangle
\left( v_{y\xi},\tilde v\right) >  \theta$. As the geodesic flow is
topologically transitive on $\Omega$, and the action of $\Gamma$ on
$\Lambda_\Gamma$ is minimal, we can assume moreover that the positive geodesic orbit
on $T^1M$ associated with $(g^t v_{y\xi})_{t\ge 0}$ contains $\Omega$ in its
closure. Let $C = C(\theta, \epsilon)$ be the constant provided by
Lemma~\ref{lm:GeodBrisee}. Let $\varepsilon'=\min(\varepsilon, 1/(2C)) \leq
\epsilon$. By compactness of $T^1K\cap \Omega$, a uniform property of
transitivity holds, in the following sense.
 There exist $T_1>2C$ and $T_2>T_1+6C$, that depend only on $K,K'$ and $\epsilon'$, such that  the vector $v_{y\xi}$ can be chosen in such a way
that the projection on $T^1M$ of $g^{[2C,T_1]}(v_{y\xi})$ intersects $ B(w,\varepsilon')$ and the projection on $T^1M$ of
$g^{[T_1+6C+1,T_2]} (v_{y\xi})$ intersects once again $B(w,\varepsilon')$.

\begin{figure}[ht!]\label{connecting}
\begin{center}
\input{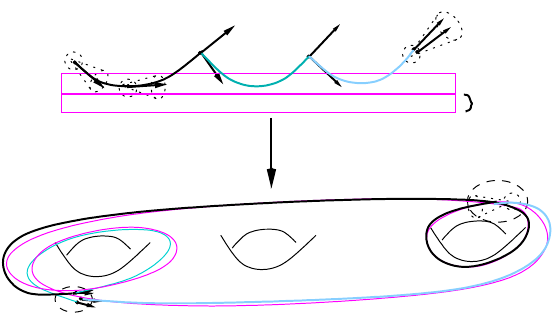_t}
\caption{Connecting lemma}
\end{center}
\end{figure}

Let $v\in T^1K$. Set $y_0 = \pi (v)\in M$ and take $y\in \tilde K$ such that
$p_\Gamma(y) = y_0$. Let $\tilde v\in T^1_{y}\tilde M$ be such that
$p_\Gamma(\tilde v) = v$. By the above claim,  there exists
$\tilde v'=-v_{y\xi}\in T^1_{y} \tilde M$ with $\measuredangle \left(\tilde v, \tilde
v'\right) \leq \pi-\theta$ such that, with $v'=p_\Gamma(\tilde v')$, the half
orbit $(\{g^{-t}v' , t\geq 0\})$ is dense in $\Omega$, and  at two distinct
times $t_1\in[2C,T_1]$ and $t_2\in[T_1+6C+1,T_2]$, we have $g^{-t_1} v'\in
B(w, \epsilon')$, and $g^{-t_2}v'\in B(w, \epsilon')$. We will see below how
it will be important. Set $x = \pi (g^{-t_2} \tilde v')$.

\medskip
By assumption, $g^Tv\in T^1K$ for some $T\geq 0$ large enough (to be made
precise later on). Set $z = \pi( g^T \tilde v)$. By the same arguments now
applied to $-g^T\tilde v$, there exists $\tilde v''\in T^1_{z}\tilde M$ with
$\measuredangle \left(g^T\tilde v,  \tilde v''\right) \leq \pi -\theta$ such
that, if $v'' = p_\Gamma(\tilde v'')$, the half orbit $( g^tv'')_{t\geq 0}$
is dense in $\Omega$, and for some  $s \in [2C, T_1]$, $g^{s}v''\in B(w,
\epsilon')$. Let $b = \pi (g^{s }\tilde v'')$ be the base point of $g^s
\tilde v''$.

Consider now the broken geodesic path $(g^{t}g^{-t_2}\tilde v')_{0\le t\le
t_2}\cup (g^t \tilde v)_{0\le t\le T}\cup (g^t \tilde v'')_{0\le t\le s}$. It
starts from $x=\pi(g^{-t_2}\tilde v')$, has an angle at least $\theta$ at
$y=\pi(\tilde v)$, a second angle at least $\theta$ at $z=\pi(g^T\tilde v)$,
and ends at $b=\pi(g^s\tilde v'')$. Since $p_\Gamma(x)$ and $p_\Gamma(b)$ are
both in $\pi(B(w, \epsilon'))$, there exists $\gamma\in \Gamma$ such that
$d(\gamma x, b)\leq 2\epsilon'\leq 1/C$. Moreover, if $\epsilon$ is small
enough, since $g^{-t_2}v'\in B(w, \epsilon')$ and $g^s v'' \in B( w,
\epsilon')$ with $\epsilon'\leq \epsilon$, up to changing $\gamma\in\Gamma$,
the angle $\measuredangle \left(\gamma g^{-t_2} \tilde v', g^s \tilde
v''\right)$ is at most $\pi-\theta$.

Assume that $T\geq 2C$. Then Lemma~\ref{lm:GeodBrisee} applies to the
sequence of points $x,y,z,b$. Therefore, $\gamma$ is hyperbolic. Its translation axis can
be written as $\pi(\tilde{\wp})$, where we choose its lift $\tilde{\wp}$ to
$T^1 \tilde M$ oriented so that $\gamma$ acts by positive translation on it.
By Lemma~\ref{lm:GeodBrisee} again, the broken geodesic path $[x,y]\cup [y,z]
\cup [z,b]$ is in the $\epsilon$-neighborhood of $\pi (\tilde{\wp})$, except
maybe in the $C$-neighborhood of $x,y,z,b$. As we chose $ v'$ so that
$g^{-t_1} v'\in B(w, \epsilon') \subseteq B(w, \epsilon)$, with $t_1
\in [2C, d(y,x)-2C]$, the periodic orbit $\wp = p_\Gamma(\tilde{\wp})$
intersects $B(w, 2\epsilon)\subset T^1K'$ near $g^{-t_1}v'$. Moreover, since
$T_1+6C+1 \leq d(x,y)\leq T_2$ and $d(y, z)=T$ and $d(z,b)\leq T_1$, it
follows from Lemma~\ref{lm:GeodBrisee} that the translation length $\ell(\gamma)$ of
$\gamma$ satisfies
\[
(T_1+6C+1) + T -(6C+1)\leq \ell(\gamma) \leq T_2 + T + T_1 + 6C+1\,.
\]
To conclude, let $q'$ be the first point
 on the geodesic path $[y, z]$ which lies in the $\epsilon$-neighborhood
of the axis of $\gamma$, and define a point $q$ as the closest point to $q'$
on the translation axis of $\gamma$.    Let $\sigma=d(y,q')\geq 0$ so that
$q'=\pi (g^\sigma\tilde v)$. Let $\tilde u'$ be the tangent vector to the
axis of $\gamma$ at the point $q$ pointing in the same direction as $g^\sigma
\tilde v$. By construction, the vector $u \coloneqq
p_\Gamma(g^{-\sigma}\tilde u')$ satisfies that for all $\sigma \le t\le T-C$,
$d(\pi(g^{t}u),\pi(g^t v))\le\varepsilon$. With $\tau=C+1$,
the same kind of estimate holds on $T^1M$: for all $\tau\le t\le T-\tau$,
$d(g^tu,g^tv)\le\epsilon$. This shows that $u$ satisfies the conclusion of
Assertion (1), with $T_0=\max(\tau,T_1+T_2+6C+1)$.

The above procedure of construction of the periodic orbit $(g^tu)$ depends on
several arbitrary choices. We define $\wp(v, T)$ as one arbitrary periodic
orbit obtained by the above construction.
\medskip

{\em Proof of Assertion (2)}. For each periodic orbit $p$ and $v_p\in T^1K\cap p$ as in the
statement, let $\tilde v_p \in T^1 \tilde K$ be the lift of $v_p$ to the
universal cover used in the first step in order to define $\wp(v_p, \ell(p))$. Let
$\gamma_p \in \Gamma$ be the hyperbolic element whose axis is the lift of $p$
through $y_p=\pi(\tilde v_p)$, oriented in the direction of $\tilde v_p$, and
whose translation length is $\ell(p)$.

Assume that $\wp(v_p, \ell(p))$ is equal to a given periodic orbit $\wp_0$.
Then, by the construction in the first step, there exist a constant $C_1$
(depending only on $\tilde K$, $K'$ and $\epsilon$), a vector $\tilde u_p\in T^1
\tilde M$ and a lift $(\tilde \wp_{0})(p)$ of $\wp_0$, that may depend on
$p$, admitting a fundamental domain $((g^t \tilde u_p))_{0\leq t\leq
\ell(\wp_0)}$ whose projection on $\widetilde M$ is within Hausdorff distance at most $C_1$ of $[y_p,
\gamma_p y_p]$. In particular, this lift intersects the $C_1$-neighborhood
$\tilde K_{C_1}$ of $\tilde K$ as $\pi(\tilde u_p) \in \tilde K_{C_1}$.
Conversely, given a lift $\tilde \wp_0$ of $\wp_0$ intersecting $T^1\tilde
K_{C_1}$, let us show that the number of $p$ with $(\tilde \wp_{0})(p) =
\tilde \wp_0$ is uniformly bounded. The point $\pi(\tilde u_p)$ can only
belong to a compact part of $\tilde \wp_0$ (of length at most $\diam \tilde K
+ 2C_1$), hence $\pi(g^{\ell(\wp_0)} \tilde u_p)$ is also restricted to a
subset of diameter $\diam \tilde K + 2C_1$, and therefore $\gamma_p y_p$ is
also restricted to a subset of diameter $\diam \tilde K + 3C_1$. Moreover
$y_p$ belongs to the compact set $\tilde K$. For any $R>0$, there exists a
constant $A(R)$ such that, for any $x\in \tilde M$, the number of elements
$\gamma$ of $\Gamma$ with $\gamma \tilde K \cap B(x,R) \ne \emptyset$ is
bounded by $A(R)$: if this number is nonzero, one can pull back by one of
these elements to bring $B(x,R)$ to a fixed size neighborhood of $\tilde K$,
where the result is obvious by compactness. It follows that the number of
possible $\gamma_p$ is uniformly bounded by $A(\diam \tilde K + 3C_1)$, as
claimed.

We have proved that there exists a uniform constant $A$ depending only on
$\tilde K$, $K'$ and $\epsilon$ such that the number of periodic orbits $p$ with
$\wp(v_p, \ell(p)) = \wp_0$ is bounded from above by $A$ times the number $n_{\tilde K_{C_1}}(\wp_0)$  of
lifts $\tilde{\wp}_0$ of $\wp_0$ that intersect $T^1\tilde K_{C_1}$.

It remains to bound the number $n_{\tilde K_{C_1}}(\wp_0)$  of such lifts $\tilde{\wp}_0$ of $\wp_0$.
Assertion~(2) follows from the fact that there exists a constant $B=B(\tilde
K,C_1)>0$ such that for every periodic orbit $\wp_0\subset T^1M$,
\begin{equation}\label{lm:nW}
n_{\tilde K_{C_1}}(\wp_0)\leq B \times \ell(\wp_0)\,.
\end{equation}
Let us prove this bound. As $\tilde K_{C_1 + 1}$ is compact, there exists a
constant $B$ such that any point in $M$ has at most $B$ preimages under
$p_\Gamma$ in $\tilde K_{C_1+1}$. Each lift $\tilde{\wp}_0$ of $\wp_0$
intersecting $T^1 \tilde K_{C_1}$ spends a time at least $1$ in $\tilde
K_{C_1 + 1}$. Therefore,
\begin{equation*}
  n_{\tilde K_{C_1}}(\wp_0) \leq \Leb(p_\Gamma^{-1}(\wp_0) \cap \tilde K_{C_1 + 1}).
\end{equation*}
By the choice of $B$, this is bounded by $B \Leb(\wp_0) = B \ell(\wp_0)$,
proving~\eqref{lm:nW}.
\end{proof}


\section{Thermodynamical formalism}\label{sec:trois}

Entropy is a well-known measure of the exponential rate of complexity of a
dynamical system, and the measure  of maximal entropy is an important tool
in the ergodic study of hyperbolic dynamical systems.

Pressure is a weighted version of entropy, which is particularly useful for
the study of perturbations of hyperbolic systems. The notion of {\em
equilibrium state} is the weighted analogue of the measure of maximal
entropy.

In this section, for the geodesic flow of noncompact negatively curved
manifolds, we recall some well known notions and facts from~\cite{PPS}
and~\cite{PS16} on the pressure and  the construction of the {\em equilibrium
state} or {\em Gibbs measure} associated with a Hölder-continuous map
$F:T^1M\to \bbR$. This construction has a long story, initiated by the works
of Patterson~\cite{Patterson} and Sullivan~\cite{Sull} when $F=0$, by
Hamenstädt~\cite{hamenstadt} and Ledrappier~\cite{Ledrappier}. We refer
to~\cite{PPS} for detailed historical background and proofs of the assertions
in this paragraph. We follow here mainly~\cite[Chap 3.]{PPS}
and~\cite{schapira2004}, and~\cite{PS16}.

\subsection{Pressures of Hölder-continuous potentials}

\label{sssec:Pressure}


Let $F:T^1M\to \bbR$ be a Hölder-continuous map in the following sense: there
exist  $0<\beta\le 1$ and $C>0$ such that for all $v, w\in T^1M$ with $d(v,w)
\leq 1$, we have
\[
\abs*{ F(v) - F(w) } \leq C d(v,w)^\beta.
\]
Such a map $F$ will be said $(\beta, C)$-Hölder-continuous.  Let $\tilde F = F \circ
p_\Gamma$ be the $\Gamma$-invariant lift of $F$ to $T^1\tilde M$.

For $x\neq y\in \tilde M$, recall the notation
\[
 \int_x^y \tilde F\coloneqq \int_0^{d(x,y)}\tilde F(g^t v_{x,y})\,dt \,.
\]

Lemma 3.2 of~\cite{PPS} and the remark (ii) page 34 which follows this lemma
easily imply the following statement.
\begin{lemm}\label{lm:hold-potential}  Let $F:T^1M\to \bbR$ be a $(\beta, C_F)$-Hölder-continuous map
on $T^1M$, and $\tilde F$ its $\Gamma$-invariant lift. There exists a
constant $c_1>0$ depending only on the upper bound of the curvature and the
Hölder constants $\beta,C_F$, with the following property. Let $D\geq 1$, and
consider points $x,y,x',y'\in \tilde M$ with $d(x,y)\leq D$ and $d(x',y')
\leq D$. Then
\[
\abs*{\int_x^{x'} \tilde F-\int_y^{y'}\tilde F}\le
  c_1 e^D + D(\abs{\tilde F(v_{xx'})} + \abs{\tilde F(g^{d(x,x')}v_{xx'})})\,,
\]
where $v_{xx'}$ is the tangent vector at $x$ to the geodesic segment from $x$
to $x'$.

This bound applies in particular when $x$ and $y$ are picked in a compact
subset $\tilde K$ of $T^1 M$ with diameter at most $D$, and $x'$ and $y'$ are
picked in $\gamma \tilde K$ for some $\gamma \in \Gamma$. In this situation,
one gets an upper bound $c_1 e^D + 2D \max_{v \in T^1 \tilde K} \abs{F(v)}$
which only depends on $\tilde K$.
\end{lemm}
\begin{proof}
By Lemma 3.2 of~\cite{PPS} and the remark (ii) page 34 which follows this
lemma, we have
\begin{equation}
\label{eq:kjvcmlkxwjvm}
  \abs*{\int_x^{x'} \tilde F-\int_y^{y'}\tilde F}\le
  c_1 e^D + D\max_{\pi^{-1} (B(x,D))} \abs{\tilde F} + D\max_{\pi^{-1} (B(x',D))} \abs{\tilde F}\,,
\end{equation}
for some constant $c_1$. Moreover, on the ball $\pi^{-1} (B(x,D))$ one has
the inequality $\abs{\tilde F(v) - \tilde F(v_{xx'})} \leq C(\tilde F) D$ as
$\tilde F$ is Hölder-continuous and therefore Lipschitz on large scales. One
can therefore bound $D\max_{\pi^{-1} (B(x,D))} \abs{\tilde F}$ with
$D\abs{\tilde F(v_{xx'})} + C(\tilde F) D^2$, and then bound the second term
with $C' e^D$. The last term in~\eqref{eq:kjvcmlkxwjvm} is handled similarly.
\end{proof}



There are several natural definitions of pressure, that all coincide, as
proven in~\cite[Theorems~4.7 and~6.1]{PPS}, see
Theorem~\ref{th:Variationnel}. We recall here these three definitions.

\subsubsection{Geometric pressure as a critical exponent}

Recall that some point $o\in\tilde M$ has been chosen once and for all. The
Poincaré series associated with $(\Gamma,F)$ is defined by
\[
P_{\Gamma,o,F}(s)=  \sum_{\gamma\in \Gamma }e^{-sd(o,\gamma o)+\int_o^{\gamma o} \tilde F} \,.
\]
The following lemma is elementary, see for instance~\cite[p.\ 34-35]{PPS}.
\begin{lemm}[Geometric pressure]\label{lem:geom-pressure} The above series admits a {\em critical exponent}
$\delta_\Gamma(F)\in \bbR\cup \{+\infty\}$ defined by the fact that for all
$s>\delta_\Gamma(F)$ (resp.\ $s<\delta_\Gamma(F)$), the series
$P_{\Gamma,o,F}(s)$ converges (resp.\ diverges).
 Moreover, $\delta_\Gamma(F)$ does not depend on the choice of $o$ and satisfies for any $c>0$,
\[
\delta_\Gamma(F)=\limsup_{T\to +\infty}\frac{1}{T}\log\sum_{\gamma\in \Gamma, T-c\le d(o,\gamma o)\le T } e^{\int_o^{\gamma o} \tilde F}\,.
\]
We   call $\delta_\Gamma(F)$ the \emph{critical exponent} of $(\Gamma, F)$ or
the \emph{geometric pressure} of $F$.
\end{lemm}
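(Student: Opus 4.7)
The plan is to verify the three claims of the lemma in order: the existence of the critical exponent with the convergence/divergence dichotomy, the independence from the basepoint, and the limsup formula.

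For existence, I would note that $s \mapsto P_{\Gamma,o,F}(s)$ is a sum of positive terms each of which is decreasing in $s$, so the function itself is non-increasing. Setting $\delta_\Gamma(F) = \inf\{s \in \bbR : P_{\Gamma,o,F}(s) < +\infty\}$, with the convention that the infimum of the empty set is $+\infty$, immediately yields the desired dichotomy. For independence from $o$, I would compare the two Poincaré series directly. Given $o'\in\widetilde M$, set $D = d(o,o')$; the triangle inequality gives $|d(o,\gamma o) - d(o',\gamma o')| \le 2D$ for all $\gamma\in\Gamma$. Applying Lemma~\ref{lm:hold-potential} with $x=o$, $y=o'$, $x'=\gamma o$, $y'=\gamma o'$ in a compact set containing $\{o, o'\}$ yields a uniform bound $|\int_o^{\gamma o}\tilde F - \int_{o'}^{\gamma o'}\tilde F| \le C$ valid for every $\gamma$. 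Combining these, for each $s \in \bbR$ the series $P_{\Gamma,o,F}(s)$ and $P_{\Gamma,o',F}(s)$ are multiplicatively comparable by the factor $e^{2|s|D + C}$, so they share the same critical exponent.

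For the limsup formula I would reduce to a standard Cauchy--Hadamard computation. Slice the sum by integer distances: set $a_n = \sum_{\gamma : n \le d(o,\gamma o) < n+1} e^{\int_o^{\gamma o}\tilde F}$. On the $n$-th slice, $e^{-sd(o,\gamma o)}$ is trapped between $e^{-s(n+1)}$ and $e^{-sn}$, whence $P_{\Gamma,o,F}(s)$ converges if and only if the Dirichlet-type series $\sum_n e^{-sn}a_n$ converges, which by Cauchy--Hadamard happens if and only if $s > \limsup_n \frac{1}{n}\log a_n$. Thus $\delta_\Gamma(F) = \limsup_n \frac{1}{n}\log a_n$. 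Finally, given $c>0$, the quantity appearing in the statement
\[
A_T^c \;=\; \sum_{\gamma\in\Gamma,\, T-c\le d(o,\gamma o)\le T} e^{\int_o^{\gamma o}\tilde F}
\]
is a sum of at most $\lceil c \rceil + 1$ consecutive $a_n$'s (for $T\in\bbN$), which gives $\limsup_T \frac{1}{T}\log A_T^c \le \limsup_n \frac{1}{n}\log a_n$; conversely each $a_n$ is itself $\le A_{n+1}^{c'}$ for $c'\ge 1$, and for $c<1$ one covers $[n,n+1)$ by finitely many intervals of length $c$, so the reverse inequality holds as well. The three claims follow.

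I do not expect any genuine obstacle: the argument is the standard Dirichlet-series routine, and the only point requiring some care is to ensure that the potential term $\int_o^{\gamma o}\tilde F$ does not spoil the comparisons. This is precisely what Lemma~\ref{lm:hold-potential} provides, uniformly in $\gamma$ once the endpoints are constrained to a compact set.
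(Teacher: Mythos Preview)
Your proof is correct and follows the standard Dirichlet-series/Cauchy--Hadamard route. The paper itself does not give a proof of this lemma; it simply refers to \cite[p.~34--35]{PPS}, calling the result elementary, so there is no alternative argument in the paper to compare against. Your use of Lemma~\ref{lm:hold-potential} to establish basepoint independence is exactly the intended mechanism.
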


As $\Gamma$ is nonelementary, one can show (see~\cite[Lemma 3.3]{PPS}) that
$\delta_{\Gamma}(F)>-\infty$. Moreover, observe that $\delta_\Gamma(F)$ is
finite as soon as $F$ is bounded from above. In~\cite[Thm 4.3]{PPS}, it has
been shown that the above limsup is in fact a true limit if $c$ is large
enough. In what follows, we will never require $F$ to be bounded from above, but
we will sometimes assume that $\delta_\Gamma(F)$ is finite.

\medskip

\subsubsection{Variational pressure }\label{ssec:ErgoPressure}

Let $\calM_1$ be the set of Borel probability measures on $T^1M$ invariant
under the geodesic flow, and $\calM_{1, \mathrm{erg}}$ the subset of ergodic
probability measures. For a given Hölder-continuous potential $F:T^1M\to
\bbR$, consider their subsets $\mathcal{M}_1^F$ and
$\mathcal{M}_{1,\mathrm{erg}}^F$ of probability measures with $\int
F^-\dd\mu<\infty$, where $F^-=-\inf(F,0)$ is the negative part of $F$. Given
$\mu\in\calM_1$,  we denote by $h_{KS}(\mu)=h_{KS}(g^1,\mu)$ its
\emph{Kolmogorov-Sinai entropy}, or \emph{measure-theoretic entropy} with
respect to $g^1$ (see  the appendix for the definition).

\begin{defi}\label{def:pressure}
The \emph{variational pressure} of $F$ is defined by
\[
\Pvar(F)=\sup_{\mu\in\calM^F_1} \left( h_{KS}(\mu)+\int F\dd\mu\,\right)\,
=
 \sup_{\mu\in\calM^F_{1, \mathrm{erg}}} \left( h_{KS}(\mu)+\int F\dd\mu\,\right)\,.
\]
\end{defi}


\subsubsection{Growth of periodic geodesics and \texorpdfstring{Gurevič}{Gurevic} pressure}

\label{subsubsec:Gurevic}

We denote by $\mathcal{P}$ (resp.\ $\mathcal{P}'$) the set of periodic
(resp.\ primitive periodic) orbits of the geodesic flow. Let now $K$ be a
compact subset of $M$ whose interior intersects at least a closed geodesic,
and $c>0$ be fixed. Let us denote by $\calP_K$ (resp.\ $\calP_K(t)$,
$\calP_{K}(t-c,t)$) the set of periodic orbits $p\in\mathcal{P}$ of the
geodesic flow whose projection $\pi(p)$ on $M$ intersects $K$ (resp.\ such
that $\ell(p)\le t$, \ $\ell(p)\in (t-c, t]$). The subsets $\calP'_K$,
$\calP'_K(t)$, $\calP_K'(t-c,t)$ of $\calP'$ are defined similarly.

Denote by $\int_pF$ the integral of $F$ over $(g^tv_p)_{0\le t\le \ell(p)}$ for any $v_p$ on $p$. By~\cite[Thm 4.7]{PPS}, the definition below makes sense.
\begin{defi}[Gurevič pressure]
For any compact subset $K$ of $M$ whose interior intersects a closed geodesic
and any $c>0$, the \emph{Gurevič pressure} of $F$ is defined by
\[
\PGur(F)
=
 \limsup_{T \to +\infty} \frac 1 T \log \sum_{p \in \calP_K(T-c,T)} e^{\int_p F}\,.
\]
It does not depend on $K$ nor $c$. Moreover, when $\PGur(F)>0$, then
\[
\PGur(F) = \limsup_{T \to +\infty} \frac 1 T \log \sum_{p \in \calP_K( T)} e^{\int_p F}\,.
\]
\end{defi}
 Gurevič was the first to introduce this definition (for the potential $F = 0$) in the context of
symbolic dynamics, see~\cite{Gurevic}. The equality $\PGur(F)=\Pvar(F)$ has
been proven in~\cite{Bowen} for compact manifolds and $F=0$, in~\cite{BR} for
compact manifolds and Hölder-continuous potentials. The equality
$\delta_\Gamma(F)=\PGur(F)$ is due to Ledrappier~\cite{Ledrappier} in the
compact case.

In the noncompact case, when $F\equiv 0$, Sullivan~\cite{Sull84} and
Otal-Peigné~\cite{OP} proved that $\delta_\Gamma=\Pvar$, and
Roblin~\cite{Roblin} proved that $\PGur=\delta_\Gamma$. The equality between
the three notions of pressures for general Hölder-continuous potentials on
noncompact manifolds is done in~\cite[Thm.~4.7 and Thm.~6.1]{PPS}.


\subsection{Patterson-Sullivan-Gibbs construction}\label{ssec:PattSull}

Let $F:T^1M\to\bbR$ be a Hölder-continuous potential with finite topological pressure. As will be seen in
Paragraph~\ref{sec:Gibbs}, the construction of a good invariant measure
associated with $F$ will use the product structure $\Omega\simeq
 ((\Lambda_\Gamma^2\backslash {\rm Diag})\times \bbR)/\Gamma$. The main step
is the definition of a good measure  $\nu^F$ on $\Lambda_\Gamma$, that we will call a {\em
Patterson-Sullivan-Gibbs measure}. We recall it below with more care
than usually done, because we will need in Section~\ref{sec:SPR-implique-PR}
to deal with technical points of the construction.

As stated in Lemma~\ref{lem:geom-pressure}, the Poincaré series $P_{\Gamma,o,F}(s)$ converges when $s>\delta_{\Gamma}(F)$ and diverges when $s<\delta_\Gamma(F)$.
We say that $(\Gamma, F)$ is {\em divergent} if this series diverges at
$s=\delta_{\Gamma}(F)$, and {\em convergent} if the series converges.

Following the famous Patterson trick, see~\cite{Patterson}, when $(\Gamma,F)$
is convergent, we  choose a positive nondecreasing map $h : \bbR^+ \to
\bbR^+$ with subexponential growth such that for all $\eta>0$, there exists
$C_\eta\ge 1$ such that
\begin{equation}
\label{eq:patterson_strong}
\forall r\geq 0, \quad \forall t\geq 0, \quad h(t+r)\leq C_\eta e^{\eta t}h(r)\,,
\end{equation}
and the series
\[
 \tilde P_{\Gamma,F}(o,s)
=
 \sum_{\gamma\in \Gamma}
h(d(o,\gamma o))\,e^{-sd(o,\gamma o)+\int_o^{\gamma o}\tilde F}
\] has the
same critical exponent $\delta_\Gamma(F)$, but diverges at the critical
exponent $\delta_\Gamma(F)$. The article~\cite{Patterson} provides the
construction of such a nondecreasing function $h$, except
that~\eqref{eq:patterson_strong} is replaced by the following property: for all $\eta>0$, there
exists $r_\eta>0$ such that
\begin{equation*}
\forall r\geq r_\eta, \quad \forall t\geq 0, \quad h(t+r)\leq e^{\eta t}h(r)\,.
\end{equation*}
We claim that this property implies~\eqref{eq:patterson_strong}. Indeed, the
result is obvious for $r\geq r_\eta$, while for $r \leq r_\eta$ one may write
\begin{equation*}
h(t+r) \leq h(t+r_\eta) \leq e^{\eta t} h(r_\eta) = \frac{h(r_\eta)}{h(0)}e^{\eta t} h(0)
\leq \frac{h(r_\eta)}{h(0)}e^{\eta t} h(r).
\end{equation*}
Hence,~\eqref{eq:patterson_strong} follows with $C_\eta = h(r_\eta)/h(0)$.

Define now for all  $s>\delta_\Gamma(F)$ a probability measure on $\tilde
M\cup\partial\tilde M$ by
\begin{equation*}
\nu^{F,s} = \frac{1}{\tilde P_{\Gamma,F}(o,s)} \sum_{\gamma\in \Gamma}h(d(o,\gamma o))e^{-sd(o,\gamma o)+\int_o^{\gamma o} \tilde F}\mathscr D_{\gamma o}\,,
\end{equation*}
where $\mathscr D_x$ stands for the Dirac mass at $x$.

 By compactness of $\tilde M\cup\partial\tilde M$, we can choose a decreasing sequence
$s_k\to \delta_{\Gamma}(F)$ such that $\nu^{F,s_k}$ converges to a
probability measure $\nu^F$. As $\tilde P_{\Gamma,o,F}$ diverges at
$s=\delta_{\Gamma,F}$, we deduce that $\nu^F$ is supported on
$\Lambda_\Gamma\subset\partial \tilde M$.

For all $x,y\in\tilde M$ and $\xi\in\partial\tilde M$, recall the following
notation from~\cite[sec 2.2.1]{schapira2004} (with an opposite sign
convention compared to~\cite{PPS})
\[
\rho_\xi^F(x,y)=\lim_{z\in[x,\xi),\,z\to\xi}\int_x^z\tilde F-\int_y^z\tilde F\,.
\]
Observe that $\rho_\xi^0=0$ and more generally, when $F\equiv c$ is constant,
$\rho^c=c\times \beta $, where $\beta$ is the usual Busemann cocycle defined
in Equation~\eqref{eq:Busemann}.

The measure $\nu^F$ satisfies the following crucial property. For all
$\gamma\in \Gamma$, and $\nu^F$-almost all $\xi\in\partial \tilde M$,
\begin{equation}\label{eq:GammaInvPS}
\frac{d\gamma_*\nu^F}{\dd\nu^F}(\xi)=e^{\delta_\Gamma(F) \beta_\xi(o,\gamma o)-\rho_\xi^F(o,\gamma o)}\,.
\end{equation}

%


As a consequence of~\eqref{eq:GammaInvPS}, one gets the following key
property, proved in~\cite{mohsen}. Recall that for a given set
$A\subset\tilde M$, the {\em shadow} $\mathcal{O}_x(A)$ of $A$ viewed from a
point $x\in\tilde M$ is by definition the set of points $y\in\tilde
M\cup\partial\tilde M$ such that the geodesic interval $[x,y]$ intersects the
set $A$.

\begin{prop}[Shadow Lemma] There exists $R_0>0$ such that for every given $R\ge R_0$,
there exists a constant $C>0$ such that for all $\gamma\in\Gamma$,
\[
\frac{1}{C}\,e^{-\delta_\Gamma(F) d(o,\gamma o)+\int_o^{\gamma o} \tilde F}\le
\nu^F(\mathcal{O}_o(B(\gamma o,R))\le C e^{-\delta_\Gamma(F) d(o,\gamma o)+\int_o^{\gamma o} \tilde F}\,.
\]
\end{prop}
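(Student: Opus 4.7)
The plan follows the classical Sullivan--Mohsen strategy adapted to the Hölder potential setting, as in~\cite{PPS}: I would relate the mass $\nu^F(\calO_o(B(\gamma o, R)))$ to the mass of a shadow based near $o$ via the quasi-invariance formula~\eqref{eq:GammaInvPS}, reducing the question to a uniform non-degeneracy estimate on shadows viewed from remote points.

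First, I would rewrite $\calO_o(B(\gamma o, R)) = \gamma\,\calO_{\gamma^{-1}o}(B(o, R))$ and apply~\eqref{eq:GammaInvPS}, which after a change of variable $\xi \mapsto \gamma\xi$ yields
\[
\nu^F\bigl(\calO_{\gamma^{-1}o}(B(o, R))\bigr) = \int_{\calO_o(B(\gamma o, R))} e^{\delta_\Gamma(F)\beta_\xi(o,\gamma o)-\rho_\xi^F(o,\gamma o)}\, d\nu^F(\xi).
\]
Then I would estimate the integrand uniformly on the shadow. If $\xi\in\calO_o(B(\gamma o, R))$, the geodesic ray $[o,\xi)$ enters $B(\gamma o, R)$, and Lemma~\ref{lm:NegCurvTriangle} combined with the definition~\eqref{eq:Busemann} of $\beta$ gives $|\beta_\xi(o,\gamma o)-d(o,\gamma o)|\le 2R$. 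For the potential cocycle, Lemma~\ref{lm:hold-potential}, applied on a compact ball of radius $O(R)$ containing $\gamma o$ and a point of $[o,\xi)$ at distance $\le R$ from $\gamma o$, produces $|\rho_\xi^F(o,\gamma o)-\int_o^{\gamma o}\widetilde F|\le c(R,F)$, with $c(R,F)$ depending only on $R$, the Hölder data of $F$, and the curvature bounds. Combining these estimates, the integrand is comparable to $e^{\delta_\Gamma(F) d(o,\gamma o)-\int_o^{\gamma o}\widetilde F}$ up to multiplicative constants depending only on $R$, so
\[
\nu^F\bigl(\calO_o(B(\gamma o, R))\bigr) \asymp e^{-\delta_\Gamma(F) d(o,\gamma o)+\int_o^{\gamma o}\widetilde F}\cdot\nu^F\bigl(\calO_{\gamma^{-1}o}(B(o, R))\bigr).
\]
The upper bound in the statement then follows at once, for any $R\ge 0$, since $\nu^F$ is a probability measure.

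The remaining step, which I expect to be the main obstacle, is to bound $\nu^F(\calO_{\gamma^{-1}o}(B(o, R)))$ from below by a positive constant, uniformly in $\gamma\in\Gamma$ and for $R$ sufficiently large. When $\gamma^{-1}o$ stays in a compact region this is immediate; the only difficulty is sequences $\gamma_n^{-1}o\to\xi_0\in\partial\widetilde M$, for which the shadow $\calO_{\gamma_n^{-1}o}(B(o, R))$ eventually covers the complement of an arbitrarily small neighborhood of $\xi_0$ in $\partial\widetilde M$. The desired uniform lower bound then reduces to showing that $\nu^F$ has no atom on $\partial\widetilde M$. I would obtain this either by a Margulis-style ping-pong argument using two independent hyperbolic elements of $\Gamma$ (which exist by nonelementarity, cf.\ Lemma~\ref{lem:Pit-Schapira2.6}), together with the quasi-invariance~\eqref{eq:GammaInvPS}, to exhibit a fixed subset of $\Lambda_\Gamma$ of positive $\nu^F$-mass lying in every shadow from a point far enough from $o$; or, in the convergent case handled in Section~\ref{ssec:PattSull}, by carrying the subexponentially slowly varying weight $h$ through the shadow estimates, which is precisely the role Patterson's trick plays in the construction of $\nu^F$.
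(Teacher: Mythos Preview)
Your proposal is essentially correct and follows the classical Sullivan--Mohsen strategy that the paper cites (the proposition is stated without proof, referring to~\cite{mohsen}); the paper's detailed proof of the closely related Orbital Shadow Lemma~\ref{lem:orbital-shadow-lemma} proceeds along the same lines you describe.

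One refinement worth noting concerns the final step. You reduce the uniform lower bound on $\nu^F(\calO_{\gamma^{-1}o}(B(o,R)))$ to showing that $\nu^F$ has \emph{no atom}, and propose either a ping-pong argument or tracking the Patterson weight $h$. This is stronger than necessary, and potentially awkward since atomlessness is often deduced \emph{from} the Shadow Lemma rather than before it. The argument the paper uses (see the proof of Lemma~\ref{lem:orbital-shadow-lemma}) is simpler: since $\Gamma$ is nonelementary, $\Lambda_\Gamma$ is uncountable (indeed perfect), and $\nu^F$ has full support in $\Lambda_\Gamma$, so $\nu^F$ cannot be a single Dirac mass. Hence the largest atom has mass $\alpha<1$, and for any $y\in\widetilde M\cup\partial\widetilde M$ one has $\nu^F(\calO_y(B(o,R)))\to\nu^F(\partial\widetilde M\setminus\{\xi_0\})\ge 1-\alpha>0$ as $y\to\xi_0$. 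Compactness of $\widetilde M\cup\partial\widetilde M$ then gives an $R_0$ such that $\nu^F(\calO_y(B(o,R)))\ge 1-\alpha/2$ uniformly in $y$ for all $R\ge R_0$. This bypasses both atomlessness and any ping-pong construction.
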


Observe that the probability measure $\nu^F$ constructed above is not unique a priori,
but it will be unique in all interesting cases, see Section~\ref{ssec:SPR}
for details.

In fact, we will need a shadow lemma for the family of measures $\nu^{F,s}$,
for $s>\delta_\Gamma(F)$. As the uniformity of the constants in the
statements with respect to $s>\delta_\Gamma(F)$ will be crucial, we provide a
detailed proof.

For $A,B\subset\tilde M$ two sets, we will use the {\em enlarged shadow}
$\mathcal{O}_B(A)=\bigcup_{x\in B} \mathcal{O}_x(A)$, i.e.,  the set of
points $y\in\tilde M\cup\partial\tilde M$ such that there exists some $x\in
B$ such that the geodesic interval $[x,y]$ intersects~$A$.

\begin{lemm}[Orbital Shadow Lemma]\label{lem:orbital-shadow-lemma}
For every compact subset $\tilde K$ of $\tilde M$, there exist $r>0$ and
$\tau>0$ with the following property.
\begin{enumerate}
\item \emph{upper bound:} For every $\eta>0$, there exists $c_\eta>0$ such
    that for all $\delta_\Gamma(F)<s\le \delta_\Gamma(F)+\tau$ and
    $\gamma\in\Gamma$ with $d(o,\gamma o)\ge r$, we have
\[
\nu^{F,s}(\mathcal{O}_{\tilde K}(\gamma \tilde K))
\le
 c_\eta e^{-(s-\eta)d(o,\gamma o)+\int_o^{\gamma o}\tilde F}.
\]
\item \emph{lower bound:} Assume additionally that $\tilde K$ contains
    $B(o, R_1)$, where $R_1=R_1(F)$ is a fixed large constant. Then there
    exists $C$   such that for all $\delta_\Gamma(F)<s\le
    \delta_\Gamma(F)+\tau$ and $\gamma\in\Gamma$ with $d(o,\gamma o)\ge r$,
    we have
\[
\frac{1}{ C} e^{-sd(o,\gamma o)+\int_o^{\gamma o}\tilde F}
\le
\nu^{F,s}(\mathcal{O}_o(\gamma \tilde K)).
\]
\end{enumerate}
\end{lemm}

\begin{proof}
By convexity of the distance in nonpositive curvature, if
$D=\textrm{diam}(\tilde K)+d(o, \tilde K)$  and $\tilde L$ is the $D$-neighborhood of $\tilde K$, then for all
$\gamma\in\Gamma$, we have
\[
\mathcal{O}_{\tilde K}(\gamma\tilde K)\subset \mathcal{O}_o(\gamma\tilde L)\,.
\]
Therefore, upon replacing $\tilde K$ with $\tilde L$ in the upper bound, it
suffices to prove it for the shadow $\mathcal{O}_o(\gamma\tilde K)$. This also shows that without loss of generality, we can assume that $o\in\widetilde K$.

We follow the classical proof of the Shadow lemma, with $\nu^{F,s}$ on
$\tilde M$ instead of $\nu^F$ on $\partial \tilde M$. By definition, for all
$y\in \Gamma o$ and $\alpha\in\Gamma$, we have
\[
\frac{d(\alpha_*\nu^{F,s})}{\dd\nu^{F,s}}(y)=\frac{h(d(\alpha o,y))}{h(d(o,y))}e^{-s(d(\alpha o,y)-d(o,y))+\int_{\alpha o}^y\tilde F-\int_o^y\tilde F}\,.
\]
We deduce that
\begin{align*}
  \nu^{F,s}\left(\mathcal{O}_o(\gamma\tilde K)\right)
  & = \gamma^{-1}_*\nu^{F,s}(\mathcal{O}_{\gamma^{-1}o}(\tilde K))
  \\&
  = \int_{\mathcal{O}_{\gamma^{-1}o}(\tilde K)} \frac{h(d(\gamma^{-1}o,y))}{h(d(o,y))}
    e^{-s(d(\gamma^{-1}o,y)-d(o,y))+\int_{\gamma^{-1}o}^y\tilde F-\int_o^y\tilde F}\dd\nu^{F,s}\,.
\end{align*}

The triangular inequality gives $d(\gamma^{-1}o,y)\le
d(\gamma^{-1}o,o)+d(o,y)$. Moreover, since $o\in \tilde K$ and
$y\in\mathcal{O}_{\gamma^{-1}o}(\tilde K)$, by
Lemma~\ref{lm:NegCurvTriangle}, we have $d(\gamma^{-1}o,y)\ge
d(\gamma^{-1}o,o)+d(o,y)-2D$. In particular, with $r=2D$, if $d(\gamma^{-1}o, o)\ge r$, we get $d(\gamma^{-1}o,y) \geq d(o, y)$.

By  construction, the map $h$ is nondecreasing and for all $\eta>0$, there
exists $C_\eta>0$ such that for $\rho\ge 0$, $t\ge 0$, $h(t+\rho)\le C_\eta
e^{\eta t}h(\rho)$. Thus, independently of $s>\delta_{\Gamma}(F)$, we have
\[
1\le \frac{h(d(\gamma^{-1}o,y))}{h(d(o,y))}\le \frac{h(d(\gamma^{-1}o,o)+d(o,y))}{h(d(o,y))}
\le C_\eta e^{\eta d(\gamma^{-1}o,o)}\,.
\]

By Lemma~\ref{lm:hold-potential}, there exists a positive constant
$C(F,\tilde K)$, such that uniformly  in
$y\in\mathcal{O}_{\gamma^{-1}o}(\tilde K)$, we have $\displaystyle
\abs*{\int^y_{\gamma^{-1}o}\tilde F-\int_o^y\tilde
F-\int_{\gamma^{-1}o}^o\tilde F}\le C(F,\tilde K)\,.$ We deduce that, when $s
< \delta_\Gamma(F)+1$,
\begin{align*}
  \nu^{F,s}\left(\mathcal{O}_o(\gamma\tilde K)\right)&
  \le C_\eta e^{2Ds+C(F,\tilde K)} e^{-(s-\eta)d(\gamma^{-1}o,o)+\int_{\gamma^{-1}o}^o\tilde F} \times \nu^{F,s}(\mathcal{O}_{\gamma^{-1}o}(\tilde K))
  \\& \le C_\eta e^{2D(\delta_\Gamma(F)+1)+C(F,\tilde K)} e^{-(s-\eta)d(\gamma^{-1}o,o)+\int_{\gamma^{-1}o}^o\tilde F}\,.
\end{align*}
This concludes the proof of the upper bound.

For the lower bound, we have
\[
\nu^{F,s}\left(\mathcal{O}_o(\gamma\tilde K)\right)\ge e^{-C(F,\tilde K)} e^{-sd(\gamma^{-1}o,o)+\int_{\gamma^{-1}o}^o\tilde F} \times \nu^{F,s}\left(\mathcal{O}_{\gamma^{-1}o}(\tilde K )\right)\,.
\]
The crucial point is to get a lower bound of the measure on the right hand
side. More precisely, as we assume that $\tilde K$ contains a ball centered at $o$ with large radius,
we wish to find such a radius $R>0$ and $\tau>0$ such that uniformly in $\gamma\in\Gamma$
and    $\delta_\Gamma(F)<s<\delta_\Gamma(F)+\tau$, the measure
$\nu^{F,s}\left(\mathcal{O}_{\gamma^{-1}o}(B(o,R))\right)$ has a  positive
lower bound. It would follow immediately if we knew that for some $R>0$,
uniformly in $y\in\tilde M$ and $\delta_\Gamma(F)<s<\delta_\Gamma(F)+\tau$,
the measure $\nu^{F,s}\left(\mathcal{O}_{y}(B(o,R))\right)$ has a  positive
lower bound.
We follow the usual argument which concludes the proof of the classical
Shadow Lemma.  Imagine by contradiction that
there exist $s_n\to \delta_\Gamma(F)$,
$R_n\to \infty$ and $y_n\to y_\infty\in \tilde M\cup \partial \tilde M$ such
that $\nu^{F,s_n}\left(\mathcal{O}_{y_n}(\tilde B(o,R_n))\right)\to 0$.


There exists a subsequence $s_{n_k}$ such that $\nu^{F,s_{n_k}}$ converges to
some probability measure $\nu'$ on the boundary which is supported on the full limit set
$\Lambda_\Gamma$. This measure is not a single Dirac mass at $y_\infty$, by
non-elementarity. By regularity, we can find an open neighborhood $U$ of
$y_\infty$ with $\nu'(U)=1-\alpha < 1$. Since $\nu^{F,s_{n_k}}$ converges
weakly to $\nu'$ and $U$ is open, this entails $\nu^{F,s_{n_k}}(U) \leq 1
-\alpha/2$ for large enough $k$. For large enough $n$, the complement of $U$
is contained in $\mathcal{O}_{y_n}(\tilde B(o,R_n))$ as $y_n\to y_\infty$ and
$R_n\to \infty$. This gives $\nu^{F,s_{n_k}}(\mathcal{O}_{y_{n_k}}(\tilde
B(o,R_{n_k}))) \geq \alpha/2$, a contradiction.
\end{proof}



\subsection{Gibbs measures}\label{sec:Gibbs}

Let $F: T^1M\to \bbR$ be a Hölder-continuous potential with finite topological pressure,
and let $\nu^F$ be a Patterson-Sullivan measure associated with $F$, as
constructed in the previous paragraph.

Denote by $\iota:T^1M\to T^1M$ the involution $v\mapsto -v$, and let
$\nu^{F\circ \iota}$ be a Patterson-Sullivan measure associated with $F\circ
\iota$. Hopf coordinates allow us to define a Radon measure on $T^1\tilde M$
by the formula

\begin{equation}\label{Gibbs-product}
\dd\tilde m^F(v)   =
 e^{\delta_\Gamma(F)\beta_{v^-}(o,\pi(v))-\rho^{F\circ\iota}_{v^-}(o,\pi(v))+\delta_\Gamma(F)\beta_{v^+}(o,\pi(v))-\rho^F_{v^+}(o,\pi(v))} \,
\dd\nu^{F\circ\iota}(v_-) \dd\nu^F(v_+)\dd t\,.
\end{equation}
 By construction, $\tilde m^F$ is invariant under the geodesic flow and it follows from~\eqref{eq:GammaInvPS} that
it is invariant under the action  of $\Gamma$  on $T^1\tilde M$, so that it
induces a Radon measure $m^F$ on $T^1M$.

The following crucial result  was shown in~\cite{OP} for $F=0$ and
in~\cite[Chap.\ 6]{PPS} in general.

\begin{theo}[\cite{OP}--\cite{PPS}] \label{theo:Gibbs}
Let $F: T^1M\to \bbR$ be a Hölder-continuous potential with finite
topological pressure. Then the following alternative holds. If a measure
$m^F$ on $T^1M$ given by the Patterson-Sullivan-Gibbs construction is finite
and if, once normalized into a probability measure, it belongs to
$\calM_1^F$, then it is the unique probability measure realizing the supremum
in the variational principle:
\[
P(F)=\sup_{m\in\mathcal{M}^F_{1}}\left(\, h_{KS}(m)+\int_{T^1M} F\dd m\,\right)\,= h_{KS}\left(\frac{m^F}{\norm{m^F}}\right)+\int_{T^1M}F\frac{\dd m^F}{\norm{m^F}}\,.
\]
Otherwise, there is no probability measure
realizing this supremum.
\end{theo}

We will also need the following result, called {\em
Hopf-Tsuji-Sullivan-Roblin} Theorem,  see~\cite[Theorem 5.3]{PPS} for a more
complete statement and a proof.

\begin{theo}[Hopf-Tsuji-Sullivan-Roblin theorem,~\cite{PPS}]\label{theo:HTS}
Let $F: T^1M\to \bbR$ be a Hölder-continuous potential with finite
topological pressure, and let $\nu^F$ and $m^F$ be associated with $F$ as
above. The following assertions are equivalent.
\begin{enumerate}
\item The pair $(\Gamma,F)$ is divergent, i.e., the Poincaré series
    $P_{\Gamma,o,F}(s)$ diverges at the critical exponent
    $\delta_\Gamma(F)$;
\item the measure $\nu^F$ gives positive measure to the radial limit set:
    $\nu^F(\Lambda_{\Gamma}^{\mathrm{rad}})>0$;
\item the measure $\nu^F$ gives full measure to the radial limit set:
    $\nu^F(\Lambda_{\Gamma}^{\mathrm{rad}})=1$;
\item the measure $m^F$ is conservative for the action of the geodesic flow
    on $T^1M$;
\item the measure $m^F$ is ergodic and conservative for the action of the
    geodesic flow on $T^1M$.
\end{enumerate}
\end{theo}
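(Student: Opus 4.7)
The plan is to establish the cycle $(1) \Rightarrow (3) \Rightarrow (2) \Rightarrow (4) \Rightarrow (5) \Rightarrow (1)$, which is the classical Hopf--Tsuji--Sullivan--Roblin scheme adapted to Hölder potentials. The Orbital Shadow Lemma~\ref{lem:orbital-shadow-lemma} and the $\Gamma$-quasi-invariance relation~\eqref{eq:GammaInvPS} will be the workhorses throughout. For $(1) \Rightarrow (3)$, the idea is to express the radial limit set as a $\limsup$ of shadows,
\[
\Lambda_\Gamma^{\mathrm{rad}} \supseteq \bigcap_{N \geq 1} \bigcup_{\substack{\gamma \in \Gamma \\ d(o, \gamma o) \geq N}} \mathcal{O}_o(B(\gamma o, R))
\]
for $R$ sufficiently large, and to use the Shadow Lemma estimate $\nu^F(\mathcal{O}_o(B(\gamma o, R))) \asymp e^{-\delta_\Gamma(F) d(o,\gamma o) + \int_o^{\gamma o} \widetilde F}$; divergence of $P_{\Gamma,o,F}$ at $\delta_\Gamma(F)$ then gives $\sum_\gamma \nu^F(\mathcal{O}_o(B(\gamma o, R))) = \infty$, and a Borel--Cantelli-style argument yields $\nu^F(\Lambda_\Gamma^{\mathrm{rad}}) > 0$. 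The upgrade to full measure (hence $(3)$) follows from a Sullivan-style zero-one law for $\Gamma$-invariant subsets of $(\Lambda_\Gamma, \nu^F)$. The implication $(3) \Rightarrow (2)$ is immediate.

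For $(2) \Rightarrow (4)$, the Hopf product formula~\eqref{Gibbs-product} decomposes $\tilde m^F$ on $T^1 \widetilde M$ as an explicit density times $\dd\nu^{F \circ \iota}_o(v^-) \, \dd\nu^F_o(v^+) \, \dd t$. Radiality of the positive endpoint $v^+$ means precisely that the forward geodesic orbit returns infinitely often to a compact set; combining $\nu^F(\Lambda_\Gamma^{\mathrm{rad}}) > 0$ with its backward-time analogue (applied to $F \circ \iota$, for which the Poincaré series has the same divergence behavior by symmetry) yields a positive-measure set of bi-recurrent vectors, and Poincaré recurrence then promotes this to conservativity of $m^F$. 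For $(4) \Rightarrow (5)$ we run the Hopf argument: conservativity lets us compare forward Birkhoff averages of a $g^t$-invariant $L^\infty$ function along strong stable leaves; the local product structure from Proposition~\ref{prop:dyn-prop}, together with the $\Gamma$-ergodicity of the boundary measures, forces the function to be constant $m^F$-a.e. Finally, for $(5) \Rightarrow (1)$ we argue by contrapositive: if $(\Gamma, F)$ is convergent, the Patterson trick introduces a subexponential weight $h$ which by construction pushes the mass of $\nu^F$ onto $\Lambda_\Gamma \setminus \Lambda_\Gamma^{\mathrm{rad}}$, so $m^F$ becomes purely dissipative via the Hopf formula, contradicting $(5)$.

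The main obstacle is the Borel--Cantelli step in $(1) \Rightarrow (3)$. Unlike the classical independent setting, the shadows $\mathcal{O}_o(B(\gamma o, R))$ overlap in a complicated way, so showing positivity of the $\limsup$ requires controlled second-moment (Kochen--Stone type) estimates that combine the two-sided Shadow Lemma with the quasi-invariance~\eqref{eq:GammaInvPS} and uniform Hölder control of $\int_o^{\gamma o} \widetilde F$ via Lemma~\ref{lm:hold-potential}. A secondary technical point is that in the convergent case one must keep careful track of the auxiliary function $h$ in the construction of $\nu^F$ to ensure the mass genuinely escapes to non-radial points; this is precisely why the uniformity of the shadow-type estimates in $s \downarrow \delta_\Gamma(F)$ established in Lemma~\ref{lem:orbital-shadow-lemma} is needed, rather than just the limiting statement at $s = \delta_\Gamma(F)$.
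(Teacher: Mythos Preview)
The paper does not give its own proof of this theorem: it is quoted verbatim from~\cite[Theorem~5.3]{PPS} and used as a black box throughout. There is therefore nothing in the paper to compare your argument against beyond the citation.

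That said, your outline is the standard Hopf--Tsuji--Sullivan--Roblin scheme and is broadly what~\cite{PPS} (following Roblin) does, with two caveats worth flagging. First, the contrapositive ``convergent $\Rightarrow \nu^F(\Lambda_\Gamma^{\mathrm{rad}})=0$'' only needs the easy half of Borel--Cantelli together with the \emph{upper} bound of the ordinary Shadow Lemma at $s=\delta_\Gamma(F)$; the uniformity-in-$s$ of Lemma~\ref{lem:orbital-shadow-lemma} is not used for this, and in the paper that lemma is established for an entirely different purpose (the SPR arguments of Section~\ref{sec:SPR}). Second, the implication $(1)\Rightarrow(3)$ is not usually obtained by a direct Kochen--Stone second-moment estimate on shadows as you suggest; in the Roblin/PPS treatment one instead shows that divergence forces the dissipative part of $m^F$ to vanish (so $(1)\Rightarrow(4)$ directly), then the Hopf argument gives ergodicity, and ergodicity combined with the $\Gamma$-quasi-invariance~\eqref{eq:GammaInvPS} yields the $0$--$1$ law on the $\Gamma$-invariant set $\Lambda_\Gamma^{\mathrm{rad}}$. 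Your cycle is a valid alternative route in principle, but the Kochen--Stone step you identify as the ``main obstacle'' is genuinely delicate and is precisely what the standard proof circumvents.
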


Together with the above Hopf-Tsuji-Sullivan-Roblin Theorem, the Poincaré recurrence Theorem implies the following crucial observation:\\
\centerline{ When the measure $m^F$ is finite, it is ergodic and
conservative.}







\section{Pressures at infinity}\label{sec:pressures-at-infinity}

In this section, we recall first the notion of {\em fundamental group outside
a compact set} introduced in~\cite{PS16}. Then, to each of the three  notions
of pressures recalled in section~\ref{sssec:Pressure}, we  associate  a
natural notion of \emph{pressure at infinity}.


\subsection{Fundamental group outside a given compact set}

For any compact set $\tilde K\subset\tilde M$,  as in~\cite{PS16, ST19,CDST}
we define the {\em fundamental group outside $\tilde K$}, denoted by
$\Gamma_{\tilde K}$, as
\[
\Gamma_{\tilde K}=\left\{\gamma\in \Gamma, \exists x,y\in \tilde K,\;   [x,\gamma y]\cap \Gamma {\tilde K}\subset {\tilde K}\cup \gamma {\tilde K} \right\}\,.
\]
Considering the last point on such a geodesic segment in $\tilde K$, and the
first point in $\gamma {\tilde K}$, it follows that this set can equivalently
be written as
\[
\Gamma_{\tilde K}=\left\{\gamma\in \Gamma, \exists x,y\in \tilde K,\;   [x,\gamma y]\cap \Gamma {\tilde K}=\{x, \gamma y\} \right\}\,.
\]
This subset of $\Gamma$ corresponds to long excursions of geodesics outside
of $K\coloneqq p_\Gamma(\widetilde K)$. We stress that this is not a subgroup
in general, see examples in~\cite[Section 7]{ST19}.

Recall from~\cite[Prop.\ 7.9]{ST19} and~\cite[Prop.\ 7.7]{ST19} the following
results.

\begin{prop}\label{prop:comparison-fund-group-outside-compacts}
\begin{enumerate}
\item Let $\tilde K\subset \tilde M$ be a compact subset, and
    $\alpha\in\Gamma$. Then $\Gamma_{\alpha\tilde K}=\alpha\Gamma_{\tilde
    K}\alpha^{-1}$.
\item If $\tilde K_1$ and $\tilde K_2$ are compact subsets of $\tilde M$
    such that $\tilde K_1$ is included in the interior of $\tilde K_2$,
    then there exist finitely many $\alpha_1, \dotsc,\alpha_k\in\Gamma$
such that $\ds \Gamma_{\tilde K_2}\subset
\bigcup_{i,j=1}^k\alpha_i\Gamma_{\tilde K_1}\alpha_j^{-1}$.
\end{enumerate}
\end{prop}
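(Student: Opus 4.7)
The plan is to handle the two items in turn: (1) is a direct algebraic check, while (2) relies on proper discontinuity of the $\Gamma$-action together with the strict inclusion $\widetilde K_1 \subset \mathrm{int}(\widetilde K_2)$ and the stability of geodesics in pinched negative curvature.

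For (1), I would verify both inclusions by transporting witnessing configurations. Given $\gamma \in \Gamma_{\widetilde K}$ witnessed by $x, y \in \widetilde K$ with $[x, \gamma y] \cap \Gamma \widetilde K = \{x, \gamma y\}$, set $x' = \alpha x$ and $y' = \alpha y$, which lie in $\alpha \widetilde K$. Since $\alpha$ is an isometry, $[x', (\alpha\gamma\alpha^{-1}) y'] = \alpha[x, \gamma y]$, and since $\alpha \in \Gamma$ one has $\Gamma(\alpha \widetilde K) = \Gamma \widetilde K$. Thus
\[
\alpha[x, \gamma y] \cap \Gamma(\alpha \widetilde K) \;=\; \alpha\bigl([x, \gamma y] \cap \Gamma \widetilde K\bigr) \;=\; \{x', (\alpha\gamma\alpha^{-1}) y'\},
\]
so $\alpha\gamma\alpha^{-1} \in \Gamma_{\alpha \widetilde K}$, giving $\alpha \Gamma_{\widetilde K} \alpha^{-1} \subset \Gamma_{\alpha \widetilde K}$. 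The reverse inclusion follows by applying the same argument with $\alpha^{-1}$ in place of $\alpha$ and $\alpha \widetilde K$ in place of $\widetilde K$.

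For (2), the plan is to build the finite list $\alpha_1, \ldots, \alpha_k$ from proper discontinuity and then show it suffices. By compactness of $\widetilde K_1, \widetilde K_2$ and proper discontinuity of $\Gamma$, the set $A = \{\alpha \in \Gamma : \alpha \widetilde K_1 \cap \widetilde K_2 \neq \emptyset\}$ is finite; take $\{\alpha_1, \ldots, \alpha_k\}$ to be $A$, possibly enlarged as discussed below. Given $\gamma \in \Gamma_{\widetilde K_2}$ witnessed by $x, y \in \widetilde K_2$, unwinding via part (1) the goal is to find $\alpha_i, \alpha_j$ from the list and $\tilde x \in \alpha_i \widetilde K_1$, $\tilde y \in \alpha_j \widetilde K_1$ such that $[\tilde x, \gamma \tilde y] \cap \Gamma \widetilde K_1 = \{\tilde x, \gamma \tilde y\}$. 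The starting observation is that $[x, \gamma y]$ meets the larger set $\Gamma \widetilde K_2 \supset \Gamma \widetilde K_1$ only at its two endpoints, so the interior of $[x, \gamma y]$ already avoids $\Gamma \widetilde K_1$ entirely; by the strict inclusion together with Lemma~\ref{lm:NegCurv4Points}, small perturbations of the endpoints preserve this avoidance in the bulk of the segment.

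The main obstacle is then to locate perturbed endpoints $\tilde x, \tilde y$ in $\Gamma \widetilde K_1$ close to $x, y$. When $x$ lies in some $\alpha \widetilde K_1 \cap \widetilde K_2$, this $\alpha$ belongs to $A$ and one sets $\tilde x = x$ directly. When $x \notin \Gamma \widetilde K_1$, the flexibility in the choice of the witnesses together with the strict inclusion $\widetilde K_1 \subset \mathrm{int}(\widetilde K_2)$ should allow one to slide $x$ slightly inside $\widetilde K_2$ onto some nearby translate $\alpha \widetilde K_1$ with $\alpha \in A$; any genuinely exceptional configurations that cannot be so handled are absorbed into the enlarged list by adjoining finitely many fixed "repair" elements, in the spirit of the exceptional-element treatment of Lemma~\ref{lem:Pit-Schapira2.6}. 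The analogous construction applies to $y$. Combined with the stability of the bulk, this yields the required expression $\gamma = \alpha_i \gamma' \alpha_j^{-1}$ with $\gamma' \in \Gamma_{\widetilde K_1}$, completing the proof.
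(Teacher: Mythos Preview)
The paper does not prove this proposition itself but cites \cite[Prop.~7.7 and 7.9]{ST19}. Your argument for item~(1) is correct and is the standard one.

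For item~(2) there is a real gap in the endpoint step. You propose to ``slide $x$ slightly'' onto some translate $\alpha\widetilde K_1$, but there is no reason any such translate lies arbitrarily close to $x$: a point $x\in\widetilde K_2$ may sit at distance of order $\diam(\widetilde K_2)$ from every $\alpha\widetilde K_1$. One can certainly move $x$ by a \emph{bounded} amount to a fixed $o_1\in\widetilde K_1$, but then the stability argument no longer controls the new segment near its endpoints: Lemma~\ref{lm:NegCurv4Points} only guarantees closeness to $[x,\gamma y]$ outside the $T_0$-neighbourhoods of the endpoints, and inside those neighbourhoods the new segment may well cross further translates $\alpha'\widetilde K_1$. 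Hence your candidate $\alpha_i^{-1}\gamma\alpha_j$ need not lie in $\Gamma_{\widetilde K_1}$. This difficulty is structural (it arises for every $\gamma$), so the appeal to Lemma~\ref{lem:Pit-Schapira2.6}, which merely absorbs finitely many exceptional $\gamma$'s, is misplaced.

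The missing idea is a last/first intersection trick. Fix $o_1\in\widetilde K_1$ and pass to $[o_1,\gamma o_1]$. Your bulk-stability observation (the open segment $(x,\gamma y)$ avoids $\Gamma\widetilde K_2$, and $\widetilde K_1\subset\mathrm{int}(\widetilde K_2)$) gives $[o_1,\gamma o_1]\cap\Gamma\widetilde K_1\subset B(o_1,T_0)\cup B(\gamma o_1,T_0)$ for some $T_0$ depending only on $\widetilde K_1,\widetilde K_2$. Now let $a$ be the \emph{last} point of $[o_1,\gamma o_1]\cap\Gamma\widetilde K_1$ lying in $B(o_1,T_0)$ and $b$ the \emph{first} such point in $B(\gamma o_1,T_0)$, say $a\in\alpha_i\widetilde K_1$ and $b\in\gamma\alpha_j\widetilde K_1$. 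Then $\alpha_i,\alpha_j$ range over the finite set $\{\alpha:\alpha\widetilde K_1\cap B(o_1,T_0)\neq\emptyset\}$, and by construction $[a,b]\cap\Gamma\widetilde K_1=\{a,b\}$, which witnesses $\alpha_i^{-1}\gamma\alpha_j\in\Gamma_{\widetilde K_1}$.
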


In some circumstances, it may be useful to consider different Riemannian
structures $(M,g_0)$
 and $(M,g)$ on the same orbifold, and compare
their fundamental groups outside a given compact set, denoted by
$\Gamma^{g_0}_{\tilde K}$ and $\Gamma^g_{\tilde K}$ in order to avoid confusions. The
following proposition follows from the definition.

\begin{prop}\label{prop:ST19-7.7}
Let $\tilde K \subset \tilde M$ be a compact subset. Let
$g_0$ and $g$ be two complete Riemannian metrics with pinched negative curvature and bounded derivatives of the curvature  that coincide outside
$p_\Gamma(\tilde K )$. Then
\[
\Gamma_{\tilde K}^g=\Gamma_{\tilde K}^{g_0}\,.
\]
\end{prop}

\subsection{Critical exponent at infinity}

Consider the associated restricted Poincaré series
\[
P_{\Gamma_{\tilde K}}(s,F)=\sum_{\gamma\in \Gamma_{\tilde K}}e^{-sd(o,\gamma o)+\int_o^{\gamma o} \tilde F}   \,.
\]
Its critical exponent  $\delta_{\Gamma_{\tilde K}}(F)\in [-\infty,+\infty]$, satisfies
for all $c>0$
\[
\delta_{\Gamma_{\tilde K}}(F)=
\limsup_{t\to +\infty} \frac{1}{t}\log \sum_{\gamma\in \Gamma_{\tilde K}, t-c\le d(o,\gamma o)\le t}e^{\int_o^{\gamma o} \tilde F}\,.
\] We call it the \emph{critical exponent} or \emph{geometric pressure of $F$ outside $\tilde K$}. By construction,
\[
\delta_{\Gamma_{\tilde K}}(F) \leq \delta_\Gamma(F)\,.
\]

\begin{defi}
The \emph{critical exponent at infinity} or {\em geometric pressure at
infinity} of $F$ is defined as
\[
\delta_\Gamma^\infty(F)=\inf_{\tilde K} \delta_{\Gamma_{\tilde K}}(F)\,,
\]
where the infimum is taken over all compact sets ${\tilde K}\subset\tilde M$.
\end{defi}

An immediate corollary of
Proposition~\ref{prop:comparison-fund-group-outside-compacts} is the
following result.

\begin{coro}\label{coro:comparison-crit-expo-outside-compacts}
Let $F : T^1 M \to \R$ be a Hölder-continuous potential.
\begin{enumerate}
\item Let $\tilde K\subset \tilde M$ be a compact subset, and
    $\alpha\in\Gamma$. Then $\delta_{\Gamma_{\alpha\tilde
    K}}(F)=\delta_{\Gamma_{\tilde K}}(F)$.
\item If $\tilde K_1$ and $\tilde K_2$ are compact subsets of $\tilde M$
    such that $\tilde K_1$ is included in the interior of $\tilde K_2$,
    then
\[
\delta_{\Gamma_{\tilde K_2}}(F)\le\delta_{\Gamma_{\tilde K_1}}(F)\,.
\]
\end{enumerate}
\end{coro}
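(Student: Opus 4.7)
The plan is to deduce both statements from Proposition~\ref{prop:comparison-fund-group-outside-compacts} together with the observation that the restricted Poincaré series is stable (up to a bounded multiplicative factor) under left/right translation of the summation set by fixed elements of $\Gamma$. Before addressing the two items, I will record the following key estimate: for any fixed $\alpha,\beta\in\Gamma$ and any $\gamma\in\Gamma$, setting $\gamma'=\alpha\gamma\beta^{-1}$, the triangle inequality gives
\[
\bigl|d(o,\gamma' o)-d(o,\gamma o)\bigr|\le 2\bigl(d(o,\alpha o)+d(o,\beta o)\bigr),
\]
while Lemma~\ref{lm:hold-potential}, applied with a compact set $K$ containing $o$, $\alpha^{-1}o$, $\beta^{-1}o$, and using the $\Gamma$-equivariance of $\widetilde F$, yields a constant $C=C(F,\alpha,\beta)$ such that
\[
\Bigl|\int_o^{\gamma'o}\widetilde F-\int_o^{\gamma o}\widetilde F\Bigr|\le C.
\]
Consequently the summands $e^{-sd(o,\gamma o)+\int_o^{\gamma o}\widetilde F}$ and $e^{-sd(o,\gamma'o)+\int_o^{\gamma'o}\widetilde F}$ differ by a multiplicative factor bounded independently of $\gamma$, so the critical exponent of any set of the form $\alpha\,S\,\beta^{-1}$ coincides with that of $S\subset\Gamma$.

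For item (1), Proposition~\ref{prop:comparison-fund-group-outside-compacts} gives $\Gamma_{\alpha\widetilde K}=\alpha\Gamma_{\widetilde K}\alpha^{-1}$, and the preliminary estimate with $\beta=\alpha$ yields at once $\delta_{\Gamma_{\alpha\widetilde K}}(F)=\delta_{\Gamma_{\widetilde K}}(F)$.

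For item (2), Proposition~\ref{prop:comparison-fund-group-outside-compacts} provides finitely many $\alpha_1,\dotsc,\alpha_k\in\Gamma$ with
\[
\Gamma_{\widetilde K_2}\subset\bigcup_{i,j=1}^k\alpha_i\Gamma_{\widetilde K_1}\alpha_j^{-1}.
\]
Since a Poincaré-type series over a finite union has critical exponent equal to the maximum of the critical exponents over each piece, and since by the preliminary estimate each $\alpha_i\Gamma_{\widetilde K_1}\alpha_j^{-1}$ has the same critical exponent as $\Gamma_{\widetilde K_1}$, we conclude
\[
\delta_{\Gamma_{\widetilde K_2}}(F)\le\max_{i,j}\delta_{\alpha_i\Gamma_{\widetilde K_1}\alpha_j^{-1}}(F)=\delta_{\Gamma_{\widetilde K_1}}(F).
\]
No serious obstacle is expected: the only mildly delicate point is to invoke Lemma~\ref{lm:hold-potential} with the correct compact set so that the potential integrals are comparable uniformly in $\gamma$, but this is exactly the statement of that lemma.
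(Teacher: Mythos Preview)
Your proof is correct and is precisely the argument the paper has in mind: the paper states the result as an immediate corollary of Proposition~\ref{prop:comparison-fund-group-outside-compacts} without spelling out the details, and you have correctly filled them in by combining the set relations from that proposition with the standard fact (via the triangle inequality and Lemma~\ref{lm:hold-potential}) that left/right translation by fixed group elements does not affect the critical exponent.
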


Corollary~\ref{coro:comparison-crit-expo-outside-compacts} implies  for any
Hölder-continuous potential $F$ the very convenient following fact:
\begin{equation*}
\delta_\Gamma^\infty(F) = \lim_{R\to +\infty} \delta_{\Gamma_{B(o, R)}}(F)\,.
\end{equation*}

It is worth noting that this critical exponent at infinity can be equal to
$-\infty$, in particular in the trivial situations described in the following
lemma, where {\em all} potentials have critical exponent at infinity equal to
$-\infty$.

\begin{lemm} Let $M$ be a compact or convex-cocompact Riemannian manifold with pinched negative curvature.
Then, for every  Hölder-continuous potential $F: T^1M\to \bbR$,
\[
\delta_\Gamma^\infty(F) = -\infty.
\]
\end{lemm}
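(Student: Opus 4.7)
The strategy is to construct, in each of the two cases, a single compact set $\widetilde K\subset\widetilde M$ for which the set $\Gamma_{\widetilde K}$ is \emph{finite}. Once such a $\widetilde K$ is produced, the sum
\[
\sum_{\gamma\in\Gamma_{\widetilde K},\,t-c\le d(o,\gamma o)\le t} e^{\int_o^{\gamma o}\widetilde F}
\]
vanishes for all sufficiently large $t$, so $\delta_{\Gamma_{\widetilde K}}(F)=-\infty$, and then $\delta_\Gamma^\infty(F)=\inf_{\widetilde K'}\delta_{\Gamma_{\widetilde K'}}(F)\le\delta_{\Gamma_{\widetilde K}}(F)=-\infty$, regardless of $F$.

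In both cases there is a closed, $\Gamma$-invariant, convex subset $C\subset\widetilde M$ on which $\Gamma$ acts cocompactly: $C=\widetilde M$ when $M$ is compact, and $C=CH(\Lambda_\Gamma)$ in the convex-cocompact case (this is the defining property). I would fix a compact convex set $\widetilde K_1\subset C$ containing $o$ with $\Gamma\widetilde K_1=C$, for instance $\widetilde K_1=C\cap\overline{B(o,D)}$ for $D$ large enough, then fix some $R>0$ and define $\widetilde K$ to be the closed $R$-neighborhood of $\widetilde K_1$ in $\widetilde M$. Since $C$ is convex inside the Hadamard manifold $\widetilde M$, its closed $R$-neighborhood $\{z\in\widetilde M:d(z,C)\le R\}$ is again convex, as the sublevel set of the convex function $z\mapsto d(z,C)$ in the CAT$(0)$ space $\widetilde M$; in particular $\widetilde K$ is compact and connected, and $\Gamma\widetilde K$ equals exactly the closed $R$-neighborhood of $C$.

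Now let $\gamma\in\Gamma_{\widetilde K}$ and pick witnesses $x,y\in\widetilde K$ with $[x,\gamma y]\cap\Gamma\widetilde K\subset\widetilde K\cup\gamma\widetilde K$. Both endpoints $x$ and $\gamma y$ lie in the closed $R$-neighborhood of $C$, which is convex, so the whole segment $[x,\gamma y]$ is contained in this neighborhood, i.e.\ in $\Gamma\widetilde K$. Therefore the defining condition collapses to $[x,\gamma y]\subset\widetilde K\cup\gamma\widetilde K$. Since $\widetilde K$ and $\gamma\widetilde K$ are connected compact sets and $[x,\gamma y]$ is a connected arc joining $x\in\widetilde K$ to $\gamma y\in\gamma\widetilde K$ through their union, the two sets must meet; as $o\in\widetilde K$ and $\gamma o\in\gamma\widetilde K$, this forces $d(o,\gamma o)\le 2\operatorname{diam}(\widetilde K)$. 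By proper discontinuity of $\Gamma$ on $\widetilde M$, only finitely many $\gamma\in\Gamma$ satisfy such a bound, so $\Gamma_{\widetilde K}$ is finite and the proof is complete.

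The only nontrivial ingredients are the convexity of the distance function to a convex set in a CAT$(0)$ space and the cocompactness of $\Gamma$ on $CH(\Lambda_\Gamma)$ in the convex-cocompact case; everything else is elementary topology plus proper discontinuity, so I expect no serious obstacle.
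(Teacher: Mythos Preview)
Your proof is correct and follows the same strategy as the paper: both reduce to showing that $\Gamma_{\widetilde K}$ is finite for a suitably large compact set $\widetilde K$, whence $\delta_{\Gamma_{\widetilde K}}(F)=-\infty$ for every $F$. The paper simply cites \cite[Prop.~7.17]{ST19} for this finiteness, whereas you supply a direct self-contained argument via convexity of the $R$-neighborhood of $CH(\Lambda_\Gamma)$ in CAT$(0)$, which forces $[x,\gamma y]\subset\widetilde K\cup\gamma\widetilde K$ and hence $\widetilde K\cap\gamma\widetilde K\neq\emptyset$. One cosmetic point: you assume $o\in C$ when writing $\widetilde K_1=C\cap\overline{B(o,D)}\ni o$; in the convex-cocompact case $o$ need not lie in $CH(\Lambda_\Gamma)$, but this is harmless since the exponent is independent of $o$ (or simply take $R>d(o,C)$ so that $o\in\widetilde K$ anyway).
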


\begin{proof} By~\cite[Prop.\ 7.17]{ST19}, for
${\tilde K}\subset {\tilde M}$ large enough, the set $\Gamma_{\tilde K}$ is
finite. It immediately implies
\begin{equation*}
\delta_{\Gamma}^\infty(F) \leq \delta_{\Gamma_{\tilde K}}(F) = -\infty\,.
\qedhere
\end{equation*}
\end{proof}

We refer to Corollary~\ref{coro:exposant-infini} for more interesting
situations where $\delta_\Gamma^\infty(0)\ge 0$ and there exists a
Hölder-continuous map $F:T^1M\to \bbR$ with
$\delta_\Gamma^{\infty}(F)=-\infty$.

\medskip
\subsection{Variational pressure at infinity}

Recall that the \emph{vague topology} on the space of Radon measures on $T^1
M$ is the weak-* topology on the space of Radon measures viewed as the dual
of the space $C_c(T^1M)$ of continuous maps with compact support on $T^1M$
(or equivalently of its closure $C_0(T^1M)$ with respect to the supremum
norm). A sequence of probability measures $(\mu_n)_{n\in \mathbb N}$
converges to $0$ for the vague topology if and only if for every map
$\varphi\in C_c(T^1M)$, it satisfies $\displaystyle \lim_{n\to +\infty}
\int\varphi\dd\mu_n =0$. We write this $\mu_n\overset{\ast}{\rightharpoonup}
0$. This provides the following other natural notion of pressure at infinity.

\begin{defi}
Let $F$ be a Hölder-continuous potential on $T^1M$. The \emph{variational
pressure at infinity} of $F$ is
\begin{align*}
\Pvar^{\infty}(F)  & =
\sup \left\{ \limsup_{n\to +\infty} \left( h_{KS}(\mu_n) + \int_{T^1M} F \dd\mu_n \right)\; ; \; (\mu_n)_{n\in \bbN} \in {(\mathcal{M}_{1}^F)}^\bbN \text{ s.t.\ } \mu_n \overset{\ast}{\rightharpoonup} 0\right\} \\
& =
\lim_{\epsilon \to 0} \inf_{ {K}\subset   M, K \text{ compact}}\sup
\left\{ h_{KS}(\mu) + \int_{T^1 M} F \dd\mu \; ; \; \mu\in \mathcal{M}_{  1}^F \text{  s.t.\ } \mu(T^1{K}) \leq \epsilon\right\}\\
& = \inf_{ {K} \subset   M, K \text{ compact}} \lim_{\epsilon \to 0}\,\, \sup
 \left\{ h_{KS}(\mu) + \int_{T^1 M} F \dd\mu \; ; \; \mu\in \mathcal{M}_{  1}^F \text{  s.t.\ } \mu(T^1{K}) \leq \epsilon\right\}.
\end{align*}
\end{defi}

Let us check that these three definitions coincide.
\begin{proof}
The limit in $\epsilon$ in the last two lines is a decreasing limit, i.e.,
an infimum, so it commutes with the infimum over compact subsets $K$. Hence, it suffices to
show that the quantity on the first line, say $A$, coincides with the
quantity on the second line, say $B$. If a sequence $\mu_n$ realizes the
supremum in $A$, then for any $\epsilon>0$ and for any compact subset $K$, one
has eventually $\mu_n(T^1 K) \leq \epsilon$ by definition of the vague
convergence to $0$. Therefore, $A \leq B$. Conversely, consider sequences
$\epsilon_n$ and $K_n$ realizing the infimum in $B$. Since decreasing
$\epsilon_n$ and increasing $K_n$ can only make the infimum smaller, it
follows that $\epsilon'_n = \min(\epsilon_n, 1/n)$ and $K'_n = K_n \cup B(o,
n)$ also realize the infimum in $B$. We get a sequence of measures $\mu_n\in
\mathcal{M}_{  1}^F$ with $\mu_n(T^1 K'_n)\leq \epsilon'_n$ and
$h_{KS}(\mu_n) + \int_{T^1 M} F \dd\mu_n \to B$. Since $T^1 K'_n$ increases
to cover the whole space and $\epsilon'_n$ tends to $0$, we have $\mu_n
\overset{\ast}{\rightharpoonup} 0$. Therefore, $B \leq A$.
\end{proof}

From a dynamical point of view, it would be more natural and apparently more
general to consider all compact subsets $\calK$ of $T^1M$, instead of
restricting to unit tangent bundles $\calK=T^1K$ of compact subsets of $M$.
However, the equality between the three above quantities shows that it would
not bring anything to the definition.

In the case $F\equiv 0$, in the context of symbolic dynamics, this definition
already appeared in different works, see for example~\cite{GS,
Ruette,BBG06,BBG}.

One can consider a variation around the above definition, requiring
additionally that all the measures $\mu_n$ are ergodic. We will denote this
pressure by $P_{\mathrm{var}, \mathrm{erg}}^{\infty}(F)$. We will see in
Corollary~\ref{cor:Perg_infty} that it coincides with $\Pvar^{\infty}(F)$, as
a byproduct of the proof of Theorem~\ref{th:AllPressionEquivalent}.


\medskip

\subsection{\texorpdfstring{Gurevič}{Gurevic} pressure at infinity}
To the Gurevič pressure is naturally associated a notion of {\em Gurevič
pressure at infinity}, when considering only periodic orbits that spend an
arbitrarily small proportion of their period in a given compact subset. This
only makes sense for compact subsets on $T^1M$   whose interior intersects
the non-wandering set $\Omega$. As in the preceding sections, we consider
only compact subsets $K$ of $M$, so that we require that the
interior of $K$, denoted by $\inter{K}$, intersects the projection
$\pi(\Omega)$ of the nonwandering set on $M$. We recall that $\calP_{K} (T-c,
T)$, defined in Subsection~\ref{subsubsec:Gurevic}, is the set of periodic
orbits intersecting $K$ with length in the interval $[T-c, T]$.

\begin{defi}
Let $F$ be a Hölder-continuous potential on $T^1M$. For any $c > 0$, the
\emph{Gurevič pressure at infinity} of $F$ is
\begin{align*}
\PGur^{\infty}(F) & =
\inf_{ \substack{K \subset  M, K \text{ compact}\\{\inter{K} \cap \pi(\Omega)\ne \emptyset }}}\,\, \lim_{\alpha\to 0}\,\,\limsup_{T\to +\infty}
\frac 1 T \log \sum_{p \in  \calP_{K} (T-c, T) \;; \; \ell(p\cap T^1{K} )<\alpha \ell(p)} e^{\int_p F}\\
 & =
\lim_{\alpha\to 0}\,\,  \inf_{ \substack{K \subset  M, K \text{ compact}\\{\inter{K} \cap \pi( \Omega) \ne \emptyset }}}\,\, \limsup_{T\to +\infty}
 \frac 1 T \log \sum_{p \in \calP_{K} (T-c, T) \;; \; \ell(p\cap T^1{K})<\alpha \ell(p)} e^{\int_p F}.
\end{align*}
It does not depend on $c$.
\end{defi}

It is not completely obvious from the definition what happens when one
increases a compact subset $K'$ to a larger compact subset $K$. Since one may consider
orbits that intersect $K$ but not $K'$, one is allowed more orbits. However,
the condition $\ell(p\cap T^1{K} )<\alpha \ell(p)$ becomes more restrictive
for $K$ than for $K'$, allowing less orbits. These two effects pull in
different directions. It turns out that the latter effect, allowing less
orbits, is stronger. We formulate this statement with a third compact subset
$K''$ as we will need it later on in this form, but for the previous
discussion you may take $K' = K''$.

\begin{prop}
\label{prop:gurevic_subset} Consider three compact subsets $K'', K', K$ of $M$ such
that the interior of $K''$ intersects a closed geodesic, and $K'$ is
contained in the interior of $K$. Then, for $\alpha>0$,
\begin{multline*}
  \limsup_{T\to +\infty}
  \frac 1 T \log \sum_{p \in  \calP_{K} (T-c, T) \;; \; \ell(p\cap T^1{K} )<\alpha \ell(p)} e^{\int_p F}
  \leq
  \limsup_{T\to +\infty}
\frac 1 T \log \sum_{p \in  \calP_{K''} (T-c, T) \;; \; \ell(p\cap T^1{K'} )<2\alpha \ell(p)} e^{\int_p F}.
\end{multline*}
\end{prop}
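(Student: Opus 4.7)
The plan is to use the Connecting Lemma (Proposition~\ref{lem:connecting}) to produce, from each periodic orbit $p$ counted on the left, a nearby periodic orbit $\tilde p$ counted on the right, with polynomial multiplicity and with $\int F$ preserved up to an additive constant. Since $K'$ is compact and contained in $\inter{K}$, I would first fix $\delta>0$ such that the $\delta$-neighborhood of $K'$ in $M$ is contained in $K$, then choose $\epsilon\in(0,\delta)$. Applying the Connecting Lemma to the pair of compacts $(K\cup K'',\,K'')$, whose first member has interior intersecting $\pi(\Omega)$ because $K''$ does, with parameter $\epsilon$, yields constants $T_0,C_0>0$.

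Given $p\in \calP_K(T-c,T)$ with $\ell(p\cap T^1K)<\alpha\,\ell(p)$, I would pick $v\in p$ with $\pi v\in K\subset K\cup K''$. Since $g^{\ell(p)}v=v\in T^1(K\cup K'')$, Proposition~\ref{lem:connecting} applied with time $\ell(p)$ produces a periodic orbit $\tilde p=(g^tu)_{t\in\R}$ with period $\ell(\tilde p)\in[\ell(p),\ell(p)+T_0]$, meeting $\inter{T^1K''}$ (hence $\tilde p\in \calP_{K''}$), and satisfying $d(g^tv,g^tu)\le\epsilon$ for all $t\in[T_0,\ell(p)-T_0]$. Whenever $g^tu\in T^1K'$ for such $t$, the shadowing plus $\epsilon<\delta$ forces $\pi g^tv\in K$, hence $g^tv\in T^1K$. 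Taking into account that the unshadowed part of the period of $\tilde p$ has length at most $3T_0$, this yields
\[
\ell(\tilde p\cap T^1K') \;\le\; \ell(p\cap T^1K)+3T_0 \;<\; \alpha\,\ell(p)+3T_0 \;\le\; 2\alpha\,\ell(\tilde p),
\]
the last inequality holding as soon as $\ell(p)>3T_0/\alpha$, i.e., for $T$ large. The factor $2$ in $2\alpha$ on the right-hand side of the proposition is exactly what absorbs the boundary loss $3T_0$.

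For the weight, the bound $\bigl|\int_pF-\int_{\tilde p}F\bigr|\le C_F$, uniform in $T$, follows from Lemma~\ref{lm:hold-potential} combined with Lemma~\ref{lm:NegCurv4Points}: in negative curvature, orbits that $\epsilon$-shadow one another for a long time are in fact exponentially close in the middle, so the Hölder difference integrates to a constant independent of $T$. The bounded multiplicity part of Proposition~\ref{lem:connecting} ensures that each $\tilde p$ with period in $[\ell(p),\ell(p)+T_0]$ arises from at most $C_0\,\ell(p)\le C_0T$ distinct inputs $p$. Putting everything together yields, for $T$ large,
\[
\sum_{\substack{p\in \calP_K(T-c,T)\\ \ell(p\cap T^1K)<\alpha\ell(p)}} e^{\int_pF}
\;\le\; C_0T\,e^{C_F}\!\!\!\sum_{\substack{\tilde p\in \calP_{K''}(T-c,T+T_0)\\ \ell(\tilde p\cap T^1K')<2\alpha\ell(\tilde p)}} e^{\int_{\tilde p}F}.
\]
Splitting the period window $[T-c,T+T_0]$ on the right into a bounded (in $T$) number of subwindows of width $c$, then taking $\frac{1}{T}\log$ and $\limsup_{T\to\infty}$, the polynomial prefactor $C_0T$ and the bounded number of subwindows contribute $0$, and the claim follows.

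The main obstacle is the interplay between the shadowing scale $\epsilon$ and the buffer $\delta$: shrinking $\epsilon$ improves the control of $T^1K'$-intersections, but increases the Connecting Lemma's $T_0$ and $C_0$. Once $\delta$ and $\epsilon<\delta$ are fixed at the outset the argument becomes uniform. A secondary technical point is the weight comparison, for which a naive Hölder bound would contribute a term growing linearly in $T$; one has to invoke the exponential convergence of shadowing orbits specific to negative curvature to get a $T$-independent constant.
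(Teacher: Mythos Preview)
Your proof is correct and follows essentially the same route as the paper's: apply the Connecting Lemma to shadow each $p\in\calP_K$ by a periodic orbit $\tilde p$ through $K''$, compare the time spent in $T^1K'$ via the $\epsilon$-buffer inside $K$, bound $\abs{\int_p F-\int_{\tilde p}F}$ by a constant via Lemma~\ref{lm:hold-potential}, and control the multiplicity linearly in $T$. Your choice to apply the Connecting Lemma to the pair $(K\cup K'',K'')$ rather than $(K,K'')$ is a small but neat improvement, since it avoids having to verify separately that the interior of $K$ meets $\pi(\Omega)$.
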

Therefore, the infimum in the definition of the Gurevič pressure may be
realized by taking an increasing sequence of balls, just like in
Corollary~\ref{coro:comparison-crit-expo-outside-compacts}:
\[
 P_{Gur}^\infty(F)=
\lim_{R\to\infty} \,\, \lim_{\alpha\to 0}\,\,\limsup_{T\to +\infty}
\frac 1 T \log \sum_{p \in  \calP_{B(o,R)} (T-c, T) \;; \; \ell(p\cap T^1{B(o,R)} )<\alpha \ell(p)} e^{\int_p F}\,.
\]

\begin{proof} Consider a periodic orbit $p$ of length $\ell(p)\in [T-c, T]$
starting from $v \in T^1K$, parameterized by $[0,\ell(p)]$. Fix  also
$\epsilon>0$. By the first assertion of Proposition~\ref{lem:connecting}
there is another periodic orbit $p'$, of length $\ell(p') \in [\ell(p),
\ell(p)+T_0]$ for a constant $T_0$ depending on $K$ and $K''$ and $\epsilon$,
parameterized by $[0,\ell(p')]$, following $p$ within $\epsilon$ during the
interval of time $[T_0,\ell(p)-T_0]$, and intersecting $T^1 K''$.
Lemma~\ref{lm:hold-potential} shows that there exists a constant $C'$ such
that $\abs{\int_p F - \int_{p'}F} \leq C'$. Moreover, by Assertion~2 of
Proposition~\ref{lem:connecting}, there exists $C''$ such that the
multiplicity of the map $p \mapsto p'$ is bounded by $C'' T$ if $T$ is large
enough.

If $\epsilon$ is such that the $\epsilon$-neighborhood of $K'$ is included in
$K$, then the times at which $p'$ belongs to $T^1 K'$ are of two kind: either
they are in $[T_0, \ell(p')-2T_0]$, and then the corresponding point on $p$
belongs to $T^1 K$, or they are not. Hence, $\ell(p' \cap T^1 K') \leq 3T_0 +
\ell(p \cap T^1 K)$. Taking into account the multiplicity, we obtain
\begin{equation*}
\sum_{p \in  \calP_{K} (T-c, T) \;; \; \ell(p\cap T^1{K} )<\alpha \ell(p)}
e^{\int_p F}
\leq C'' T\sum_{p' \in \calP_{K''} (T-c, T+T_0) \;; \; \ell(p'\cap T^1 K')<3T_0 + \alpha \ell(p')}
e^{C' + \int_{p'} F}\,.
\end{equation*}
When $T$ is large enough, we have $3T_0 + \alpha \ell(p') < 2 \alpha
\ell(p')$. As the interval $[T-c,T+T_0]$ is the union of at most
$\frac{T_0}{c}+2$ intervals of length at most $c$, taking a limsup, we obtain
\begin{multline*}
  \limsup_{T\to +\infty}
  \frac 1 T \log \sum_{p \in  \calP_{K} (T-c, T) \;; \; \ell(p\cap T^1{K} )<\alpha \ell(p)} e^{\int_p F}
  \\
  \leq
  \limsup_{T\to +\infty}
\frac 1 T \log \sum_{p' \in  \calP_{K''} (T-c, T) \;; \; \ell(p'\cap T^1{K'} )<2\alpha \ell(p')} e^{\int_{p'} F}\,.
\qedhere
\end{multline*}
\end{proof}

%


\subsection{Pressure at infinity is invariant under compact perturbations}

In this paragraph, we will show that the critical exponent at infinity is
invariant under any compact perturbation of the potential or of the underlying
metric.

\begin{prop}\label{prop:CompactPerturbPotential}
Let $F : T^1M \to \mathbb R$ be a  Hölder-continuous map with finite
pressure, let $A : T^1M\to \bbR$ be a Hölder-continuous map, and let $\tilde
K \subset \tilde M$ be a compact subset such that~$A$ vanishes outside
$p_\Gamma(T^1 \tilde K)$. Then
\[
\delta_{\Gamma_{\tilde K}}(F + A) = \delta_{\Gamma_{\tilde K}}(F).
\]
In particular,
\[
\delta_\Gamma^\infty(F+A) = \delta_\Gamma^\infty(F)\,.
\]
\end{prop}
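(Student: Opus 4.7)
The plan is to show that the Poincaré series for $F+A$ and for $F$ restricted to $\Gamma_{\tilde K}$ differ termwise by a bounded multiplicative factor, and to deduce that they have the same critical exponent. Concretely, the goal is to produce a constant $C$, depending only on $\tilde K$, $A$, and its Hölder data, such that
\[
\left| \int_o^{\gamma o} \tilde A \right| \leq C \quad \text{for every } \gamma \in \Gamma_{\tilde K}.
\]
Once this is established, one immediately has $e^{-C} P_{\Gamma_{\tilde K}}(s,F) \leq P_{\Gamma_{\tilde K}}(s,F+A) \leq e^{C} P_{\Gamma_{\tilde K}}(s,F)$, so the two series share the same critical exponent, which is the first claim.

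The heart of the argument is a vanishing observation along the witnessing geodesic for $\gamma \in \Gamma_{\tilde K}$. I will use the refined characterization of $\Gamma_{\tilde K}$ recalled in Section~4.1: I choose $x,y \in \tilde K$ as the last point in $\tilde K$ and the first point in $\gamma \tilde K$ along the relevant segment, so that $[x, \gamma y]\cap \Gamma \tilde K = \{x, \gamma y\}$. By hypothesis, $\tilde A$ is supported in the set of unit vectors based in $\Gamma \tilde K$, so along the geodesic from $x$ to $\gamma y$ the integrand $\tilde A(\dot c(t))$ can be nonzero only when $c(t)\in \Gamma \tilde K$, i.e.\ only at the two endpoints. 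This is a set of measure zero, hence $\int_x^{\gamma y} \tilde A = 0$.

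To transfer this vanishing from the segment $[x,\gamma y]$ to $[o, \gamma o]$, I apply Lemma~\ref{lm:hold-potential} to the potential $A$ (which is bounded since it is continuous and compactly supported in $T^1M$) with the compact set $K_0 = \tilde K \cup \{o\}$. Since $o,x \in K_0$ and $\gamma o, \gamma y \in \gamma K_0$, the lemma provides a constant depending only on $K_0$, the diameter, and the Hölder data of $A$, such that $|\int_o^{\gamma o}\tilde A - \int_x^{\gamma y}\tilde A|$ is at most this constant. Combined with $\int_x^{\gamma y}\tilde A =0$, this yields the uniform bound and hence the first equality.

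For the second equality $\delta_\Gamma^\infty(F+A) = \delta_\Gamma^\infty(F)$, I will exhaust $\tilde M$ by an increasing sequence of balls $B(o,R_n)$, with $R_0$ chosen large enough that $\tilde K \subset B(o,R_0)$. Then $A$ vanishes outside $p_\Gamma(T^1 B(o,R_n))$ for every $n$, so the first part applies with $\tilde K$ replaced by $B(o,R_n)$, giving $\delta_{\Gamma_{B(o,R_n)}}(F+A) = \delta_{\Gamma_{B(o,R_n)}}(F)$ for all $n$. Taking $R_n \to \infty$ and using the fact stated after Corollary~\ref{coro:comparison-crit-expo-outside-compacts} that $\delta_\Gamma^\infty(\cdot)$ is the limit of the critical exponents along such exhausting balls concludes the argument. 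There is no genuine obstacle: once one sees that $\gamma \in \Gamma_{\tilde K}$ forces the direct segment $[x,\gamma y]$ to avoid the support of $\tilde A$ except on a negligible set, the Hölder comparison lemma takes care of everything.
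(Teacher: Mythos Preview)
Your proof is correct and follows essentially the same approach as the paper: use the refined characterization of $\Gamma_{\tilde K}$ to find a witnessing segment $[x,\gamma y]$ along which $\tilde A$ vanishes, then invoke Lemma~\ref{lm:hold-potential} to transfer the vanishing to $[o,\gamma o]$ up to a uniform additive constant. Your treatment of the ``in particular'' clause (exhausting by balls containing $\tilde K$ and invoking the limit characterization after Corollary~\ref{coro:comparison-crit-expo-outside-compacts}) makes explicit a step that the paper leaves to the reader.
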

\begin{proof}
By definition, for all $\gamma\in\Gamma_{\tilde K}$, there exist
$x,y\in\tilde K$ such that the geodesic segment $[x,\gamma y]$ satisfies
$[x,\gamma y]\cap \Gamma \tilde K=\{x,\gamma y\}$.
 We deduce that
\[
\int_x^{\gamma y} (\tilde F+A) =\int_x^{\gamma y}\tilde F \,.
\]
By Lemma~\ref{lm:hold-potential}, we deduce that
\[
\abs*{\int_o^{\gamma o} (\tilde F+A) -\int_o^{\gamma o}\tilde F}\le  2C(F,\tilde K,A)\,.
\]
By definition of $\delta_{\Gamma_{\tilde K}}(F)$ and $\delta_{\Gamma_{\tilde
K}}(F+A)$, the result follows immediately.
\end{proof}

In the next proposition, we  consider two negatively curved Riemannian
metrics $g_0$ and $g$ on $M$ such that there exists $C>0$ satisfying at every point of $M$
\begin{equation}\label{eq:qisom}
\frac 1 C g_0 \leq g \leq C g_0
\end{equation}
and still denote by $g_0$ and $g$ their lifts to $\tilde M$. For a given
potential $F:TM\to\bbR$, denote by $\delta_{\Gamma_{\tilde K},g_0}(F)$,
$\delta_{\Gamma_{\tilde K},g}(F)$, $\delta_{\Gamma,g_0}^\infty(F),
\delta_{\Gamma,g}^\infty(F)$ the associated critical exponents for the
restriction of $F$ to the unit tangent bundles for $g$ and $g_0$
respectively. It follows from~\eqref{eq:qisom} that being Hölder-continuous
does not depend on the metric one considers.

\begin{prop}\label{prop:CompactPerturbMetric}
Let $(M,g_0)$ be a Riemannian manifold with pinched negative curvature, and
$g$ be another negatively curved metric on $M$. Let $F : T M \to \mathbb R$
be a Hölder-continuous potential. Let $\tilde K\subset \tilde M$ be a compact
set such that $g$ and $g_0$ coincide outside of $p_\Gamma(\tilde K)$. Then
\[
\delta_{\Gamma_{\tilde K},g_0}(F) = \delta_{\Gamma_{\tilde K},g}(F).
\]
In particular, $\delta_{\Gamma,g_0}^\infty(F) = \delta_{\Gamma,
g}^\infty(F)$.
\end{prop}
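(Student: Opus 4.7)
My plan is to show that, term by term, the restricted Poincaré series defining $\delta_{\Gamma_{\widetilde K},g}(F)$ and $\delta_{\Gamma_{\widetilde K},g_0}(F)$ are comparable up to uniformly bounded multiplicative factors. The starting point is Proposition~\ref{prop:ST19-7.7}, which gives $\Gamma^{g}_{\widetilde K}=\Gamma^{g_0}_{\widetilde K}$ as subsets of $\Gamma$, so both series are indexed by the same set of elements. Enlarging $\widetilde K$ if necessary (which does not affect the statement thanks to Corollary~\ref{coro:comparison-crit-expo-outside-compacts}), I may assume $o\in\widetilde K$.

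Fix $\gamma\in\Gamma_{\widetilde K}$. By the definition applied in the metric $g$, there exist $x_g,y_g\in\widetilde K$ such that the $g$-geodesic segment $[x_g,\gamma y_g]_g$ meets $\Gamma\widetilde K$ only at its two endpoints. Its interior therefore lies entirely in $\widetilde M\setminus\Gamma\widetilde K$, a region on which $g=g_0$, so the $g$-geodesic equation coincides with the $g_0$-geodesic equation there. By uniqueness of geodesics in the Hadamard manifold $(\widetilde M,g_0)$, this curve coincides with $[x_g,\gamma y_g]_{g_0}$ as a subset of $\widetilde M$, and the unit-speed parameterizations agree almost everywhere. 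Consequently, both the length and the potential integral along $[x_g,\gamma y_g]$ are the same computed in either metric.

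Applying Lemma~\ref{lm:hold-potential} twice (once in $g$, once in $g_0$) to transfer from $(o,\gamma o)$ to $(x_g,\gamma y_g)$—legitimate since $o,x_g\in\widetilde K$ and $\gamma o,\gamma y_g\in\gamma\widetilde K$—and combining with the previous equalities, I obtain constants $C_1,C_2>0$ depending only on $\widetilde K$ and $F$ such that, for every $\gamma\in\Gamma_{\widetilde K}$,
\[
\abs{d_g(o,\gamma o)-d_{g_0}(o,\gamma o)}\le C_1,\qquad \abs*{\textstyle\int_o^{\gamma o}\widetilde F\text{ (in }g\text{)}-\int_o^{\gamma o}\widetilde F\text{ (in }g_0\text{)}}\le C_2.
\]
For fixed $s$, the corresponding $\gamma$-terms of the two Poincaré series thus differ by a factor bounded above and below, yielding equal critical exponents. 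For the statement about $\delta^\infty$, I restrict the infimum to compact sets $\widetilde K'$ containing a fixed compact lift of the support of $g-g_0$; this is legitimate by Corollary~\ref{coro:comparison-crit-expo-outside-compacts}, and for such $\widetilde K'$ the first part applies, so the two infima are equal.

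The main technical point is the identification of the $g$- and $g_0$-geodesics between $x_g$ and $\gamma y_g$ as the same curve. Without it, equality of lengths would still follow (any path in the region $g=g_0$ has the same length in either metric, and $d_{g_0}$ is an infimum over such paths), but the comparison of potential integrals, which involve tangent vectors along a priori distinct curves, would be significantly more delicate. The assumption $\gamma\in\Gamma_{\widetilde K}$ is precisely what forces the relevant geodesic to stay in the region where the metrics coincide (outside the two endpoints, a set of measure zero), allowing the Hadamard uniqueness argument to go through and making the potential integrals match exactly.
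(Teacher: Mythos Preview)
Your proof is correct and follows the same approach as the paper: use Proposition~\ref{prop:ST19-7.7} to identify the two index sets, observe that the $g$- and $g_0$-geodesics between $x$ and $\gamma y$ coincide (your justification via the geodesic ODE and Hadamard uniqueness is more explicit than the paper's one-line assertion), and then transfer to $(o,\gamma o)$ via Lemma~\ref{lm:hold-potential} in each metric. One minor point: your reduction to $o\in\widetilde K$ is not justified by Corollary~\ref{coro:comparison-crit-expo-outside-compacts} (which only gives an inequality, and enlarging $\widetilde K$ changes the very quantity $\delta_{\Gamma_{\widetilde K}}(F)$ you are computing); simply drop this reduction and apply Lemma~\ref{lm:hold-potential} with the compact set $\widetilde K\cup\{o\}$, as the paper effectively does by bounding $d^{g}(o,\widetilde K)$ and $d^{g_0}(o,\widetilde K)$.
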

\begin{proof} When necessary, denote by
$[a,b]^g$ or $[a,b]^{g_0}$ the geodesic segment of the metric $g$ (resp.\
$g_0$) between $a$ and $b$. By Proposition~\ref{prop:ST19-7.7}, we have
$\Gamma_{\tilde K}^{g_0}=\Gamma_{\tilde K}^{g}$. Let $\gamma\in\Gamma_{\tilde
K}$. There exist $x,y\in\tilde K$ such that $[x,\gamma y]^{g_0}\cap \Gamma
\tilde K = \{x, \gamma y \}$.

Outside $\Gamma \tilde K$, the metrics $g_0$ and $g$ coincide, so
that~\eqref{eq:qisom} is satisfied, the segments $[x,\gamma y]^g$ and
$[x,\gamma y]^{g_0}$ are the same, and the integrals of $F$ coincide:
$\int_{[x,\gamma y]^g}\tilde F=\int_{[x,\gamma y]^{g_0}}\tilde F$.

Moreover,  by compactness, there exists $D>0$ depending on $\tilde K$, $g_0$
and $g$, such that for both metrics, $d^{g_0}(x,o)\le D$, $d^{g}(x,o)\le D$,
$d^{g_0}(y,o)\le D$, and $d^g(y,o)\le D$. Therefore, using
Lemma~\ref{lm:hold-potential}, there exists a constant $C$ depending on $D$
and $\sup_{\tilde K}(\tilde F)$ such that for both metrics, we have
\[
\abs*{\int_{[o,\gamma o]^g}\tilde F-\int_{[x,\gamma y]^g}\tilde F}\le C\quad
\text{and}\quad \abs*{\int_{[o,\gamma o]^{g_0}}\tilde F-\int_{[x,\gamma y]^{g_0}}\tilde F}\le C\,.
\]
The result follows by definition of the geometric pressure outside
$\widetilde K$.
\end{proof}

Compact perturbations of a given potential do not change the critical
exponent at infinity, but modify the pressure, as shown in the next
proposition. This kind of statement is very useful and relatively classical.
Similar statements in symbolic dynamics or on geometrically finite manifolds,
or for potentials converging to $0$ at infinity can be found for example
in~\cite{IRV,Riquelme-Velozo}.

\begin{prop}\label{prop:PressureBump}
Let $F:T^1M\to\bbR$ be a Hölder-continuous potential, and $A : T^1M \to [0,
+\infty)$ a nonnegative Hölder-continuous map with compact support. The map
\[
\lambda\in\bbR\to \delta_\Gamma(F+\lambda A)
\]
is Lipschitz-continuous,  convex, nondecreasing, and as soon as the interior
of the support of $A$ intersects the nonwandering set $\Omega$, we have $\ds
\lim_{\lambda\to\infty}\delta_\Gamma(F+\lambda A)=+\infty $.
\end{prop}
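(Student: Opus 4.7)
The four properties will be addressed in order of increasing difficulty.

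\textbf{Monotonicity.} Since $A\ge 0$, for $\lambda_1\le \lambda_2$ we have $\widetilde F+\lambda_1\widetilde A\le \widetilde F+\lambda_2\widetilde A$ everywhere on $T^1\widetilde M$. The Poincaré series for $F+\lambda_1 A$ is thus termwise bounded by that of $F+\lambda_2 A$, so $\delta_\Gamma(F+\lambda_1 A)\le \delta_\Gamma(F+\lambda_2 A)$.

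\textbf{Lipschitz continuity.} Since $A$ is continuous with compact support it is bounded by $\|A\|_\infty<\infty$. For every $\gamma\in\Gamma$ one has $0\le \int_o^{\gamma o}\widetilde A\le \|A\|_\infty d(o,\gamma o)$, so for $\lambda\ge\lambda'$
\[
\int_o^{\gamma o}(\widetilde F+\lambda\widetilde A)\le \int_o^{\gamma o}(\widetilde F+\lambda'\widetilde A)+(\lambda-\lambda')\|A\|_\infty d(o,\gamma o).
\]
This forces $\delta_\Gamma(F+\lambda A)\le \delta_\Gamma(F+\lambda' A)+(\lambda-\lambda')\|A\|_\infty$, which combined with monotonicity yields the Lipschitz bound $|\delta_\Gamma(F+\lambda A)-\delta_\Gamma(F+\lambda' A)|\le \|A\|_\infty |\lambda-\lambda'|$.

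\textbf{Convexity.} This is Hölder applied to the Poincaré series. Fix $t\in[0,1]$, set $\lambda=t\lambda_1+(1-t)\lambda_2$ and $s=ts_1+(1-t)s_2$ for $s_1>\delta_\Gamma(F+\lambda_1 A)$, $s_2>\delta_\Gamma(F+\lambda_2 A)$. Writing each term of $P_{\Gamma,o,F+\lambda A}(s)$ as a product of a $t$-power and a $(1-t)$-power of the corresponding terms for $(\lambda_i,s_i)$, Hölder's inequality gives
\[
P_{\Gamma,o,F+\lambda A}(s)\le P_{\Gamma,o,F+\lambda_1 A}(s_1)^{t}\,P_{\Gamma,o,F+\lambda_2 A}(s_2)^{1-t}<\infty,
\]
hence $\delta_\Gamma(F+\lambda A)\le ts_1+(1-t)s_2$, and taking the infimum over $s_1,s_2$ yields convexity.

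\textbf{Divergence.} Assume the open set $U=\{A>0\}\cap \mbox{int}(\mathrm{supp}\,A)$ meets $\Omega$ (I interpret ``interior of $A$'' this way). Shrinking $U$ we may assume $A\ge c$ on $U$ for some $c>0$ and that $U\cap\Omega\ne\emptyset$. By transitivity of the geodesic flow on $\Omega$ combined with the closing lemma of Proposition~\ref{prop:dyn-prop}, there exists a periodic orbit $p\subset T^1M$ whose intersection with $U$ has positive Lebesgue measure $\tau>0$; let $\ell_p$ be its period and $\gamma_p\in\Gamma$ the associated hyperbolic element. Considering only the powers $\{\gamma_p^n\}_{n\ge 1}$ in the Poincaré series (with $o$ chosen close to the axis, or compensating with the uniform bound of Lemma~\ref{lm:hold-potential}), one has $d(o,\gamma_p^n o)=n\ell_p+O(1)$ and $\int_o^{\gamma_p^n o}\widetilde F=n\int_p F+O(1)$, while $\int_o^{\gamma_p^n o}\widetilde A\ge n c\tau-O(1)$. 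Hence
\[
P_{\Gamma,o,F+\lambda A}(s)\ge \sum_{n\ge 1}e^{-sn\ell_p+n\int_p F+n\lambda c\tau+O(1)},
\]
which diverges for $s\le (\int_p F+\lambda c\tau)/\ell_p$, so
\[
\delta_\Gamma(F+\lambda A)\ge \frac{\int_p F+\lambda c\tau}{\ell_p}\xrightarrow[\lambda\to+\infty]{}+\infty.
\]

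The only potentially delicate step is the last one, namely producing a closed geodesic through $U$: one must combine transitivity on $\Omega$ (which is guaranteed by Proposition~\ref{prop:dyn-prop}) with the closing lemma to realize a nearby periodic orbit, and verify that this orbit genuinely spends a positive amount of time inside the open set where $A\ge c$. Everything else reduces to routine estimates on the Poincaré series.
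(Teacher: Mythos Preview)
Your proof is correct. The monotonicity and Lipschitz arguments match the paper's (which just says these are immediate from the definition), but you take a different route for convexity and divergence.

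For convexity, the paper invokes the variational principle (Theorem~\ref{th:Variationnel}): since $\delta_\Gamma(F+\lambda A)=\sup_{\mu\in\calM_1^F}\bigl(h_{KS}(\mu)+\int F\,d\mu+\lambda\int A\,d\mu\bigr)$ is a supremum of affine functions of $\lambda$, it is convex. Your H\"older-inequality argument on the Poincar\'e series is more elementary, in that it works directly from the geometric definition of $\delta_\Gamma$ without appealing to the (nontrivial) equality $\delta_\Gamma=\Pvar$.

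For the divergence, the paper again uses the variational principle: any periodic orbit meeting $\{A>0\}$ supports an invariant probability measure $\mu$ with $\int A\,d\mu>0$, and then $\delta_\Gamma(F+\lambda A)\ge h_{KS}(\mu)+\int F\,d\mu+\lambda\int A\,d\mu\to+\infty$. Your version is the same idea expressed through the Poincar\'e series by summing over powers of a single hyperbolic element; the two computations are equivalent once one remembers that for the Dirac measure on a periodic orbit $p$ one has $h_{KS}=0$ and $\int G\,d\mu=\frac{1}{\ell_p}\int_p G$. Your remark that the only delicate point is producing a closed geodesic through $\{A>0\}$ is apt; density of periodic orbits in $\Omega$ (a standard consequence of transitivity plus the closing lemma in Proposition~\ref{prop:dyn-prop}) handles this, and the paper implicitly relies on the same fact.
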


\begin{proof} The fact that it is Lipschitz-continuous is an immediate consequence of the definition,
and that it is nondecreasing is obvious as $A\ge 0$. Convexity follows from
the variational principle (Theorem~\ref{th:Variationnel}) because it is a
supremum of affine maps.

Now, if the interior of  the support of  $A$ intersects $\Omega$, there will be at least an
invariant probability measure $\mu$ with compact support (supported by a
periodic orbit intersecting the interior of  the support of $A$ for example) such that $\int A \dd\mu>0$. By
the variational principle,
\[
\delta_\Gamma(F+\lambda A)\ge h_{KS}(\mu)+\int F\dd\mu +\lambda\int A \dd\mu\,,
\] and the latter quantity goes to $+\infty$ when $\lambda\to +\infty$. The result follows.
\end{proof}

The combination of Propositions~\ref{prop:CompactPerturbPotential}
and~\ref{prop:PressureBump} provides the following corollary, which will
become relevant in Section~\ref{sec:SPR}.

\begin{coro}\label{coro:existence-pot-SPR}
Let $F$ and $A : T^1M\to \bbR$ be two Hölder-continuous potentials. Assume
that $F$ has finite geometric pressure at infinity, and that $A$ is nonnegative,
compactly supported, and not everywhere zero on the non-wandering set. Then
for $\lambda>0$  large enough, we have
\[
\delta_\Gamma(F+\lambda A) > \delta_\Gamma^\infty(F + \lambda A).
\]
\end{coro}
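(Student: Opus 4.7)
The plan is a direct combination of the two preceding propositions: Proposition~\ref{prop:CompactPerturbPotential} says the pressure at infinity is unchanged by $\lambda A$, while Proposition~\ref{prop:PressureBump} says the full pressure grows to $+\infty$ with $\lambda$. The only small care needed is to check that the qualitative hypothesis on $A$ in Proposition~\ref{prop:PressureBump} is genuinely fulfilled by our assumptions.

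First I would observe that since $A$ is Hölder with compact support, the same is true of $\lambda A$ for every $\lambda \in \R$, and one can choose a single compact $\widetilde K \subset \widetilde M$ such that $\lambda A$ vanishes outside $p_\Gamma(T^1\widetilde K)$ for all $\lambda$. Applying Proposition~\ref{prop:CompactPerturbPotential} with perturbation $\lambda A$ then gives
\[
\delta_\Gamma^\infty(F + \lambda A) = \delta_\Gamma^\infty(F)
\]
for every $\lambda \in \R$, so the right-hand side of the desired inequality is a constant (bounded above by $\delta_\Gamma(F) < +\infty$) independent of $\lambda$.

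Next I would verify the non-degeneracy hypothesis of Proposition~\ref{prop:PressureBump}. The assumption that $A$ is non-negative, continuous, and not everywhere zero on $\Omega$ produces some $v_0 \in \Omega$ with $A(v_0) > 0$; by continuity, $\{A > 0\}$ is then a non-empty open subset of $T^1M$ meeting $\Omega$, which is exactly what is required. Proposition~\ref{prop:PressureBump} therefore applies and yields
\[
\lim_{\lambda \to +\infty} \delta_\Gamma(F + \lambda A) = +\infty.
\]

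Combining the two steps, since $\delta_\Gamma^\infty(F + \lambda A) = \delta_\Gamma^\infty(F)$ is a finite constant while $\delta_\Gamma(F + \lambda A) \to +\infty$, there exists $\lambda_0 > 0$ such that $\delta_\Gamma(F + \lambda A) > \delta_\Gamma^\infty(F + \lambda A)$ for all $\lambda \ge \lambda_0$, which is the claim. I expect no substantive obstacle here: the two quoted propositions carry the entire analytical content, and the only point to double-check is the translation between the phrase "the interior of $A$ intersects $\Omega$" used in Proposition~\ref{prop:PressureBump} and the cleaner statement "$A$ is not everywhere zero on $\Omega$" used here, which is handled by the continuity of $A$.
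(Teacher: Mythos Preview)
Your proposal is correct and follows exactly the approach the paper intends: the paper itself presents this corollary as an immediate combination of Proposition~\ref{prop:CompactPerturbPotential} (the pressure at infinity is unchanged by the compactly supported perturbation) and Proposition~\ref{prop:PressureBump} (the full pressure diverges as $\lambda\to+\infty$), without giving a separate argument. Your added remark translating ``$A$ not everywhere zero on $\Omega$'' into ``$\{A>0\}$ is open and meets $\Omega$'' via continuity is the right way to match the hypotheses, and your parenthetical that $\delta_\Gamma^\infty(F)\le \delta_\Gamma(F)<+\infty$ tacitly uses the finite-pressure assumption present in Proposition~\ref{prop:CompactPerturbPotential}, which is the only minor point worth making explicit.
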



\subsection{Infinite pressure}

In this paragraph, we prove that if the geometric pressure of a potential is infinite,
then its pressure at infinity is also infinite. This is not surprising:
everything coming from a compact set is finite, so if the pressure is
infinite the major contribution has to come from the complement of compact
sets, and therefore the pressure outside any compact set should also be
infinite. However, the proof is not completely trivial. It will involve
careful splittings of orbits and subadditivity, two themes that will also
show up in later proofs. One may think of this proof as a warm-up for the
next sections.

\begin{prop}
\label{prop:infinite_pressure} Let $F:T^1M\to\bbR$ be a Hölder-continuous
potential with $\delta_\Gamma(F) = +\infty$. Then $\delta^\infty_\Gamma(F) =
+\infty$.
\end{prop}
\begin{proof}
We will prove the contrapositive, namely, if there exists a compact set
$\tilde K$ of $\tilde M$ with $\delta_{\Gamma_{\tilde K}}(F) < \infty$ then
$\delta_\Gamma(F) < \infty$. Adding $o$ to $\tilde K$ if necessary, we can
assume $o\in \tilde K$. Fix some $s
> \delta_{\Gamma_{\tilde K}}(F)$. Let $D$ be the diameter of $\tilde
K$.

Let $u_n = \sum_{\gamma\in\Gamma : d(o, \gamma o) \in (n-1, n]} e^{\int_o^{\gamma o}
\tilde F}$. We claim that there exists $C>0$ such that, for all $n\in\mathbb{N}$,
\begin{equation}
\label{eq:un_le}
  u_n \leq C\sum_{\substack{a,b\in\mathbb{N}\\1 \leq a, b \leq n-1 \\ \abs{a+b-n} \leq C}} u_a u_b + C e^{sn}.
\end{equation}
The proof of this inequality is purely geometrical. On the other hand, the
proof that this inequality implies the proposition is purely analytical. We
postpone the geometrical proof of~\eqref{eq:un_le} and explain how to deduce
the result assuming this inequality, by a subadditivity argument. Extend
$u_n$ by $0$ for $n\in(-\infty, -1]$, and define a new sequence $v_n =
\sum_{n-C}^{n+C} u_i$. It satisfies the inequality
\begin{equation}
\label{eq:vn_le}
  v_n \leq C_1 \sum_{\substack{1\leq a', b'\leq n-1 \\ a'+b' = n}} v_{a'} v_{b'} + C_1 e^{sn},
\end{equation}
for some $C_1$. To get this inequality, bound each $u_i$ appearing in $v_n$
using~\eqref{eq:un_le}, and notice that the $a,b$ in the upper bound
satisfy $n-2C \leq a+b\leq n+2C$ and will therefore appear in one of the
products $v_{a'} v_{b'}$ for $a'+b'=n$. We will prove that this sequence $v_n$ grows
at most exponentially fast, from which the same result follows for $u_n$, as
desired. For small $z>0$, define $B(z) = \sum_{n\geq 1} C_1 e^{sn} z^n$ and
$V_N(z) = \sum_{n=1}^N v_n z^n$. The inequality~\eqref{eq:vn_le} gives
\begin{equation}
\label{eq:Vnz_le}
  V_N(z) \leq B(z) + C_1 V_{N-1}(z)^2.
\end{equation}
The function $B$ is smooth at $0$. Let $t$ be strictly larger than its
derivative at $0$. Fix $z$ positive and small enough so that $B(z) + C_1
(tz)^2 < t z$, which is possible since the function on the left has
derivative $<t$. We claim that $V_N(z) \leq t z$ for all $N$. This is obvious
for $N=0$ as $V_0=0$, and the choice of $z$ and the
inequality~\eqref{eq:Vnz_le} imply that, if it holds at $N-1$, then it holds
at $N$, concluding the proof by induction. In particular, $v_n z^n \leq
V_n(z) \leq tz$. This proves that $v_n$ grows at most exponentially.

\medskip

It remains to show~\eqref{eq:un_le}, using geometry. Let $A>0$ be large
enough ($A > D+1$ will suffice). Take $\gamma$ with $d(o, \gamma o) \in (n-1,
n]$. We consider two different cases: either $[o,\gamma o] \setminus (B(o, A)
\cup B(\gamma o, A))$ does intersect $\Gamma \tilde K$ (we say that $\gamma$
is recurrent -- this terminology is local to this proof), or it does not. The
former will give rise to the first term in~\eqref{eq:un_le}, the latter to
the second term.

We start with the non-recurrent $\gamma$'s. If $[o,\gamma o] \subset B(o, A)
\cup B(\gamma o, A)$, then $d(o, \gamma o)$ is uniformly bounded, so is $n$,
and the formula~\eqref{eq:un_le} is obvious for these finitely many $n$'s by
taking $C$ large enough. Assume now that $n$ is large. Consider the last
point $x$ on $[o,\gamma o] \cap B(o, A) \cap \Gamma \tilde K$, and the first
point $y$ on $[o,\gamma o] \cap B(\gamma o, A) \cap \Gamma \tilde K$. Take
$\gamma_x\in\Gamma$ such that $x \in \gamma_x \tilde K$, and $\gamma_y\in\Gamma$ such that
$y\in \gamma \gamma_y \tilde K$. Note that $\gamma_x$ and $\gamma_y$ belong
to a finite set $\calF_A$ (depending on $A$), made of these elements of
$\Gamma$ that move $o$ by at most $A + D$. Moreover, $\gamma' = \gamma_x^{-1}
\gamma \gamma_y$ belongs to $\Gamma_{\tilde{K}}$ since $[x,y]\cap \Gamma
\tilde K = \{x, y\}$ by construction.

Applying Lemma~\ref{lm:hold-potential} to the compact set $\bigcup_{g\in
\calF_A} g \tilde K$, we obtain a constant $C$ such that
\begin{equation*}
  \int_o^{\gamma o} \tilde F \leq  \int_{\gamma_x o}^{\gamma \gamma_y o} \tilde F + C
  = \int_o^{\gamma' o} \tilde F + C.
\end{equation*}
Finally, the contribution of the non-recurrent $\gamma$'s to $u_n$ is bounded from above
by
\begin{equation*}
  \sum_{\gamma_x, \gamma_y \in \calF_A}
\sum_{\substack{\gamma' \in \Gamma_{\tilde{K}}\\ d(o, \gamma' o) \in (n-1 - 2A-2D, n+2A+2D]}}e^{\int_o^{\gamma' o} \tilde F + C}.
\end{equation*}
The sum over $\gamma_x$ and $\gamma_y$ gives a finite multiplicity, and the
sum over $\gamma'$ is bounded by $C(A) e^{n s}$ for some constant $C(A)>0$ since $s
> \delta_{\Gamma_{\tilde K}}(F)$. This is compatible with the second term
in the upper bound of~\eqref{eq:un_le}.

We turn to the contribution to $u_n$ of the recurrent $\gamma$'s. For such a
$\gamma$, there is a point $x$ in $[o,\gamma o]\cap \Gamma \tilde K \setminus
(B(o, A) \cup B(\gamma o, A))$. Write $x = \gamma' x'$ with $x' \in \tilde
K$. Consider the integer $a$ such that $d(o, \gamma' o) \in (a-1, a]$. It
satisfies $A - D \leq a <n-A+D+1<n$, so if $A$ is large enough one has $1\le a\le n-1$. Let
$\gamma'' = {\gamma'}^{-1} \gamma $, so that $\gamma =  \gamma' \gamma''$.
The integer $b$ such that $d(o, \gamma'' o) \in (b-1, b]$ satisfies also $0<A - D \leq b <n-A+D+1<n$. Moreover,
\begin{multline*}
  a + b = d(o, \gamma' o) + d(o, \gamma'' o) \pm 2
  = d(o, \gamma' o) + d(\gamma' o, \gamma o) \pm 2
  \\
  = d(o, x) + d(x, \gamma o) \pm (2+2D)
  = d(o, \gamma o) \pm (2+2D)
  = n \pm (3+2D).
\end{multline*}
This shows that $1\le a,b\le n-1$ and $\abs{a+b-n} \leq 3+2D$. Finally, applying twice
Lemma~\ref{lm:hold-potential}, we obtain the existence of a constant $C'$ such
that
\begin{equation*}
  \abs*{\int_o^{\gamma o} \tilde F - \int_o^{\gamma' o} \tilde F - \int_o^{\gamma'' o}\tilde F} \leq C'.
\end{equation*}
Altogether, this shows that the contribution of recurrent $\gamma$'s to $u_n$
is bounded by the first term of the right hand side of~\eqref{eq:un_le}.
\end{proof}

\section{\texorpdfstring{Gurevič}{Gurevic} and geometric pressure at infinity coincide}\label{sec:cinq}

In this section, we will study and count the possible excursions of periodic
orbits outside large compact sets, and first deduce the inequality
\[
\PGur^\infty(F)\le \delta_\Gamma^\infty(F).
\]
The arguments we develop here will also be instrumental in the proof of the
inequality $\Pvar^\infty(F)\le \delta_\Gamma^\infty(F)$ in
Section~\ref{sec:ErgoPressure}.

These inequalities are the heart of Theorem~\ref{th:AllPressionEquivalent}.
The reverse inequalities $\PGur^\infty(F)\ge \delta_\Gamma^\infty(F)$ and
$\Pvar^\infty(F)\ge \delta_\Gamma^\infty(F)$ are simpler, and will be proven
respectively in Sections~\ref{sec:GurPressure}
and~\ref{subsec:Pvar_infty_easy}.

Let us explain why the above inequalities are the most surprising and
difficult. A major difference between the definition of
$\delta_\Gamma^\infty(F)$ and the two others is that $\PGur^\infty(F)$ and
$\Pvar^\infty(F)$ take into account trajectories (respectively periodic /
typical) that spend most of the time outside a given large compact set, but
can however come back inside this compact set several times, whereas
$\delta_\Gamma^\infty(F)$ considers trajectories that start and finish in a
given compact set, but never come back in the meantime. Thus, there are
apparently much more trajectories considered in the first two definitions.
However, in the next two sections, culminating in
Corollaries~\ref{cor:half-thm-Gur-geom} and~\ref{coro:PressureMassInfty}, we
prove that the above inequalities hold.

The strategy developed below is to cut a given trajectory, which comes back
several times inside a given compact set, but spends a small proportion of
time inside, into several excursions, and to prove precise upper bounds
presented below.


\subsection{Excursions of closed geodesics outside compact sets}

In this section, we study periodic orbits that intersect (the unit tangent
bundle of) a fixed compact subset $K\subset M$, but which spend most of their time
away from the $R$-neighborhood $K_R$  of $K$.

For all compact subsets $K_1 \subset K_2\subset M$  and   $0 < \alpha \leq 1$,
we define
\begin{equation*}
\begin{split}
\calP(K_1, K_2, \alpha) = {} &
\Bigl\{p \text{ periodic orbit} \; ; \; p \cap T^1 K_1 \ne\emptyset,\;
  \; \ell(p \cap T^1K_2) \leq \alpha \ell(p)\Bigr\}
\end{split}
\end{equation*}
and
\begin{equation}\label{eq:Periodic2K-time}
\calP(K_1, K_2, \alpha;T,T') =
\left\{p  \in
\calP(K_1, K_2, \alpha) ,\; T\le \ell(p)\le T'\right\}.
\end{equation}
Given a Hölder-continuous potential $F$, we define for all $T, T'>0$,
\[
\calN_F(  K_1, K_2, \alpha;T,T') =
\sum_{p \in \calP(K_1, K_2, \alpha; T, T')} e^{\int_p F}.
\]

\begin{theo}\label{th:CountExcursion}
Let $K\subset M$ be a compact subset, and $\tilde K\subset\tilde M$ be a
compact subset such that $p_\Gamma(\tilde K)=K$. Let $T_0>0$. Let $F : T^1M
\to \bbR$ be a Hölder-continuous potential with $\delta_{\Gamma_{\tilde
K}}(F)
> -\infty$. Let $\eta >0$. For all $0 < \alpha \leq 1$ and $R\geq 2$,
there exists a positive number $\psi = \psi(\tilde K, F, \eta, \alpha/R)$
such that
\begin{align*}
\limsup_{T\to +\infty} \frac 1 T \log \calN_F(K, K_R, \alpha; T, T+T_0)
&\leq (1-\alpha)\delta_{\Gamma_{\tilde K}}(F) + \alpha\delta_\Gamma(F) + \eta + \psi.
\end{align*}
Moreover, when $\tilde K, F$ and $\eta$ are fixed, $\psi(\tilde K, F, \eta,
\alpha/R)$ tends monotonically to $0$ when $\alpha/R$ tends to $0$.
\end{theo}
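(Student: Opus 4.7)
The plan is to decompose each marked periodic orbit $p$ counted by $\widehat\calN_F$ into excursions from $K$, classified as \emph{long} if the corresponding projected sub-arc of $p$ exits $K_R$ and \emph{short} otherwise, then group consecutive shorts into bursts and apply the limsup definitions of $\delta_{\Gamma_{\widetilde K}}(F)$ and $\delta_\Gamma(F)$ respectively to the two classes.

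First I would use the weighting $n_K(p)$ to identify $\widehat\calN_F$ with a sum over pairs $(\gamma_p, x)$, where $x \in \widetilde K$ lies on the axis of $\gamma_p$. Successive intersections of the axis with $\Gamma \widetilde K$ over one period yield a factorization $\gamma_p = \gamma_1 \gamma_2 \cdots \gamma_n$ with $\gamma_i \in \Gamma_{\widetilde K}$. Labeling each $\gamma_i$ as long or short and grouping consecutive shorts into burst elements $\beta_j \in \Gamma$ (with $d(o, \beta_j o)$ equal to the corresponding sub-arc length up to $O(D)$, where $D = \diam \widetilde K$), I rewrite $\gamma_p = \beta_0 \gamma_1^l \beta_1 \cdots \gamma_{n_l}^l \beta_{n_l}$ with $n_l$ longs and $n_l + 1$ bursts. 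The constraint $\ell(p \cap T^1 K_R) \leq \alpha \ell(p)$ is then exploited by observing that each burst lies entirely in $T^1 K_R$ and each long excursion spends at least $2R - O(D)$ in $T^1 K_R$ while traversing the collar $K_R \setminus K$ at its endpoints. This yields
\[
\ell_s := \sum_j d(o, \beta_j o) \leq \alpha T, \qquad n_l \leq \frac{\alpha T}{2R - O(D)}, \qquad \ell_l := \sum_i d(o, \gamma_i^l o) \geq (1-\alpha) T - O(1).
\]

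Next, Lemma~\ref{lm:hold-potential} applied at each of the $2n_l+1$ junctions between consecutive pieces (with a constant $C_0 = C_0(\widetilde K, F)$ independent of $R$) gives
\[
\Bigl| \int_p F - \sum_{i=1}^{n_l}\int_o^{\gamma_i^l o}\widetilde F - \sum_{j=0}^{n_l}\int_o^{\beta_j o}\widetilde F \Bigr| \leq C_0 (2n_l + 1).
\]
I would then fix $C$ such that $\sum_{\gamma \in \Gamma_{\widetilde K},\, d(o,\gamma o) \in (k-1,k]} e^{\int_o^{\gamma o}\widetilde F} \leq C e^{(\delta_{\Gamma_{\widetilde K}}(F) + \eta) k}$ for all $k$, with the analogous bound for $\Gamma$ and $\delta_\Gamma(F) + \eta$. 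Summing over all grouped decompositions (longs running over $\Gamma_{\widetilde K}$, bursts freely over $\Gamma$, which only enlarges the upper bound) and absorbing the orbit-to-sequence multiplicity into a polynomial factor via Lemma~\ref{lm:nW}, one obtains
\[
\widehat\calN_F \lesssim \sum_{n_l, \ell_l} C^{2n_l+1} e^{C_0(2n_l+1)} \binom{T}{n_l}\binom{\alpha T + n_l + 1}{n_l + 1} e^{(\delta_{\Gamma_{\widetilde K}} + \eta)\ell_l + (\delta_\Gamma + \eta)(T-\ell_l)}.
\]
Since $\delta_{\Gamma_{\widetilde K}} \leq \delta_\Gamma$ and $\ell_l \geq (1-\alpha) T$, the main exponent is at most $[(1-\alpha)\delta_{\Gamma_{\widetilde K}} + \alpha \delta_\Gamma + \eta] T$, while the combinatorial and junction corrections form a factor $e^{\psi T}$, with $\psi = O((\alpha/R)(C_0 + \log C)) + H(\alpha/(2R)) + \alpha H(1/(2R))$ ($H$ the binary entropy); each term tends monotonically to $0$ as $\alpha/R \to 0$ with $\widetilde K, F, \eta$ fixed, using $H(x) \sim -x\log x$ near $0$.

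The hard part is controlling the junction error uniformly. A na\"ive excursion-by-excursion splitting would produce an error $C_0 n$ with $n$ potentially of order $T$ (many short excursions of bounded minimal length), contributing a constant rate that cannot be absorbed into $\psi$. Grouping shorts into bursts cuts the junction count to $2n_l + 1 = O(\alpha T/R)$ and is indispensable; the price is summing bursts freely over $\Gamma$ rather than over admissible products of short excursions in $\Gamma_{\widetilde K}$, but this only enlarges the upper bound. A secondary point is verifying that the binomial entropy corrections vanish uniformly as $\alpha/R \to 0$, which uses only the elementary asymptotics of $H$ near $0$ together with $n_l/T \leq \alpha/(2R)$.
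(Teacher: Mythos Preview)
Your proposal is correct and follows essentially the same route as the paper: your ``bursts'' of consecutive short excursions are precisely the paper's segments $[a_i,b_i]\subset\Gamma\widetilde K_R$, your long excursions are the paper's $(b_i,a_{i+1})$, and in both cases the key count $n_l\le \alpha T/(2R)$ is what makes the junction error and the combinatorial entropy vanish with $\alpha/R$. Two small cleanups: your lower bound $\ell_l\ge(1-\alpha)T-O(1)$ should read $(1-\alpha)T-O(n_l)$ (this only adds another $O(\alpha/R)$ term to $\psi$), and your term $\alpha H(1/(2R))$ is not literally a function of $\alpha/R$, but since $\alpha\le 1$ forces $R\le(\alpha/R)^{-1}$ one has $\alpha H(1/(2R))\lesssim(\alpha/R)\log(2/(\alpha/R))\to 0$, so the stated dependence on $\alpha/R$ alone is recoverable.
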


\begin{rema}
\label{rema:delta_eq_neg_inf} \rm When $\delta_{\Gamma_{\tilde K}}(F) =
-\infty$, the statement should be modified, replacing on the right hand side
$\delta_{\Gamma_{\tilde K}}(F)$ with an arbitrary real number $d$, and
allowing $\psi$ to depend on $d$. The same proof applies.
\end{rema}


Letting $R\to +\infty$, $\eta\to 0$ and at last $K$ exhaust $M$ and
$\alpha\to 0$, we deduce the following corollary.

\begin{coro}\label{cor:half-thm-Gur-geom}
Under the same assumptions on $M$ and $F$ as in Theorem~\ref{th:CountExcursion}, we have
\[
\PGur^\infty(F)\le \delta_\Gamma^\infty(F)\,.
\]
\end{coro}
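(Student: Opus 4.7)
The plan is to combine Theorem~\ref{th:CountExcursion} with Proposition~\ref{prop:gurevic_subset}, and then take limits in the order $R\to+\infty$, $\eta\to 0$, $\alpha\to 0$, and finally $\widetilde K$ exhausting $\widetilde M$. First, if $\delta_\Gamma(F)=+\infty$, then Proposition~\ref{prop:infinite_pressure} gives $\delta_\Gamma^\infty(F)=+\infty$ and the inequality is vacuous, so I assume $\delta_\Gamma(F)<+\infty$. It suffices to prove $\PGur^\infty(F)\le \delta_{\Gamma_{\widetilde K}}(F)$ for every compact $\widetilde K\subset \widetilde M$. Setting $K=p_\Gamma(\widetilde K)$, I may (after possibly enlarging $\widetilde K$, which by Corollary~\ref{coro:comparison-crit-expo-outside-compacts} only decreases $\delta_{\Gamma_{\widetilde K}}(F)$ and hence only strengthens the target bound for the original set) assume that $\inter{K}$ meets $\pi(\Omega)$. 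The case $\delta_{\Gamma_{\widetilde K}}(F)=-\infty$ is handled identically using Remark~\ref{rema:delta_eq_neg_inf}, so assume also $\delta_{\Gamma_{\widetilde K}}(F)\in\bbR$.

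For every $R\ge 4$ and $\alpha\in(0,1/2]$, the inclusion $K_{R/2}\subset \inter{K_R}$ allows me to apply Proposition~\ref{prop:gurevic_subset} with outer set $K_R$, middle set $K_{R/2}$, and test set $K$ (which meets a closed geodesic since $\inter K$ meets $\pi(\Omega)$). This gives
\[
\limsup_{T\to \infty}\frac{1}{T}\log \sum_{\substack{p\in \calP_{K_R}(T-c,T)\\ \ell(p\cap T^1 K_R)<\alpha \ell(p)}} e^{\int_p F} \;\le\; \limsup_{T\to \infty}\frac{1}{T}\log \calN_F(K,K_{R/2},2\alpha;T-c,T).
\]
Using $\calN_F\le \widehat\calN_F$ and applying Theorem~\ref{th:CountExcursion} with parameters $(2\alpha,R/2)$ in place of $(\alpha,R)$, the right-hand side is bounded above by
\[
(1-2\alpha)\,\delta_{\Gamma_{\widetilde K}}(F) + 2\alpha\,\delta_\Gamma(F) + \eta + \psi\bigl(\widetilde K,F,\eta,4\alpha/R\bigr).
\]
The switch of time-window from $[T,T+T_0]$ to $[T-c,T]$ is harmless, as a bounded shift does not affect the exponential rate as $T\to\infty$.

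Since the infimum defining $\PGur^\infty(F)$ is bounded above by the infimum over the subfamily $(K_R)_{R\ge 4}$ of the left-hand sides above, I let first $R\to+\infty$ (so $\psi\to 0$ by the monotone convergence statement in Theorem~\ref{th:CountExcursion}), then $\eta\to 0$, and finally $\alpha\to 0$ (where $\delta_\Gamma(F)<+\infty$ ensures that the term $2\alpha\,\delta_\Gamma(F)$ vanishes). This yields $\PGur^\infty(F)\le \delta_{\Gamma_{\widetilde K}}(F)$, and taking the infimum over $\widetilde K$ concludes the proof. I expect no genuine obstacle at this stage: the hard work is already accomplished in Theorem~\ref{th:CountExcursion}, and the only subtle point is that $\PGur^\infty$ uses a single compact set both for the ``meeting'' and ``fraction of time spent'' conditions while the theorem allows distinct sets; Proposition~\ref{prop:gurevic_subset} bridges this discrepancy at the cost of a factor $2$ on $\alpha$, which is absorbed in the limit $\alpha\to 0$.
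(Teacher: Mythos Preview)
Your proof is correct and follows essentially the same approach as the paper: both combine Proposition~\ref{prop:gurevic_subset} with Theorem~\ref{th:CountExcursion} to reduce the Gurevi\v{c} growth rate to a quantity controlled by $\delta_{\Gamma_{\widetilde K}}(F)$. The only difference is cosmetic: the paper works with a single fixed compact set $\widetilde L=\widetilde K_3$ (i.e., $R=2$ in Theorem~\ref{th:CountExcursion}) and kills $\psi(\widetilde K,F,\eta,\alpha/R)$ by letting $\alpha\to 0$ alone, whereas you introduce an extra limit $R\to\infty$ first; since $\psi$ depends on the ratio $\alpha/R$, the paper's route avoids that additional limit, but both are valid.
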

\begin{proof}
If $\delta_\Gamma(F)$ is infinite, then $\delta_\Gamma^\infty(F)$ is also
infinite by Proposition~\ref{prop:infinite_pressure}, and the result is
obvious. We can therefore assume $\delta_\Gamma(F)<\infty$. We will also
assume $\delta_\Gamma^\infty(F) > -\infty$, as the case
$\delta_\Gamma^\infty(F) = -\infty$ can be proved similarly using
Remark~\ref{rema:delta_eq_neg_inf}.

Let $\eta>0$. We have to find a compact subset $\tilde L$ of $\tilde M$ whose interior
intersects $\pi(\tilde \Omega)$, and $\alpha>0$, such that, with $L=p_\Gamma(\tilde L)$, the exponential
growth rate of $\sum_{p \in  \calP_{L} (T, T+1) \;; \; \ell(p\cap T^1{L}
)<\alpha \ell(p)} e^{\int_p F}$ is at most $\delta_\Gamma^\infty(F) + 3\eta$.
Fix a large compact set $\tilde K$ with $\delta_{\Gamma_{\tilde K}}(F) \leq
\delta_\Gamma^\infty(F) + \eta$. We denote respectively by $\tilde K_2$ and
$\tilde K_3$ the neighborhoods of size $2$ and $3$ of $\tilde K$. We wish to
apply Theorem~\ref{th:CountExcursion} with $R=3$, and set $\tilde L=\tilde
K_3$, and $L=p_\Gamma(\tilde K_3)$.

There is a difficulty coming from the fact that the definition of the Gurevič
pressure involves all periodic orbits going through $\tilde L=\tilde K_3$,
while Theorem~\ref{th:CountExcursion} only takes into account those that,
additionally, enter $\tilde K$. This difficulty is solved using
Proposition~\ref{prop:gurevic_subset} applied with  $K$ instead of $K'' $,
$K_2$ instead of $K'$ and $K_3= p_\Gamma(\tilde K_3)  $ instead of $K$. This
proposition yields that the exponential growth rate of $ \sum_{p \in
\calP_{L} (T, T+1) \;; \; \ell(p\cap T^1L )<\alpha \ell(p)} e^{\int_p F}$ is
bounded by that of $ \sum_{p \in \calP_{K} (T, T+1) \;; \; \ell(p\cap
T^1K_2)<2\alpha \ell(p)} e^{\int_p F}$. The latter can be estimated thanks to
Theorem~\ref{th:CountExcursion} applied to $R=2$, $T_0=1$ and $2\alpha$: this
growth rate is bounded by $(1-2\alpha) \delta_{\Gamma_{\tilde K}}(F) +
2\alpha \delta_\Gamma(F) + \eta + \psi(\alpha)$, where $\psi(\alpha)$ tends
to $0$ with $\alpha$. This quantity converges to $\delta_{\Gamma_{\tilde
K}}(F) + \eta \leq \delta_\Gamma^\infty(F) + 2\eta$ when $\alpha$ tends to
$0$, so that for some $\alpha>0$ it is strictly smaller than $
\delta_\Gamma^\infty(F) + 3\eta$.
\end{proof}


The strategy of the proof of Theorem~\ref{th:CountExcursion} is as follows. A
periodic orbit will be cut into two kinds of segments, those which stay in
the given compact set $K$, and the excursions outside this compact set. The
weighted growth of the excursions should be controlled by the exponent
$\delta_{\Gamma_K}(F)$ multiplied by the proportion of time spent outside
$K$, and the weighted growth of the segments inside $K$ should be controlled
by $\delta_\Gamma(F)$ multiplied by the proportion of time spent in $K$.
However, to succeed to get such a control, we need to avoid the situation
with several very short excursions in a very close neighborhood of $K$. For
this reason, we need to play with two compact sets, $K$ and its
$R$-neighborhood $K_R$.

\begin{proof}[Proof of Theorem~\ref{th:CountExcursion}] As in the above proof, we can assume
that $\delta_\Gamma(F)$ and $\delta_\Gamma^\infty(F)$ are finite.
Let $\tilde K\subset\tilde M$   be a compact set and $\tilde K_R\subset
\tilde M$ be its $R$-neighborhood, and set $K=p_\Gamma(\tilde  K) $,
$K_R=p_\Gamma(\tilde  K_R)$. Let $D$ be the diameter of $K$. Any geodesic
segment joining the boundary of $\tilde K$ and the boundary of $\tilde K_R$
has length at least $R$ and at most $D+2R$. Let also $D'=D'(K, T_0)$ be
larger than the diameter of $K \cup \{o\}$, $1$ and $T_0$.

Consider a periodic orbit $p\in \calP(K, K_R, \alpha)$ with $\ell(p)\in [T,
T+ T_0]$. By assumption, $\pi(p)\cap K \neq \emptyset$.  We will divide it
into long excursions, i.e., those excursions  outside both $K$ and $K_R$, of
total length at least $(1-\alpha)\ell(p)$, and periods of time of total
length at most $\alpha\ell(p)$ where it stays inside $K_R$.

Since $p$ intersects $T^1 K$, we can choose a lift $c$ of $p$ that intersect
$T^1 \tilde K$. Let $g$ be a hyperbolic isometry whose translation axis is $c$, and
whose translation length is $\ell(p)$, and which translates in the direction
given by the orientation of $p$.

Define inductively points $a_i, b_i$ on $c$ as follows. Choose first a point
$a_0$ on $c$ inside $\tilde K$. Consider on  the geodesic segment $[a_0,
g.a_0]$ of $c$ the first points $b_0,a_1 \in\Gamma\partial\tilde K$ such that the open interval
$(b_0,a_1)$ does not intersect $ \Gamma \tilde K$ and $(b_0,a_1)\cap\tilde M\setminus
\Gamma \tilde K_R\neq\emptyset$. The interval $(b_0,a_1)$ projects through
$p_\Gamma$ into a {\em long excursion}, i.e., an excursion outside $K$ which
also goes outside $K_R$. Inductively, we define $(b_1, a_2), \dotsc,
(b_{N-1},a_N)$ by the properties that $b_i,a_{i+1}$ are the first points of
$[a_i, g.a_0]$ which lie in $\Gamma\partial\tilde K$ and satisfy
$(b_i,a_{i+1})\cap \Gamma \tilde K=\emptyset$ and $(b_i,a_{i+1})\cap\tilde
M\setminus \Gamma \tilde K_R\neq\emptyset$. In other terms, the intervals
$(b_i,a_{i+1})$, $0\le i\le N-1$, are the connected components
 of $[a_0, g.a_0]\setminus\Gamma\tilde K$ that intersect $\tilde M\setminus \Gamma\tilde K_R$,
whereas the segments $[a_i, b_{i}]$ are included in $\Gamma\tilde K_R$.
Finally, set $b_N=g.a_0$.

\begin{figure}[ht!]\label{groupactions}
\begin{center}
\input{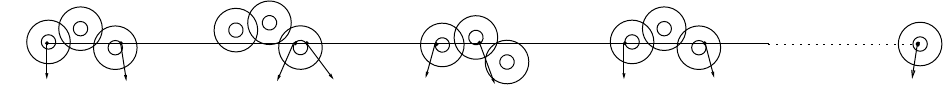_t}
\caption{Long excursions outside $\tilde K$ and $\tilde K_R$ }
\end{center}
\end{figure}
For all $0\le i\le N$, choose elements $\gamma_i^{\pm}\in \Gamma$ such that
$a_i\in \gamma_i^-\tilde K$ and $b_i\in \gamma_i^+\tilde K$. As $\tilde K$ is
compact and the action of $\Gamma$ is proper, for each $i$, there are only
finitely many choices of such elements $\gamma_i^\pm$. Without loss of
generality, set $\gamma_0^-=\operatorname{Id}$ and $\gamma_N^+=g$.

\medskip

Choose some $\varepsilon>0$.
  The following elementary observations are crucial for the sequel.

\begin{enumerate}
\item As $\bigcup_{0\le i\le N}[a_i, b_i]\subset \Gamma  \tilde K_R$, by
    definition of $\calP(K, K_R, \alpha)$ and since $T \leq \ell(p) \leq T
    + T_0$, we have
\[
\ell(p\cap T^1K)\le\sum_{i = 0}^N d(a_i, b_i) \leq \alpha(T + T_0) \leq \alpha T + D'.
\]

\item For all $i\in \{0, \dotsc, N-1\}$,   we have $(b_i, a_{i+1})\subset
    \tilde M \setminus\Gamma \tilde K $. Moreover, the length of
    $(b_i,a_{i+1})\cap \Gamma\tilde K_R$ is at least $2R$ and
    $\bigcup_{i}[b_i,a_{i+1}]$ does not intersect the interior of
$\Gamma\tilde K$, so that by definition of $\calP(K, K_R, \alpha)$,
\begin{equation}\label{eqn:estimate-on-time}
(1-\alpha) T+2RN \leq \sum_{i = 0}^{N-1} d(b_i, a_{i+1}) \leq  T+ T_0 \leq T + D',
\end{equation}
and therefore, for $\nu\coloneqq \frac{1}{2R}\left(\alpha T+D'\right)$, we
have
\begin{equation}\label{eqn:nb-excursions}
N\le  \nu\,.
\end{equation}
For large enough $T$, we have
\begin{equation}
\label{eq:nu_le}
  \nu \leq \frac{\alpha}{R}T.
\end{equation}

\item Write $\psi_i = (\gamma_i^-)^{-1}\gamma_i^+ \in \Gamma$ for all $i =
    0, \dotsc, N$. We have $\abs{d(o, \psi_i o) - d(a_i, b_i)}\le 2D'$, so
    that
\[
\sum_{i = 0}^N d(o, \psi_i o) \leq \alpha (T+T_0) + 2(N+1)D' \leq \alpha T + 5N D'\,.
\]
Let $s_i$ be the unique integer such that  $d(o,\psi_i o)\le s_i
<d(o,\psi_{i} o)+1$. Then
\begin{equation*}
s_0+ \dotsb + s_N \le\alpha T + 5ND'+N+1\le  \alpha T + 7ND'\,.
\end{equation*}

\item By definition of $\Gamma_{\tilde K}$, for all $i = 0, \dotsc, N-1$, since $\left((\gamma_i^+)^{-1}b_i,\varphi_i(\gamma_{i+1}^-)^{-1}a_{i+1}\right)$ does not intersect $\Gamma \widetilde K$
    we have $\phi_i =  (\gamma_i^+)^{-1}\gamma_{i+1}^-\in \Gamma_{\tilde
    K}$. Moreover, $\abs{d(o, \phi_i o) - d(b_i, a_{i+1})}\le 2D'$. Let
    $t_i$ be the unique integer such that $d(o,\varphi_i o)\le t_i
    <d(o,\varphi_i o)+1$.

\item As $\sum_{i=0}^N d(a_i,b_i)+\sum_{i=0}^{N-1}
    d(b_i,a_{i+1})=d(a_0,b_N)=\ell(p)\in[T,T+T_0]$, we get
\[
\abs*{\sum_{i = 0}^{N} d(o, \psi_i o) + \sum_{i=0}^{N-1} d(o, \phi_i o) - T} \leq T_0 + (4N+2)D'
\]
and therefore
\[
\abs*{\sum_{i = 0}^{N}s_i + \sum_{i=0}^{N-1} t_i - T} \leq T_0 + (4N+2)D' + (2N+1) \leq 10 ND'\,.
\]
\item By~\eqref{eqn:estimate-on-time}, as $d(b_i,a_{i+1})-2D'\le t_i\le
    d(b_i,a_{i+1})+2D'+1$, we get
\begin{equation*}
(1-\alpha)T-2ND'\le  \sum_{i=0}^{N-1} t_i \leq T+4ND'.
\end{equation*}

\item Since $M$ has pinched negative sectional curvature and $F$ is
    $(\beta, C_F)$-Hölder-continuous, Lemma~\ref{lm:hold-potential} applied
    to the compact set $\tilde K \cup \{o\}$ ensures that there exists a
    constant $C(F,\tilde K)$ depending only on the upper bound of the
    curvature, on $\tilde K$ and the Hölder-continuous constants of $F$
    such that for all $i = 0, \dotsc, N$,
\[
\abs*{\int_{a_i}^{b_i}\tilde F - \int_o^{\psi_i o}\tilde  F }\le C(F,\tilde K)\,.
\]


\item Similarly, for all $i = 0, \dotsc, N-1$,
\[
\abs*{\int_{b_i}^{a_{i+1}} \tilde F - \int_o^{\phi_i o} \tilde F} \le C(F,\tilde K)\,.
\]

\item As $\int_p F = \int_{a_0}^{g a_0} \tilde F$, and bounding $2N+1$ from above with
    $3\nu$, we deduce
\begin{equation}
\label{eq:int_pF_le}\sum_{i=0}^N \int_{o}^{\psi_i o}\tilde F +
\sum_{i = 0}^{N-1} \int_o^{\phi_i  o}\tilde F -3 C(F, \tilde K)\nu \leq
\int_p F \leq \sum_{i=0}^N \int_{o}^{\psi_i o}\tilde F +
\sum_{i = 0}^{N-1} \int_o^{\phi_i  o}\tilde F +3 C(F, \tilde K)\nu.
\end{equation}
\end{enumerate}

\medskip

For all $t\in \bbN$, set
\[
\Gamma(t-1,t) = \{ \gamma\in \Gamma\; ;\; d(o, \gamma o)\in (t-1, t]\} \quad
\text{ and } \quad \Gamma_{\tilde K}(t-1,t) = \Gamma(t-1,t) \cap \Gamma_{\tilde K}.
\]
We also write
\[
Q_{F, \Gamma}(t) = \sum_{\gamma\in \Gamma(t-1,t)} e^{\int_o^{\gamma o} \tilde F} \quad
\text{ and } \quad Q_{F, \Gamma_{\tilde K}}(t) = \sum_{\gamma\in \Gamma_{\tilde K}(t-1,t)} e^{\int_o^{\gamma o}\tilde F} \,.
\]

To each periodic orbit $p\in \calP(K, K_R, \alpha)$ with $\ell(p)\in [T, T+
T_0]$, we have associated a hyperbolic isometry $g\in \Gamma$ whose axis
intersects $\tilde K$ and projects through $p_\Gamma$ onto $\pi(p)$ and with
translation length equal to $\ell(p)$. Then, to each such element $g$ we have
associated by the previous construction finite sequences $\phi_0,
\dotsc,\phi_{N-1}$ in $\Gamma_{\tilde K}$ and $\psi_0, \dotsc, \psi_N\in \Gamma$. As
one can recover $g$ (and then $p$, which is the projection of the translation axis of
$g$) from these sequences by the formula $g = \psi_0 \phi_0 \psi_1 \dotsm
\phi_{N-1}\psi_N$, this association is injective.

Let us now bound $\calN_F(K, K_R, \alpha;T, T+T_0)$. Summing the exponentials of the
bounds~\eqref{eq:int_pF_le} over all the periodic orbits in $\calP(K, K_R,
\alpha;T,T+T_0)$, we get the inequality
\begin{equation}
\label{eq:CountExcursion1}
\begin{split}
\calN_F(K, & K_R, \alpha;T, T+T_0  ) \leq
 e^{3 C(F, \tilde K)\nu}
\sum_{N = 0}^{\nu(\alpha,T,T_0,R)} \sum_{\substack{t_0, \dotsc, t_{N-1}, s_0,\dotsc, s_N \in \mathbb{N} \\ \abs*{\sum s_i + \sum t_i - T}\leq 10ND'\\ \sum t_i \geq (1-\alpha)T - 2ND'}}
\\ &
 Q_{F, \Gamma}(s_0) \cdot Q_{F, \Gamma_{\tilde K}}(t_0)\cdot Q_{F, \Gamma}(s_1)\cdot Q_{F, \Gamma_{\tilde K}}(t_1)\dotsm Q_{F, \Gamma_{\tilde K}}(t_{N-1}) \cdot Q_{F, \Gamma}(s_N).
\end{split}
\end{equation}

The following lemma is a straightforward consequence of the definition of the
critical exponents $\delta_\Gamma(F)$ and $\delta_{\Gamma_{\tilde K}}(F)$.

\begin{lemm}\label{lem:majoration-seulement}
For all $\eta>0$, there exists $C_\eta = C_\eta(\tilde K, F, \eta) \geq 1$
such that for all $t>0$, we have
\[
Q_{F,\Gamma}(t)\leq C_\eta e^{\delta_\Gamma(F) t + \eta t} \quad
\text{ and } \quad Q_{F,\Gamma_{\tilde K}}(t) \leq C_\eta e^{\delta_{\Gamma_{\tilde K}}(F)t + \eta t}.
\]
\end{lemm}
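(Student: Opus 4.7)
The plan is to derive the bounds directly from the $\limsup$ formulas for $\delta_\Gamma(F)$ and $\delta_{\Gamma_{\widetilde K}}(F)$ that were recorded after the definition of the geometric pressure (taking $c=1$ there). The only real content is splitting the range of $t$ into ``large'' and ``small'', and then choosing $C_\eta$ to absorb the finite-range contribution.

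First I would handle the large-$t$ regime for $Q_{F,\Gamma}$. By definition of $\limsup$, given $\eta>0$ there exists $T_1=T_1(\eta,F)$ such that for every $t\ge T_1$,
\[
\frac{1}{t}\log Q_{F,\Gamma}(t)=\frac{1}{t}\log\!\!\sum_{\substack{\gamma\in\Gamma\\ t-1\le d(o,\gamma o)\le t}} e^{\int_o^{\gamma o}\widetilde F}\;\le\;\delta_\Gamma(F)+\eta,
\]
so $Q_{F,\Gamma}(t)\le e^{(\delta_\Gamma(F)+\eta)t}$. The same reasoning applied to the formula for $\delta_{\Gamma_{\widetilde K}}(F)$ gives, possibly after enlarging $T_1$, the analogous bound $Q_{F,\Gamma_{\widetilde K}}(t)\le e^{(\delta_{\Gamma_{\widetilde K}}(F)+\eta)t}$ for $t\ge T_1$.

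For the small-$t$ regime $0<t<T_1$, I would invoke proper discontinuity of the $\Gamma$-action: the set $\{\gamma\in\Gamma\;;\;d(o,\gamma o)\le T_1\}$ is finite, hence $Q_{F,\Gamma}(t)$ (and a fortiori $Q_{F,\Gamma_{\widetilde K}}(t)$) is bounded above by a constant $M=M(\widetilde K,F,T_1)$ uniformly for $t\in(0,T_1]$. Setting
\[
C_\eta\;\coloneqq\;\max\!\Bigl(1,\;M\cdot\exp\!\bigl(-\min(\delta_\Gamma(F)+\eta,\,\delta_{\Gamma_{\widetilde K}}(F)+\eta,\,0)\,T_1\bigr)\Bigr),
\]
we ensure $M\le C_\eta e^{(\delta_\Gamma(F)+\eta)t}$ and $M\le C_\eta e^{(\delta_{\Gamma_{\widetilde K}}(F)+\eta)t}$ for every $t\in(0,T_1]$, while the large-$t$ bound is already taken care of since $C_\eta\ge 1$.

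There is no serious obstacle: the only minor point to watch is that $\delta_{\Gamma_{\widetilde K}}(F)$ (or $\delta_\Gamma(F)$) could in principle be negative or even $-\infty$. When finite, the argument above works verbatim; when equal to $+\infty$ the inequality is vacuous and one may take $C_\eta=1$; in the context in which the lemma is applied (Theorem~\ref{th:CountExcursion}) the hypothesis $\delta_{\Gamma_{\widetilde K}}(F)>-\infty$ is precisely assumed, and $\delta_\Gamma(F)\ge\delta_{\Gamma_{\widetilde K}}(F)>-\infty$ is automatic, so this case is covered.
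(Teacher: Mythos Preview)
Your proof is correct and is exactly the elementary argument the paper has in mind: it states the lemma as ``a straightforward consequence of the definition of the critical exponents $\delta_\Gamma(F)$ and $\delta_{\Gamma_{\widetilde K}}(F)$'' without further details. Your splitting into large and small $t$, together with proper discontinuity for the finite range, is the standard way to unpack that sentence.
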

We can write the second bound as $Q_{F,\Gamma_{\tilde K}}(t) \leq C_\eta
e^{(\delta_{\Gamma_{\tilde K}}(F) - \delta_\Gamma(F))t + \delta_\Gamma(F) t +
\eta t}$. Multiplying these bounds, we get
\begin{align*}
 & Q_{F, \Gamma}(s_0) \cdot Q_{F, \Gamma_{\tilde K}}(t_0)\cdot Q_{F, \Gamma}(s_1)\cdot Q_{F, \Gamma_{\tilde K}}(t_1)\dotsm Q_{F, \Gamma_{\tilde K}}(t_{N-1}) \cdot Q_{F, \Gamma}(s_N) \\
  & \leq C_\eta^{2N+1} \exp\pare*{(\delta_\Gamma(F) +\eta) (\sum s_i + \sum t_i) + (\delta_{\Gamma_{\tilde K}}(F) - \delta_\Gamma(F))(\sum t_i)} \\
  & \leq C_\eta^{3N} \exp\pare*{(\delta_\Gamma(F) +\eta) T + (\abs{\delta_{\Gamma}(F)} + \eta) 10ND' + (\delta_{\Gamma_{\tilde K}}(F) - \delta_\Gamma(F))((1-\alpha)T-2ND')} \\
   & = C_\eta^{3N} \exp\pare*{ \left(\alpha \delta_\Gamma(F) + (1-\alpha) \delta_{\Gamma_{\tilde K}}(F) + \eta\right)T + \left(\abs{\delta_{\Gamma}(F)} + \eta + \delta_\Gamma(F) - \delta_{\Gamma_{\tilde K}}(F)\right)  10ND'}.
\end{align*}
Note that this bound does not depend anymore on the choice of the $s_i$ and
$t_i$. In order to bound~\eqref{eq:CountExcursion1}, one should take into account a
multiplicity given by the number of possible choices for these integers.

The following combinatorial standard estimate will control the number of
possible choices.

\begin{lemm}\label{lm:Combi1}
Let $\tau, \kappa\in \mathbb N$ be integers with $\kappa<\tau$. The number of
ordered integer decompositions of $\tau$ of length $\kappa$, i.e., the number
of $(u_1,\dotsc, u_\kappa)\in \mathbb N^\kappa$ such that $u_i\ge 0$ and
$u_1+ \dotsb + u_{\kappa} \le \tau$, is equal to
\[
 \binom{\tau+\kappa}{\kappa} = \frac{(\tau+\kappa)!}{\kappa!\tau!}.
\]
\end{lemm}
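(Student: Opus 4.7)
The plan is to reduce the inequality $u_1 + \cdots + u_\kappa \leq \tau$ to an equality by introducing a slack variable, and then apply the classical stars and bars argument.

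First I would set $u_{\kappa+1} = \tau - (u_1 + \cdots + u_\kappa)$. Since $u_1 + \cdots + u_\kappa \leq \tau$ and each $u_i \geq 0$, this gives $u_{\kappa+1} \geq 0$, and conversely any choice of $(u_1,\ldots,u_{\kappa+1}) \in \mathbb{N}^{\kappa+1}$ with $u_1 + \cdots + u_{\kappa+1} = \tau$ determines a valid tuple $(u_1,\ldots,u_\kappa)$ of the desired form. This establishes a bijection between the set we want to count and
\[
\{(u_1, \ldots, u_{\kappa+1}) \in \mathbb{N}^{\kappa+1} : u_1 + \cdots + u_{\kappa+1} = \tau\}.
\]

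Then I would invoke the standard stars and bars bijection: encode each such tuple as a binary string of $\tau$ stars and $\kappa$ bars, where the $i$-th block of consecutive stars (delimited by the bars) has length $u_i$. The number of such strings is the number of ways to choose the positions of the $\kappa$ bars among $\tau + \kappa$ total symbols, which gives $\binom{\tau+\kappa}{\kappa}$.

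There is no real obstacle here; the statement is a textbook combinatorial identity. The only minor care needed is to confirm the bijection with the slack variable works on the nose (the condition $\kappa < \tau$ in the hypothesis is not even needed for the identity itself, it only matters for the asymptotic use of the lemma that will follow). I would keep the proof to a few lines, stating the bijection and quoting stars and bars.
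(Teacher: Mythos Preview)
Your proof is correct and is the standard stars-and-bars argument; the paper itself does not give a proof of this lemma, treating it as a well-known combinatorial identity and moving directly to its application.
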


Then $(s_0, t_0, s_1,\dotsc, s_N)$ forms an ordered integer decomposition of some integer
$\tau \le T+10ND'$, with $\kappa=2N+1$. Their number is thus bounded by $\binom{T+10ND'+2N+1}{2N+1}$.
Recall that by~\eqref{eqn:nb-excursions}, we have $N\leq \nu$, which is
bounded by $T/2$ for large $T$, so that $T+10ND' + 2N+1 \leq 8D'T$ and $2N+1
\leq 3\nu \leq 8D'\nu$. We get $\binom{T+10ND'+2N+1}{2N+1} \leq
\binom{8D'T}{2N+1}\leq \binom{8D'T}{8D'\nu}$ thanks to monotonicity
properties of binomial coefficients. Summing over all the values of $N$, we
obtain the estimate
\begin{multline*}
  \calN_F(K, K_R, \alpha;T, T+T_0)
  \leq (\nu + 1) \cdot \binom{8D'T}{8D'\nu} e^{3C(F, \tilde{K}) \nu}\cdot  C_\eta^{3\nu} \\
  \exp\pare*{(\alpha \delta_\Gamma(F) + (1-\alpha) \delta_{\Gamma_{\tilde K}}(F) + \eta)T
    + (\abs{\delta_{\Gamma}(F)} + \eta + \delta_\Gamma(F) - \delta_{\Gamma_{\tilde K}}(F)) 10\nu D'}\,.
\end{multline*}

To conclude the proof, we should estimate the exponential growth rate of the
various terms in this expression when $T$ tends to infinity. Recall that $\nu
\leq \alpha T/R$ by~\eqref{eq:nu_le}. Stirling's formula $n! \sim \sqrt{2\pi
n}(n/e)^n$ implies that the exponential growth rate of $\binom{8D'T}{8D'\nu}
\leq \binom{8D'T}{8D'T \cdot \alpha/R}$ is bounded by $-\rho \log \rho -
(1-\rho)\log (1-\rho)$ for $\rho = \alpha/R$. Finally, the exponential growth
rate of $\calN_F(K, K_R, \alpha;T, T+T_0)$ is bounded by
\begin{multline*}
  \alpha \delta_\Gamma(F) + (1-\alpha) \delta_{\Gamma_{\tilde K}}(F) + \eta -\rho \log \rho - (1-\rho)\log (1-\rho)
  \\
   + \pare*{3C(F, \tilde{K}) + 3 \log C_\eta + 10D'(\abs{\delta_{\Gamma}(F)} + \eta + \delta_\Gamma(F) - \delta_{\Gamma_{\tilde K}}(F))}
  \frac{\alpha}{R}\,.
\end{multline*}
This concludes the proof of the theorem.
\end{proof}



\subsection{\texorpdfstring{Gurevič}{Gurevic} and geometric pressures at infinity coincide}\label{sec:GurPressure}

This paragraph is devoted to the proof of the following part of
Theorem~\ref{th:AllPressionEquivalent}.

\begin{theo}\label{th:PressureGeod}
For all  Hölder-continuous potentials $F: T^1M \to \bbR$ with finite
pressure, we have
\[
\PGur^\infty(F) = \delta_\Gamma^\infty(F).
\]
\end{theo}

By Corollary~\ref{cor:half-thm-Gur-geom}, it is enough to prove the
inequality $\PGur^\infty(F) \geq \delta_\Gamma^\infty(F)$.
\begin{proof}
The set of periodic orbits of the geodesic flow (counted with locally bounded
multiplicities in the orbifold case) is in $1-1$ correspondence with the set
of conjugacy classes of hyperbolic elements of $\Gamma$. Let us recall how.
Given a periodic orbit $p\subset T^1M$, its preimage $p_\Gamma^{-1}(p)\subset
T^1\tilde M$ is a countable union of orbits of the geodesic flow on
$T^1\tilde M$. Each of these orbits projects on $\tilde M$ to the translation
axis of a hyperbolic element of $\Gamma$, which is unique (modulo the
pointwise stabilizer of their axis) when requiring that this element
translates along the axis with translation length equal to $\ell(p)$, and in
the direction given by the direction of $(g^t)_{t>0}$ on this orbit. The
number of conjugacy classes of hyperbolic elements (modulo the pointwise
stabilizer of their axis) associated with $p$ in this way is equal to the
multiplicity of $p$.

Let $K\subset M$ be a compact subset whose interior intersects a closed
geodesic, and containing the projection $p_\Gamma(o)$. Let $\tilde K$ be a
compact subset of $\tilde M$ which contains $o$, such that $p_\Gamma(\tilde
K)=K$, and whose interior intersects $\tilde\Omega$. Let $N$ be the maximal
multiplicity of $p_\Gamma$ on $\tilde K$. Let $D$ be the diameter of $\tilde K$.

With the notation of~\eqref{eq:Periodic2K-time}, set
\[
 \calP(K, \alpha) \coloneqq \calP(K,K, \alpha)  \quad  \text{ and}  \quad
 \calP(K, \alpha; T,T') \coloneqq \calP(K,K, \alpha; T,T') \,.
\]

First, by Lemma~\ref{lem:Pit-Schapira2.6}, there exists a finite set
$\mathcal{G} = \{g_1,\dotsc, g_k\} \subset \Gamma$, such that, for all
$\gamma\in\Gamma_{\tilde K}$, there exist $g_i, g_j$ (not necessarily unique)
such that $g_i^{-1}\gamma g_j$ is hyperbolic with a translation axis which intersects
$\tilde K$. Let $p_\gamma$ be the associated periodic orbit (it depends on
the choice of $g_i,g_j$ but this is not a problem). As the axis of
$g_i^{-1}\gamma g_j$ intersects $\tilde K$, we deduce that
\[
\abs*{\ell(p_\gamma)-d(o,g_i^{-1}\gamma g_j o)}\le 2D\,.
\]
By the triangular inequality, we deduce that
\[
\abs{d(o,\gamma o)-\ell(p_\gamma)}\le 2D + 2\max_{1\le i\le k}(d(o,g_i o))\,.
\]
Similarly, thanks to Lemma~\ref{lm:hold-potential}, and using the fact that
$\tilde F$ is bounded on the $\delta$-neighborhood of $\Gamma\tilde K$, with
$\delta=\max_{1\le i\le k} (d(o,g_i o))$, we deduce that there exists  a constant $C=C(F,
\tilde K, g_1, \dotsc, g_k)$ such that
\[
\abs*{\int_o^{\gamma o} \tilde F-\int_{p_\gamma} F} \le C\,.
\]

Choose now some $R>1$, and let $\tilde K_R$ be the $R$-neighborhood of
$\tilde K$. For $\gamma\in\Gamma_{\tilde K_R}$, there exist $a \in \tilde
K_R$ and $b\in \gamma\tilde K_R$ such that the geodesic segment $[a,b]$ only
meets $\Gamma \tilde K_R$ at its endpoints. Using the above notations, we
assume that $g_i^{-1}\gamma g_j$ is hyperbolic with associated periodic orbit
$p_\gamma$. The point $g_i o$ is at  distance at most $\delta$ from
$o$, which is at distance at most $D$ from $a$, and the point $g_j o$
is at  distance at most $\delta$ from $\gamma g_j o$ which is at
  distance at most $D$ from $b$. Therefore, by
Lemma~\ref{lm:NegCurv4Points}, there exists a constant $T_0>0$ depending on
$\delta$, $D$ and the bounds on the curvature, such that, when removing
segments of length  $T_0$ at the beginning and the end of $[g_io,\gamma g_jo
]$, the middle segment is in a neighborhood of radius less than $1/2$ from
the geodesic segment $[a,b]$.

On the other hand, the periodic orbit $p_\gamma$ associated with
$g_i^{-1}\gamma g_j$ admits a translation axis which intersects $\tilde K$. Let
$x\in\tilde K$ be a point on this axis and $g_i^{-1}\gamma g_j x\in
g_i^{-1}\gamma g_j\tilde K$ its image by $g_i^{-1}\gamma g_j$. By
Lemma~\ref{lm:NegCurv4Points},   when removing segments of length $T_0$ at
the beginning and the end of the segment $[x, g_i^{-1}\gamma g_j x]$, the
middle segment is in a neighborhood of size less than $1/2$ of the geodesic
segment $[o,g_i^{-1}\gamma g_j o]$.

The triangular inequality implies that, after removing segments of length
$2T_0$ at the beginning and at the end of the geodesic segment $[g_i x,
\gamma g_j x]$, this segment is at distance at most $1/2$ of $[g_i o,\gamma
g_j o]$, and therefore, at distance at most $1$ from $[a,b]$. In particular,
as $\gamma\in \Gamma_{\tilde K_R}$, and $R\ge 1$, after removing segments of
length $2T_0+R$ at the beginning and the end of $[g_i x, \gamma g_j x]$, this
segment spends the rest of the time outside $\Gamma \tilde K$.

We deduce that the time spent by $p_\gamma$ inside $K$ is at most $4T_0+2R$.
In particular, when $\ell(p_\gamma)\ge \frac{4T_0+2R}{\alpha}$, the periodic
orbit $p_\gamma$ spends a proportion of time at most $\alpha$ inside $K$. As
$\abs{d(o,\gamma o)-\ell(p_\gamma)}\le 2D + 2\delta$, it implies that as soon
as $d(o,\gamma o)\ge 2D + 2\delta+ \frac{4T_0+2R}{\alpha}$, then $p_\gamma$
belongs to $\calP(K, \alpha)$. In particular, when $T>1+ 2D+2\delta
+\frac{4T_0+2R}{\alpha}$, the above considerations show that for
$\gamma\in\Gamma_{\tilde K _R}(T-1,T)$, the associated periodic orbit
$p_\gamma$ belongs to $\calP(K,\alpha,T-1-2D-2\delta, T+2D+2\delta)$.

The translation axis of $g_i^{-1}\gamma g_j$ is a lift of $p_\gamma$ that intersects
$\tilde K$. The number of such lifts is at most linear in $\ell(p_\gamma)$,
by~\eqref{lm:nW}. Therefore, the multiplicity of the above map $\gamma\mapsto
p_\gamma$ is at most linear in $\ell(p_\gamma)$.

The above considerations imply that there exist constants $C$ and $\tau$
depending only on $K,\tilde K, D,\alpha,F$ such that for $T>0$ large enough,
and all $R>1$,
\[
\sum_{\gamma\in \Gamma_{{\tilde K}_R},\, T-1 \leq d(o, \gamma o)\leq T} e^{\int_o^{\gamma o}\tilde F}  \le
C\times T\times \sum_{p\in  \mathcal{P}(K, \alpha,T-1-\tau,T+\tau)} e^{\int_{p}F}\,.
\]
Taking $\frac{1}{T}\log $ of the above inequality, and letting $T\to +\infty$, and then letting $R\to +\infty$ and $\alpha \to 0$ gives $\PGur^\infty(F) \geq \delta_\Gamma^\infty(F)$.
\end{proof}


\section{Variational and geometric pressures at infinity coincide}\label{sec:ErgoPressure}

This section is devoted to the proof of the equality between geometric and
variational pressures at infinity.

\begin{theo}\label{th:ErgoPressure}
Let $F: T^1M \to \bbR$ be a Hölder-continuous potential. Then
\[
\delta_{\Gamma}^\infty(F) = \Pvar^\infty(F).
\]
\end{theo}

The first paragraph contains the proof of the easier inequality
$\delta_\Gamma^\infty(F)\le \Pvar^\infty(F)$. The harder inequality
$\Pvar^\infty(F)\le \delta_\Gamma^\infty(F)$ will follow from
Section~\ref{sec:cinq}, after some reductions. First, in
Section~\ref{ssec:ErgodicPreliminaries}, we introduce a notion of pressure,
that we call {\em Katok pressure} in reference to the Katok entropy
introduced in~\cite{Katok80}. We show that the variational pressure is
bounded from above by this new pressure, involving spanning sets. Using the
closing lemma, in Section~\ref{sec:escape-of-mass}, we study escape of mass
of sequences of probability measures, and relate this new pressure to the
Gurevič pressure (which involves weighted growth of periodic orbits), and
conclude the proof of the inequality $\Pvar^\infty(F)\le
\delta_\Gamma^\infty(F)$ thanks to Theorem~\ref{th:CountExcursion}.

\subsection{The first inequality}
\label{subsec:Pvar_infty_easy}

This paragraph is devoted to the proof of  the easier inequality
$\delta_\Gamma^\infty(F)\le \Pvar^\infty(F)$. We deal first with the
exceptional situation where $\delta_\Gamma(F)=\infty$.

\begin{lemm}
\label{lem:Pvar_infty_infty} Under the assumptions of
Theorem~\ref{th:ErgoPressure}, if we assume $\delta_\Gamma(F)=\infty$, then
for any compact subset $K$ in $  M$ and any $C, \epsilon>0$, there exists $\mu
\in \calM_{1, \mathrm{erg}}^F$ such that $\mu(T^1K)<\epsilon$ and
$h_{KS}(\mu) + \int F \dd\mu > C$.
\end{lemm}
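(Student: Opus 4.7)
The plan is to construct, for any target $C$, a single closed geodesic $p$ whose time average of $F$ exceeds $C$ and which spends most of its time outside $K$. Its normalized Lebesgue measure $\mu_p$ will then be ergodic and $(g^t)$-invariant, lie in $\calM_{1,\mathrm{erg}}^F$ (since $F$ is bounded on the compact orbit), have zero Kolmogorov--Sinai entropy, and so simultaneously realize both $\mu_p(T^1K)<\epsilon$ and $h_{KS}(\mu_p)+\int F\,d\mu_p>C$.

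First I would invoke Proposition~\ref{prop:infinite_pressure} to upgrade $\delta_\Gamma(F)=+\infty$ to $\delta_\Gamma^\infty(F)=+\infty$. Because the pressure at infinity is the infimum of $\delta_{\Gamma_{\widetilde K}}(F)$ over compact $\widetilde K$, this forces $\delta_{\Gamma_{\widetilde K}}(F)=+\infty$ for \emph{every} compact $\widetilde K\subset\widetilde M$. I would then choose a compact $\widetilde K'\subset\widetilde M$ containing $o$ and a lift of $K$, with $p_\Gamma(\widetilde K')\supset K$, and whose interior meets $\widetilde\Omega$ (so that Lemma~\ref{lem:Pit-Schapira2.6} is available). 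Fixing $A$ larger than $C+\delta_0+1$, where $\delta_0<\infty$ denotes an upper bound for the volume entropy of $\widetilde M$ (finite under the pinched curvature assumption, so that $\#\{\gamma\in\Gamma:d(o,\gamma o)\le T\}\le e^{\delta_0 T+O(1)}$), the $\limsup$ definition of $\delta_{\Gamma_{\widetilde K'}}(F)=+\infty$ yields, for arbitrarily large $T$,
\[
\sum_{\gamma\in\Gamma_{\widetilde K'},\;T-1\le d(o,\gamma o)\le T}e^{\int_o^{\gamma o}\widetilde F}\;\ge\;e^{AT}.
\]
Pigeonholing against the volume bound produces some $\gamma\in\Gamma_{\widetilde K'}$ with $d(o,\gamma o)\in[T-1,T]$ and $\int_o^{\gamma o}\widetilde F\ge (A-\delta_0)T-O(1)\ge (C+1)T$.

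I would then convert such a $\gamma$ into a closed geodesic exactly as in Section~\ref{sec:GurPressure}: Lemma~\ref{lem:Pit-Schapira2.6} supplies $g_i,g_j$ in a fixed finite subset of $\Gamma$ such that $\gamma'=g_j^{-1}\gamma g_i$ is hyperbolic with axis meeting $\widetilde K'$, and I let $p$ denote the associated closed orbit. Triangle inequalities and Lemma~\ref{lm:hold-potential} give $\ell(p)=d(o,\gamma o)+O(1)$ and $\int_p F=\int_o^{\gamma o}\widetilde F+O(1)$, so $\ell(p)^{-1}\int_p F>C$ once $T$ is large. The same shadowing argument used at the end of Section~\ref{sec:GurPressure}---Lemma~\ref{lm:NegCurv4Points} applied to the excursion segment $[x,\gamma y]$ from the definition of $\Gamma_{\widetilde K'}$, which meets $\Gamma\widetilde K'$ only at its two endpoints---shows that $p$ stays outside $T^1 p_\Gamma(\widetilde K')\supset T^1K$ except on two pieces of bounded length, whence $\mu_p(T^1K)=O(1)/\ell(p)<\epsilon$ for $T$ large. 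The main technical point I expect is precisely this uniform shadowing bound on the time spent in $T^1K$, but it is exactly the estimate already established in the proof of Theorem~\ref{th:PressureGeod}, so the bulk of the argument can be imported directly.
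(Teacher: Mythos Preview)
Your approach is correct and takes a genuinely different route from the paper's. The paper does not invoke Proposition~\ref{prop:infinite_pressure}; instead it uses $\Pvar(F)=\infty$ directly to produce an ergodic $\nu$ with $\int F\,d\nu$ very large, follows a $\nu$-typical orbit starting in $T^1K$, isolates along it a long excursion outside $K_1$ (reaching beyond $K_R$) on which the time-average of $F$ remains large, and then closes that single excursion via the Connecting Lemma (Proposition~\ref{lem:connecting}). Your route is more group-theoretic: once Proposition~\ref{prop:infinite_pressure} gives $\delta_{\Gamma_{\widetilde K'}}(F)=+\infty$, the ``long excursion with large $F$-average'' is already packaged as a single element of $\Gamma_{\widetilde K'}$ via pigeonholing against the volume-entropy bound, and the conversion to a closed orbit is precisely the machinery of Section~\ref{sec:GurPressure}. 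Your argument is tidier once Proposition~\ref{prop:infinite_pressure} is available and recycles existing estimates; the paper's argument is more self-contained and purely dynamical, never leaving the flow picture.

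One small inaccuracy to correct: the shadowing of Section~\ref{sec:GurPressure} does \emph{not} show that $p$ stays outside $T^1 p_\Gamma(\widetilde K')$---the axis is only $\epsilon$-close to the excursion segment $[x,\gamma y]$, so it may re-enter a thin collar of $\Gamma\widetilde K'$. What that argument actually yields (and what you need) is that $p$ stays outside $T^1K$, provided $\widetilde K'$ is chosen to contain an $\epsilon$-buffer around a lift of $K$; this is exactly why Section~\ref{sec:GurPressure} works with $\Gamma_{\widetilde K_R}$ for some $R>1$ rather than $\Gamma_{\widetilde K}$. With that choice of $\widetilde K'$ your conclusion $\mu_p(T^1K)=O(1)/\ell(p)<\epsilon$ is valid.
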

\begin{proof}
The entropy $h_{KS}(\mu)$ of any invariant measure $\mu\in\calM_{1,
\mathrm{erg}}^F$ is nonnegative.
Therefore,  it suffices to find a measure $\mu\in\calM_{1,
\mathrm{erg}}^F$ with
$\mu(T^1K)<\epsilon$ and $\int F \dd\mu>C$. By Theorem~\ref{th:Variationnel},
$\Pvar(F) = \infty$.

Let $R=R(C, K)$ and $C'=C'(C, K, R)$ be two large enough constants, to be
determined later on in the proof.   The equality $\Pvar(F) = \infty$ ensures the
existence of a measure $\nu \in \calM_1^F$ with $\int F \dd\nu > C'$, for
arbitrarily large $C'>0$. Taking an ergodic component of $\nu$ if necessary,
we can assume that $\nu$ is ergodic. If $\nu(T^1 K)<\varepsilon$, we are done
choosing $\mu=\nu$ and $C'=C$.

Otherwise, consider a $\nu$-typical vector $v$ in $T^1K$. By Birkhoff ergodic
Theorem and Poincaré recurrence Theorem, one can find an arbitrarily large
$T>0$ such that   $1/T \int_0^T F(g^t v)\dd t> C'$ and $g^T v \in K$.

Let $K_1$ (resp.\ $K_R$) be the neighborhood of size $1$ (resp.\ $R$) of $K$.
Consider the open set $\{t\in [0,T], \,\,g^t v \notin K_1\}$.   Inside this
set, consider those connected components that contain some $t$ such that $g^t
v$ does not belong to $K_R$. These components have length at least $2(R-1)$.
If $C'$ is large enough so that $\abs{F}<C'$ on $K_R$, we claim that there
exists such a component $(a,b)$ such that $\int_a^b F(g^t v) > C' (b-a)$.
Indeed, otherwise, one would get $\int_0^T F(g^t v) \leq C' T$ by summing the
contributions of these big connected components, and integrating the bound
$\abs{F}\leq C'$ on the remaining points.

Set $w = g^a v$, and $\tau=b-a$. The piece of orbit $(g^tw)_{0\le t\le \tau}$
has length larger than $2(R-1)$, its projection to $M$ starts and ends in $\partial K_1$ and remains
outside  $K_1$ in between, and it
satisfies $\int_0^{\tau} F(g^t w) \dd t \geq
\tau C'$.

Using the connecting lemma~\ref{lem:connecting} in the compact subset $K_1$,
we get a closed orbit $(g^t w')_{0\leq t \leq \tau + s}$, with $s \leq T_0$,
for some $T_0$ depending only on $K_1$, which stays at distance at most $1/2$
of the orbit of $w$ for $T_0 \leq t\leq \tau - T_0$. In particular, $g^tw'$
can belong to $K$ only for $t\leq T_0$ or $\tau - T_0 \le t \le \tau+s$.
Define the measure $\mu$ as the uniform probability measure along this periodic orbit. If
$R$ has been chosen large enough compared to $T_0$, we deduce $\mu(T^1 K)\le
\frac{3T_0}{2(R-1)}\le \varepsilon$. Let us now check that $\int F \dd\mu$ is
large. First, $\abs*{\int_0^\tau F(g^t w')\dd t- \int_0^\tau F(g^t w)}$ is
bounded by a constant $C_0$ depending only on $K$, by
Lemma~\ref{lm:hold-potential}. Second, $\int_\tau^{\tau + s} F(g^t w')$ is
bounded from below by a constant $-C_1$ depending only on $K$, as $s$ is
bounded by $T_0$ and $F$ is bounded on the $(T_0+2)$-neighborhood of $K$. We
get
\begin{equation*}
  \int_0^{\tau+s} F(g^t w') \dd t \geq \int_0^\tau F(g^t w) \dd t - C_0 - C_1
  \geq C' \tau - C_0 - C_1.
\end{equation*}
If $C'=C'(K, C, R)$ is large enough, this is at least $C (\tau + s)$, as
desired.
\end{proof}

\begin{prop}\label{prop:first-inequality}  Under the assumptions of Theorem~\ref{th:ErgoPressure},
let $F$ be a Hölder-continuous map. Then $\delta_\Gamma^\infty(F)\le
\Pvar^\infty(F)$.
\end{prop}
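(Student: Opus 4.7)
The plan is to produce escaping measures with large pressure by applying the variational principle to compactly perturbed potentials $F_N=F-N\chi$. First, dispose of the edge cases. If $\delta_\Gamma(F)=+\infty$, then $\delta_\Gamma^\infty(F)=+\infty$ by Proposition~\ref{prop:infinite_pressure}, and a diagonal extraction from Lemma~\ref{lem:Pvar_infty_infty} (along exhausting compacts $K_n\uparrow M$, thresholds $C_n\to+\infty$ and tolerances $\epsilon_n\to 0$) produces $(\mu_n)\in\calM_{1,\mathrm{erg}}^F$ with $\mu_n\rightharpoonup 0$ and $h_{KS}(\mu_n)+\int F\dd\mu_n\to+\infty$, forcing $\Pvar^\infty(F)=+\infty$; the case $\delta_\Gamma^\infty(F)=-\infty$ is vacuous. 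So I assume both $\delta_\Gamma(F)$ and $\delta_\Gamma^\infty(F)$ are finite.

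Fix a compact $\widetilde K\subset\widetilde M$ containing $o$, let $K=p_\Gamma(\widetilde K)$, and choose a Hölder cutoff $\chi:T^1M\to[0,1]$ supported in $T^1K$ with $\chi\equiv 1$ on $T^1K^-$ for a slightly smaller compact $K^-\subset K^\circ$; set $F_N=F-N\chi$ for $N>0$. The central geometric input is
\[
\delta_\Gamma(F_N)\ge\delta_{\Gamma_{\widetilde K}}(F).
\]
For $\gamma\in\Gamma_{\widetilde K}$, pick witnesses $x,y\in\widetilde K$ with $[x,\gamma y]\cap\Gamma\widetilde K\subset\widetilde K\cup\gamma\widetilde K$: since $\widetilde\chi$ takes values in $[0,1]$ and vanishes outside $T^1\Gamma\widetilde K$, the integral $\int_x^{\gamma y}\widetilde\chi$ is bounded by the time $[x,\gamma y]$ spends in $\widetilde K\cup\gamma\widetilde K$, hence by a constant depending only on $\widetilde K$. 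Lemma~\ref{lm:hold-potential} applied to the Hölder function $\widetilde\chi$ then upgrades this to the uniform bound $\int_o^{\gamma o}\widetilde\chi\le C(\widetilde K,\chi)$ over $\gamma\in\Gamma_{\widetilde K}$. Restricting the Poincaré series of $F_N$ to $\gamma\in\Gamma_{\widetilde K}$ yields a lower bound of $e^{-NC}$ times the Poincaré series of $F$ restricted to $\Gamma_{\widetilde K}$, whose critical exponent is $\delta_{\Gamma_{\widetilde K}}(F)$, proving the claim.

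Since $F_N\le F$ and $\chi$ is bounded, $\delta_\Gamma(F_N)\le\delta_\Gamma(F)<\infty$ and $\calM_1^F=\calM_1^{F_N}$. The variational principle (Theorem~\ref{th:Variationnel}) applied to $F_N$ gives, for any $\epsilon>0$, some $\mu\in\calM_1^F$ with $h_{KS}(\mu)+\int F\dd\mu-N\int\chi\dd\mu\ge\delta_{\Gamma_{\widetilde K}}(F)-\epsilon$. Combined with the matching upper bound $h_{KS}(\mu)+\int F\dd\mu\le\delta_\Gamma(F)$ from the same theorem, this gives
\[
\mu(T^1K^-)\le\int\chi\dd\mu\le\frac{\delta_\Gamma(F)-\delta_{\Gamma_{\widetilde K}}(F)+\epsilon}{N},
\]
which tends to $0$ as $N\to\infty$ with $\widetilde K,\epsilon$ fixed. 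A diagonal argument now finishes: with $\widetilde K_n\uparrow\widetilde M$ and $K_n^-$ exhausting $M$, choose $\epsilon_n\to 0$ and $N_n\to\infty$ fast enough that the resulting $\mu_n$ satisfies $\mu_n(T^1K_n^-)\le 1/n$. Then $\mu_n\rightharpoonup 0$ vaguely, and $h_{KS}(\mu_n)+\int F\dd\mu_n\ge\delta_{\Gamma_{\widetilde K_n}}(F)-\epsilon_n\to\delta_\Gamma^\infty(F)$ by Corollary~\ref{coro:comparison-crit-expo-outside-compacts}, so $\Pvar^\infty(F)\ge\delta_\Gamma^\infty(F)$. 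The main technical hurdle is the uniform bound $\int_o^{\gamma o}\widetilde\chi\le C$ over $\gamma\in\Gamma_{\widetilde K}$; once this geometric estimate is in hand, everything else is essentially automatic from the variational principle.
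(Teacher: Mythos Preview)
Your proof is correct and follows essentially the same strategy as the paper: perturb $F$ by a large compactly supported well $F_N=F-N\chi$, use the variational principle for $F_N$ to produce near-equilibrium measures, and exploit the upper bound $h_{KS}(\mu)+\int F\dd\mu\le\delta_\Gamma(F)$ to force these measures to escape to infinity while keeping pressure $\ge\delta_{\Gamma_{\widetilde K}}(F)-\epsilon$. The only cosmetic difference is in establishing $\delta_\Gamma(F_N)\ge\delta_{\Gamma_{\widetilde K}}(F)$: the paper observes directly that $F_{n,R}=F$ outside $T^1p_\Gamma B(o,R)$ and invokes Proposition~\ref{prop:CompactPerturbPotential} to get $\delta_{\Gamma_{B(o,R)}}(F_{n,R})=\delta_{\Gamma_{B(o,R)}}(F)$, whereas you essentially reprove that proposition inline via the bound $\int_o^{\gamma o}\widetilde\chi\le C$ for $\gamma\in\Gamma_{\widetilde K}$.
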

\begin{proof}
If $\delta_\Gamma(F) = \infty$,   Lemma~\ref{lem:Pvar_infty_infty} shows that
one can find a sequence of measures $\mu_n\in \calM_1^F$ tending weakly to
$0$ such that $h_{KS}(\mu_n) + \int_{T^1M} F \dd\mu_n$ tends to infinity.
Therefore, $\Pvar^\infty(F)=\infty$, and the result is obvious. If
$\delta_\Gamma^\infty(F) = -\infty$, the result is also obvious.

Assume now that $\delta_\Gamma(F) < \infty$ and $\delta_\Gamma^\infty(F) >
-\infty$. Choose for every $R\in\bbN\setminus\{0\}$ a Hölder-continuous map
$0\le \chi_R\le 1$ which approximates ${\bf 1}_{T^1p_\Gamma B(o,R)}$ on
$T^1M$: $\chi_R\equiv 1$ on $T^1\left(p_\Gamma B(o,R-1)\right)$ and
$\chi_R\equiv 0$ outside $T^1\left(p_\Gamma B(o,R)\right)$. Define
$F_{n,R}=F-n\chi_R$, for all $n\in\bbN$, and note that $F_{n,R}=F$ outside
$T^1p_\Gamma B(o,R)$ so that
$\delta_{\Gamma_{B(o,R)}}(F)=\delta_{\Gamma_{B(o,R)}}(F_{n,R})$ by
Proposition~\ref{prop:CompactPerturbPotential}. As a consequence,
\[
\delta_{\Gamma}(F_{n,R})\ge \delta_{\Gamma_{B(o,R)}}(F_{n,R})=\delta_{\Gamma_{B(o,R)}}(F)\ge \delta_\Gamma^\infty(F)\,.
\]

By the variational principle~\cite[Thm 1.1]{PPS}, we can find for all
$\epsilon>0$ a measure $\mu_{n,R,\epsilon}\in\mathcal{M}^{F_{n,R}}_1$, such
that
\[
h_{KS}(\mu_{n,R,\varepsilon})+\int_{T^1M}F_{n,R}d\mu_{n,R,\varepsilon}>\delta_\Gamma(F_{n,R})-\varepsilon\ge \delta_\Gamma^\infty(F)-\varepsilon\,.
\]

Since $F_{n,R}=F$ outside of a compact subset, $\mu_{n,R,\epsilon}$ also
belongs to $\mathcal{M}^{F}_1$. Therefore,
\begin{align*}
  \delta_\Gamma(F)& \ge h_{KS}(\mu_{n,R,\varepsilon})+\int_{T^1M}F \dd\mu_{n,R,\varepsilon}
  \\&
  \ge
  n\mu_{n,R,\epsilon} (T^1 p_\Gamma B(o,R-1))+ h_{KS}(\mu_{n,R,\varepsilon})+\int_{T^1M}F_{n,R} \dd\mu_{n,R,\varepsilon}
  \\&
  \ge n\mu_{n,R,\epsilon} (T^1 p_\Gamma B(o,R-1)) + \delta_\Gamma^\infty(F)-\varepsilon\,.
\end{align*}

Choose any sequence $\varepsilon_k\to 0$, $R_k\to\infty$, $n_k\to \infty$,
and $\mu_k=\mu_{n_k,R_k,\varepsilon_k}$. As $\delta_\Gamma(F)<\infty$, we get
from the above on the one hand that for all $R>0$,
\[
\limsup_{k\to \infty}\mu_k(T^1p_\Gamma(o,R ))=0\,,
\]
and on the other hand that
\[
\liminf_{k\to \infty} \left( h_{KS}(\mu_k)+\int F\dd\mu_k\right)\ge \delta_\Gamma^\infty(F)\,.
\]
This proves that
\[
\Pvar^\infty(F)\ge
\delta_\Gamma^\infty(F)\,.
\qedhere
\]
\end{proof}

\begin{rema}
\label{rema:Pvarerg} Since the proof only needs ergodic measures, it even
proves the slightly stronger result
\[
\delta_\Gamma^\infty(F) \leq
P_{\mathrm{var}, \mathrm{erg}}^\infty(F) \leq
P_{\mathrm{var}}^\infty(F) \,.
\]
\end{rema}

\subsection{Katok pressure}\label{ssec:ErgodicPreliminaries}

The proof of Theorem~\ref{th:PressureMassInfty} will rely on the following
notion of pressure, extending to general potentials a notion of entropy
introduced by A. Katok in~\cite{Katok80} in the case $F = 0$.

For all $v\in T^1\tilde M$ and $\epsilon,T>0$, the \emph{dynamical ball}
$B(v, \epsilon;-T, T)$ is defined by
\[
B(v, \epsilon; -T,T) = \{w\in T^1\tilde M \; ; \; \forall t\in [-T, T], d(g^t v, g^tw) \leq \epsilon\}.
\]
As in~\cite{PPS}, it is more convenient to deal with symmetric dynamical
balls. Recall from~\cite[Lemma 3.14]{PPS} that for all $0<\varepsilon\le
\varepsilon'$, there exists $T_{\varepsilon,\varepsilon'}\ge 0$, such that
for all $v\in T^1\tilde M$ and $T>0$, we have
\begin{equation*}
B(v,\varepsilon';-T-T_{\varepsilon,\varepsilon'}, T+T_{\varepsilon,\varepsilon'})
\subset B(v,\varepsilon;-T,T)\subset B(v,\varepsilon';-T,T).
\end{equation*}
As in~\cite[Rem 3.1]{ST19}, on $T^1M$, we define two kinds of dynamical
balls for $v\in T^1M, \varepsilon, T>0$: the small dynamical ball
$B_\Gamma(v,\varepsilon;-T,T)=p_\Gamma(B(\tilde v,\varepsilon;-T,T))$, where $\tilde v\in T^1\tilde M$ is a lift of $v\in T^1M$ and the
big dynamical ball
\begin{equation}\label{eqn:dyn-ball}
B_{\mathrm{dyn}}(v,\varepsilon;-T,T)=\{w\in T^1 M \; ; \; \forall t\in [-T, T], d(g^t v, g^tw) \leq \epsilon\}\supset B_\Gamma(v,\varepsilon;-T,T).
\end{equation}
Both balls coincide as soon as the injectivity radius of $M$ is bounded from
below and $\epsilon$ is small enough. More generally, if along the geodesic
$(g^t v)_{-T\le t\le T}$, the injectivity radius at all points $\pi(g^t v)$
is larger than $\varepsilon$, then

\begin{equation}\label{eqn:equality-dyn-balls}
B_{\mathrm{dyn}}(v,\varepsilon;-T,T)= B_\Gamma(v,\varepsilon;-T,T)\,.
\end{equation}

We will mainly use the small dynamical balls, that are more convenient in our
geometric context, but less natural from the dynamical point of view.

Given a probability measure $\mu$ on $T^1M$, $\delta\in (0,1)$ and
$\epsilon,T>0$,
 we will say that a set $V \subset T^1M$ is \emph{$(\mu, \delta, \epsilon;-T, T)$-spanning}, respectively \emph{dynamically-$(\mu, \delta, \epsilon;-T, T)$-spanning},
if
\[
\mu\left(\bigcup_{v\in V} B_\Gamma(v, \epsilon;-T, T) \right)\geq \delta\,,\quad\text{respectively}\quad
\mu\left(\bigcup_{v\in V} B_{\mathrm{dyn}}(v, \epsilon;-T, T) \right)\geq \delta\,.
\]
Of course, a $(\mu, \delta, \epsilon;-T, T)$-spanning set is also
dynamically-$(\mu, \delta, \epsilon;-T, T)$-spanning.

Let $F: T^1M\to \bbR$ be a Hölder-continuous potential. Let
$\mu\in\mathcal{M}^F_{1, \mathrm{erg}}$ be an ergodic probability measure on
$T^1M$, invariant under the geodesic flow, such that $\int F^-\dd\mu<\infty$.

\begin{defi} Set
\[
S_F (\mu, \delta, \epsilon;-T, T) = \inf   \sum_{v\in V} e^{\int_{-T}^T F(g^tv) \dd t},
\]
where the infimum is taken over all $V\subset T^1M$ that are $(\mu, \delta,
\epsilon;-T, T)$-spanning. Similarly define
$S^{\mathrm{dyn}}_F(\mu,\delta,\varepsilon;-T,T)$ as the infimum of the same
quantity over all dynamically- $(\mu, \delta, \epsilon;-T, T)$-spanning sets.

The \emph{Katok pressure} of $F$ with respect to $\mu$ at level $\delta$ is
defined by
\[
P^\Gamma_{\mathrm{Katok}}(\mu, F, \delta) = \sup_{\epsilon > 0} \limsup_{T\to +\infty}\frac{1}{2T} \log S_F(\mu, \delta, \epsilon; -T, T)\,.
\]
Similarly, define
\[
P^{\mathrm{dyn}}_{\mathrm{Katok}}(\mu, F, \delta) = \sup_{\epsilon>0}\limsup_{T\to +\infty}\frac{1}{2T} \log S^{\mathrm{dyn}}_F(\mu, \delta, \epsilon; -T, T)\,.
\]
The \emph{Katok pressure} of $F$ with respect to $\mu$, respectively the
\emph{dynamical Katok pressure}, is
\[
P^\Gamma_{\mathrm{Katok}}(\mu, F) = \inf_{\delta\in (0,1)}P^\Gamma_{\mathrm{Katok}}(\mu, F, \delta),
\]
respectively
\[
P^{\mathrm{dyn}}_{\mathrm{Katok}}(\mu, F) = \inf_{\delta\in (0,1)}P^{\mathrm{dyn}}_{\mathrm{Katok}}(\mu, F, \delta)\,.
\]
\end{defi}


Comparison between the two kinds of dynamical balls in~\eqref{eqn:dyn-ball}
implies the following inequality:
\[
P^{\mathrm{dyn}}_{\mathrm{Katok}}(\mu,F)\le P^\Gamma_{\mathrm{Katok}}(\mu,F)\,.
\]
The first and main inequality of Proposition~\ref{prop:EntropyKatok} below was
shown in~\cite{Katok80}. Compactness was assumed, but his proof~\cite[(1.4)
p.~144]{Katok80} does not use the compactness of the underlying manifold. The
second inequality below follows obviously from the above considerations.

\begin{prop}[Katok~\cite{Katok80}] \label{prop:EntropyKatok}  Let $\mu$  be a $g^t$-invariant ergodic probability measure. Then for all
$\delta>0$,
\[
h_{KS}(\mu) \leq h_{\mathrm{Katok}}(\mu)= P^{\mathrm{dyn}}_{\mathrm{Katok}}(\mu,0)\le P^\Gamma_{\mathrm{Katok}}(\mu,0)\,.
\]
\end{prop}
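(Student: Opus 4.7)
The plan is to prove the two inequalities separately. The second one, $P^{dyn}_{\mathrm{Katok}}(\mu, 0) \leq P^\Gamma_{\mathrm{Katok}}(\mu, 0)$, is immediate from the inclusion $B_\Gamma(v, \epsilon; -T, T) \subset B_{\mathrm{dyn}}(v, \epsilon; -T, T)$ recorded in~\eqref{eqn:dyn-ball}: every $(\mu, \delta, \epsilon; -T, T)$-spanning set is automatically dynamically-$(\mu, \delta, \epsilon; -T, T)$-spanning, so the infimum defining $S^{dyn}_0$ is taken over a strictly larger class than that defining $S_0$. Consequently $S^{dyn}_0 \leq S_0$ at every $(\delta, \epsilon; -T, T)$, and the pressure inequality follows at once.

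For Katok's inequality $h_{KS}(\mu) \leq P^{dyn}_{\mathrm{Katok}}(\mu, 0)$, the plan is to reproduce the 1980 argument of Katok, whose only non-trivial ingredients are Shannon--McMillan--Breiman together with the existence of a partition with small-measure boundary, and which does not use compactness. Fix $\eta, \epsilon > 0$ and $\delta \in (0,1)$. The first step is to produce a countable measurable partition $\xi$ of $T^1 M$ with finite Shannon entropy and $h_{KS}(g^1, \mu, \xi) \geq h_{KS}(\mu) - \eta$, all of whose cells have diameter less than $\epsilon/3$ except possibly one element, which consists of the complement of a compact set of $\mu$-mass close to $1$, and such that the measure $\mu(\partial_\epsilon \xi)$ of the $\epsilon$-neighborhood of the boundary of $\xi$ is arbitrarily small. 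Shannon--McMillan--Breiman applied to $(g^1, \mu, \xi)$, combined with Egorov's theorem, then furnishes $T_0$ and a measurable set $E$ with $\mu(E) \geq 1 - \delta/2$ on which, for every $T \geq T_0$, the refined cylinder $\xi_T(v) = \bigcap_{|k|\leq T,\, k \in \bbZ} g^{-k}\xi(g^k v)$ satisfies $\mu(\xi_T(v)) \leq e^{-2T(h_{KS}(\mu) - 2\eta)}$. Applying Birkhoff's theorem to $\mathbf 1_{\partial_\epsilon \xi}$, together with uniform continuity of $(g^t)_{t \in [-1, 1]}$, shows that for $v$ in a further subset $E' \subset E$ of measure at least $1 - \delta$ and $T$ large, the dynamical ball $B_{\mathrm{dyn}}(v, \epsilon; -T, T)$ is covered by at most $e^{o(T)}$ cylinders $\xi_T(w)$, with the $o(T)$ term uniform in $v \in E'$ and tending to $0$ as $\mu(\partial_\epsilon \xi) \to 0$. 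Combining these estimates yields
\[
\mu(B_{\mathrm{dyn}}(v, \epsilon; -T, T)) \;\leq\; e^{o(T)}\, e^{-2T(h_{KS}(\mu) - 2\eta)} \quad \text{for } v \in E'.
\]
Consequently, any dynamically-$(\mu, \delta, \epsilon; -T, T)$-spanning set $V$ must satisfy $\# V \geq (\delta/2)\, e^{-o(T)}\, e^{2T(h_{KS}(\mu) - 2\eta)}$; taking $\frac{1}{2T}\log$, $T \to \infty$, then $\eta \to 0$ and $\mu(\partial_\epsilon \xi) \to 0$, we obtain $h_{KS}(\mu) \leq P^{dyn}_{\mathrm{Katok}}(\mu, 0, \delta)$ for every $\delta \in (0,1)$, hence also for the infimum defining $P^{dyn}_{\mathrm{Katok}}(\mu, 0)$.

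The main obstacle is the construction of the partition $\xi$ in the noncompact setting: one needs simultaneously finite Shannon entropy, near-maximal Kolmogorov--Sinai entropy, small-diameter cells inside a large compact core, a single ``infinity'' element absorbing the remaining mass, and a boundary whose $\epsilon$-neighborhood has small $\mu$-mass. Such a partition is built in a standard way from the regularity of the finite Radon measure $\mu$, and is the only point where the argument genuinely departs from Katok's original compact setting. With this partition in hand, the rest of the proof follows Katok's original computation verbatim, as noted in the discussion following the statement.
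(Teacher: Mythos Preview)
Your proposal is correct and aligned with the paper's treatment: the paper does not actually give a proof of this proposition, but merely remarks that the first inequality is Katok's~\cite[(1.4) p.~144]{Katok80} and that his argument ``does not use the compactness of the underlying manifold,'' while the second inequality ``follows obviously from the above considerations'' (i.e., from the inclusion $B_\Gamma\subset B_{\mathrm{dyn}}$ of~\eqref{eqn:dyn-ball}). Your write-up spells out precisely these two points, so the approaches coincide.

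One minor comment: you describe the noncompact partition construction as ``the only point where the argument genuinely departs from Katok's original compact setting,'' whereas the paper asserts flatly that Katok's proof needs no modification at all. These are compatible readings---you are simply being explicit about where the routine adaptation occurs (choosing a countable partition with a single ``infinity'' atom and finite entropy, rather than a finite partition of a compact space). This is a cosmetic difference, not a mathematical one.
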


The appendix by F. Riquelme  shows that, in the case of geodesic flows on
manifolds in negative curvature, these entropies coincide, even in our
non-compact setting, cf.\ Theorem~\ref{theo:entropies-coincide}.

In the sequel, we will always work with small dynamical balls and the
associated Katok pressure $P^\Gamma_{\mathrm{Katok}}(\mu,F)$. Assume that
$\mu$ is ergodic.

For all $A\subset T^1M$, all $\delta\in (0,1)$ and all $\epsilon,T>0$, we
define
\[
S_{F,A}(\mu, \delta, \epsilon;-T, T) =
\inf_{V\subset A \; (\mu, \delta, \epsilon; -T,T)\text{-spanning}} \sum_{v\in V} e^{\int_{-T}^T F(g^tv) \dd t}
\]
and
\[
P_{\mathrm{Katok}}^A(\mu, F,\delta) = \sup_{\epsilon>0}\limsup_{T\to +\infty}\frac{1}{2T} \log S_{F,A}(\mu, \delta, \epsilon; -T, T)\,.
\]

The following lemma is elementary but crucial in the sequel.
\begin{lemm}\label{lem:KatokrestreintaA} Under the assumptions of Theorem~\ref{th:ErgoPressure}, let
$\mu\in\mathcal{M}_{1,\mathrm{erg}}^F$ be an ergodic invariant measure. As
soon as $\mu(A)>\delta$ we have
\begin{equation*}
P^\Gamma_{\mathrm{Katok}}(\mu,F, \delta) \leq P_{\mathrm{Katok}}^A(\mu,F,\delta).
\end{equation*}
Moreover, if $\mu(A)\geq 1 - \frac \delta 6$, and $F$ is bounded on $A$, then
\begin{equation}\label{eq:PKatokTypical2}
P^\Gamma_{\mathrm{Katok}}(\mu,F, \delta) \geq P_{\mathrm{Katok}}^A(\mu,F,\frac \delta 2).
\end{equation}
\end{lemm}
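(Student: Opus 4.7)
For the first inequality I argue almost tautologically. The hypothesis $\mu(A) > \delta$ combined with inner regularity of $\mu$ produces a compact subset $K\subset A$ with $\mu(K)>\delta$; covering $K$ by finitely many small dynamical balls centered at points of $A$ shows that $(\mu,\delta,\epsilon;-T,T)$-spanning sets $V\subset A$ do exist. Any such $V$ is \emph{a fortiori} $(\mu,\delta,\epsilon;-T,T)$-spanning in the full sense, so $S_F \leq S_{F,A}$, and taking $\frac{1}{2T}\log$ and $\limsup$ gives $P^\Gamma_{\mathrm{Katok}}(\mu, F, \delta) \leq P^A_{\mathrm{Katok}}(\mu, F, \delta)$.

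For the second inequality, the plan is to push a near-optimal $(\mu,\delta,\epsilon;-T,T)$-spanning set $V$ into $A$. Set $U_v := B_\Gamma(v,\epsilon;-T,T)$. For each $v \in V$ with $\mu(U_v\cap A) > 0$, pick any $v'_v \in U_v \cap A$, and let $V' := \{v'_v\}\subset A$. A common lift together with the triangle inequality in the dynamical metric gives $B_\Gamma(v'_v, 2\epsilon; -T, T) \supset U_v$. The discarded balls (those $U_v$ with $\mu(U_v\cap A)=0$) have union contained in $A^c$ up to a $\mu$-null set, hence of total mass at most $\mu(A^c) \leq \delta/6$; therefore
\[
\mu\left(\bigcup_{v'\in V'} B_\Gamma(v', 2\epsilon;-T,T)\right) \geq \delta - \delta/6 = 5\delta/6 \geq \delta/2,
\]
so $V'$ is $(\mu,\delta/2,2\epsilon;-T,T)$-spanning. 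The Hölder bound on $F$ applied to the orbit pairs $(g^t v, g^t v'_v)_{t\in[-T,T]}$, which stay within distance $\epsilon$ of each other, gives $\left|\int_{-T}^T F(g^t v'_v)\,dt - \int_{-T}^T F(g^t v)\,dt\right|\leq 2TC_F\epsilon^\beta$ for $\epsilon\leq 1$. Summing yields
\[
S_{F,A}(\mu,\delta/2,2\epsilon;-T,T) \leq e^{2TC_F\epsilon^\beta}\,S_F(\mu,\delta,\epsilon;-T,T).
\]
Taking $\frac{1}{2T}\log$ and $\limsup$, invoking scale-independence of the Katok pressure (for $P^\Gamma$ as noted after the definition; for $P^A$ by the same argument involving~\eqref{eqn:PPS3.14} together with the boundedness of $F$ on $A$), and finally sending $\epsilon \to 0$, one obtains the announced inequality.

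The main point requiring care is the balance of numerical constants: the hypothesis $\mu(A)\geq 1 - \delta/6$ is precisely tuned so that after discarding the $U_v$'s missing $A$ (total mass at most $\delta/6$) and doubling the radius, the retained spanning mass still exceeds $\delta/2$. The hypothesis that $F$ is bounded on $A$ is used only to legitimize scale-independence of $P^A_{\mathrm{Katok}}$, so that the limit $\epsilon\to 0$ can be taken cleanly at the end.
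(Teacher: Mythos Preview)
Your argument reaches the correct conclusion but by a genuinely different route than the paper's. The paper does not compare integrals via the pointwise Hölder estimate; instead it replaces $A$ by $A' = A\cap g^{-T}A\cap g^{T}A$ (which still has mass $\geq 1-\delta/2$, this being the real reason for the constant $\delta/6$), picks $v'\in A'\cap B_\Gamma(v,\epsilon;-T,T)$, and then invokes Lemma~\ref{lm:hold-potential} (second part, valid on a noncompact set where $F$ is bounded) to get a bound on $\bigl|\int_{-T}^{T}F(g^t v')\,dt-\int_{-T}^{T}F(g^t v)\,dt\bigr|$ that is \emph{independent of $T$}. This is precisely where the hypothesis ``$F$ bounded on $A$'' is used in the paper. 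Your pointwise bound $2TC_F\epsilon^\beta$ is correct but $T$--linear, forcing the extra limit $\epsilon\to 0$; in exchange, your version never needs $F$ to be bounded on $A$, and only needs $\mu(A)\geq 1-\delta/2$ rather than $1-\delta/6$.

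One point deserves correction. Your justification for scale-independence of $P^A_{\mathrm{Katok}}$ (``by the same argument involving~\eqref{eqn:PPS3.14} together with the boundedness of $F$ on $A$'') does not go through as stated: passing from scale $\epsilon'$ at time $T+T_{\epsilon,\epsilon'}$ to scale $\epsilon$ at time $T$ requires controlling $\int_{T}^{T+T_{\epsilon,\epsilon'}}F(g^t v)\,dt$ for $v\in A$, but $g^t v$ need not lie in $A$ on that interval, so boundedness of $F$ on $A$ alone is insufficient. Fortunately you do not need full scale-independence. The trivial direction (smaller $\epsilon$ gives larger $S_{F,A}$, hence the limsup is nonincreasing in $\epsilon$) is enough: your inequality at scale $2\epsilon$ then gives, for every fixed $\epsilon_0$ and every $\epsilon\leq \epsilon_0/2$,
\[
\limsup_{T\to\infty}\frac{1}{2T}\log S_{F,A}(\mu,\tfrac{\delta}{2},\epsilon_0;-T,T)\;\leq\;\limsup_{T\to\infty}\frac{1}{2T}\log S_{F,A}(\mu,\tfrac{\delta}{2},2\epsilon;-T,T)\;\leq\;P^\Gamma_{\mathrm{Katok}}(\mu,F,\delta)+C_F\epsilon^\beta,
\]
and letting $\epsilon\to 0$ finishes the argument without ever invoking the hard direction of scale-independence or the boundedness of $F$ on $A$.
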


\begin{proof} The first inequality is immediate from the definition.

For the second one, let $A' = A \cap g^{-T}A\cap g^T A$. It satisfies
$\mu(A') \geq 1-\delta/2$. Consider $V$ a
$(\mu,\delta,\varepsilon;-T,T)$-spanning set. As $\mu(\bigcup_{v\in V}
B_\Gamma(v,\varepsilon;-T,T))\ge \delta$, we get $\mu(A'\cap \bigcup_{v\in V}
B_\Gamma(v,\varepsilon;-T,T))\ge \delta/2$. For every $v\in V$ such that
$\mu(A'\cap B_\Gamma(v,\varepsilon;-T,T))>0$, choose an element $v'$ in the
intersection $A'\cap B_\Gamma(v,\varepsilon;-T,T)$, and let $V'$ be the set
of all such $v'$. By construction, $V'\subset A$ is a
$(\mu,\delta/2,2\varepsilon;-T,T)$-spanning set.

As $F$ is Hölder-continuous, for $v\in V$ such that $\mu(A'\cap
B_\Gamma(v,\varepsilon;-T,T))>0$ and $v'\in A'\cap
B_\Gamma(v,\varepsilon;-T,T)$, the integrals $\int_{-T}^T F\circ g^t v \dd t$
and $\int_{-T}^T F\circ g^t v'\dd t$ differ at most by an additive constant
depending on the Hölder constants of $F$, and its $L^\infty$-norm on the
$\epsilon$-neighborhood of $A$, but not on $T$. Indeed, as $F$ is bounded on
$A$, and Hölder-continuous, it is also bounded on the
$\varepsilon$-neighborhood of $A$. Moreover, by definition of $A'$, $g^{\pm
T}v'\in A$, so that $g^{\pm T}v$ belong to the $\varepsilon$-neighborhood of
$A$. Moreover, $d(g^Tv,g^Tv')\le\varepsilon$ and
$d(g^{-T}v,g^{-T}v')\le\varepsilon$. Thus, Lemma~\ref{lm:hold-potential}
applies and gives the desired bound.


Therefore, up to a multiplicative constant, $\sum_{v\in V}e^{\int_{-T}^T
F(g^tv) \dd t}$  is greater than $\sum_{v'\in V'} e^{\int_{-T}^T F(g^tv')\dd
t}$. Up to this multiplicative constant, $S_F(\mu, \delta, \epsilon;-T, T)$
is greater than $S_{F,A}(\mu, \delta/2, 2\epsilon;-T, T)$. Taking the limsup
of $1/(2T)\log$ of these quantities leads to the second inequality.
\end{proof}

Since the Katok pressure is defined by taking an infimum over all $(\mu,
\delta, \epsilon;-T, T)$-spanning sets, we deduce the following useful
statement.

\begin{lemm}\label{WeakPKatok} Under the assumptions of Theorem~\ref{th:ErgoPressure}, let
$\mu\in\mathcal{M}_{1,\mathrm{erg}}^F$ be an ergodic probability measure. Let
$\delta>0$ and $\epsilon>0$ be fixed, and for all $T>0$, let $A_T\subset
T^1M$ be a set such that $\mu(A_T)>\delta$. Then
\[
P^\Gamma_{\mathrm{Katok}}(\mu, F) \leq  \limsup_{T\to +\infty}\frac 1 {2T} \log S_{F,A_T}(\mu, \delta, \epsilon; -T, T).
\]
\end{lemm}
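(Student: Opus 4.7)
The plan is to read off the inequality directly from the definition of the Katok pressure as an infimum, together with a monotonicity observation on the quantities $S_F$ and $S_{F,A_T}$. The starting point is the trivial bound
\[
P^\Gamma_{\mathrm{Katok}}(\mu, F)
= \inf_{\delta' \in (0,1)} \limsup_{T \to +\infty} \frac{1}{2T} \log S_F(\mu, \delta', \epsilon; -T, T)
\le P^\Gamma_{\mathrm{Katok}}(\mu, F, \delta),
\]
so it suffices to bound $P^\Gamma_{\mathrm{Katok}}(\mu, F, \delta)$ by the right-hand side of the statement.

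The key observation is that for every $T>0$,
\[
S_F(\mu, \delta, \epsilon; -T, T) \le S_{F,A_T}(\mu, \delta, \epsilon; -T, T).
\]
Indeed, any $V \subset A_T$ which is $(\mu,\delta,\epsilon;-T,T)$-spanning is automatically $(\mu,\delta,\epsilon;-T,T)$-spanning as a subset of $T^1M$, since the spanning condition only involves the $\mu$-mass of $\bigcup_{v \in V} B_\Gamma(v,\epsilon;-T,T)$ and does not care where the centers $v$ are located. Thus $S_F(\mu,\delta,\epsilon;-T,T)$ is an infimum over a larger class of spanning sets than $S_{F,A_T}(\mu,\delta,\epsilon;-T,T)$, and therefore is smaller. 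The assumption $\mu(A_T)>\delta$ ensures that $S_{F,A_T}(\mu,\delta,\epsilon;-T,T)$ is not vacuously $+\infty$: a countable dense subset $V$ of $A_T$ already gives $\mu\bigl(\bigcup_{v\in V} B_\Gamma(v,\epsilon;-T,T)\bigr) \ge \mu(A_T)>\delta$.

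Taking $\frac{1}{2T}\log$ of both sides of the inequality and passing to the limsup in $T$ yields
\[
P^\Gamma_{\mathrm{Katok}}(\mu, F, \delta)
\le \limsup_{T \to +\infty} \frac{1}{2T} \log S_{F,A_T}(\mu, \delta, \epsilon; -T, T),
\]
and combining with the first display finishes the proof. There is essentially no obstacle here: the statement is a formal consequence of the definition of the Katok pressure as an infimum, plus the elementary monotonicity of the infimum under enlarging the class of admissible spanning sets. The only mild subtlety is to verify that $S_{F,A_T}$ is well defined under the hypothesis $\mu(A_T)>\delta$, which is immediate.
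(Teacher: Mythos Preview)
Your proof is correct and follows exactly the approach indicated in the paper: the paper's entire argument is the sentence preceding the lemma, ``Since the Katok pressure is defined by taking an infimum over all $(\mu, \delta, \epsilon;-T, T)$-spanning sets, we deduce the following useful statement.'' You have simply unpacked this observation carefully, including the helpful remark that $\mu(A_T)>\delta$ guarantees $S_{F,A_T}$ is finite.
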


We will use the following analogue of Proposition~\ref{prop:EntropyKatok} for
general potentials.

\begin{prop}\label{prop:PressureKatok} Under the assumptions of Theorem~\ref{th:ErgoPressure},
let $F: T^1M\to \bbR$ be a Hölder-continuous map, and $\mu\in \mathcal{M}_{1,
\mathrm{erg}}^F$ be an ergodic probability measure  on $T^1M$ such that $\int
F^-\dd\mu<\infty$. Then
\[
h_{KS}(\mu) + \int_{T^1M} F \dd\mu \leq P^\Gamma_{\mathrm{Katok}}(\mu, F)\,.
\]
\end{prop}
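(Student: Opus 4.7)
The plan is to adapt Katok's classical argument for entropy (see Proposition~\ref{prop:EntropyKatok}) to the pressure setting, combining the Brin--Katok formula with the Birkhoff ergodic theorem. Fix $\delta \in (0,1)$ and $\eta > 0$; I will show that $\limsup_{T \to \infty} \frac{1}{2T} \log S_F(\mu, \delta, \epsilon; -T, T) \geq h_{KS}(\mu) + \int F\,\dd\mu - C\eta$ for a constant $C$ independent of $\eta$. Since the bound is uniform in $\delta$, taking the infimum and then $\eta \to 0$ yields the proposition. If $\int F\,\dd\mu = +\infty$, the same scheme applies with $\int F\,\dd\mu$ replaced by an arbitrarily large constant.

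First, using Theorem~\ref{theo:entropies-coincide} from the appendix, I fix $\epsilon > 0$ small enough so that for $\mu$-almost every $v$,
\[
\liminf_{T \to \infty} -\frac{1}{2T} \log \mu\bigl(B_{\mathrm{dyn}}(v, 2\epsilon; -T, T)\bigr) \geq h_{KS}(\mu) - \eta.
\]
Simultaneously, since $\mu$ is ergodic and $\int F^-\,\dd\mu < \infty$, Birkhoff's theorem gives $\frac{1}{2T}\int_{-T}^T F(g^t v)\,\dd t \to \int F\,\dd\mu$ for $\mu$-a.e.\ $v$. By Egorov's theorem there exist $T_0$ and a set $A \subset T^1M$ with $\mu(A) \geq 1 - \delta/3$ on which both bounds hold uniformly for $T \geq T_0$: namely $\mu(B_{\mathrm{dyn}}(v, 2\epsilon; -T, T)) \leq e^{-2T(h_{KS}(\mu) - 2\eta)}$ and $\int_{-T}^T F(g^t v)\,\dd t \geq 2T\bigl(\int F\,\dd\mu - \eta\bigr)$.

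For $T \geq T_0$ and any $(\mu, \delta, \epsilon; -T, T)$-spanning set $V$, the union $\bigcup_{v \in V} B_\Gamma(v, \epsilon; -T, T)$ has $\mu$-measure $\geq \delta$, so it intersects $A$ in a set of measure $\geq 2\delta/3$. Let $V' = \{v \in V : A \cap B_\Gamma(v, \epsilon; -T, T) \neq \emptyset\}$, and for each $v \in V'$ pick $w_v \in A \cap B_\Gamma(v, \epsilon; -T, T)$. Since $w_v \in B_\Gamma(v, \epsilon; -T, T)$ implies $d(g^t v, g^t w_v) \leq \epsilon$ for all $t \in [-T, T]$, the triangle inequality gives $B_\Gamma(v, \epsilon; -T, T) \subset B_{\mathrm{dyn}}(w_v, 2\epsilon; -T, T)$, and since $w_v \in A$ this yields $\mu(B_\Gamma(v, \epsilon; -T, T)) \leq e^{-2T(h_{KS}(\mu) - 2\eta)}$. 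Summing over $V'$ gives $\abs{V'} \geq \frac{2\delta}{3} e^{2T(h_{KS}(\mu) - 2\eta)}$. Moreover, $F$ being $(\beta, C_F)$-Hölder and $d(g^t v, g^t w_v) \leq \epsilon$ for $t \in [-T, T]$ yield $\abs*{\int_{-T}^T F(g^t v)\,\dd t - \int_{-T}^T F(g^t w_v)\,\dd t} \leq 2T C_F \epsilon^\beta$, so
\[
\sum_{v \in V} e^{\int_{-T}^T F(g^t v)\,\dd t} \;\geq\; \abs{V'}\cdot e^{2T(\int F\,\dd\mu - \eta - C_F \epsilon^\beta)}\;\geq\; \tfrac{2\delta}{3}\, e^{2T(h_{KS}(\mu) + \int F\,\dd\mu - 3\eta - C_F \epsilon^\beta)}.
\]
Taking $\frac{1}{2T}\log$, then $\limsup_{T \to \infty}$, then letting $\epsilon \to 0$ (which preserves the Brin--Katok bound) and $\eta \to 0$ delivers the claim.

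The principal obstacle is the validity of the Brin--Katok identification in our noncompact geometric setting, which is exactly the content of the appendix; once this is granted, the argument is a careful bookkeeping exercise, the only subtlety being the passage from $B_\Gamma$-balls centered at $v \in V$ to $B_{\mathrm{dyn}}$-balls centered at typical points $w_v \in A$, which costs a harmless factor $2$ in the radius and is absorbed by choosing the initial $\epsilon$ small.
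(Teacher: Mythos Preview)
Your argument is correct and reaches the same conclusion, but the route differs from the paper's. The paper first restricts the spanning sets to a compact Birkhoff-good set $A_{\delta,\eta}$ via Lemma~\ref{lem:KatokrestreintaA}, factors out $e^{2T(\int F\,\dd\mu - \eta)}$ from the weighted sum, and then invokes Proposition~\ref{prop:EntropyKatok} (Katok's inequality $h_{KS}(\mu)\le P^\Gamma_{\mathrm{Katok}}(\mu,0,\delta/2)$) as a black box for the remaining cardinality estimate. You instead go one level deeper: you use the Brin--Katok measure bound on dynamical balls to estimate $\abs{V'}$ directly, combined with Birkhoff on the same Egorov set. Both approaches ultimately rest on the same underlying input (the ``lower local entropy $\geq h_{KS}$'' direction of Brin--Katok, which does not require compactness), so neither is more elementary; yours is a little more self-contained, while the paper's separation into an $F$-part and a $0$-part is cleaner and reusable.

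Two small points are worth noting. First, what you actually need is the \emph{liminf} form of Brin--Katok, $\liminf_T -\frac{1}{2T}\log\mu(B_{\mathrm{dyn}}(v,2\epsilon;-T,T))\geq h_{KS}(\mu)-\eta$ for $\mu$-a.e.\ $v$; Theorem~\ref{theo:entropies-coincide} is stated with $\limsup$ and $\supess$, so strictly speaking you should cite the references invoked in its proof (\cite{BK83},~\cite{Led13}) for the liminf direction rather than the theorem itself. Second, your pointwise Hölder estimate $\abs{\int_{-T}^T F(g^tv)-\int_{-T}^T F(g^tw_v)}\le 2TC_F\epsilon^\beta$ grows linearly in $T$, which is why you must send $\epsilon\to 0$ at the end (legitimately, since $P^\Gamma_{\mathrm{Katok}}(\mu,F,\delta)$ is $\epsilon$-independent by~\eqref{eqn:PPS3.14}); the paper avoids this extra step because Lemma~\ref{lem:KatokrestreintaA} uses Lemma~\ref{lm:hold-potential} to get an $O(1)$ bound when the endpoints lie in a compact set where $F$ is bounded.
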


\begin{proof}
Let $\mu\in \calM_{1, \mathrm{erg}}^F$ be an ergodic probability measure and $F$ a Hölder-continuous
potential. Let $\delta\in (0,1)$ be fixed.

For all $\eta>0$ and $T>0$, set
\[
G_{T, \eta}(F) = \left\{v\in T^1M \; ; \; \forall t\geq T,\; \abs*{\frac 1 {2t} \int_{-t}^t F(g^s v) ds - \int F \dd\mu }\leq \eta\right\}.
\]
Birkhoff ergodic theorem implies that for all $\eta>0$, we have
$\displaystyle \lim_{T\to +\infty}\mu(G_{T, \eta}(F)) = 1$. Therefore there
exist $T_0>0$ and a compact subset $A_{\delta, \eta}\subset G_{T_0, \eta}(F)$
such that $\mu(A_{\delta, \eta})> 1 - \frac \delta 6$. Therefore,
by~\eqref{eq:PKatokTypical2},
\begin{equation}\label{eq:PKatokMin1}
P^\Gamma_{\mathrm{Katok}}(\mu,F, \delta) \geq P_{\mathrm{Katok}}^{A_{\delta, \eta}}(\mu,F,\frac \delta 2) =
\limsup_{T\to +\infty}\frac{1}{2T} \log \inf_{V\subset A_{\delta, \eta} \; (\mu, \delta/2, \epsilon;-T, T)\text{-spanning}} \sum_{v\in V} e^{\int_{-T}^T F(g^tv) \dd t}.
\end{equation}
Let $\calS_T\subset A_{\delta, \eta}$ be a finite  $(\mu,
\delta/2,\epsilon;-T, T)$-spanning set. As $A_{\delta, \eta} \subset G_{T_0,
\eta}(F)$ and thanks to the definition of $G_{T_0, \eta}(F)$, we have for $T
\ge T_0$
\[
\sum_{v\in \calS_T} e^{\int_{-T}^T F(g^tv) \dd t} \geq e^{2T (\int F \dd\mu - \eta)}\# \calS_T
\geq  e^{2T (\int F \dd\mu - \eta)} \inf \# V,
\]
the infimum being taken over all $(\mu, \delta/2, \epsilon, T)$-spanning sets
$V$.

Minimizing over $\calS_T$, Equation~\eqref{eq:PKatokMin1} leads to
\[
P^\Gamma_{\mathrm{Katok}}(\mu,F, \delta) \geq \int F \dd\mu - \eta + P^\Gamma_{\mathrm{Katok}}(\mu, 0, \delta/2)\,.
\]
Together with Proposition~\ref{prop:EntropyKatok}, this concludes the proof
of Proposition~\ref{prop:PressureKatok} since $\delta\in (0,1)$ and $\eta>0$
can be arbitrarily small.
\end{proof}

\medskip


\subsection{Escape of mass and pressure at infinity}\label{sec:escape-of-mass}

This paragraph is devoted to the proof of the following result, of
independent interest, which implies Corollary~\ref{coro:PressureMassInfty}, a
key step in the proof of Theorem~\ref{th:ErgoPressure}.

\begin{theo}\label{th:PressureMassInfty}
Let $K\subset M$ be a compact set whose interior intersects $\pi(\Omega)$,
and let $\tilde K\subset\tilde M$ be a compact subset such that
$p_\Gamma(\tilde K)=K$. Let $F : T^1 \to \bbR$ be a Hölder-continuous
potential with $\delta_{\Gamma_{\tilde K}}(F)
> -\infty$. Let $\eta >0$. For all $0< \alpha \leq 1$ and $R\geq 4$,
there exists a positive number $\psi = \psi(\tilde K, F, \eta, \alpha/R)$
with the following property. For every
$\mu\in\mathcal{M}_{1, \mathrm{erg}}^F$ with $\mu(T^1K_R)\le \alpha$, we have
\begin{equation*}
h_{KS}(\mu) + \int_{T^1M} F \dd\mu
\leq (1-\alpha)\delta_{\Gamma_{\tilde K}}(F) + \alpha\delta_\Gamma(F) + \eta + \psi.
\end{equation*}
Moreover, when $\tilde K, F$ and $\eta$ are fixed, $\psi(\tilde K, F, \eta,
\alpha/R)$ tends monotonically to $0$ when $\alpha/R$ tends to $0$.
\end{theo}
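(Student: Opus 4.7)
The overall strategy is to bound the left-hand side by the Katok pressure and then bound the Katok pressure by the weighted counting of periodic orbits from Theorem~\ref{th:CountExcursion}, via the connecting lemma. By Proposition~\ref{prop:PressureKatok}, $h_{KS}(\mu)+\int F\dd\mu \leq P^{\Gamma}_{\mathrm{Katok}}(\mu,F)$, so it suffices to bound the Katok pressure. I will control $P^{\Gamma}_{\mathrm{Katok}}(\mu,F)$ using the variant with spanning sets restricted to a Birkhoff-typical set $A_T$, as provided by Lemma~\ref{WeakPKatok}.

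The plan is as follows. Fix small auxiliary parameters $\delta,\eta'>0$ and a compact set $L\subset M$ with $L\supset K$, $\inter{L}\cap\pi(\Omega)\neq\emptyset$, and $\mu(T^1 L)>1-\delta/10$. For large $T$, define
\[
A_T=\left\{v\in T^1M : v,g^{\pm T}v\in T^1L,\ \left|\tfrac{1}{2T}\textstyle\int_{-T}^T F\circ g^t v\dd t-\int F\dd\mu\right|<\eta',\ \tfrac{1}{2T}\mathrm{Leb}\{t\in[-T,T]: g^tv\in T^1K_{R/2}\}<\alpha+\eta'\right\}.
\]
By Birkhoff's ergodic theorem applied to $F$ and to $\chi_{T^1K_{R/2}}$ (using $\mu(T^1K_{R/2})\leq\mu(T^1K_R)\leq\alpha$), together with the $\Gamma$-invariance of $\mu$, we have $\mu(A_T)>1-\delta$ for $T$ large enough. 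Take a maximal $\epsilon$-separated subset $E_T\subset A_T$ for the dynamical metric $d_{[-T,T]}(v,w)=\sup_{|t|\leq T}d(g^tv,g^tw)$; then $E_T$ is $(\mu,1-\delta,2\epsilon;-T,T)$-spanning inside $A_T$.

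For each $v\in E_T$, apply the connecting lemma (Proposition~\ref{lem:connecting}) with compact sets $L$ (for the endpoints of $v$) and $K$ (for the orbit to intersect), producing a periodic orbit $p_v$ of period in $[2T,2T+T_0]$, shadowing $(g^tv)_{t\in[-T+T_0,T-T_0]}$ within $\epsilon$, and intersecting $T^1K$. If $\epsilon<R/2$, any time $t$ for which $g^tp_v\in T^1 K_R$ in the shadowing window forces $g^tv\in T^1 K_{R/2}$, hence $\ell(p_v\cap T^1 K_R)\leq(\alpha+\eta')\cdot 2T+2T_0\leq (\alpha+2\eta')\,\ell(p_v)$ for $T$ large, so $p_v\in\calP(K,K_R,\alpha+2\eta';2T,2T+T_0)$. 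Moreover Lemma~\ref{lm:hold-potential} gives $\left|\int_{-T}^T F\circ g^t v\dd t-\int_{p_v}F\right|\leq C(F,L,\epsilon,T_0)$. The map $v\mapsto p_v$ is at most $O(T)$-to-one, because two $\epsilon$-separated vectors shadowing the same $p$ must shadow from distinct time-shifts along the period, and there are only $O(T/\epsilon)$ such shifts; alternatively one invokes item~2 of Proposition~\ref{lem:connecting}.

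Combining these, and writing $C'$ for the multiplicative losses coming from the constants above and the polynomial multiplicity,
\[
S_{F,A_T}(\mu,1-\delta,2\epsilon;-T,T)\leq\sum_{v\in E_T}e^{\int_{-T}^T F\circ g^t v\dd t}\leq C'\cdot T\cdot\widehat\calN_F(K,K_R,\alpha+2\eta';2T,2T+T_0).
\]
Taking $\tfrac{1}{2T}\log$ and $\limsup$, invoking Lemma~\ref{WeakPKatok} on the left and Theorem~\ref{th:CountExcursion} on the right, and finally letting $\eta'\to 0$, we obtain
\[
h_{KS}(\mu)+\int F\dd\mu \leq P^{\Gamma}_{\mathrm{Katok}}(\mu,F)\leq (1-\alpha)\delta_{\Gamma_{\widetilde K}}(F)+\alpha\delta_\Gamma(F)+\eta+\psi,
\]
where the absorption of the factor $2$ between $R/2$ and $R$ is harmless since $\psi(\widetilde K,F,\eta,\alpha/R)$ is only required to tend to $0$ monotonically as $\alpha/R\to 0$. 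The main obstacle in this plan is the careful bookkeeping in step three: one must choose the shadowing accuracy $\epsilon$ small enough (compared to $R$) that the time of $v$ spent in a slightly smaller neighborhood $K_{R/2}$ controls the time of the closed orbit $p_v$ in $K_R$, and one must simultaneously ensure $v$ starts and ends in a compact set (handled by $L$) so that Proposition~\ref{lem:connecting} applies even when $\mu$ assigns little or no mass to $T^1 K$.
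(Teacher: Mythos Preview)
Your overall strategy matches the paper's proof exactly: bound $h_{KS}(\mu)+\int F\dd\mu$ by the Katok pressure (Proposition~\ref{prop:PressureKatok} and Lemma~\ref{WeakPKatok}), pick a Birkhoff-typical set whose endpoints lie in a large compact set $L\supset K$, take a maximal separated set there, use the connecting lemma to replace each vector by a periodic orbit through $K$, and feed the resulting sum into Theorem~\ref{th:CountExcursion}.

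There is, however, one genuine error in the shadowing step. You assert that if $\epsilon<R/2$ and $d(g^tp_v,g^tv)\leq\epsilon$, then $g^tp_v\in T^1K_R$ forces $g^tv\in T^1K_{R/2}$. The triangle inequality goes the other way: from $g^tp_v\in T^1K_R$ you only get $g^tv\in T^1K_{R+\epsilon}$, which is useless. The paper fixes this by swapping the roles of $K_{R/2}$ and $K_R$: in the definition of $A_T$ one controls the time $v$ spends in $T^1K_R$ (directly from the hypothesis $\mu(T^1K_R)\leq\alpha$, not the weaker $\mu(T^1K_{R/2})\leq\alpha$), and then the correct implication $g^tp_v\in T^1K_{R/2}\Rightarrow g^tv\in T^1K_{R/2+\epsilon}\subset T^1K_R$ shows that the periodic orbit lands in $\calP(K,K_{R/2},\alpha+2\eta';\ldots)$. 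Theorem~\ref{th:CountExcursion} is then applied with $R/2$ in place of $R$, and your final remark about absorbing the factor $2$ into $\psi$ applies correctly at that point. Alternatively, your version can be salvaged by keeping the Birkhoff control on $T^1K_{R/2}$ but drawing the correct conclusion $p_v\in\calP(K,K_{R/2-\epsilon},\ldots)$ and applying Theorem~\ref{th:CountExcursion} with $R/4$, say.
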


Letting $K$ grow to exhaust $ M$, we deduce the following corollary, which
provides the second half of Theorem~\ref{th:ErgoPressure} (the first
inequality $\delta_\Gamma^\infty(F)\le \Pvar^\infty(F)$ has been proved in
Proposition~\ref{prop:first-inequality}).
\begin{coro}\label{coro:PressureMassInfty}
Let $F$ be a Hölder-continuous potential on $T^1M$. Let $(\mu_n)_{n\geq
0}\in(\mathcal{M}_{1}^F)^{\bbN}$ be
 a sequence of probability measures which converges in the vague topology to a measure $\mu$. Then
\[
\limsup_{n\to +\infty} \left( h_{KS}(\mu_n) + \int F \dd\mu_n  \right) \leq (1-\norm{\mu})\delta_\Gamma^\infty(F) + \norm{\mu} \delta_\Gamma(F).
\]
In particular, when $\mu_n \overset{\ast}{\rightharpoonup} 0$, then $\displaystyle
\limsup_{n\to +\infty} \left( h_{KS}(\mu_n) + \int F \dd\mu_n \right) \leq
\delta_\Gamma^\infty(F)$, so that
\[
\Pvar^\infty(F) \leq \delta_\Gamma^\infty(F).
\]
\end{coro}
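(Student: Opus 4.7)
The plan is to deduce Corollary~\ref{coro:PressureMassInfty} from Theorem~\ref{th:PressureMassInfty} via ergodic decomposition and by exhausting $M$ with compact sets.

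Since each $\mu_n \in \calM_1^F$ is only required to be invariant (not necessarily ergodic), I would first pass to the ergodic decomposition $\mu_n = \int \mu_n^\omega \, d\mathbb{P}_n(\omega)$. The affinity of Kolmogorov--Sinai entropy gives $h_{KS}(\mu_n) = \int h_{KS}(\mu_n^\omega)\, d\mathbb{P}_n(\omega)$, and Fubini gives $\int F\, d\mu_n = \int\!\!\int F\, d\mu_n^\omega\, d\mathbb{P}_n(\omega)$ as well as $\int F^-\, d\mu_n^\omega<\infty$ for $\mathbb{P}_n$-a.e.\ $\omega$, so $\mu_n^\omega \in \calM_{1,\mathrm{erg}}^F$ almost surely.

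Fix a compact $\widetilde K \subset \widetilde M$ with $K = p_\Gamma(\widetilde K)$, $R \geq 4$, $\eta>0$, and a small $\epsilon_0>0$. For each ergodic component, set $\alpha_n^\omega = \max(\mu_n^\omega(T^1 K_R), \epsilon_0) \in (0, 1]$. Theorem~\ref{th:PressureMassInfty} applied to $\mu_n^\omega$, together with the monotonicity of $\psi$ in $\alpha/R$ (which bounds $\psi(\widetilde K, F, \eta, \alpha_n^\omega/R) \leq \psi(\widetilde K, F, \eta, 1/R)$), yields
\[
h_{KS}(\mu_n^\omega) + \int F\, d\mu_n^\omega \leq \delta_{\Gamma_{\widetilde K}}(F) + \alpha_n^\omega \bigl(\delta_\Gamma(F)-\delta_{\Gamma_{\widetilde K}}(F)\bigr) + \eta + \psi(\widetilde K, F, \eta, 1/R).
\]
The key observation is that this bound is affine in $\alpha_n^\omega$, so integrating against $d\mathbb{P}_n(\omega)$ and using $\int \alpha_n^\omega\, d\mathbb{P}_n(\omega) \leq \mu_n(T^1 K_R) + \epsilon_0$ together with the monotonicity in $\beta$ of $(1-\beta)\delta_{\Gamma_{\widetilde K}}(F) + \beta \delta_\Gamma(F)$ (valid since $\delta_{\Gamma_{\widetilde K}}(F) \leq \delta_\Gamma(F)$) gives
\[
h_{KS}(\mu_n) + \int F\, d\mu_n \leq \bigl(1-\mu_n(T^1 K_R)-\epsilon_0\bigr) \delta_{\Gamma_{\widetilde K}}(F) + \bigl(\mu_n(T^1 K_R)+\epsilon_0\bigr) \delta_\Gamma(F) + \eta + \psi(\widetilde K, F, \eta, 1/R).
\]

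To conclude, I would take limits in the order $n \to \infty$, $\epsilon_0 \to 0$, $R \to \infty$, $\widetilde K \uparrow \widetilde M$, $\eta \to 0$. The portmanteau property for vague convergence on the compact set $T^1 K_R$ yields $\limsup_n \mu_n(T^1 K_R) \leq \mu(T^1 K_R) \leq \|\mu\|$; sending $R \to \infty$ then gives $\psi(\widetilde K, F, \eta, 1/R) \to 0$ and $\mu(T^1 K_R) \uparrow \|\mu\|$ by inner regularity; exhausting $\widetilde K$ gives $\delta_{\Gamma_{\widetilde K}}(F) \to \delta_\Gamma^\infty(F)$. The result follows:
\[
\limsup_n \pare*{h_{KS}(\mu_n) + \int F\, d\mu_n} \leq (1-\|\mu\|) \delta_\Gamma^\infty(F) + \|\mu\| \delta_\Gamma(F).
\]
For $\mu_n \rightharpoonup 0$ this reduces to $\delta_\Gamma^\infty(F)$, and taking the supremum over such sequences yields $\Pvar^\infty(F) \leq \delta_\Gamma^\infty(F)$. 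The main subtle point is ensuring that the bound of Theorem~\ref{th:PressureMassInfty} is affine in $\alpha$ and that the error $\psi$ is uniform across the ergodic components (both resolved by the statement of the theorem); the edge cases $\delta_\Gamma(F) = +\infty$ and $\delta_{\Gamma_{\widetilde K}}(F) = -\infty$ are handled respectively by Proposition~\ref{prop:infinite_pressure} and Remark~\ref{rema:delta_eq_neg_inf}.
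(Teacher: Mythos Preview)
Your proof is correct and follows essentially the same route as the paper: ergodic decomposition of each $\mu_n$, application of Theorem~\ref{th:PressureMassInfty} to each ergodic component, then averaging via affinity and passing to the limit. The only cosmetic differences are that the paper fixes a single $\epsilon$ and chooses $K$, $R$ simultaneously (adjusting so that $\mu(\partial T^1K_R)=0$), whereas you take sequential limits and introduce the auxiliary $\epsilon_0$ to keep $\alpha>0$; your use of the upper-semicontinuity $\limsup_n \mu_n(C)\le \mu(C)$ for compact $C$ under vague convergence is a clean way to avoid the boundary-measure-zero adjustment.
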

\begin{proof}
When $\delta_\Gamma(F)=\infty$, then $\delta_\Gamma^\infty(F)=\infty$ by
Proposition~\ref{prop:infinite_pressure}, and the result is obvious. We can
therefore assume that $\delta_\Gamma^\infty(F)<\infty$. We will deal with the
case $\delta_\Gamma^\infty(F)>-\infty$, as the case
$\delta_\Gamma^\infty(F)=-\infty$ can be treated similarly.

Let $\epsilon>0$. Let $K$ be a large compact subset of $M$, and $\tilde K$ a
compact subset of $\tilde M$ satisfying $p_\Gamma(\tilde K)=K$ and
$\delta_{\Gamma_{\tilde K}}(F) \leq \delta^\infty_\Gamma(F)+\epsilon$ and
$\norm{\mu}\leq \mu(T^1 K)+\epsilon$. There are only countably many values of
$r$ for which $\mu(\partial T^1 K_r)$ has positive measure as these sets are
disjoint. Therefore, we can pick $r$ such that $\mu(\partial T^1 K_r) = 0$.
Replacing $K$ with $K_r$, we can assume $\mu(\partial T^1 K)=0$.

We apply Theorem~\ref{th:PressureMassInfty} to $\eta=\epsilon$, obtaining a
function $\psi$. Let $R$ be large enough so that $\psi(1/R)\leq \epsilon$. We
can also ensure that $\mu(\partial T^1 K_R)=0$. For large enough $n$, we have
$\mu_n(T^1 K) \geq \mu(T^1 K)-\epsilon$ and $\mu_n(T^1 K_R) \leq \mu(T^1
K_R)+\epsilon \leq \norm{\mu}+\epsilon$. In particular, $\mu_n(T^1 K_R) \geq
\mu_n(T^1 K) \geq \norm{\mu}-2\epsilon$. Let us estimate $h_{KS}(\mu_n) +
\int F \dd\mu_n$ for such an $n$, fixed from now on.

We can write $\mu_n$ as an average of ergodic measures: $\mu_n = \int_\Omega
 \nu_\omega \dd\bbP(\omega)$, where all the $\nu_\omega$ are invariant
probability measures for $g_t$. Since $\infty>\int F^- \dd\mu_n = \int (\int
F^- \dd\nu_\omega) \dd\bbP(\omega)$, almost all the measures $\nu_\omega$
belong to $\calM_{1,\mathrm{erg}}^F$. The entropy of a convex combination of
probability measures is given by~\cite[Proposition 4.3.16 (2)]{HK}. We can
therefore apply Theorem~\ref{th:PressureMassInfty} to each of the
$\nu_\omega$ (with $\alpha = \nu_\omega(T^1 K_R)$) and then average with
respect to $\bbP$, yielding
\begin{align*}
  h_{KS}(\mu_n) + \int F \dd\mu_n
  &= \int \pare*{h_{KS}(\nu_\omega) + \int F \dd\nu_\omega} \dd\bbP(\omega)
  \\& \leq \int \pare*{(1- \nu_\omega(T^1 K_R)) \delta_{\Gamma_{\tilde K}}(F) +
\nu_\omega(T^1 K_R) \delta_\Gamma(F) + \epsilon + \psi(1/R)} \dd\bbP(\omega)
  \\& = (1 - \mu_n(T^1 K_R)) \delta_{\Gamma_{\tilde K}}(F) +
\mu_n(T^1 K_R) \delta_\Gamma(F) + \epsilon + \psi(1/R)
  \\& \leq (1 - \norm{\mu} + 2\epsilon)(\delta^\infty_\Gamma(F)+\epsilon) + (\norm{\mu} + \epsilon) \delta_\Gamma(F) + 2\epsilon.
\end{align*}
As $\epsilon$ is arbitrary, this gives the conclusion.
\end{proof}

Let us point out that when $F=0$  and $M$ is geometrically finite, under the
same hypotheses,   a stronger version of
Corollary~\ref{coro:PressureMassInfty} appears
in~\cite[Thm.~1.1]{Riquelme-Velozo}:
\[
\limsup_{n\to +\infty} h_{KS}(\mu_n) \leq (1-\norm{\mu})\delta_\Gamma^\infty(0) + \norm{\mu} h_{KS}\left(\frac \mu {\norm{\mu}}\right).
\]
In~\cite{Velozo-phd,Velozo}, Velozo announces an analogous inequality for
pressure on general negatively curved manifolds,   in the case of potentials
going to $0$ at infinity. Our approach is valid for all Hölder-continuous
potentials, but gives a weaker inequality. However, it provides enough
information for our purposes.

\begin{coro}
\label{cor:Perg_infty} The pressures $\Pvar^\infty(F)$ and $P_{\mathrm{var},
\mathrm{erg}}^\infty(F)$ are equal.
\end{coro}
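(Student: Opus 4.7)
The plan is to sandwich $\Pvar^\infty(F)$ and $P_{\mathrm{var}, \mathrm{erg}}^\infty(F)$ between two copies of $\delta_\Gamma^\infty(F)$, using results already proved in this section. The trivial inequality $P_{\mathrm{var}, \mathrm{erg}}^\infty(F) \le \Pvar^\infty(F)$ holds by definition, since every ergodic probability measure is in particular an invariant probability measure, so any admissible competitor in the definition of $P_{\mathrm{var}, \mathrm{erg}}^\infty(F)$ is also an admissible competitor for $\Pvar^\infty(F)$.

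For the reverse direction, I would invoke two results from the preceding pages. First, Remark~\ref{rema:Pvarerg} (which was a direct byproduct of the proof of Proposition~\ref{prop:first-inequality}, since that argument only produced ergodic measures) gives
\[
\delta_\Gamma^\infty(F) \le P_{\mathrm{var}, \mathrm{erg}}^\infty(F).
\]
Second, Corollary~\ref{coro:PressureMassInfty} gives the upper bound
\[
\Pvar^\infty(F) \le \delta_\Gamma^\infty(F).
\]
Chaining these:
\[
\Pvar^\infty(F) \le \delta_\Gamma^\infty(F) \le P_{\mathrm{var}, \mathrm{erg}}^\infty(F) \le \Pvar^\infty(F),
\]
so all three quantities are equal, and in particular $\Pvar^\infty(F) = P_{\mathrm{var}, \mathrm{erg}}^\infty(F)$.

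There is no real obstacle to overcome here: the statement is a corollary in the literal sense, assembling inequalities already established. The only subtle point to double-check is that Remark~\ref{rema:Pvarerg} does indeed apply: one should verify that the measures $\mu_{n_k, R_k, \varepsilon_k}$ constructed in the proof of Proposition~\ref{prop:first-inequality} are ergodic, which was arranged by selecting them via the variational principle for $F_{n,R}$ and (if necessary) passing to an ergodic component with the same or larger value of $h_{KS}(\mu) + \int F_{n,R} \dd\mu$. With that verification in hand, the corollary follows immediately and requires no additional argument.
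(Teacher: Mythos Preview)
Your proof is correct and is essentially identical to the paper's own argument: the paper chains exactly the same three inequalities ($P_{\mathrm{var}, \mathrm{erg}}^\infty(F) \le \Pvar^\infty(F)$ trivially, $\Pvar^\infty(F) \le \delta_\Gamma^\infty(F)$ by Corollary~\ref{coro:PressureMassInfty}, and $\delta_\Gamma^\infty(F) \le P_{\mathrm{var}, \mathrm{erg}}^\infty(F)$ by Remark~\ref{rema:Pvarerg}) to conclude. Your added remark about verifying that the measures in Proposition~\ref{prop:first-inequality} can be taken ergodic is a reasonable sanity check, though in the paper this is simply asserted in Remark~\ref{rema:Pvarerg} without further comment.
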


\begin{proof}
We have obviously the inequality $P_{\mathrm{var}, \mathrm{erg}}^\infty(F)
\leq \Pvar^\infty(F)$. Moreover, $\Pvar^\infty(F) \leq
\delta_\Gamma^\infty(F)$ by Corollary~\ref{coro:PressureMassInfty}. Finally,
Remark~\ref{rema:Pvarerg} gives the inequality $\delta_\Gamma^\infty(F)\leq
P_{\mathrm{var}, \mathrm{erg}}^\infty(F)$. Together, these inequalities show
that all these quantities coincide.
\end{proof}

\begin{proof}[Proof of Theorem~\ref{th:PressureMassInfty}]
As the result is obvious if $\delta_\Gamma(F)=\infty$, we may assume that
$\delta_\Gamma(F)<\infty$. Let $K\subset  M$ be a compact subset, $R>0$, and
$K_R$ the $R$-neighborhood of $K$.
Let $\eta>0$.

Let $\mu\in\mathcal{M}^F_{1, \mathrm{erg}}$ be an ergodic probability measure
on $T^1M$, and $0<\alpha\leq 1$ such that $\mu(T^1K_R) \leq \alpha$. Let
$\epsilon>0$ be small enough (how small exactly will be prescribed at the end
of the proof).

Let $A $ be a large compact subset containing $K_R$, with $\mu(T^1A)>
1-\epsilon$. Let $T_0$ be the constant given by Assertion 1 of   Proposition~\ref{lem:connecting} (Connecting lemma) applied with $A$ and $K$ in the role of $K$ and $K'$. Define
\begin{equation*}
A_T= \Bigl\{w\in T^1A, \, \abs*{\frac{1}{2T}\int_{-T}^T F\circ g^t w\dd t-\int F\dd\mu}\le \varepsilon\quad\text{and}\quad
  \frac{1}{2T}\int_{-T}^T {\bf 1}_{T^1K_R}(g^t w)\dd t\le \alpha+\epsilon\,\Bigr\}\,.
\end{equation*}
By Birkhoff ergodic Theorem, there exists $T_1>0$ such that for $T\ge T_1$,
$\mu(A_T)\ge 1-\epsilon$. Then
\[
\mu(A_T\cap g^{T+T_0} T^1A\cap g^{-T-T_0} T^1A)\ge 1-3\epsilon\,.
\]
The strategy is to bound
\[
h_{KS}(\mu)+\int F\dd\mu
\]
from above, in terms of periodic orbits, and use
Theorem~\ref{th:CountExcursion} to prove Theorem~\ref{th:PressureMassInfty}.

Consider a maximal $(\varepsilon;-T,T)$-separated subset $V$ of
$\mathcal{A}_{T}=A_T\cap g^{T+T_0} T^1 A\cap g^{-T-T_0} T^1 A$ in the sense
that the small dynamical balls $B_\Gamma(v;\varepsilon,-T,T)$, for $v\in V$,
are pairwise disjoint. By maximality,
$\mathcal{A}_{T} \subseteq \bigcup_{v \in V} B_\Gamma(v,2\epsilon;-T,T)$.
Therefore, $V$ is also a $(\mu, \delta, 2\epsilon; -T,T)$ spanning set for
any $\delta\leq 1-3\varepsilon$. Proposition~\ref{prop:PressureKatok} and
Lemma~\ref{WeakPKatok} ensure that $h_{KS}(\mu) + \int_{T^1M} F \dd\mu$ is
bounded from above by the exponential growth rate of the sums $\sum_{v\in V}
e^{\int_{-T}^T F(g^t v)\,dt}$, over such sets $V$ (that depend  implicitly on $T$).

Now, to each $v\in V$, we will associate a periodic orbit and bound the above
sum in terms of $\calN_F(K, K_R, \alpha; T-\tau, T+\tau)$ for some constant
$\tau>0$.

Take $v\in V$. As it belongs to $\mathcal{A}_T$, both points $g^{T+T_0} v$
and $g^{-T-T_0}v$ belong to $T^1 A$. By the Connecting Lemma
(Proposition~\ref{lem:connecting}) applied to the sets $A$ and $K$, we deduce
the existence of a periodic vector $v_p$, and associated periodic orbit
$p(v)$, with $\abs{\ell(p(v))-2T}\le 3T_0$, and $d(g^t v_p,
g^tv)\le\varepsilon/3$ for all $-T\le t \le T$. Note that we have used a
longer orbit to start with since  Proposition~\ref{lem:connecting}  only
gives a good distance control $T_0$-away from the endpoints of the original
geodesic. Since the interior of $K$ intersects $\pi(\Omega)$, it also
follows from Proposition~\ref{lem:connecting} that we can also require that
the orbit $p(v)$ intersects $K$.

By Lemma~\ref{lm:hold-potential}, $\int_{0}^{\ell(p(v))} F(g^t v_p)\dd t$ is
close to $\int_{-T}^{T}F(g^t v)\dd t$ up to a constant depending only on
$A$ and $F$. Since $v\in \mathcal{A}_T$, the latter integral is close to $2T\int F\dd\mu$, up
to $2T\epsilon$. Altogether, we get
\begin{equation*}
\abs*{\int_{0}^{\ell(p(v))} F(g^t v_p)\dd t-\ell(p(v))\int F\dd\mu} \leq C_0 + \ell(p(v)) \epsilon,
\end{equation*}
for some $C_0$ depending only on $A$, $\varepsilon,F$. In particular, there exists $T_3$ such
that for $T\ge T_3$, $\ell(p(v))$ is also large, so that this inequality
becomes
\[
\abs*{\frac{1}{\ell(p(v))}\int_{0}^{\ell(p(v))} F(g^t v_p)\dd t- \int F\dd\mu} \le 2\varepsilon\,.
\]
Similarly, we obtain, for $T$ large enough,
\[
\frac{\ell(p(v)\cap T^1 K_{R/2})}{\ell(p(v))} \le \alpha+2\varepsilon,
\]
starting from the same properties for the orbit of $v$ due to the definition
of $\mathcal{A}_T$, and using the fact that the orbits of $v$ and $v_p$
remain close to each other up to $\epsilon$, so the orbit of $v_p$ can be in
$K_{R/2}$ only at times when the orbit of $v$ is in $K_R$, except for times
in a bounded interval.

Moreover, as the set $V$ is $(\varepsilon;-T,T)$ separated, and the periodic
orbit $p(v)$ associated with each $v\in V$ is $\varepsilon/3$-close to it, two
parameterized orbits $p(v)$ and $p(v')$ are separated by at least
$\epsilon/3$ when $v\neq v'$. Therefore, the multiplicity of $v \mapsto p(v)$ can only come
from different choices of basepoints on the resulting orbit, separated by at
least $\epsilon/3$, plus orbifold multiplicities bounded by the compactness of $A$. Hence, this multiplicity is bounded by some
multiplicative constant times $T$.

Therefore, up to a multiplicative constant, $\sum_{v\in V} e^{\int_{-T}^T
F\circ g^t v\dd t} $ is bounded by
\[
T\calN_F(K, K_{R/2}, \alpha+2\epsilon,2T-\tau,2T+\tau)\,,
\]
for some $\tau>0$ depending on $A,F,\varepsilon,T_0$ but independent of $T$. Let $\tilde K$ be a compact set such that $p_\Gamma(\tilde K) = K$. Applying
Theorem~\ref{th:CountExcursion} with $\eta/2$ and $R/2$, we
get that its exponential growth rate is bounded by
\begin{equation*}
  (1-\alpha-2\epsilon) \delta_{\Gamma_{\tilde{K}}}(F) + (\alpha+2\epsilon) \delta_\Gamma(F) + \eta/2
  + \psi\pare*{\frac{\alpha+2\epsilon}{R/2}}
\end{equation*}
where $\psi$ is a function tending to $0$ at $0$. If $\epsilon$ is small
enough, say $\epsilon \leq \epsilon_0$, then the error term $2\epsilon
(\delta_\Gamma(F)-\delta_{\Gamma_{\tilde{K}}}(F))$ is bounded by $\eta/2$,
and we get a bound
\begin{equation*}
  (1-\alpha)\delta_{\Gamma_{\tilde K}}(F) + \alpha\delta_\Gamma(F) + \eta + \psi\pare*{\frac{\alpha+2\epsilon}{R/2}}.
\end{equation*}
Finally, we choose $\epsilon = \alpha \epsilon_0$, so that
$(\alpha+2\epsilon)/(R/2)$ is a function of $\alpha/R$ that tends to $0$ when
$\alpha/R$ tends to $0$. This is the desired bound.
\end{proof}





\section{Strong positive recurrence}\label{sec:SPR}

In symbolic dynamics, the notion of strong positive recurrence appeared in
several works, as mentioned in the introduction, see for
example~\cite{Gurevic,Gurevic2,GS,Sa99,Sa01,Ruette,BBG06, Buzzi, BBG}. In our
geometric context, when $F=0$, the notion appeared in~\cite{ST19,CDST} under
the terminology of ``strongly positively recurrent manifold'' or ``strongly
positively recurrent action''. Independently, it appeared (still in the case
$F=0$) among people interested by geometric group theory, see for
example~\cite{ACT15,WY,WY19}, under the name of ``actions with a growth gap''
or later ``statistically convex-cocompact manifolds''. We follow the ergodic
terminology of {\em strong positive recurrence} below, extending the point of
view developed in~\cite{ST19}, in the spirit of the works of symbolic
dynamics.

\subsection{Different notions of recurrence}\label{ssec:SPR}

Recall some definitions which are classical in symbolic dynamics, and were
introduced for the geodesic flow in negative curvature in~\cite{PS16, ST19}.
Recall that $\mathcal{P}$ and $\mathcal{P}'_K$ have been defined in
Paragraph~\ref{subsubsec:Gurevic}, and $n_{\tilde K}$ after
Remark~\ref{rmk:closing_hard}.

\begin{defi} A Hölder-continuous potential $F:T^1M\to \bbR$ with finite topological pressure is said to be
\begin{enumerate}
\item {\em recurrent} if there exists a compact subset $K\subset M$ whose
    interior intersects the projection $\pi(\Omega)$ of the nonwandering
    set, with a compact lift $\tilde K$ to $\tilde M$ such that
\[
\sum_{p\in\mathcal{P}} n_{\tilde K}(p)e^{\int_p (F-\delta_\Gamma(F))}\,=+\infty\,;
\]
\item {\em positively recurrent} if it is recurrent with respect to some
    compact subset $K\subset  M$ whose interior intersects  $\pi(\Omega)$,
    with a lift $\tilde K$ to $\tilde M$, and for some $N\ge 1$,
\[
\sum_{p\in\mathcal{P}'_K,\,n_{\tilde K}(p)\le N} \ell(p)e^{\int_p (F-\delta_\Gamma(F))} <+\infty\,;
\]
\item {\em strongly positively recurrent} if its pressure at infinity
    satisfies
\[
\Ptop^\infty(F)<\Ptop(F)\,.
\]
\end{enumerate}
\end{defi}

Let us introduce another quantitative notion of recurrence,  which involves an invariant measure.

Let $K\subset M$ be a compact subset, $\tilde K\subset \tilde M$ a compact
subset such that $p_\Gamma(\tilde K)=K$. For all $T>0$ large enough, as
in~\cite{ST19}, we define\footnote{In~\cite{CDST}, the definition has been slightly modified to
guarantee that it remains open when $\tilde M$ is a Gromov-hyperbolic metric
space}  $U_T(\tilde K)\subset\tilde M$
as the open set
\[
U_T(\tilde K)=\{y\in\tilde M\cup\partial\tilde M,\,\,
\exists x\in\tilde K, \,\, d(x,y)>T\,\,\textrm{and}\,\,[x,y)_T\cap \Gamma\tilde K\subset \tilde K\,\}\,,
\]
where $[x,y)_T$ denotes the geodesic segment  of length $T$ starting from $x$
on $[x,y)$. When we write this, we implicitly require $d(x,y)>T$. In other
words, $y\in U_T(\tilde K)$ if there exists some geodesic $[x,y)$ starting in
$\tilde K$ and ending at $y$, which does not meet $\Gamma\tilde
K\setminus\tilde K$ until time $T$.

For technical reasons, we will need to work with the following slightly
larger sets:
\[
U_{T_0,T}(\tilde K)=\{y\in\tilde M\cup\partial\tilde M,\,\,
\exists x\in\tilde K,  \,\, d(x,y)>T\,\,\textrm{and}\,\,[x,y)_{[T_0,T]}\cap \Gamma\tilde K\subset \tilde K\,\}\,,
\]
where $[x,y)_{[T_0,T]}$ denotes the geodesic segment  of length $T-T_0$
starting  at distance $T_0$ from $x$ on $[x,y)$ (assuming again $d(x,y)>T\ge
T_0$). In other words, $y\in U_{T_0,T}(\tilde K)$ if there exists some
geodesic $[x,y)$ starting in $\tilde K$ and ending at $y$, which does not
meet $\Gamma\tilde K\setminus\tilde K$ between times $T_0$ and $T$.

Let us define $\tilde V_T(\tilde K)\subset T^1\tilde K$ (resp.\ $\tilde
V_{T_0,T}(\tilde K)\subset T^1 \tilde K$) as the set of unit vectors tangent
at $x$ to a geodesic segment $[x,y)$, for some $y\in U_{T}(\tilde K)$ (resp.\
$U_{T_0,T}(\tilde K)$) and $x$ associated with $y$ as above. Finally, let
$V_T(\tilde K) = p_\Gamma(\tilde V_T(\tilde K))$ and $V_{T_0,T}(\tilde K) =
p_\Gamma(V_{T_0, T})(\tilde K))$.

All these sequences $(U_T(\tilde K))_{T>0}$, $(U_{T_0,T}(\tilde K))_{T>T_0}$,
$(\tilde V_T(\tilde K))_{T>0}$, $(\tilde V_{T_0,T}(\tilde K))_{T>T_0}$,
$(V_T(\tilde K))_{T>0}$ and $(V_{T_0,T}(\tilde K))_{T>T_0}$ are nonincreasing
when $T\to \infty$.

\begin{defi}
The geodesic flow $(g^t)$ is said to be {\em exponentially recurrent} with
respect to an invariant (non-necessarily finite) measure $m$ if there exist a
compact subset $K\subset M$ whose interior intersects $\pi(\Omega)$ and some
compact subset $\tilde K \subset\tilde M$ with $p_\Gamma(\tilde K)=K$ such that, for
all $T_0\ge 0$, there exist $C>0$ and $\alpha>0$ such that for $T\ge T_0$,
\[
m(V_{T_0,T}(\tilde K))\le C\exp (-\alpha T)\,.
\]
\end{defi}

In~\cite[Thms 1.2, 1.4 and 1.6]{PS16}, the following result, reformulated here
thanks to Theorem~\ref{theo:HTS}, is proven.

\begin{theo}[Pit-Schapira]\label{theo:Pit-Schap}
Let $F:T^1M\to \bbR$ be a Hölder-continuous map with finite topological pressure.
\begin{enumerate}
\item The potential $F$ is recurrent if and only if $(\Gamma,F)$ is divergent,  if and only if
    $m^F$ is ergodic and conservative
\item The potential $F$ is positively recurrent  if and only if  $m^F$ is finite.
\item The potential $F$ is positively recurrent  if and only if  it is
    recurrent and there exists a compact subset   $K\subset M$ whose
    interior intersects at least a closed geodesic, and   a compact subset
    $\tilde K\subset \tilde M$ with $p_\Gamma(\tilde K)=K$, such that
\[
\sum_{\gamma\in\Gamma_{\tilde K}} d(o,\gamma o)e^{-\delta_\Gamma(F)d(o,\gamma o)+\int_o^{\gamma o} \tilde F}< + \infty\,.
\]
\end{enumerate}
\end{theo}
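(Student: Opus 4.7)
Each of the three items relates a counting condition to a dynamical or measure-theoretic property, and the plan is to chain the pieces together using tools already in the excerpt: Theorem~\ref{theo:HTS} (Hopf-Tsuji-Sullivan-Roblin), the Gibbs product formula~\eqref{Gibbs-product} combined with the orbital Shadow Lemma~\ref{lem:orbital-shadow-lemma}, and the excursion decomposition afforded by Lemmas~\ref{lem:Pit-Schapira2.6},~\ref{lm:GeodBrisee}, and~\ref{lm:hold-potential}.

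For item (1), I would prove bounded comparability, up to at most a polynomial factor, between $\sum_{p\in\calP}n_K(p)\,e^{\int_p(F-\delta_\Gamma(F))}$ and the Poincaré series $P_{\Gamma,o,F}(\delta_\Gamma(F))$. Lemma~\ref{lem:Pit-Schapira2.6} ensures that, after multiplication by elements of a fixed finite set, every $\gamma\in\Gamma$ is conjugate to a hyperbolic element whose axis meets $\widetilde K$; that axis projects to a periodic orbit $p$, and the multiplicity of the correspondence is exactly $n_{\widetilde K}(p)$, which is comparable to $n_K(p)$. By Lemma~\ref{lm:hold-potential} the weights $e^{\int_o^{\gamma o}\tilde F}$ and $e^{\int_p F}$ agree up to a bounded multiplicative factor, so the two series diverge together; this is, by definition, the equivalence between recurrence of $F$ and divergence of $(\Gamma,F)$. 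Theorem~\ref{theo:HTS} then packages this with ergodicity and conservativity of $m^F$.

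For item (2), the plan is to translate finiteness of $\|m^F\|$ into a concrete series. Using the product formula~\eqref{Gibbs-product} on $T^1\widetilde M$, integrating over a Dirichlet-type fundamental domain for $\Gamma$, and applying the orbital Shadow Lemma term by term, one obtains two-sided estimates of $\|m^F\|$ in terms of a weighted sum over $\Gamma$. A first-return argument to $T^1 K$ (valid once conservativity from (1) holds) then identifies this sum, up to bounded factors and a multiplicity $n_K(p)$, with $\sum_{p\in\calP'_K,\,n_K(p)\le N}\ell(p)\,e^{\int_p(F-\delta_\Gamma(F))}$, the length factor $\ell(p)$ arising from a Kac-type formula for the return time; the restriction to bounded $n_K(p)$ matches the definition of positive recurrence.

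For item (3), I would relate the two conditions via excursion structure. By definition an element $\gamma\in\Gamma_{\widetilde K}$ encodes one excursion outside $\widetilde K$, and by Lemma~\ref{lm:GeodBrisee} together with the Connecting Lemma~\ref{lem:connecting} such an excursion can be closed into a periodic orbit with $n_K(p)$ bounded, so that $\ell(p)=d(o,\gamma o)+O(1)$ and $\int_p F=\int_o^{\gamma o}\tilde F+O(1)$ via Lemma~\ref{lm:hold-potential}; conversely, a primitive orbit with $n_K(p)\le N$ splits into at most $N$ excursions, each lying in $\Gamma_{\widetilde K}$, so the positive-recurrence sum is controlled by the $N$-fold convolution of the excursion sum. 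Combined with (2) this yields the stated equivalence. The main obstacle throughout is the combinatorial bookkeeping—controlling the multiplicities $n_K(p)$, distinguishing primitive from imprimitive orbits, and preventing the additive errors from Lemma~\ref{lm:hold-potential} and from the boundary of $\widetilde K$ from compounding over long excursions; the very definition of $\Gamma_{\widetilde K}$ (forcing excursions to stay outside $\Gamma\widetilde K$ apart from their endpoints) is engineered precisely to make this control tractable.
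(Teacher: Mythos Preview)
This theorem is not proved in the present paper: it is quoted from \cite[Thms 1.2, 1.4 and 1.6]{PS16} and merely \emph{reformulated} here using Theorem~\ref{theo:HTS}. There is therefore no ``paper's own proof'' to compare your proposal against; the authors take the three equivalences as input from the literature.

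That said, your sketch is a reasonable outline of the strategy actually used in \cite{PS16}, especially for item (1), where the correspondence between $\Gamma$ and periodic orbits via Lemma~\ref{lem:Pit-Schapira2.6} together with the multiplicity $n_K(p)$ is exactly the mechanism. For item (2), however, your plan is too vague at the key step: the Shadow Lemma does not directly give two-sided bounds on $\|m^F\|$ via a ``weighted sum over $\Gamma$''; the actual argument in \cite{PS16} goes through a genuine first-return (Kac) identity on $T^1K$ and a careful estimate of $\int_{T^1K}\tau_K\,dm^F$ in terms of the excursion series, which is where the factor $\ell(p)$ (resp.\ $d(o,\gamma o)$) appears. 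For item (3), the direction ``positive recurrence $\Rightarrow$ excursion series converges'' is essentially as you describe, but the converse is more delicate than an $N$-fold convolution bound: one must also establish \emph{recurrence} (divergence), and in \cite{PS16} this uses a separate argument showing that convergence of the length-weighted excursion series forces divergence of the full Poincar\'e series at $\delta_\Gamma(F)$. Your sketch omits this step entirely, which is a genuine gap.
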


In Section~\ref{sec:SPR-implique-PR}, we will prove the following result.

\begin{theo}\label{theo:SPR-implies-PR'}
Let $F:T^1M\to \bbR$ be a Hölder-continuous map with finite topological pressure. If $F:T^1M\to \bbR$ is
strongly positively recurrent, then it is positively recurrent.
\end{theo}
This Theorem has been proven in~\cite{ST19} in the case $F\equiv 0$, and the
proof is almost the same. We provide it here  for the sake of completion and the comfort of the reader.

The contrapositive reformulation is extremely useful:\\
\centerline{ \bf If the measure $m^F$ is infinite, then
$\delta_\Gamma^\infty(F)=\delta_\Gamma(F)$.}

It has the following corollary.

\begin{coro}\label{prop:GaloisCover}
Let $F:T^1M\to \bbR$ be a Hölder-continuous map with finite topological pressure. Let $p : \overline{M}\to M$
be an infinite Riemannian connected Galois cover of $M$, and
  $H = \pi_1(\bar{M}) \triangleleft \Gamma = \pi_1(M)$. Let $\overline{F}=F\circ dp:T^1\overline{M}\to \bbR$ be the lift of $F$ to $T^1\overline{M}$.
Then
\[
\delta_H^\infty(\overline{F}) = \delta_H(\overline{F}) \leq \delta_\Gamma(F).
\]
\end{coro}
\begin{proof}
The inequality $\delta_H(\overline{F}) \leq \delta_\Gamma(F)$ is immediate
since $H \subset \Gamma$. By contradiction, assume that
$\delta_H^\infty(\overline{F}) < \delta_H(\overline{F})$. Then the potential
$\overline{F}$ would be strongly positively recurrent. By
Theorems~\ref{theo:SPR-implies-PR'} and~\ref{theo:Gibbs}, once renormalized
into a probability measure, the associated equilibrium measure $m^F$ is
finite and unique. By uniqueness, the measure $m^F$ is invariant under the
action of the deck group $G = \Gamma/H$. As $G$ is infinite by hypothesis, it
is a contradiction with the finiteness of $m^F$.
\end{proof}

\begin{rema}This corollary  does not apply to non-regular cover, even
for the zero potential. For example, consider the following construction.
Given  $\Sigma_\Gamma = \mathbb H^2/\Gamma$   a compact genus 2 hyperbolic
surface, there exists $H<\Gamma $ a non-normal subgroup such that
$\Sigma_H=\mathbb H^2/H$ is a punctured torus with infinite volume. The
(non-regular) covering $p : \Sigma_H = \mathbb H^2/H \to \Sigma_\Gamma$ does
not satisfy the conclusion of the above corollary. Indeed,  $\Sigma_H$ is
convex cocompact, non elementary, with infinite volume. In particular, there
exists a large compact subset $\tilde K\subset\mathbb H^2$ such that
$H_{\tilde K}$ is finite, so that
 \[
 \delta_H(0) > 0 \text{ and } \delta_H^\infty(0) = - \infty.
 \]
\end{rema}

\begin{coro}\label{coro:exposant-infini}
There exists a complete hyperbolic surface $M$, with $\delta_\Gamma^\infty(0)
> 0$, and a Hölder-continuous potential $F: T^1M\to \bbR$ such that
$\delta_\Gamma^\infty(F) = -\infty$.
\end{coro}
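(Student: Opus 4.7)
The plan is to take $M$ as an infinite cyclic cover of a compact hyperbolic surface, and $F$ as the negative of the distance to a reference point.

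I would fix a compact hyperbolic surface $\Sigma = \bbH^2/\Gamma_0$ of genus at least two, and let $p : M \to \Sigma$ be the Galois cover associated to the kernel $H$ of a surjection $\pi_1(\Sigma) \twoheadrightarrow \bbZ$. Then $M = \bbH^2/H$ is a complete connected hyperbolic surface, and $H \triangleleft \Gamma_0$ is a nontrivial normal subgroup of the nonelementary Kleinian group $\Gamma_0$; such a subgroup is itself nonelementary, so $\delta_H(0) > 0$. The cover being infinite and Galois, applying the equality part of Proposition~\ref{prop:GaloisCover} to the trivial potential yields $\delta_H^\infty(0) = \delta_H(0) > 0$, which settles the first half of the statement.

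For the second half, I propose the Lipschitz potential $F : T^1M \to \bbR$ defined by $F(v) = -d(\pi v, x_0)$ for a fixed $x_0 \in M$; it is $1$-Lipschitz on $T^1 M$ since $\pi$ is, and hence Hölder. Pick a lift $o \in \bbH^2$ of $x_0$; the lift of $F$ then satisfies $\widetilde F(\tilde v) = -d(\pi\tilde v, Ho)$. For any $R > 0$, let $\widetilde K_R = \bar B(o, R) \subset \bbH^2$. By the very definition of $H_{\widetilde K_R}$, every $h \in H_{\widetilde K_R}$ comes with $x, y \in \widetilde K_R$ such that the interior of $[x, hy]$ avoids $H\widetilde K_R$. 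Since $H\widetilde K_R$ is precisely the closed $R$-neighborhood of $Ho$, this forces $d(z, Ho) > R$ for every interior point $z$ of $[x, hy]$, hence $\widetilde F \leq -R$ pointwise there. Consequently
\[
\int_x^{hy} \widetilde F \;\leq\; -R\, d(x, hy) \;\leq\; -R\bigl(d(o, ho) - 2R\bigr).
\]
Applying Lemma~\ref{lm:hold-potential} with the compact set $\widetilde K_R$ and the element $h \in \Gamma_0$ to compare $\int_o^{ho} \widetilde F$ with $\int_x^{hy} \widetilde F$ produces an error bounded by a constant $C(R)$ depending only on $R$ (because $|\widetilde F|$ is itself bounded on any fixed compact set). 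Thus
\[
\int_o^{ho} \widetilde F \;\leq\; -R\, d(o, ho) + C(R).
\]

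Combining this exponential weight with the elementary upper bound
\[
\#\{h \in H_{\widetilde K_R} : d(o, ho) \in [T-1, T]\} \;\leq\; C_\epsilon\, e^{(\delta_H(0) + \epsilon)T}
\]
(valid for any $\epsilon>0$ and $T$ large), the restricted Poincaré series $P_{H_{\widetilde K_R}}(s, F)$ converges as soon as $s > \delta_H(0) - R + \epsilon$, so $\delta_{H_{\widetilde K_R}}(F) \leq \delta_H(0) - R$. Letting $R \to +\infty$ yields $\delta_H^\infty(F) = -\infty$, which concludes the construction. The only non-formal input is the equality $\delta_H^\infty(0) = \delta_H(0)$ furnished by Proposition~\ref{prop:GaloisCover}; everything else is a direct Poincaré-series estimate exploiting the fact that any long excursion in $\bbH^2$ outside $\widetilde K_R$ pushes $\pi \tilde v$ far from $Ho$, which the Lipschitz choice of $F$ converts into a punishingly negative integral.
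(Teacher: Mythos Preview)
Your proof is correct and follows essentially the same approach as the paper: the same $\bbZ$-cover of a compact hyperbolic surface, the same invocation of Proposition~\ref{prop:GaloisCover} for $\delta_H^\infty(0)>0$, and a potential that is essentially minus the distance to a basepoint. The paper's written proof uses a step-function version $F\approx -|n|$ on the $n$-th translate of a fundamental domain for the deck group, but explicitly notes right afterward that the construction works with the distance potential $F(v)=-d(\pi v,x_0)$; your argument is a clean implementation of that remark, working directly with balls $\widetilde K_R$ on the universal cover rather than with unions of fundamental domains.
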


Observe that if $\delta_\Gamma^\infty(0)>-\infty$, then it is nonnegative
and every Hölder-continuous potential $F$ which is bounded from below by some
constant $-K$ satisfies $\delta_\Gamma^\infty(F)\geq -K$. Therefore examples
satisfying Corollary~\ref{coro:exposant-infini} must be unbounded from below.

\begin{proof}
Let $M=\mathbb{H}^2/\Gamma$ be a $\bbZ$-cover of a compact hyperbolic
surface. By Corollary~\ref{prop:GaloisCover}, $\delta_\Gamma^\infty(0) =
\delta_\Gamma(0)>0$. It is well known that $\delta_\Gamma(0) = 1$ (it follows
for instance from~\cite{Bro85}, see for instance~\cite{CDST} for details on
critical exponents of covers).  Choose some compact fundamental domain
$D\subset M$ with piecewise smooth boundary for the action of the deck group
$G = < g^n \; ; \; n\in \bbZ>$. For all $n\in \bbZ$, set $D_n = g^n D$. Build
a Hölder-continuous map  $F: T^1M \to \bbR$   such that for all $n\in
\bbZ\backslash \{0\}$ and  $v\in T^1 D_n$, we have $-\abs{n} \leq F(v) \leq
-(\abs{n}-1)$. Considering compact subsets $\tilde K_N$ with $p_\Gamma(\tilde
K_N)=\bigcup_{\abs{n}\le N}D_n$, we have $\delta_{\Gamma_{\tilde
K_N}}(F)\le\delta_{\Gamma_{\tilde K_N}}(0)-N$, so that
$\delta_\Gamma^\infty(F) = -\infty$.
\end{proof}

In fact, this construction applies whenever the non-wandering set $\Omega$ is
non-compact, for the potential $F(v) = - d(\pi (v), p_\Gamma(o))$ for instance.

The next two theorems are proved respectively in Subsections~\ref{exp-rec}
and~\ref{sec:SPR-ind-compact}.

\begin{theo}\label{theo:SPR-equiv-exprec}
Let $F:T^1M\to \bbR$ be a Hölder-continuous map with finite topological pressure. The potential $F$ is
strongly positively recurrent if and only if  the geodesic flow is exponentially
recurrent with respect to the measure $m^F$ given by the
Patterson-Sullivan-Gibbs construction.
\end{theo}

The last result that we shall prove provides a very satisfying information on
strongly positively recurrent potentials. We will not use it in this paper.

\begin{theo}\label{theo:indep-compact}
Let $F:T^1M\to \bbR$ be a Hölder-continuous map with finite topological pressure. If $F:T^1M\to \bbR$ is
strongly positively recurrent, then for \emph{every} compact set $\tilde
K\subset\tilde M$, whose interior intersects $\pi(\Omega)$, we have
$\displaystyle \delta_{\Gamma_{\tilde K} }(F)<\delta_\Gamma(F)\,$.
\end{theo}

In fact, the proof of Theorem~\ref{theo:SPR-equiv-exprec} shows that, for any
compact subset $K$ with $\delta_{\Gamma_{\tilde K}}(F) < \delta_\Gamma(F)$,
the sets $V_{T_0,T}(\tilde K)$ have exponentially small $m^F$-measure for all
$T\ge T_0$. Together with Theorem~\ref{theo:indep-compact}, this implies the
following corollary.
\begin{coro}\label{coro:expo-rec-indep-compact}
Let $F:T^1M\to \bbR$ be a Hölder-continuous map  with finite topological pressure and with finite Gibbs measure
$m^F$. Assume that the geodesic flow is exponentially recurrent with respect
to $m^F$. Then, for \emph{any} compact subset $K\subset M$ whose interior
intersects $\pi(\Omega)$ and any compact subset $\tilde K$ of $\tilde M$ with
$p_\Gamma(\tilde K)=K$, for any $T_0\ge 0$, there exist $C>0$ and $\alpha>0$
such that for all $T\ge T_0$,
\[
m^F(V_{T_0,T}(\tilde K))\le C\exp (-\alpha T)\,.
\]
\end{coro}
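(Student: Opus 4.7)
The plan is essentially to chain together the three main results of Section~\ref{sec:SPR} to obtain the corollary, taking advantage of the remark inserted just before its statement.

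First I would use the hypothesis of exponential recurrence with respect to $m_F$ to invoke Theorem~\ref{theo:SPR-equiv-exprec}, which gives the equivalence between exponential recurrence and strong positive recurrence. The hypothesis only asserts exponential decay of $m_F(V_{T_0,T}(\widetilde K_0))$ for \emph{some} compact set $K_0$ (with suitable lift), and this is precisely the definition of exponential recurrence used in the statement of Theorem~\ref{theo:SPR-equiv-exprec}. The conclusion is that $F$ is strongly positively recurrent, i.e.\ $\Ptop^\infty(F) < \Ptop(F)$.

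Second, since $F$ is SPR, Theorem~\ref{theo:indep-compact} then upgrades this inequality: for \emph{every} compact set $\widetilde K \subset \widetilde M$ whose interior intersects $\pi(\Omega)$ one has
\[
\delta_{\Gamma_{\widetilde K}}(F) < \delta_\Gamma(F).
\]
This is the crucial step, because it lifts the exponential recurrence hypothesis from one preferred compact set $K_0$ to arbitrary compact sets.

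Third, I would invoke the remark stated in the paragraph preceding the corollary, to the effect that the proof of Theorem~\ref{theo:SPR-equiv-exprec} actually establishes the following stronger implication: for any compact set $\widetilde K$ satisfying $\delta_{\Gamma_{\widetilde K}}(F) < \delta_\Gamma(F)$ and any $T_0\ge 0$, one has $m_F(V_{T_0,T}(\widetilde K)) \le C e^{-\alpha T}$ for some constants $C,\alpha>0$. Combining this with the conclusion of Step 2 gives the desired uniformity over all compact sets $\widetilde K$, completing the proof. The only non-routine ingredients are Theorems~\ref{theo:SPR-equiv-exprec} and~\ref{theo:indep-compact} themselves, but those are proven elsewhere; the corollary itself is a clean combination of them.
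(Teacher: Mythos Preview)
Your proposal is correct and follows exactly the paper's own argument: the paper explicitly notes (both in the paragraph preceding the corollary and inside the proof of Theorem~\ref{theo:SPR-equiv-exprec}) that the corollary follows by combining the direction ``$\delta_{\Gamma_{\widetilde K}}(F)<\delta_\Gamma(F)$ implies exponential decay of $m_F(V_{T_0,T}(\widetilde K))$'' from the proof of Theorem~\ref{theo:SPR-equiv-exprec} with Theorem~\ref{theo:indep-compact}, after first using the reverse direction of Theorem~\ref{theo:SPR-equiv-exprec} to deduce SPR from the hypothesis.
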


Before proving these results about strong positive recurrence, we provide in
the next paragraph ways of constructing strongly positively recurrent
potentials.


\subsection{Strong positive recurrence through bumps and wells}

Adding a bump $\lambda A$ to a potential $F$,  with $A$ a nonnegative
compactly supported Hölder-continuous map and $\lambda \to +\infty$, we
already proved in Corollary~\ref{coro:existence-pot-SPR}  the existence of
strongly positively recurrent potentials.  We restate it below with this
terminology.

\begin{coro}\label{coro:existence-pot-SPRbis}
On any nonelementary complete connected Riemannian manifold with pinched
negative curvature and bounded first derivative of the curvature, there exist
Hölder-continuous maps that are strongly positively recurrent.
\end{coro}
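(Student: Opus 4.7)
The plan is simply to package the already-established Corollary~\ref{coro:existence-pot-SPR} together with Theorem~\ref{th:AllPressionEquivalent} to produce the desired strongly positively recurrent potential. The only non-trivial ingredient is the construction of a suitable bump function on a nonelementary noncompact manifold, which is where we put the small amount of geometric content.

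First, I would take $F\equiv 0$ as a baseline Hölder potential. Next I would build a compactly supported, non-negative Hölder function $A:T^1M\to\bbR$ whose interior of support meets the non-wandering set $\Omega$. This is possible because $\Gamma$ is nonelementary, hence $\Omega\ne\emptyset$ (indeed $\Omega$ contains a closed geodesic by assumption), so one can fix a vector $v_0\in\Omega$, choose a small ball $B(v_0,r)\subset T^1M$ (with $r$ smaller than the local injectivity radius so that $B(v_0,r)$ embeds isometrically in $T^1M$), and take for $A$ a standard Lipschitz bump function on $T^1M$ equal to $1$ on $B(v_0,r/2)$ and vanishing outside $B(v_0,r)$.

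Then I would apply Corollary~\ref{coro:existence-pot-SPR} to the pair $(F,A)=(0,A)$: for $\lambda>0$ large enough,
\[
\delta_\Gamma(\lambda A) \;>\; \delta_\Gamma^\infty(\lambda A).
\]
By Theorem~\ref{th:Variationnel} the left-hand side equals $\Ptop(\lambda A)$, and by Theorem~\ref{th:AllPressionEquivalent} the right-hand side equals $\Ptop^\infty(\lambda A)$. Hence $\Ptop^\infty(\lambda A)<\Ptop(\lambda A)$, which is exactly the definition of strong positive recurrence for the Hölder potential $\lambda A$.

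There is essentially no obstacle here beyond the preceding material: the real work is in Corollary~\ref{coro:existence-pot-SPR}, itself built on Proposition~\ref{prop:CompactPerturbPotential} (a compact perturbation leaves $\delta_\Gamma^\infty$ unchanged) and Proposition~\ref{prop:PressureBump} (adding a positive bump drives the full pressure $\delta_\Gamma$ to infinity with $\lambda$). The only point requiring minor care is verifying that $A$ can be made Hölder with support whose interior meets $\Omega$; this follows from the existence of a closed geodesic and the standard construction of a Lipschitz bump in a small ball around one of its tangent vectors.
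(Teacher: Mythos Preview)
Your proposal is correct and follows exactly the paper's approach: the paper simply notes that Corollary~\ref{coro:existence-pot-SPRbis} is a restatement of Corollary~\ref{coro:existence-pot-SPR} in the language of strong positive recurrence, and you have unpacked precisely that (taking $F=0$, choosing a compactly supported bump $A$ positive somewhere on $\Omega$, and invoking Theorems~\ref{th:Variationnel} and~\ref{th:AllPressionEquivalent} to match notations). The extra care you take in constructing the Lipschitz bump $A$ around a tangent vector to a closed geodesic is fine and slightly more explicit than the paper.
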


It will be convenient to add to a given potential $F$ large bumps of
arbitrarily small height. It is what we do in the next proposition.

\begin{prop}\label{prop:small-bump}
Let $F: T^1M\to \bbR$ be a  Hölder-continuous map with finite topological pressure.  For
all $\varepsilon>0$, there exists a Hölder-continuous map $0\le A\le 1$
compactly supported on $T^1M$, such that
\[
\delta_\Gamma^\infty(F+\varepsilon A)=\delta_\Gamma^\infty(F)\le \delta_\Gamma(F)<\delta_\Gamma(F+\varepsilon A)\,.
\]
\end{prop}
\begin{proof} For a given
$\varepsilon>0$, by the variational principle for $\Ptop(F)$, there exists a
measure $m_\varepsilon\in\mathcal{M}_{1}^F$, such that
$\Ptop(F)=\delta_\Gamma(F)=\sup_{m\in\mathcal{M}_1^F}\left( h_{KS}(m)+\int F\dd
m\right)\le h_{KS}(m_\varepsilon)+\int F\dd m_\varepsilon+\frac{\varepsilon}{2}$.

Choose some compact subset $K_\varepsilon$ such that
$m_\varepsilon(T^1K_\varepsilon)\ge 1-\varepsilon$. Now, choose some Hölder-continuous
map $0\le A\le 1$ with compact support such that $A\equiv 1$ on
$T^1K_\varepsilon$. Observe that as soon as $0<\varepsilon<1/2$, we have
 \[
\delta_\Gamma(F+\varepsilon A) \ge  h_{KS}(m_\varepsilon)+\int F\dd m_\varepsilon +\varepsilon m(K_\varepsilon)
\ge \delta_\Gamma(F)-\frac{\varepsilon}{2}+\varepsilon(1-\varepsilon)>\delta_\Gamma(F)\,.
\]
The result follows.
\end{proof}

Adding a bump does not modify the  topological pressure at infinity, and increases the
 topological pressure to produce strongly positively recurrent potentials. On the other
hand, subtracting a bump, i.e., adding a well, does not modify the  topological pressure
at infinity and decreases the  topological pressure towards the  topological pressure at infinity, as
shown in the next statement.

\begin{prop}\label{th:PressureWell}
Let $F: T^1M\to \bbR$ be a  Hölder-continuous map. Then for all $\beta\ge 0$
satisfying $\delta_\Gamma^\infty(F) \le \delta_\Gamma(F)-\beta$ and all
$\eta>0$, there exists a compactly supported Hölder-continuous function $A$
on $T^1M$ taking values in $[0,\beta]$ such that
\[
\delta_\Gamma(F - A)\le \delta_\Gamma(F)-\beta+\eta\,.
\]
\end{prop}
When $\delta_\Gamma^\infty(F)$ is finite, one may take $\beta =
\delta_\Gamma(F) - \delta_\Gamma^\infty(F)$. Then the proposition says that,
by perturbing $F$ with a compactly supported potential taking values in $[0,
\delta_\Gamma(F) - \delta_\Gamma^\infty(F)]$, one may get a pressure which is
arbitrarily close to $\delta_\Gamma^\infty(F)$. The formulation we have given
also makes sense when $\delta_\Gamma^\infty(F) = -\infty$, and says in this
case that with a compact perturbation one can make the topological  pressure arbitrarily
negative.

\begin{proof}
When $\delta_\Gamma(F)=+\infty$, the conclusion is true for $A = 0$.
Therefore, we may assume that $F$ has finite topological  pressure. For every $n\in\bbN$, let $A_n$ be a
compactly supported Hölder-continuous map  taking values in $[0,\beta]$, equal
to $\beta$ on $p_\Gamma T^1B(o, n)$. We claim that $\limsup \delta_\Gamma(F- A_n) \leq
\delta_\Gamma(F)-\beta$. The proposition follows from this claim by taking
$A=A_n$ for large $n$. Let us prove it.

For every $n\ge 1$, choose an invariant measure $\mu_n$ with $h_{KS}(\mu_n) +
\int (F-A_n) \dd\mu_n \geq \delta_\Gamma(F-A_n) -1/n$. Extracting a
subsequence if necessary, we can assume that $\mu_n$ converges weakly to an
invariant measure $\mu$, with mass $\norm{\mu} \in [0,1]$. Let $\epsilon>0$.
Choose a large compact subset $K\subset M$ with $\mu(T^1 K) > \norm{\mu} -
\epsilon$ and $\mu(\partial T^1 K)=0$. For large enough $n$, we also have
$\mu_n(T^1 K) > \norm{\mu} - \epsilon$, and therefore $\int A_n \dd\mu_n \geq
\beta(\norm{\mu} - \epsilon)$ as $A_n$ is equal to $\beta$ on $T^1 K$. We get
\begin{multline*}
  \limsup \delta_\Gamma(F- A_n) = \limsup \left(h_{KS}(\mu_n) + \int (F-A_n) \dd\mu_n \right)
  \\
  \leq \limsup \left(h_{KS}(\mu_n) + \int F \dd\mu_n\right)
  - \beta(\norm{\mu} - \epsilon).
\end{multline*}
Apply now Corollary~\ref{coro:PressureMassInfty}, to get an upper bound
\begin{equation*}
  \limsup \delta_\Gamma(F- A_n) \leq (1-\norm{\mu}) \delta_\Gamma^\infty(F) + \norm{\mu} \delta_\Gamma(F) - \beta(\norm{\mu} - \epsilon).
\end{equation*}
With the inequality $\delta_\Gamma^\infty(F) \leq \delta_\Gamma(F) - \beta$,
this gives
\begin{equation*}
  \limsup \delta_\Gamma(F- A_n) \leq \delta_\Gamma(F) - \beta + \beta \epsilon.
\end{equation*}
As $\epsilon$ is arbitrary, this concludes the proof.
\end{proof}


\subsection{Strong positive  recurrence implies positive recurrence}\label{sec:SPR-implique-PR}

In this section, we shall prove Theorem~\ref{theo:SPR-implies-PR'}. We follow
the proof of~\cite{ST19} in the case $F=0$.

Assume that $F$ is strongly positively recurrent. By definition, there exists
a compact subset $K\subset M$ whose interior intersects at least a closed
geodesic, and a compact subset $\tilde K\subset\tilde M$ with $p_\Gamma(
\tilde K)=K$, such that
\[
\delta_{\Gamma_{\tilde K}}(F)<\delta_\Gamma(F)\,.
\]
Without loss of generality, we assume that $o\in\tilde K$.

An elementary computation shows that this strict inequality implies the
convergence of  the series $\ds \sum_{\gamma\in\Gamma_{\tilde K}} d(o,\gamma
o)e^{-\delta_\Gamma(F)d(o,\gamma o)+\int_o^{\gamma o} \tilde F}$. Therefore,
in order to prove that strong positive recurrence implies positive recurrence, by
Theorem~\ref{theo:Pit-Schap} (point 3), it is enough to show that $F$ is
recurrent.

The following inclusion is a variant of an observation of~\cite{ST19}:
\[
\Lambda_\Gamma\setminus\Lambda_{\Gamma}^{\mathrm{rad}}\subset
\underset{T_0>0}{\bigcup}\underset{T>T_0}{\bigcap}U_{T_0,T}(\tilde K)\,.
\]
Indeed, the set on the right represents points $y\in\partial\tilde M$ such
that for some $x\in\tilde K$, the geodesic $[x,y)$ stays a bounded amount of
time in $\Gamma.\tilde K$, whereas the set on the left is the set of
$y\in\Lambda_\Gamma$ such that the geodesic $[xy)$ eventually leaves every orbit $\Gamma.\tilde L$, for every compact subset $\tilde L\subset \tilde M$.

To prove Theorem~\ref{theo:SPR-implies-PR'}, it suffices to show that for all
$T_0>0$, we have $\ds\nu^F( \bigcap_{T>T_0} U_{T_0,T}(\tilde K))=0$: then it
follows by the above inclusion that $\nu^F$ gives zero measure to
$\Lambda_\Gamma\setminus\Lambda_{\Gamma}^{\mathrm{rad}}$, and therefore full
measure to $\Lambda_{\Gamma}^{\mathrm{rad}}$. Then Theorem~\ref{theo:HTS}
implies that $F$ is recurrent.

The following lemma, a variation around~\cite[Eq.(29)]{ST19}, is a key step
of the proof, and will be useful also in Section~\ref{exp-rec}. For
$\varepsilon>0$, let $\tilde K_\varepsilon$ be the $\varepsilon$-neighborhood
of $\tilde K$. Let $D$ be the diameter of $\tilde K$. We recall that $\tilde
K$ contains $o$.
\begin{lemm}
\label{lem:ombres-et-GammaK} For all $\varepsilon>0$, there exists $T_0>0$
such that for all $T> T_0+2D+\varepsilon$, we have
\[
 \underset{\gamma\in\Gamma_{\tilde K_\varepsilon},\, d(o,\gamma o)\ge T+D+T_0  }
 {\bigcup}\mathcal{O}_o(\gamma \tilde K)\subset
 U_{T_0,T} (\tilde K).
\]
Moreover, for all $T_1\geq 0$, there exists a finite set $\{g_1,
\dotsc,g_N\}$ of elements of $\Gamma$ such that for all $T> T_1+2D$, we have
\[
U_{T_1,T} (\tilde K) \cap \Gamma \tilde K
\subset \bigcup_{i=1}^N \underset{ \gamma\in\Gamma_{\tilde K },d(o,\gamma o)\ge
T-2D-T_1} {\bigcup} g_i\cdot\mathcal{O}_{\tilde K}(\gamma \tilde
K)\,.
\]
\end{lemm}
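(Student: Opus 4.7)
The plan is to prove the two inclusions separately, the first by a shadowing comparison and the second by a combinatorial analysis of the segment $[x,y]$.

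\emph{First inclusion.} The strategy is to apply Lemma~\ref{lm:NegCurv4Points}. Assume without loss of generality that $o\in\widetilde K$. Pick $\gamma\in\Gamma_{\widetilde K_\varepsilon}$ with $d(o,\gamma o)\ge T+2D+T_0$ and $y\in\mathcal{O}_o(\gamma\widetilde K)$, and choose witnesses $a,b\in\widetilde K_\varepsilon$ so that $[a,\gamma b]\cap \Gamma\widetilde K_\varepsilon=\{a,\gamma b\}$. The crucial observation is that every interior point of $[a,\gamma b]$ lies at distance at least $\varepsilon$ from $\Gamma\widetilde K$, since otherwise it would belong to $\Gamma\widetilde K_\varepsilon$. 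Letting $z\in\gamma\widetilde K\cap [o,y)$, one has $d(o,a)\le D+\varepsilon$, $d(z,\gamma b)\le D+\varepsilon$ and $d(o,z)\ge T+D+T_0$. Taking $T_0=T_0(D+\varepsilon,\varepsilon/2)$ from Lemma~\ref{lm:NegCurv4Points}, the geodesics $[o,z]$ and $[a,\gamma b]$ are $\varepsilon/2$-close on the middle interval $[T_0, d(o,z)-T_0]\supset[T_0,T]$ (up to a time shift $s_0\in[0,T_0]$). Hence $[o,y)_t$ stays at distance at least $\varepsilon/2$ from $\Gamma\widetilde K$ for $t\in[T_0,T]$, yielding $y\in U_{T_0,T}(\widetilde K)$ with witness $x=o$.

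\emph{Second inclusion.} Take $\{g_1,\dotsc,g_N\}$ to be the (finite, by proper discontinuity of $\Gamma$) set of $g\in\Gamma$ with $d(o,go)\le T_0+2D$. Given $y\in U_{T_0,T}(\widetilde K)\cap\Gamma\widetilde K$ with witness $x\in\widetilde K$, the plan is to extract from $[x,y]$ a fundamental excursion between two translates of $\widetilde K$, of length at least $T-T_0$. Define $t_0=\sup\{t\in[0,T_0]:[x,y]_t\in\Gamma\widetilde K\}$. Then $[x,y]_{t_0}\in g\widetilde K$ for some $g\in\{g_1,\dotsc,g_N\}$, since its distance to $o$ is at most $t_0+2D\le T_0+2D$. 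By definition of $t_0$, the segment avoids $\Gamma\widetilde K$ on $(t_0,T_0]$; by the $U_{T_0,T}$-condition, it can only re-enter $\Gamma\widetilde K$ through $\widetilde K$ on $(T_0,T]$. In the principal case when $[x,y]$ does not revisit $\widetilde K$ on $(T_0,T]$, the next $\Gamma\widetilde K$-hit $t_1$ lies beyond $T$; writing $[x,y]_{t_1}\in g^+\widetilde K$, the translated segment $g^{-1}[x,y]_{[t_0,t_1]}$ runs from $\widetilde K$ to $\gamma\widetilde K$ (with $\gamma \coloneqq g^{-1}g^+$), meeting $\Gamma\widetilde K$ only at its endpoints. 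Therefore $\gamma\in\Gamma_{\widetilde K}$, $y\in g\cdot\mathcal{O}_{\widetilde K}(\gamma\widetilde K)$, and $d(o,\gamma o)\ge t_1-t_0-2D>T-T_0-2D$, as required.

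\emph{Main obstacle.} The delicate point will be the case when $[x,y]$ revisits $\widetilde K$ on $(T_0,T]$, splitting the candidate excursion. The workaround is to reset $t_0$ to the last $\widetilde K$-crossing of $[x,y]$ prior to its final departure from $\Gamma\widetilde K$ before reaching $y$; the starting point of the excursion then lies in $\widetilde K$ itself, so $g=\mathrm{id}$. Checking that the resulting excursion still satisfies $d(o,\gamma o)\ge T-2D-T_0$ requires tracking the structure of $\widetilde K$-passages, using negative curvature to control their number and total duration along a geodesic segment of length $T$. This careful bookkeeping is the bulk of the technical work in the second inclusion.
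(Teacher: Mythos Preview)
Your first inclusion is correct and matches the paper's argument exactly: both use Lemma~\ref{lm:NegCurv4Points} to transfer the $\Gamma\widetilde K_\varepsilon$-avoidance of $[a,\gamma b]$ to $\Gamma\widetilde K$-avoidance of $[o,z]$ on the interval $[T_0,T]$.

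For the second inclusion, your principal case is also the paper's argument (the paper phrases it as taking the last translate $g_i\widetilde K$ met by $[x,y]_{[0,T_0]}$ and the first translate $h\widetilde K$ met afterwards). The difference is that you have inflated the residual case into a ``main obstacle'' requiring ``tracking the structure of $\widetilde K$-passages, using negative curvature to control their number and total duration''. This machinery is unnecessary and, as stated, would not actually produce the bound you want. The resolution is a one-line metric observation: since the witness $x$ lies in $\widetilde K$ and $\widetilde K$ has diameter $D$, the point $[x,y]_t$ can belong to $\widetilde K$ only when $t=d(x,[x,y]_t)\le D$. Hence there is no revisit of $\widetilde K$ after time $D$, and certainly none on $(D,T]$. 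Combined with the defining condition of $U_{T_0,T}(\widetilde K)$, this gives $[x,y]_{(\max(T_0,D),T]}\cap\Gamma\widetilde K=\emptyset$, so the first re-entry time $t_1$ into $\Gamma\widetilde K$ after $\max(T_0,D)$ satisfies $t_1>T$. Your reset of $t_0$ is then fine, with $g=\mathrm{id}$ when $t_0\in(T_0,D]$, and the length estimate follows immediately (up to replacing $T_0$ by $\max(T_0,D)$ in the constant, which is harmless for all applications in the paper). No bookkeeping of multiple passages, and no use of negative curvature beyond what is already in the principal case, is required.
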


\begin{proof}
The first inclusion uses the same kind of arguments as
for~\cite[Eq.(29)]{ST19}. If $\gamma\in\Gamma_{\tilde K_\varepsilon}$, there
exist $x,y \in \tilde K_\epsilon$ such that the geodesic segment $[x,\gamma
y]$ does not intersect $\Gamma\tilde K_\varepsilon$ outside $\{x, \gamma
y\}$. Consider $z \in \mathcal{O}_o(\gamma \tilde K)$. The geodesic $[o, z]$
intersects $\gamma \tilde K$ at a point $\gamma z_0$. By
Lemma~\ref{lm:NegCurv4Points}, there exists $T_0>0$ such that the geodesics
$[x,\gamma y]$ and $[o, \gamma z_0]$ follow each other up to $\epsilon$,
except in the $T_0$ neighborhood of the beginning and of the end of these
segments. Moreover, $d(o,\gamma o)\ge T+D+T_0$, so that $d(o,\gamma z_0)\ge T+T_0$.  Therefore, $[o, \gamma z_0]$ avoids $\Gamma \tilde K$ along $[T_0,
T]$, and so does $[o, z]$.

For the second inclusion, let $T_1\geq 0$. Introduce a finite family
$(g_i)_{1\le i\le N}$ of isometries of $\Gamma$ such that the
$T_1$-neighborhood $\tilde K_{T_1}$ of $\tilde K$ satisfies $\tilde
K_{T_1}\cap \Gamma\tilde K\subset\bigcup_i g_i\tilde K$. Consider a point
$y\in U_{T_1,T} (\tilde K) \cap \Gamma \tilde K$. Consider the last copy
$g_i\tilde K$ intersected by the segment $[o,y]_{T_1}$, and the first copy
$h\tilde K$ intersected by the segment $[o,y]_{T_1+D,T}$. By definition,
$\gamma = g_i^{-1}h\in\Gamma_{\tilde K}$. Moreover, it satisfies $d(o, \gamma
o) \geq T-2D-T_1$, and $y \in g_i\cdot\mathcal{O}_{\tilde K}(\gamma \tilde
K)$ by construction. The desired inclusion follows.
\end{proof}

Lemmas~\ref{lem:ombres-et-GammaK} and~\ref{lem:orbital-shadow-lemma} have the
following corollary, from which Theorem~\ref{theo:SPR-implies-PR'} follows.
\begin{coro}
For all $T_0 \geq 0$, for all $0<\eta<\delta_\Gamma(F)-\delta_{\Gamma_{\tilde
K}}(F)$, there exist $T_1,C>0$ such that for $T\ge T_1$, we have
\begin{equation}
\label{Eq:exp-rec}
\nu^F(U_{T_0,T}(\tilde K))\le Ce^{-(\delta_\Gamma(F)-\delta_{\Gamma_{\tilde K}}(F)-\eta)T}.
\end{equation}
In particular
\[
\nu^F(\bigcap_{T>T_0} U_{T_0,T}(\tilde K))=0\,.
\]
\end{coro}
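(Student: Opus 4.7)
The plan is to transfer the bound from the pre-measures $\nu^{F,s}$ (with $s \downarrow \delta_\Gamma(F)$) to $\nu^F$ via the Portmanteau theorem. The set $U_{T_0,T}(\widetilde K)$ is open in $\widetilde M \cup \partial \widetilde M$, so if I can show $\nu^{F,s}(U_{T_0,T}(\widetilde K)) \leq C e^{-(\delta_\Gamma(F) - \delta_{\Gamma_{\widetilde K}}(F) - \eta)T}$ uniformly for $s \in (\delta_\Gamma(F), \delta_\Gamma(F) + \tau]$, then taking the $\liminf$ along a subsequence $s_k \downarrow \delta_\Gamma(F)$ achieving the weak convergence $\nu^{F,s_k} \to \nu^F$ gives the same bound for $\nu^F$. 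The second assertion then follows from monotone decrease of the sets in $T$.

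To bound $\nu^{F,s}(U_{T_0,T}(\widetilde K))$, I would first arrange $o \in \widetilde K$ so that $\nu^{F,s}$, supported on $\Gamma o$, is supported on $\Gamma \widetilde K$. This lets me apply the right inclusion of Lemma~\ref{lem:ombres-et-GammaK}:
\[
\nu^{F,s}(U_{T_0,T}(\widetilde K)) \;\leq\; \sum_{i=1}^N \sum_{\substack{\gamma' \in \Gamma_{\widetilde K} \\ d(o,\gamma' o) \geq T - 2D - T_0}} \nu^{F,s}\pare*{g_i \cdot \mathcal{O}_{\widetilde K}(\gamma' \widetilde K)}.
\]
Each set $g_i \cdot \mathcal{O}_{\widetilde K}(\gamma' \widetilde K)$ is contained in $\mathcal{O}_o(g_i\gamma' \widetilde K')$ for a fixed enlarged compact set $\widetilde K'$ (by the shadowing argument of Lemma~\ref{lem:orbital-shadow-lemma}, using boundedness of $d(o, g_i o)$ and of $\operatorname{diam}(\widetilde K)$). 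Applying the Orbital Shadow Lemma to $\widetilde K'$ with parameter $\eta/4$, and using Lemma~\ref{lm:hold-potential} to replace $g_i\gamma'$ by $\gamma'$ in the exponent at the cost of a bounded multiplicative constant, yields uniformly in $s$
\[
\nu^{F,s}\pare*{g_i \cdot \mathcal{O}_{\widetilde K}(\gamma' \widetilde K)} \;\leq\; C_1\, e^{-(s - \eta/4)\, d(o,\gamma' o) + \int_o^{\gamma' o} \widetilde F}.
\]

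Finally, I would sum over $\gamma' \in \Gamma_{\widetilde K}$ with $d(o,\gamma' o) \geq T - 2D - T_0$. By the very definition of $\delta_{\Gamma_{\widetilde K}}(F)$, one has $\sum_{\gamma' \in \Gamma_{\widetilde K},\, d(o,\gamma' o) \in [t,t+1]} e^{\int_o^{\gamma' o} \widetilde F} \leq C_\eta e^{(\delta_{\Gamma_{\widetilde K}}(F) + \eta/4)\,t}$. The resulting geometric series over $t \geq T - 2D - T_0$ has ratio $e^{\delta_{\Gamma_{\widetilde K}}(F) - s + \eta/2}$, which is strictly less than $1$ provided $\eta < \delta_\Gamma(F) - \delta_{\Gamma_{\widetilde K}}(F)$ and $s$ is close enough to $\delta_\Gamma(F)$. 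After bookkeeping the constants, this gives
\[
\nu^{F,s}(U_{T_0,T}(\widetilde K)) \;\leq\; C_2\, e^{-(s - \delta_{\Gamma_{\widetilde K}}(F) - \eta/2)\,T},
\]
and then Portmanteau together with $s \downarrow \delta_\Gamma(F)$ yields the desired bound.

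The only delicate point is ensuring uniformity in $s$ across the range $(\delta_\Gamma(F), \delta_\Gamma(F)+\tau]$, which is crucial so that the Portmanteau step works. This uniformity is exactly the reason the Orbital Shadow Lemma was stated with constants independent of $s$; one simply has to verify that the auxiliary estimates coming from Lemma~\ref{lm:hold-potential} and the shadowing inclusion $\mathcal{O}_{\widetilde K}(\gamma' \widetilde K) \subset \mathcal{O}_o(\gamma' \widetilde K')$ do not reintroduce an $s$-dependence, which is straightforward.
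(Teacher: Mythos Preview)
Your proposal is correct and follows essentially the same approach as the paper: bound $\nu^{F,s}(U_{T_0,T}(\widetilde K))$ uniformly in $s$ near $\delta_\Gamma(F)$ by combining the inclusion of Lemma~\ref{lem:ombres-et-GammaK} with the Orbital Shadow Lemma~\ref{lem:orbital-shadow-lemma}, sum the resulting geometric series using the definition of $\delta_{\Gamma_{\widetilde K}}(F)$, and then pass to the limit via the Portmanteau inequality for open sets. Your treatment is in fact slightly more careful than the paper's in two respects: you track the $\eta$-loss from the Orbital Shadow Lemma explicitly (the paper absorbs it silently into the final constant), and you spell out the shadow inclusion $g_i\cdot\mathcal{O}_{\widetilde K}(\gamma'\widetilde K)\subset\mathcal{O}_o(g_i\gamma'\widetilde K')$ needed to apply the lemma, which the paper leaves implicit.
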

Similar statements appeared in~\cite{ST19} and~\cite{CDST}, but it appears
that some details are welcome on the limit process. We include therefore a
detailed (short) argument.
\begin{proof}  Let $\eta$ be as above. By
Lemmas~\ref{lem:ombres-et-GammaK} and~\ref{lem:orbital-shadow-lemma} and
conformality of $\nu^{F,s_n}$, for all $s_n>\delta_\Gamma(F)$ close enough to
$\delta_\Gamma(F)$, and $T>T_0$ large enough, we have
\begin{align*}
\nu^{F,s_n}(U_{T_0,T}(\tilde K))&=\nu^{F,s_n}(\Gamma o\cap U_{T_0,T}(\tilde K))\le
 \sum_{i=1}^N\sum_{ \gamma\in\Gamma_{\tilde K },d(o,\gamma o)\ge T-2D-T_0  }
   \nu^{F,s_n}(g_i\cdot\mathcal{O}_{\tilde K}(\gamma \tilde K))\\
&\le  N\times C\times \sum_{ \gamma\in\Gamma_{\tilde K },d(o,\gamma o)\ge T-2D-T_0  }
  e^{-(s_n-\eta/2) d(o,\gamma o)+\int_o^{\gamma o} \tilde F}.
\end{align*}
As the exponential growth rate of $\sum_{ \gamma\in\Gamma_{\tilde K },i \leq
d(o,\gamma o) \leq i+1} e^{\int_o^{\gamma o} \tilde F}$ is
$\delta_{\Gamma_{\tilde K}}$ by definition, we get for large enough $i$
\begin{equation*}
\sum_{ \gamma\in\Gamma_{\tilde K },i \leq d(o,\gamma o) \leq i+1}
e^{\int_o^{\gamma o} \tilde F} \leq e^{(\delta_{\Gamma_{\tilde K}}+\eta/2)i}.
\end{equation*}
Together with the previous equation, this gives for large enough $T$, for some constant $C>0$, the
inequality
\begin{equation*}
\nu^{F,s_n}(U_{T_0,T}(\tilde K)) \leq C\times e^{(\delta_{\Gamma_{\tilde
K}}(F)+\eta-s_n)T}.
\end{equation*}

Now, $\nu^F$ is the  weak-$*$ limit $\nu^F=\lim_{n\to \infty} \nu^{F,s_n}$.
Therefore, for any open set $U$, one has $\nu^F(U) \leq \liminf
\nu^{F,s_n}(U)$. We obtain
\begin{equation*}
\nu^F(U_{T_0,T}(\tilde K)) \le  C \times e^{(\delta_{\Gamma_{\tilde K}}(F)+\eta-\delta_\Gamma(F))T }\,.
\end{equation*}
The result follows.
\end{proof}


\subsection{Strong positive recurrence and exponential recurrence}\label{exp-rec}

Let us prove Theorem~\ref{theo:SPR-equiv-exprec}.
\begin{proof}
The implication ``strong positive recurrence  of $F$  implies exponential
recurrence   of $(g^t)$ with respect to $m^F$'' was essentially shown in the
above proof of Theorem~\ref{theo:SPR-implies-PR'}, and in particular
Equation~\eqref{Eq:exp-rec}. Indeed, let $K$ and $\tilde K$ be as in the
proof of this theorem. On $T^1\tilde M$, the product structure $\tilde
m^F\sim\nu^F\times \nu^F\times \dd t$ (see Equation~\eqref{Gibbs-product}),
in the Hopf coordinates (see Equation~\eqref{Hopf}) shows that up to some
constant $c$,
\[
m^F(V_{T_0,T}(\tilde K))\leq \tilde m^F(\tilde V_{T_0,T}(\tilde K))\le c\nu^F(\partial\tilde M)\times\nu^{F}(U_{T_0,T}(\tilde K))\,.
\]
Equation~\eqref{Eq:exp-rec} concludes. Note that this proof, combined with
Theorem~\ref{theo:indep-compact}, implies
Corollary~\ref{coro:expo-rec-indep-compact}.

\medskip

Conversely, suppose that $(g^t)$ is exponentially recurrent with respect to
$m^F$, so that for some compact subset $\tilde K\subset \tilde M$ whose interior intersects
$\pi(\tilde \Omega)$, for every $T_0>0$, there exist $\alpha=\alpha(T_0)>0$ and
$C=C(T_0)>0$ such that, for all $T>T_0$,
\begin{equation}
\label{eq:mFexp}
m^F(V_{T_0,T}(\tilde K))\le C\exp(-\alpha T)\,.
\end{equation}

The first step consists in showing that there exists a constant $C'$ such
that, for all $T\ge T_0$, we have
\begin{equation}\label{exp-decay-bord}
 \nu^F(U_{T_0,T}(\tilde K))\le C' e^{-\alpha T}\,.
\end{equation}

Since the projection $p_\Gamma$ is finite-to-one on the compact set $T^1
\tilde K$, one deduces from~\eqref{eq:mFexp} that $\tilde m^F(\tilde
V_{T_0,T}(\tilde K) \leq C_0 \exp(-\alpha T)$ for some constant $C_0$. By
definition, if $v\in \tilde V_{T_0,T}(\tilde K)$, then $v^+\in
U_{T_0,T}(\tilde K)$, and $v^-\in \mathcal{O}_{v^+}(\tilde K)$. Recall that
$m^F$ is supported in $\Omega$. As above, Equations~\eqref{Gibbs-product}
and~\eqref{Hopf} show that up to some constant $c$,
\[
C_0 \exp(-\alpha T) \geq \tilde m^F(\tilde V_{T_0,T}(\tilde K))\ge
\frac{1}{c}\inf_{v\in\tilde \Omega\cap T^1\tilde K}\nu^F(\mathcal{O}_{v^+}(\tilde K)) \times\nu^{F}(U_{T_0,T}(\tilde K))\,.
\]
In the above infimum, the vector $v$ varies in  the compact set  $\tilde
\Omega\cap T^1\tilde K$,  and $\nu^F$ has full support in the limit set, so
that this infimum is positive. Therefore,~\eqref{exp-decay-bord} is proven.

In what follows, we will need to consider a compact set $\tilde L$ large enough
to satisfy the lower bound in Lemma~\ref{lem:orbital-shadow-lemma}. By a
standard use of Lemma~\ref{lm:NegCurv4Points}, for all $\varepsilon >0$ there
exists $\tau>0$, such that  if $\tilde L\supset\tilde
K_\varepsilon\supset\tilde K$ contains an $\varepsilon$-neighborhood of
$\tilde K$, uniformly in $T\ge T_0+2\tau$, we have
\[
\overline{U_{T_0,T}(\tilde L)}\subset
U_{T_0+\tau,T-\tau}(\tilde K).
 \]
In particular, it follows from~\eqref{exp-decay-bord} that $\nu^F(\overline{U_{T_0,T}(\tilde L)}) \leq C' e^{-\alpha T}$ for
some $C'>0$ and $\alpha>0$. Until now, $T_0$ was arbitrary.  We choose now $T_0$ given by the first
item in Lemma~\ref{lem:ombres-et-GammaK}.

As $\nu^F=\lim_{s_n\to \delta_\Gamma(F)} \nu^{F,s_n}$, we have
\begin{equation*}
  \limsup \nu^{F,s_n}(U_{T_0, T}(\tilde L)) \leq \nu^F(\overline{U_{T_0, T}(\tilde L)})
  \leq C' e^{-\alpha T}.
\end{equation*}
Therefore, for all $s_n$ close enough to $\delta_\Gamma(F)$, we have $
\nu^{F,s_n}(U_{T_0,T}(\tilde L))\le C' e^{-\beta  T}$ for any $\beta <
\alpha$, for instance $\beta= \alpha/2$. Fix some $\epsilon>0$. Then
Lemma~\ref{lem:ombres-et-GammaK} gives for some $D=D(\tilde L)$
\[
 \underset{\gamma\in\Gamma_{\tilde L_\varepsilon},\, d(o,\gamma o)\ge T+D+T_0}{\bigcup}\mathcal{O}_o(\gamma \tilde L)\subset
 U_{T_0,T} (\tilde L)\,,
\]
so that, as $\nu^{F,s_n}$ is supported on $\Gamma o$,
\[
\nu^{F,s_n}\left(
\Gamma o\cap\underset{\gamma\in\Gamma_{\tilde L_\varepsilon},\, d(o,\gamma o)\ge T+D+T_0  }{\bigcup}\mathcal{O}_o(\gamma \tilde L)\right)
\le C' e^{-\beta T}\,.
\]
In particular, there exists $C''$ such that, for any large enough $k$, we have
\[
\nu^{F,s_n}\left(
\Gamma o\cap\underset{\gamma\in\Gamma_{\tilde L_\epsilon},\, d(o,\gamma o)\in [k, k+1]}{\bigcup}\mathcal{O}_o(\gamma \tilde L)\right)
\le C'' e^{-\beta k}\,.
\]

As the group $\Gamma$ acts properly discontinuously on $\tilde M$ and $\tilde
L$ is compact, the intersections of shadows in the above union have a bounded
multiplicity. Therefore, we deduce that  there exists some constant $c>0$
such that
\[
\sum_{\gamma\in\Gamma_{\tilde L_\epsilon},\, d(o,\gamma o)\in [k, k+1]  } \nu^{F,s_n}(\mathcal{O}_o(\gamma \tilde L))\le ce^{-\beta k}\,.
\]
Together with the Orbital Shadow Lemma~\ref{lem:orbital-shadow-lemma}, this
implies that up to some multiplicative constant, uniformly in $s_n$, for some $c'>0$, for all
$k$ large enough, we have
\begin{equation*}
\sum_{\gamma\in\Gamma_{\tilde L _\varepsilon},\, d(o,\gamma o)\in [k, k+1]  }  e^{-s_n d(o,\gamma o)+\int_{o}^{\gamma o}\tilde F}\le c' e^{-\beta k}\,.
\end{equation*}

Let $s_n$ converge to $\delta_\Gamma(F)$. As $d(o, \gamma o) \in [k, k+1]$,
the previous inequality gives, for some $c''>0$, for large enough $k$,
\begin{equation*}
\sum_{\gamma\in\Gamma_{\tilde L _\varepsilon},\, d(o,\gamma o)\in [k, k+1]  }  e^{\int_{o}^{\gamma o}\tilde F}\le c'' e^{\delta_\Gamma(F) k -\beta k}\,.
\end{equation*}
Since $\delta_{\Gamma_{\tilde L _\varepsilon}}(F)$ is the exponential growth
rate of the left hand term, we get $\delta_{\Gamma_{\tilde L
_\varepsilon}}(F) \leq \delta_\Gamma(F) -\beta < \delta_\Gamma(F)$. In
particular, $\delta_\Gamma^\infty(F) < \delta_\Gamma(F)$, proving the strong
positive recurrence of $F$.
\end{proof}



\subsection{SPR is independent of the compact set}\label{sec:SPR-ind-compact}

This paragraph is devoted to the proof of Theorem~\ref{theo:indep-compact}.
Let $F : T^1M \to \bbR$ be a  strongly positively recurrent Hölder-continuous
potential. Let $K\subset M$ be a compact subset whose interior $\inter{K}$
intersects $\pi(\Omega)$, and $\tilde K\subset  \tilde M$  a compact subset
such that $p_\Gamma(\tilde K) = K$. Our proof relies on the following
proposition, which provides a convenient upper bound for the growth of
$\Gamma_{\tilde K}$.

\begin{prop}\label{prop:trou-lisse}
Let $A : T^1M\to [0, +\infty)$ be a nonnegative Hölder-continuous map whose support is contained in the interior of $T^1K$. Then
\[
\delta_{\Gamma_{\tilde K}}(F) \leq \delta_\Gamma(F-A).
\]
\end{prop}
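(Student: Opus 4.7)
The plan is to observe that Proposition~\ref{prop:trou-lisse} follows almost immediately from Proposition~\ref{prop:CompactPerturbPotential} combined with the trivial monotonicity $\Gamma_{\widetilde K}\subset\Gamma$. The underlying geometric reason is the familiar one used throughout this section: every geodesic segment $[x,\gamma y]$ witnessing $\gamma\in\Gamma_{\widetilde K}$ meets $\Gamma\widetilde K$ only at its endpoints, hence a fortiori meets $T^1(\Gamma\widetilde K)\supset\operatorname{supp}(\widetilde A)$ only in a portion of bounded length near the endpoints, so replacing $F$ by $F-A$ changes the integral along such an excursion by a uniformly bounded amount.

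Concretely, I will apply Proposition~\ref{prop:CompactPerturbPotential} to the pair $(F,-A)$. Since $\operatorname{supp}(A)$ is contained in the interior of $T^1K$, it is in particular contained in $T^1K=p_\Gamma(T^1\widetilde K)$, so $-A$ vanishes outside $p_\Gamma(T^1\widetilde K)$ and the hypothesis of that proposition is satisfied. I therefore obtain
\[
\delta_{\Gamma_{\widetilde K}}(F-A)=\delta_{\Gamma_{\widetilde K}}(F).
\]
The inclusion $\Gamma_{\widetilde K}\subset\Gamma$ dominates every partial Poincaré series by the full one, yielding
\[
\delta_{\Gamma_{\widetilde K}}(F-A)\le\delta_\Gamma(F-A).
\]
Chaining these two displays gives $\delta_{\Gamma_{\widetilde K}}(F)\le\delta_\Gamma(F-A)$, which is the desired conclusion.

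There is no serious obstacle; the whole argument fits in two lines once Proposition~\ref{prop:CompactPerturbPotential} is granted. The only point worth verifying is that one may invoke that proposition here even without assuming $\delta_\Gamma(F)<\infty$: reading its short proof, the finite-pressure hypothesis is never used, as the argument only relies on Lemma~\ref{lm:hold-potential} to bound $\bigl|\int_o^{\gamma o}\widetilde A\bigr|$ uniformly in $\gamma\in\Gamma_{\widetilde K}$, and this estimate is valid regardless of the finiteness of the pressure. Should one prefer a self-contained derivation, it is straightforward to reproduce the bound directly: take $x,y\in\widetilde K$ with $[x,\gamma y]\cap\Gamma\widetilde K=\{x,\gamma y\}$, observe that $\int_x^{\gamma y}\widetilde A=0$ because $\widetilde A$ is supported in $T^1(\Gamma\widetilde K)$, and apply Lemma~\ref{lm:hold-potential} to pass from endpoints $x,\gamma y$ to $o,\gamma o$.
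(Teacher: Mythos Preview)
Your proof is correct and takes essentially the same approach as the paper: both arguments rest on the fact that for $\gamma\in\Gamma_{\widetilde K}$ the integral $\int_o^{\gamma o}\widetilde A$ is uniformly bounded, then enlarge the index set from $\Gamma_{\widetilde K}$ to $\Gamma$. The only difference is packaging: you recognize that this bounded-integral step is exactly the content of Proposition~\ref{prop:CompactPerturbPotential} and invoke it directly, whereas the paper reproves it in place using Lemma~\ref{lm:NegCurv4Points}. Your version is slightly more streamlined.
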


\begin{proof}
By definition, $\delta_{\Gamma_{\tilde K}}(F)$ is the exponential growth rate
of
\begin{equation}
\label{eq:wxclkvjmlkwc}
\sum_{\gamma \in \Gamma_{\tilde K}, t-1 \leq d(o, \gamma o) \leq t}
e^{\int_o^{\gamma o} \tilde F}\,.
\end{equation}
Consider $\gamma \in \Gamma_{\tilde K}$. By definition, there exist $x,y\in
\tilde K$ such that $[x, \gamma y] \cap \Gamma \cdot \tilde K = \{x, \gamma
y\}$.

Let $\epsilon>0$ be small enough so that $A$ vanishes on the
$\epsilon$-neighborhood of the complement of $\tilde K$.
Lemma~\ref{lm:NegCurv4Points} provides a constant $T_0$ such that the
segments $[o, \gamma o]$ and $[x, \gamma y]$ follow each other up to
$\epsilon$, except in the $T_0$ neighborhood of their endpoints. In
particular, the function $\tilde A=A\circ p_\Gamma$ vanishes along $[o,
\gamma o]$ except in the $T_0$-neighborhood of its endpoints, where it is
uniformly bounded as $A$ has compact support.   It follows that
\begin{equation*}
  \abs*{\int_o^{\gamma o} \tilde F - \int_o^{\gamma o} (\tilde F - \tilde A)} \leq C,
\end{equation*}
where $C$ does not depend on $\gamma$. In particular,~\eqref{eq:wxclkvjmlkwc}
is bounded by
\begin{equation*}
e^C \sum_{\gamma \in \Gamma_{\tilde K}, t-1 \leq d(o, \gamma o) \leq t}
e^{\int_o^{\gamma o} \tilde F-\tilde A}
\leq e^C \sum_{\gamma \in \Gamma, t-1 \leq d(o, \gamma o) \leq t}
e^{\int_o^{\gamma o} \tilde F-\tilde A}\,.
\end{equation*}
As the exponential growth rate of the latter sum is $\delta_\Gamma(F-A)$,
this concludes the proof.
\end{proof}

We will also need the following proposition.

\begin{prop}\label{prop:trou-Gibbs}
Let $F_1, F_2 : T^1M\to \bbR$ be two Hölder-continuous potentials with finite
topological pressure that satisfy $F_2\le F_1$ and $F_2(w)<F_1(w)$ for some
$w\in \Omega$. Assume that $F_2$ admits a finite Gibbs measure  $m_{F_2}$.
Then their topological pressures satisfy
\[
\Ptop(F_2) < \Ptop(F_1)\,.
\]
\end{prop}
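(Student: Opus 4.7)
The plan is to use the variational characterization of pressure from Theorem~\ref{th:Variationnel} together with Theorem~\ref{theo:Gibbs}, which guarantees that the finite Gibbs measure $m_{F_2}$ (normalized to a probability measure) achieves the supremum in the variational principle for $F_2$. The strict inequality will come from the observation that $F_1 - F_2$ is a non-negative continuous function that is strictly positive at a point of $\Omega$, combined with the fact that the Gibbs measure $m_{F_2}$ has full support on $\Omega$.

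More precisely, I would proceed in three steps. First, verify that $m_{F_2} \in \mathcal{M}_1^{F_1}$: since $F_2 \le F_1$, we have $F_1^- \le F_2^-$, hence $\int F_1^- \, dm_{F_2} \le \int F_2^- \, dm_{F_2} < \infty$ (the latter being finite since $m_{F_2}\in\mathcal{M}_1^{F_2}$). Second, apply Theorem~\ref{theo:Gibbs} to obtain
\[
\Ptop(F_2) = h_{KS}(m_{F_2}) + \int F_2 \, dm_{F_2},
\]
and apply the variational principle (Theorem~\ref{th:Variationnel}) with the test measure $m_{F_2}$ to get
\[
\Ptop(F_1) \ge h_{KS}(m_{F_2}) + \int F_1 \, dm_{F_2}.
\]
Subtracting yields
\[
\Ptop(F_1) - \Ptop(F_2) \ge \int (F_1 - F_2) \, dm_{F_2}.
\]

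Third, and this is the only nontrivial point, show the integral on the right is strictly positive. The function $F_1 - F_2$ is Hölder continuous, non-negative, and strictly positive at some $w \in \Omega$; hence there exist an open neighborhood $U$ of $w$ and $\epsilon > 0$ such that $F_1 - F_2 \ge \epsilon$ on $U$. It therefore suffices to show $m_{F_2}(U) > 0$. This follows from the Patterson-Sullivan-Gibbs construction: the measure $\nu^{F_2}$ is obtained as a weak limit of measures supported on the $\Gamma$-orbit of $o$, whose accumulation set is exactly $\Lambda_\Gamma$, so $\nu^{F_2}$ has full support $\Lambda_\Gamma$ (and likewise for $\nu^{F_2 \circ \iota}$). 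Via the product formula~\eqref{Gibbs-product} in Hopf coordinates~\eqref{Hopf}, the measure $\tilde m^{F_2}$ has full support on $\widetilde{\Omega}$, so $m_{F_2}$ has full support $\Omega$. Consequently $m_{F_2}(U) > 0$, and
\[
\int (F_1 - F_2)\, dm_{F_2} \ge \epsilon\, m_{F_2}(U) > 0,
\]
which gives $\Ptop(F_1) > \Ptop(F_2)$, as desired.

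The potentially delicate step is the full-support property of $m_{F_2}$ on $\Omega$; however this is a well-established feature of the Patterson-Sullivan-Gibbs construction recalled in Section~\ref{ssec:PattSull}, so no genuine obstacle remains.
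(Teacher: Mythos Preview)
Your proof is correct and follows essentially the same approach as the paper: use that $m_{F_2}$ realizes the variational supremum for $F_2$, plug it as a test measure into the variational principle for $F_1$, and exploit the full support of $m_{F_2}$ on $\Omega$ to get strict inequality. The only cosmetic difference is that you argue directly via $\int (F_1-F_2)\,dm_{F_2}>0$, while the paper phrases the final step as a proof by contradiction.
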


\begin{proof} For $i = 1,2$, $\Ptop(F_i)$ coincides with $\Pvar(F_i)$, i.e.,
\[
\sup \left\{ \int F_i \dd m + h_{KS}(m) \; ; \; m \text{ invariant probability measure with } \int F_i^- \dd\mu_i < +\infty\right\}.
\]
As $F_2 \leq F_1$, we have $\int F_1^- \dd m \leq \int F_2^- \dd m$ for any
invariant probability measure $m$. Therefore, when $m=m_{F_2}$,
\[
\Pvar(F_2)= \int F_2 \dd m_{F_2} + h_{KS}(m_{F_2})\le \int F_1 \dd m_{F_2} + h_{KS}(m_{F_2})\le \Pvar(F_1)  .
\]

Assume by contradiction that $\Pvar(F_1) = \Pvar(F_2)$. Then by the previous inequalities,
\[
\int F_1 \dd m_{F_2} =\int F_2 \dd m_{F_2}\,.
\]
It implies that $F_1 = F_2$  $m_{F_2}$-almost surely. As $F_2\le F_1$ and
$F_2<F_1$ on  a neighborhood of $w$, this contradicts the fact that $m_{F_2}$
has full support in $\Omega$. Therefore $\Pvar(F_2)< \Pvar(F_1)$.
\end{proof}

Let us conclude the proof of Theorem~\ref{theo:indep-compact}.

\begin{proof}[Proof of Theorem~\ref{theo:indep-compact}]
Choose some $w\in \Omega \cap T^1K$ and $\epsilon>0$  such that $B(w,
2\epsilon)\subset T^1K$. Let $A : T^1M \to [0, +\infty)$ be a nonnegative
Hölder-continuous map supported in $B(w, \epsilon)$ with $A(w)>0$. By
Proposition~\ref{prop:CompactPerturbPotential},  for all $\eta>0$,
$\delta_\Gamma^\infty(F - \eta A) = \delta_\Gamma^\infty(F)$. Moreover, the
map $\eta \mapsto \delta_\Gamma(F - \eta A)$ is Lipschitz-continuous by
Proposition~\ref{prop:PressureBump}. As $F$ is strongly positively recurrent,
for $\eta>0$ small enough, the map $F - \eta A$ is still strongly positively
recurrent. In particular, by Theorem~\ref{theo:SPR-implies-PR}, it admits a
finite Gibbs measure. Therefore,  Propositions~\ref{prop:trou-lisse}
and~\ref{prop:trou-Gibbs} give the inequalities
\[
\delta_{\Gamma_{\tilde K}}(F) \leq \delta_\Gamma(F-\eta A) < \delta_\Gamma(F)\,.
\]
Theorem~\ref{theo:indep-compact} follows.
\end{proof}


\appendix

\section{Entropies for geodesic flows, by Felipe Riquelme}

In this appendix, we prove that three important notions of entropies of an
invariant probability measure for the dynamics of the geodesic flow on negatively curved manifolds coincide,
namely the Kolmogorov-Sinai, the Katok and the Brin-Katok entropies. These
equalities were first proved for dynamical systems defined on compact metric
spaces in~\cite{Katok80} and~\cite{BK83}, and generalized for Lipschitz maps
on noncompact manifolds in~\cite{Riq-Ruelle-geod} taking only in
consideration ergodic measures. This appendix treats the case of non-ergodic
measures as well as the equality with Katok and local (Brin-Katok) entropies
relative to small dynamical balls. The extension of this appendix to the orbifold setting is open, as discussed in our last paragraph.


\subsection{Different notions of entropy}

Let $(\tilde M,g)$ be a smooth complete connected Riemannian manifold with pinched negative sectional
curvature $-b^2\leq K_g\leq -a^2$, for some $0<a\leq b$. Let $M=\tilde M/\Gamma$ be a quotient manifold, with  $\Gamma=\pi_1(M)$ a discrete group, and
$p_\Gamma:T^1\tilde{M}\to T^1M$ the differential of the quotient map
$\tilde{M}\to M$. We will denote by $(g^t)$ both geodesic flows
on $T^1\tilde{M}$ and  $T^1M=T^1\tilde M/\Gamma$.

For all definitions of entropy, the entropy of the geodesic flow $(g^t)$ with
respect to an invariant probability measure $\mu$ on $T^1M$ is defined as the
entropy of its time-one-map $ g^1$ with respect to $\mu$. If $\mu$ is ergodic
with respect to the flow, it is not necessarily ergodic with respect to this
time-one-map $g^1$. However, in this case, a.e.~time $\tau\in\mathbb{R}$ is
ergodic, so that the relation $h(g^\tau)=\abs{\tau} h(g^1)$ allows us to
assume, without loss of generality, that $\mu$ is ergodic w.r.t.~$g^1$.

\subsubsection{The Kolmogorov-Sinai entropy}
Let $\mathcal{M}_1$ be the set of $g^1$ invariant probability measures on $T^1M$ and let $\mu\in\mathcal{M}_1$.  In this appendix, the word {\em partition} always denotes a finite or countable measurable partition of $T^1M$.  Let
$\mathcal{P}$ be such a partition. The
entropy of $\mathcal{P}$ is defined by
\[
H(\mu,\mathcal{P})=-\sum_{P\in\mathcal{P}}\mu(P)\log\mu(P)\,.
\]
The join $\mathcal{P}^n=\bigvee_{i=0}^ng^{-i}\mathcal{P}$ is the partition
whose atoms are the nonempty subsets of the form $P_0\cap g^{-1}P_1\cap\dotsb g^{-n}P_n$, where
the sets $P_i$ are in $\mathcal{P}$. The entropy of $\mu$ w.r.t.
$\mathcal{P}$ is the limit
\[
h(\mu,\mathcal{P})=\lim_{n\to \infty}  \frac{1}{n}H(\mu,\mathcal{P}^n)\,.
\]
The Kolmogorov-Sinai entropy of $\mu$ is the  least upper bound
\[
h_{KS}(\mu)\coloneqq \sup_{\mathcal{P}} h(\mu,\mathcal{P})
\]
over all   partitions $\mathcal{P}$ with finite
entropy.

For any $v\in T^1M$ denote by $\mathcal{P}(v)$ the atom of $\mathcal{P}$
containing $v$. Shannon-McMillan-Breiman Theorem (see for
instance~\cite{AC88})  asserts that whenever $\mu$ is ergodic, then for
$\mu$-a.e.\ $v\in T^1M$, we have
\[
h(\mu, \mathcal P)=\lim_{n\to\infty} -\frac{1}{n}\log \mu(\mathcal{P}^n(v))\,.
\]
Moreover, when $\mu$ is not ergodic, we have
\[
\int_{T^1M} \lim_{n\to\infty} -\frac{1}{n}\log \mu(\mathcal{P}^n(v)) \dd\mu(v) = h(\mu, \mathcal P).
\]

\subsubsection{The Katok entropies}

For completeness, let us recall the following definitions. Let $d$ be any
metric on $T^1\tilde{M}$, bi-Lipschitz equivalent to the Sasaki metric. By an
abuse of notation, we will denote by $d$ the corresponding induced metric on
$T^1M$ and by $B_d(v, r)$ the corresponding metric ball centered at $v$ with
radius $r>0$.

Let $\tilde{v}\in T^1\tilde M$ and $\epsilon,T>0$. The \emph{dynamical ball}
$B(\tilde{v}, \epsilon;  T)$ on the universal cover is defined by
\[
B(\tilde{v}, \epsilon; T) = \{\tilde{w}\in T^1\tilde M \; ; \; \forall t\in [0, T], d(g^t \tilde{v}, g^t\tilde{w}) \leq \epsilon\}.
\]
As in~\cite[Rem 3.1]{ST19}, we consider on $T^1M$ the small dynamical ball
$B_\Gamma(v,\varepsilon; T)=p_\Gamma(B(\tilde v,\varepsilon; T))$ and the big
dynamical ball
\begin{equation}\label{eqn:dyn-ball2}
B_{\mathrm{dyn}}(v,\varepsilon; T)=\{w\in T^1 M \; ; \; \forall t\in [0, T], d(g^t v, g^tw) \leq \epsilon\}\supset B_\Gamma(v,\varepsilon; T).
\end{equation}
Both balls coincide as soon as the injectivity radius of $M$ is bounded from
below away from zero uniformly on $T^1M$ and $\varepsilon$ is small enough.
More generally, if along the orbit $(g^t v)_{0\le t\le T}$, the injectivity
radius at the point $\pi(g^t v)$ is larger than $\varepsilon$, then
\begin{equation}\label{eq:one-sided-balls-coincide}
B_{\mathrm{dyn}}(v,\varepsilon; T)= B_\Gamma(v,\varepsilon; T)\,.
\end{equation}

Given a probability measure $\mu$ on $T^1M$, $\delta\in (0,1)$ and
$\epsilon,T>0$,
 a set $V \subset T^1M$ is \emph{$(\mu, \delta, \epsilon;  T)$-spanning}
(respectively \emph{dynamically-$(\mu, \delta, \epsilon;  T)$-spanning})  if
\[
\mu\left(\bigcup_{v\in V} B_\Gamma(v, \epsilon;  T) \right)\geq \delta\,,\quad{\text{respectively}}\quad
\mu\left(\bigcup_{v\in V} B_{\mathrm{dyn}}(v, \epsilon;  T) \right)\geq \delta.
\]
Of course, a $(\mu, \delta, \epsilon; T)$-spanning set is also
dynamically-$(\mu, \delta, \epsilon;  T)$-spanning.

Let $S_\Gamma(\mu,\delta,\varepsilon;T)$ (resp.\
$S_{\mathrm{dyn}}(\mu,\delta,\varepsilon;T)$) be the minimal cardinality of a
$(\mu, \delta, \epsilon;  T)$-spanning set (resp.\ of a dynamically-$(\mu,
\delta, \epsilon;  T)$-spanning set).

The Katok entropy of $\mu$ with respect to the small (resp.\ big) dynamical
balls are defined respectively as
\[
h_{\mathrm{Katok}}^\Gamma(\mu)=\inf_{\delta>0} \sup_{\epsilon>0}\limsup_{T\to\infty}\frac{1}{T}\log S_\Gamma(\mu,\delta,\varepsilon;T)\,,
\]
and
\[
h_{\mathrm{Katok}}^{\mathrm{dyn}}(\mu)=\inf_{\delta>0}
\sup_{\epsilon>0}\limsup_{T\to\infty}\frac{1}{T}\log
S_{\mathrm{dyn}}(\mu,\delta,\varepsilon;T)\,.
\]


\subsubsection{The Brin-Katok entropies}

Given a nonempty compact subset $\mathcal{K}\subset T^1M$, we define the local entropies on $\mathcal{K}$
relative respectively to small and big dynamical balls as
\[
\bar{h}_{\mathrm{loc}}^\Gamma(\mu,\mathcal{K})=\sup_{\epsilon>0}\;\esssup_{v\in \mathcal{K}}\limsup_{T\to \infty,\,\, g^Tv\in \mathcal{K}} -\frac{1}{T}\log \mu(B_\Gamma(v,\varepsilon;T))\,,
\]
and
\[
\bar{h}_{\mathrm{loc}}^{\mathrm{dyn}}(\mu,\mathcal{K})=\sup_{\epsilon>0}\;\esssup_{v\in \mathcal{K}}\limsup_{T\to \infty,\,\, g^Tv\in \mathcal{K}} -\frac{1}{T}\log \mu(B_{\mathrm{dyn}}(v,\varepsilon;T))\,.
\]
Taking the  least upper bound  over nonempty compact subsets $\mathcal{K}$ leads to the definition of the
upper Brin-Katok local entropies
\[
\bar{h}_{BK}^\Gamma(\mu)=\sup_{\mathcal{K}} \;\bar{h}_{\mathrm{loc}}^\Gamma(\mu,\mathcal{K})\quad\text{and}\quad
\bar{h}_{BK}^{\mathrm{dyn}}(\mu)=\sup_{\mathcal{K}} \;\bar{h}_{\mathrm{loc}}^{\mathrm{dyn}}(\mu,\mathcal{K})\,.
\]

\subsection{All entropies coincide}

The main result of this appendix is stated below. Despite of being expected,
its relevance lies in its many potential applications. For example,
in~\cite[Theorem 1.4]{ST19}, a formula relating local entropies of invariant
measures through a change of the Riemannian metric has been established,
which brings as consequence such a formula for Kolmogorov-Sinai entropies. In
particular, it also gives a relationship between topological entropies of
geodesic flows coming from perturbations of a given Riemannian metric by the
use of measures of maximal entropies on the corresponding dynamics.

\begin{theo}\label{theo:entropies-coincide}
Let $(M,g)$ be a complete connected Riemannian manifold with pinched negative curvatures
$-b^2\leq K_g\leq -a^2<0$. Let $\mu\in\mathcal{M}_1$ be an ergodic invariant
probability measure for the geodesic flow on $T^1M$. Then
\[
h_{KS}(\mu)=\bar{h}_{BK}^\Gamma(\mu)=\bar{h}_{BK}^{\mathrm{dyn}}(\mu)=h_{\mathrm{Katok}}^\Gamma(\mu)=h_{\mathrm{Katok}}^{\mathrm{dyn}}(\mu)\,.
\]
\end{theo}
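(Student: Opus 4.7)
The five entropies split naturally into two groups: those defined via the large dynamical balls $B_{dyn}$ (the $h^{dyn}$-versions, treated in Riquelme's previous work on Lipschitz maps for ergodic measures) and those via the smaller balls $B_\Gamma$ (the $h^\Gamma$-versions). The inclusion $B_\Gamma(v,\varepsilon;T) \subseteq B_{dyn}(v,\varepsilon;T)$ gives for free
\[
h_{\mathrm{Kat}}^\Gamma(\mu) \geq h_{\mathrm{Kat}}^{dyn}(\mu)
\quad \text{and} \quad
\bar{h}_{BK}^\Gamma(\mu) \geq \bar{h}_{BK}^{dyn}(\mu),
\]
since smaller balls require more centers to span a given measure and have smaller $\mu$-measure. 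Combined with the equalities $h_{KS}(\mu) = h_{\mathrm{Kat}}^{dyn}(\mu) = \bar{h}_{BK}^{dyn}(\mu)$ from Riquelme--Ruelle, the proof reduces to establishing the reverse inequalities
\[
h_{\mathrm{Kat}}^\Gamma(\mu) \leq h_{KS}(\mu)
\quad \text{and} \quad
\bar{h}_{BK}^\Gamma(\mu) \leq h_{KS}(\mu).
\]

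The key geometric input is that $B_\Gamma(v,\varepsilon;T) = B_{dyn}(v,\varepsilon;T)$ whenever the orbit $(g^t v)_{0\le t\le T}$ stays inside a compact set $\mathcal{K}\subset T^1M$ on which the injectivity radius of $\pi(\cdot)$ is bounded below by some $r_0 > 2\varepsilon$. Since $M$ has pinched negative curvature, the injectivity radius is uniformly bounded below on the projection of any compact subset of $T^1M$, so for every $\eta > 0$ we may choose such a $\mathcal{K}$ with $\mu(\mathcal{K}) \geq 1-\eta$.

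We then reduce to the induced system. Let $T_\mathcal{K}:\mathcal{K}\to\mathcal{K}$ denote the first-return map of $g^1$ to $\mathcal{K}$, with invariant probability $\mu_\mathcal{K} = \mu|_\mathcal{K}/\mu(\mathcal{K})$, which is ergodic whenever $\mu$ is. Since $\mathcal{K}$ is compact, the classical theorems of Katok and Brin--Katok apply directly to $(T_\mathcal{K},\mu_\mathcal{K})$, and because the small and big dynamical balls coincide on $\mathcal{K}$ for $\varepsilon < r_0/2$, the $\Gamma$ versus $dyn$ distinction disappears at the level of the induced map. Abramov's formula then yields $h_{KS}(g^1,\mu) = \mu(\mathcal{K})\cdot h_{KS}(T_\mathcal{K},\mu_\mathcal{K})$. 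The heart of the proof is to transfer the entropy equalities for $T_\mathcal{K}$ back to the flow: the $n$-th return time $\tau_n(v)$ satisfies $\tau_n(v)/n \to 1/\mu(\mathcal{K})$ for $\mu_\mathcal{K}$-a.e.\ $v$ by Kac's lemma, producing the time-rescaling factor $\mu(\mathcal{K})$ that matches exactly the one appearing in Abramov's formula. Letting $\eta \to 0$ then closes the chain.

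The principal obstacle is controlling the excursions of typical orbits outside $\mathcal{K}$: between successive returns, the orbit may visit cusp regions of arbitrarily small injectivity radius, where $B_\Gamma$ and $B_{dyn}$ differ drastically. The technical crux is to compare $B_\Gamma(v, \varepsilon; \tau_n(v))$ with the $n$-step dynamical ball of $T_\mathcal{K}$ around $v$ and show that the excursion contributions produce only an $O(\eta)$ error in the entropy estimates, which vanishes as $\mathcal{K}$ exhausts a full-measure set. This relies on the integrability $\int_\mathcal{K} R \, \dd\mu_\mathcal{K} = 1/\mu(\mathcal{K}) < \infty$ of the roof function $R(v) = \min\{t > 0 : g^t v \in \mathcal{K}\}$, together with the continuity properties of return times for nearby orbits in the uniformly hyperbolic regime provided by pinched negative curvature.
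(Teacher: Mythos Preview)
Your proposal correctly identifies the easy inequalities and correctly locates the difficulty in the excursions outside $\mathcal{K}$, but it does not actually overcome that difficulty. The ``technical crux'' paragraph is where the proof should be, and what you have written there is an assertion, not an argument. Specifically, the $n$-step dynamical balls of the induced map $T_\mathcal{K}$ only constrain orbits at the return times; they carry no information about the orbit segments between returns, which can wander into regions of arbitrarily small injectivity radius. There is no reason for the $T_\mathcal{K}$-dynamical ball around $v$ to be contained in $B_\Gamma(v,\varepsilon;\tau_n(v))$, so the classical Brin--Katok theorem for $T_\mathcal{K}$ gives no lower bound on $\mu(B_\Gamma(v,\varepsilon;\tau_n(v)))$. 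Kac integrability of the return time and ``continuity of return times'' do not supply the missing inclusion; nearby points in $\mathcal{K}$ can have very different excursion geometry, and you have offered no quantitative control.

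The paper's proof does not pass through the induced system or Abramov's formula at all. Its key geometric input, which you never invoke, is that under a curvature lower bound the injectivity radius along a geodesic decays at most exponentially: for $w\in T^1C$ with $C$ compact, $r_{\mathrm{inj}}(\pi g^t w)\geq e^{-c|t|}$. This is used to build, by hand, a finite-entropy partition $\mathcal{P}$ of $T^1M$ refined on the return-time sets $A_k=\{v\in\mathcal{K}:\text{first return at time }k\}$ with mesh $\delta_k=\varepsilon/(Le^c)^k$, so that any atom of $\mathcal{P}^{n_k}$ containing $v$ stays inside $B_\Gamma(v,\varepsilon;n_k)$ throughout the excursions (the exponentially shrinking mesh beats both the Lipschitz expansion of the flow and the exponential shrinking of the injectivity radius). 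Shannon--McMillan--Breiman then gives $\bar h_{BK}^\Gamma(\mu)\le h(\mu,\mathcal{P})\le h_{KS}(\mu)$ directly. The inequality $h_{\mathrm{Kat}}^\Gamma(\mu)\le \bar h_{BK}^\Gamma(\mu)$ is handled by a short separated/spanning set argument on a good set of large measure, not via the induced map. If you want to salvage your outline, you would need precisely this exponential injectivity-radius bound to relate the induced dynamical balls to the $B_\Gamma$ balls; at that point the direct partition construction is simpler.
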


We will prove Theorem~\ref{theo:entropies-coincide} in two steps. The first
step is to prove that the Kolmogorov-Sinai entropy coincides with the local
Brin-Katok entropies, and the second one is the analogue with the Katok entropies.

\noindent\\
\textbf{Step 1.} The inequality $h_{KS}(\mu)\le
\bar{h}_{BK}^{\mathrm{dyn}}(\mu)$ is due to Brin-Katok~\cite{BK83}. In this
reference, equality is proved on a compact manifold, but the proof of this inequality does
not use compactness. The inequality $\bar{h}_{BK}^{\mathrm{dyn}}(\mu)\le
\bar{h}_{BK}^{\Gamma}(\mu)$ is immediate from~\eqref{eqn:dyn-ball2}.
Therefore, we just need to prove that $\bar{h}_{BK}^\Gamma(\mu)\leq
h_{KS}(\mu)$.

The proof relies on a crucial geometric property: as the curvature is bounded
from below, the injectivity radius along a geodesic decays at most
exponentially. More precisely, for every compact subset $C\subset M$, there
exists a positive constant $c>0$ such that for all vectors $w\in T^1C$, and
all $t\in\bbR$, we have
\begin{equation}\label{eqn:rayon-inj}
r_{\mathrm{inj}}(g^tw)\ge c^{-1} e^{-c\abs{t}}\,.
\end{equation}
This geometric inequality follows from~\cite[Thm~4.7]{CGT}, see
also~\cite[Prop~4.19]{CCGGIIKLKN}.

\medskip

For the next proposition we do not need the ergodicity of $\mu$. In
particular, the corollary stated after its proof is satisfied for any
invariant probability measure.

\begin{prop}\label{prop:part} For every compact subset
$\mathcal{K}\subset T^1M$ with $\mu(\mathcal{K})>0$, for every
$\varepsilon>0$, there exists a partition $\mathcal{P}_\mathcal{K}$ of
$\mathcal{K}$ with finite entropy such that, if
$\mathcal{P}=\mathcal{P}_\mathcal{K}\sqcup \left({T^1M\setminus
\mathcal{K}}\right)$, for $\mu$-a.e.\ $v\in \mathcal{K}$, the sequence
$n_k\to\infty$ of return times in $\mathcal K$ of $(g^nv)_{n\in\bbN}$
satisfies
\[
\mathcal{P}^{n_k}(v)\subset B_\Gamma(v,\varepsilon;n_k)\,.
\]
In particular, for every compact subset $\mathcal{K}\in T^1M$, for $\mu$-a.e.\
$v\in \mathcal{K}$,
\begin{equation}\label{eqn:part}
\limsup_{n\to \infty, g^n v\in \mathcal{K}} -\frac{1}{n}\log \mu(B_\Gamma(v,\varepsilon;n))\le
 \limsup_{n\to \infty, g^n v\in \mathcal{K} } -\frac{1}{n}\log \mu\left(\mathcal{P}^n(v)\right)\,.
\end{equation}
\end{prop}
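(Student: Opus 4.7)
The plan is as follows. By compactness of $\pi(\mathcal{K})$ and the pinched curvature assumption, there is a uniform lower bound $r_\mathcal{K}>0$ on the injectivity radius of $M$ at points of $\pi(\mathcal{K})$. Fix $\delta_0\in(0,\min(\varepsilon/2,r_\mathcal{K}/4))$. I will take $\mathcal{P}_\mathcal{K}$ to be any finite measurable partition of the compact set $\mathcal{K}$ into atoms of diameter less than $\delta_0$ in the Sasaki-like metric; being finite, it automatically has finite entropy. Set $\mathcal{P}=\mathcal{P}_\mathcal{K}\sqcup\{T^1M\setminus\mathcal{K}\}$.

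Fix $v\in\mathcal{K}$ for which Poincaré recurrence yields an increasing sequence $0=n_0<n_1<\ldots\to\infty$ of return times to $\mathcal{K}$. For $w\in\mathcal{P}^{n_k}(v)$, the definition of $\mathcal{P}$ forces $d(g^{n_j}v,g^{n_j}w)<\delta_0$ in $T^1M$ for each $j=0,\ldots,k$. Fix a lift $\tilde v\in T^1\widetilde M$ of $v$; since $\delta_0<r_\mathcal{K}/2$, there is a unique lift $\tilde w$ of $w$ with $d(\tilde v,\tilde w)<\delta_0$, and at each return time $n_j$ a unique lift $\hat w_j$ of $g^{n_j}w$ with $d(g^{n_j}\tilde v,\hat w_j)<\delta_0$.

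The heart of the argument is the inductive claim that $g^{n_j}\tilde w=\hat w_j$ for all $j\le k$, i.e.\ that the forward iterates of the initial lift remain the unique nearby lifts. Two ingredients are combined: the convexity in $t$ of the distance function between two geodesics in the negatively curved $\widetilde M$, and the displacement lower bound $d(x,\gamma x)\ge 2r_\mathcal{K}$ for every non-trivial $\gamma\in\Gamma$ and every $x$ in a lift of $\pi(\mathcal{K})$. Should the induction fail at step $j+1$, so that $g^{n_{j+1}}\tilde w=\gamma\hat w_{j+1}$ with $\gamma\ne e$, then both convex functions $t\mapsto d(\pi g^t\tilde v,\pi g^t\tilde w)$ and $t\mapsto d(\pi g^t\tilde v,\gamma^{-1}\pi g^t\tilde w)$ would exhibit opposite endpoint behavior on $[n_j,n_{j+1}]$ (one value below $\delta_0$, the other above $r_\mathcal{K}-\delta_0$). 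Adding them and exploiting the triangle inequality together with the convexity of the displacement map $x\mapsto d(x,\gamma x)$ along geodesics (when $\gamma$ is loxodromic) yields an incompatibility; the parabolic case requires a separate argument exploiting the geometry near the associated cusp and the fact that orbits of $v$ exit $\mathcal{K}$ only through a controlled boundary.

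Once the matching is established at every return time, convexity of $f(t)=d(\pi g^t\tilde v,\pi g^t\tilde w)$ on each sub-interval $[n_j,n_{j+1}]$ — whose two endpoints are bounded by $\delta_0$ — propagates the bound $f(t)<\delta_0$ throughout $[0,n_k]$. In view of the definition $d(g^t\tilde v,g^t\tilde w)=\sup_{s\in[-1,1]}f(t+s)$ of the Sasaki-like metric, this yields $d(g^t\tilde v,g^t\tilde w)<\delta_0<\varepsilon$ for $t\in[0,n_k]$, hence $\tilde w\in B(\tilde v,\varepsilon;n_k)$ and therefore $w\in B_\Gamma(v,\varepsilon;n_k)$. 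The inequality~\eqref{eqn:part} then follows from $\mu(\mathcal{P}^{n_k}(v))\le\mu(B_\Gamma(v,\varepsilon;n_k))$ by applying $-\log$, dividing by $n_k$, and passing to the $\limsup$. The main obstacle I anticipate is the inductive lift-matching step, particularly the handling of parabolic elements of $\Gamma$, where the displacement function is no longer convex along geodesics and one must replace the convexity argument by a cusp-geometry estimate.
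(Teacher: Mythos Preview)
Your inductive lift-matching step is the genuine gap, and it is not merely a technicality about parabolics. Suppose $d(\tilde v,\tilde w)<\delta_0$ in $T^1\widetilde M$ and let the orbit of $v$ leave $\mathcal K$ for a long time $T=n_{j+1}-n_j$. During this excursion the distance $f_1(t)=d(\pi g^t\tilde v,\pi g^t\tilde w)$ is convex but has no upper bound other than the exponential one $f_1(t)\lesssim \delta_0 e^{b(t-n_j)}$. Nothing prevents $g^{n_{j+1}}\tilde w$ from landing in a $\delta_0$-neighbourhood of $\gamma g^{n_{j+1}}\tilde v$ for some $\gamma\neq e$: the image $g^{T}B(\tilde v,\delta_0)$ is a long thin set of unstable length $\sim\delta_0 e^{bT}$, which for $T$ large easily meets several $\Gamma$-translates of $B(g^{T}\tilde v,\delta_0)$. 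Your proposed contradiction from convexity does not close: with $f_2(t)=d(\pi g^t\tilde v,\gamma\pi g^t\tilde w)$ you get two convex functions, one small at $n_j$ and large at $n_{j+1}$, the other large at $n_j$ and small at $n_{j+1}$, and the displacement $D(t)=d(\pi g^t\tilde w,\gamma\pi g^t\tilde w)$, while convex, is only bounded \emph{above} by the chord between its endpoint values --- convexity gives no lower bound in the interior, so $D$ can dip arbitrarily low when the orbit passes near the axis (or fixed point) of $\gamma$. The functions $f_1(t)=\delta_0 e^{a(t-n_j)}$ and $f_2(t)=\delta_0 e^{a(n_{j+1}-t)}$ already show that no incompatibility arises from convexity alone.

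The paper's proof addresses exactly this by abandoning the fixed-diameter finite partition. It first decomposes $\mathcal K$ according to the \emph{first return time}: $A_k=\{v\in\mathcal K:\text{first return at time }k\}$. On each $A_k$ it uses a partition of diameter $\delta_k=\varepsilon/(Le^c)^k$, where $L$ is the Lipschitz constant of $g^1$ and $c$ comes from the exponential lower bound $r_{\mathrm{inj}}(g^t w)\ge e^{-c|t|}$ on the injectivity radius along orbits starting in $\pi(\mathcal K)$. This guarantees that if $w\in\mathcal P(v)\subset A_k$ then $d(g^t v,g^t w)\le L^t\delta_k\le\varepsilon e^{-ck}$ for all $0\le t\le k$, which stays below the injectivity radius everywhere along the orbit, so that $B_{dyn}=B_\Gamma$ and the lifts are forced to match. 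The resulting partition is countable, not finite, and the finite-entropy verification is nontrivial: it uses that $\#(\mathcal P\cap A_k)\lesssim \delta_k^{-d}$ and Kac's lemma $\sum_k k\,\mu(A_k)=\mu(\mathcal K)<\infty$ to control $\sum_k \mu(A_k)\log\#(\mathcal P\cap A_k)$. This return-time-adapted refinement is the idea missing from your attempt.
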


\begin{proof}
By~\cite[Proposition 1.34]{Riq-these}, for every compact subset
$\mathcal{K}\subset T^1M$,  there exists $c_0>0$ such that for all
$\delta>0$, there exists a partition $\mathcal{P}_\delta$ of $\mathcal{K}$
whose atoms $\mathcal{P}_\delta(v)$ for any $v\in T^1M$ all have diameter at
most $\delta$, with $\mu(\partial \mathcal{P}_\delta(v))=0$, and $\#
\mathcal{P}_\delta\le c_0\delta^{-d}$ where $d$ is the dimension of $T^1 M$.
As $\mu(\mathcal{K})>0$, by Poincaré recurrence Theorem, we know that for
$\mu$-a.e.\ $v\in \mathcal{K}$, infinitely often $g^{n}v\in \mathcal{K}$.
Divide the set $\mathcal{K}$ (up to a measure $0$ set) into the return time
partition: for all $k\ge 1$, let
\[
 A_k=\{v\in \mathcal{K}, g^kv\in \mathcal{K},\,\text{and} \quad g^i v\notin \mathcal{K}\quad \text{for all}\quad 1\le i\le k-1\}\,.
\]
For all $k\ge 1$, set $\delta_k=\frac{\varepsilon}{(Le^{ c})^k}$, where $L$
is the Lipschitz-constant for the time-one-map $g^1$ of the geodesic flow,
and $c>0$ is the constant associated with the compact subset $C =
\pi(\mathcal{K})\subset M$ from equation~\eqref{eqn:rayon-inj}. For $v\in
A_k$, define $\mathcal{P}(v)$ as $\mathcal{P}(v)\coloneqq
\mathcal{P}_{\delta_k}(v)\cap A_k$. For $v\notin \mathcal{K}$, set
$\mathcal{P}(v)=T^1M\setminus\mathcal{K}$.

Thanks to the choice of $\delta_k$, an immediate verification shows that for
$v\in A_k$, we have $\mathcal{P}(v)\subset
B_{\mathrm{dyn}}(v,\frac{\varepsilon}{e^{ck}};k)\,.$  By
equations~\eqref{eq:one-sided-balls-coincide} and~\eqref{eqn:rayon-inj}, in fact,
we have in this case
\[
\mathcal{P}(v)\subset B_\Gamma(v,\frac{\varepsilon}{e^{ck}};k)=B_{\mathrm{dyn}}(v,\frac{\varepsilon}{e^{ck}};k)\,.
\]
Recall the notation
\[
\mathcal{P}^n(v)=\mathcal{P}(v)\cap g^{-1}\mathcal{P}(gv)\cap \dotsb \cap g^{-(n-1)}\mathcal{P}(g^{n-1}v)\,.
\]
Now, for almost all $v\in \mathcal{K}$, let $n_k\to \infty$ be the sequence of return times of $(g^nv)_{n\ge 0}$
inside $\mathcal{K}$ (with $n_0=0$).
Let $\tilde{v}$ be any lift of $v$ into $T^1\widetilde{M}$. By construction of $\mathcal{P}$, and by the above, for almost every $v\in T^1M$, we have
\begin{align*}
\mathcal{P}^{n_k}(v)&\subseteq \mathcal{P}(v)\cap g^{-n_1}\mathcal{P}(g^{n_1}v)\cap \dotsb \cap g^{-n_{k-1}}\mathcal{P}(g^{n_{k-1}}v)\\
&\subseteq  \bigcap_{i=0}^{k-1} g^{-n_i}B_{\mathrm{dyn}}\left(g^{n_i}v,\frac{\varepsilon}{e^{c(n_{i+1}-n_i)}};n_{i+1}-n_i\right)\\
&=  \bigcap_{i=0}^{k-1}  g^{-n_i}B_{\Gamma}\left(g^{n_i}v,\frac{\varepsilon}{e^{c(n_{i+1}-n_i)}};n_{i+1}-n_i\right)\\
&\subseteq  \bigcap_{i=0}^{k-1}  g^{-n_i}p_\Gamma\left(B_{\mathrm{dyn}} \left(g^{n_i}\tilde v,\varepsilon;n_{i+1}-n_i\right)\right)\\
&=  \bigcap_{i=0}^{k-1}  p_\Gamma \left(g^{-n_i}B_{\mathrm{dyn}} \left(g^{n_i}\tilde v,\varepsilon;n_{i+1}-n_i\right)\right).
\end{align*}
Note that we are strongly using the fact that dynamical balls for the
time-one-map coincide with dynamical balls for the flow at integer times.
Without loss of generality, we may assume that $\varepsilon \leq c^{-1}$. In
particular, the quotient map $p_\Gamma$ is an isometry restricted to each of
the dynamical balls involved in the last intersection, thanks
to~\eqref{eqn:rayon-inj}. Hence, we get
\begin{align*}
\mathcal{P}^{n_k}(v)&\subseteq  p_\Gamma\left(\bigcap_{i=0}^{k-1}  g^{-n_i}B_{\mathrm{dyn}} (g^{n_i}\tilde v,\varepsilon;n_{i+1}-n_i)\right)\\
&=  p_\Gamma\left( B_{\mathrm{dyn}} ( \tilde v,\varepsilon;n_{k })\right)\\
&= B_{\Gamma}(v,\varepsilon;n_{k })\,.
\end{align*}

It remains to prove that $\mathcal{P}$ is a partition of finite entropy.  By construction recall that
\[\#\{P\in\mathcal{P}: P\subseteq A_k\}\leq c_0\delta_k^{-d} = c_0\left(\frac{\varepsilon}{(Le^c)^k}\right)^{-d}\,.\]
We have
\begin{align*}
H_\mu(\mathcal{P})&=-\sum_{P\in\mathcal{P}}\mu(P)\log \mu(P)\\
&=-\mu(\mathcal{K}^c)\log \mu(\mathcal{K}^c)-\sum_{k=1}^\infty\sum_{P\in \mathcal{P}, P\subset A_k}\mu(P)\log \mu(P)\\
&\le -\mu(\mathcal{K}^c)\log \mu(\mathcal{K}^c)-\sum_{k=1}^\infty\mu(A_k)\log\mu(A_k)+\sum_{k=1}^\infty\mu(A_k)\log \#\{P\in\mathcal{P}: P\subset A_k\}\\
&\le -\mu(\mathcal{K}^c)\log \mu(\mathcal{K}^c)-\sum_{k=1}^\infty\mu(A_k)\log\mu(A_k)\\
&\quad -\left(\sum_{k=1}^\infty\mu(A_k)\right)\times\log \left(c_0 \epsilon^d\right)+\sum_{k=1}^\infty\mu(A_k)\times k\log (Le^c)^d\,.
\end{align*}
The first term is some finite constant.  The third term is bounded from above
by a constant times $\mu(\mathcal{K})$ and is therefore finite. By Kac lemma, the last
term, up to a constant, is equal to $\sum_{k=1}^\infty k\mu(A_k)=\mu(T^1M)\le 1$. The second term is finite since
Lemma~1.35 in~\cite{Riq-these} together with $\sum_k k\mu(A_k)<\infty$ imply
$\sum_k \mu(A_k)\log \mu(A_k)<\infty$.Therefore, $\mathcal{P}$ has finite entropy.
\end{proof}

Integrating~\eqref{eqn:part} over $v\in \mathcal{K}$ on the left, and over $v\in T^1M$ on the right,
and using the Shannon-McMillan-Breiman theorem,
 Proposition~\ref{prop:part} leads to the following corollary.
\begin{coro}Under the same assumptions, we have
\begin{equation*}
\int_{\mathcal{K}}
\limsup_{n\to \infty, \,g^nv\in \mathcal{K}}-\frac{1}{n}\log \mu(B_\Gamma(v,n,\varepsilon)) \,d\mu(v)\le
 \int_{T^1M}\limsup_{n\to \infty, g^n v\in \mathcal{K}} -\frac{1}{n}\log \mathcal{P}^n(v)\,d\mu(v)\le h_{KS}(\mu)\,.
\end{equation*}
\end{coro}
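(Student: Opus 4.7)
The plan is to prove the two inequalities separately: the left one is a direct consequence of Proposition~\ref{prop:part}, while the right one follows from the Shannon--McMillan--Breiman theorem applied to the partition $\mathcal P$ constructed there.

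First, Proposition~\ref{prop:part} gives, for $\mu$-a.e.\ $v\in \mathcal K$, a sequence $n_k\to \infty$ of return times of $(g^n v)_n$ to $\mathcal K$ such that $\mathcal{P}^{n_k}(v)\subset B_\Gamma(v,\varepsilon;n_k)$. Since the inclusion gives $\mu(\mathcal{P}^{n_k}(v))\le \mu(B_\Gamma(v,\varepsilon;n_k))$, applying $-\frac{1}{n_k}\log$ and passing to the limsup along the return times yields, pointwise $\mu$-a.e.\ on $\mathcal K$,
\[
\limsup_{n\to\infty,\,g^nv\in\mathcal K}-\frac{1}{n}\log\mu(B_\Gamma(v,\varepsilon;n))\;\le\; \limsup_{n\to\infty,\,g^nv\in\mathcal K}-\frac{1}{n}\log\mu(\mathcal{P}^n(v)).
\]
Integrating over $\mathcal K$ on the left, and then enlarging the domain of integration to $T^1M$ on the right—which preserves the inequality since the integrand is non-negative—gives the first inequality of the corollary.

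For the second inequality, the key input is that, by Proposition~\ref{prop:part}, the partition $\mathcal P$ has finite Shannon entropy $H_\mu(\mathcal P)<\infty$. The Shannon--McMillan--Breiman theorem therefore applies, in its countable-partition version and without requiring ergodicity (the limit is then merely allowed to be non-constant), producing a non-negative integrable function $h_\mu(\cdot,\mathcal P)$ with
\[
-\frac{1}{n}\log\mu(\mathcal{P}^n(v))\;\xrightarrow[n\to\infty]{}\; h_\mu(v,\mathcal P)\quad \mu\text{-a.e.,}\qquad \int_{T^1M}h_\mu(v,\mathcal P)\,d\mu(v)=h(\mu,\mathcal P).
\]
Since any limsup along a subsequence is bounded above by the full limit, we obtain
\[
\int_{T^1M}\limsup_{n\to\infty,\,g^nv\in\mathcal K}-\frac{1}{n}\log\mu(\mathcal{P}^n(v))\,d\mu(v)\;\le\; \int_{T^1M}h_\mu(v,\mathcal P)\,d\mu(v)=h(\mu,\mathcal P)\;\le\; h_{KS}(\mu),
\]
the last inequality being the very definition of $h_{KS}(\mu)$ as the supremum of $h(\mu,\mathcal Q)$ over partitions $\mathcal Q$ of finite entropy.

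The only subtle point, and hence the main (minor) obstacle, is bookkeeping: checking measurability of the a.e.-defined limsup (routine, as it is a limsup of non-negative measurable functions), and invoking the correct version of Shannon--McMillan--Breiman for countable partitions and possibly non-ergodic measures. Both generalizations are classical, so no additional technical work is needed.
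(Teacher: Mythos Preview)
Your proof is correct and follows the same route as the paper: the first inequality comes from integrating the pointwise estimate~\eqref{eqn:part} of Proposition~\ref{prop:part} over $\mathcal K$ and enlarging the domain on the right, while the second is obtained via the Shannon--McMillan--Breiman theorem applied to the finite-entropy partition $\mathcal P$. The paper's own justification is the single sentence preceding the corollary, so your write-up simply makes explicit what is left implicit there (including the non-ergodic version of SMB, consistent with the paper's remark that ergodicity is not needed for this corollary).
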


Assume now that $\mathcal K$ is large enough so that there exists $v\in \mathcal K$ with $\mu(B_{d}(v, 1))>0$ and $B_{d}(v, 2)\subset \mathcal K$. Let us
define
\[
\mathcal K_{-1}= \{v\in \mathcal K \; ; \; d(v, \mathcal K^c) \geq 1\} \subset \mathcal K.
\]

By our assumption, $\mu(\mathcal K_{-1})>0$. Note that for all $v\in \mathcal K_{-1}$, we have
\[
\limsup_{T\to \infty, \,g^Tv\in \mathcal{K}_{-1}, T\in \mathbb R}-\frac{1}{T}\log \mu(B_\Gamma(v,T,\varepsilon))\leq \limsup_{n\to \infty, \,g^nv\in \mathcal{K}, n\in \mathbb N}-\frac{1}{n}\log \mu(B_\Gamma(v,n,\varepsilon))
\]

If we consider the essential  least upper bound over $v\in\mathcal{K}$ on the left and on the right in~\eqref{eqn:part}, using the ergodicity of $\mu$ and the
Shannon-McMillan-Breiman Theorem, we get

\[
\bar{h}_{\mathrm{loc}}^\Gamma(\mu,\mathcal{K}_{-1})\leq h(\mu,\calP).
\]

This already implies $\bar{h}_{BK}^\Gamma(\mu)\leq h_{KS}(\mu)$ since the right hand side
of the inequality is less than $h_{KS}(\mu)$ and $\mathcal{K}\subset T^1M$ is
arbitrary.

\noindent\\
\textbf{Step 2.} The goal now is to prove the equality between the Katok
entropies and the Kolmogorov-Sinai entropy. The inequality $h_{KS}(\mu)\le
h_{\mathrm{Katok}}^{\mathrm{dyn}}(\mu)$ follows immediately from
Katok~\cite[Formula (1.4)]{Katok80}, where the author considers coverings
instead of spanning sets.  In this reference, equality is proved on a compact
manifold, but the proof of this inequality does not use compactness. The
inequality $h_{\mathrm{Katok}}^{\mathrm{dyn}}(\mu)\le
h_{\mathrm{Katok}}^{\Gamma}(\mu)$ is immediate from~\eqref{eqn:dyn-ball2}.
Hence, by Step 1 we just need to prove that
$h_{\mathrm{Katok}}^{\Gamma}(\mu)\leq \bar{h}_{BK}^\Gamma(\mu)$.

Let $h \coloneqq \bar{h}_{BK}^\Gamma(\mu)$. By definition of local entropy, for any $\rho>0$,
there exists a compact subset $\mathcal{K}\subset T^1M$  and $\varepsilon>0$ such that
$\mu(\mathcal{K})>4/5$ and for $\mu$-a.e.\ $v\in \mathcal{K}$, we have
\[
\limsup_{T\to \infty,\,\, g^Tv\in \mathcal{K}} -\frac{1}{T}\log \mu(B_\Gamma(v,\varepsilon/2;T))\leq h+\rho.
\]
For every $\tau>0$, set
\[
\mathcal{K}_\tau\coloneqq \{v\in \mathcal{K}: \mu(B_\Gamma(v,\varepsilon/2;T))\geq \exp(-T(h+2\rho)), \ \forall T\geq \tau, \ g^Tv\in \mathcal{K}\}.
\]
Then there exists $\tau_0>0$ such that $\mu(\mathcal{K}_{\tau_0})>3/4$. Note that
$\mu(Y_T)>1/2$ for every $T\geq\tau_0$, where $Y_T=\mathcal{K}_{\tau_0}\cap
g^{-T}\mathcal{K}_{\tau_0}$. Let $0<\delta<1/2$. Then
\begin{equation*}
h_{\mathrm{Katok}}^{\Gamma}(\mu) \leq  \limsup_{T\to\infty}\frac{1}{T}\log S_\Gamma(\mu,\delta,\varepsilon;T)
\leq \limsup_{T\to\infty}\frac{1}{T}\log S_\Gamma(Y_T,\varepsilon;T),
\end{equation*}
where $S_\Gamma(Y_T,\varepsilon,T)$ is the minimal cardinality of a
$(\varepsilon,T)$-spanning set of $Y_T$.

Choose a maximal $(\varepsilon/2,T)$-separated set $\mathcal{E}$ in $Y_T$, and denote by $\Sigma_\Gamma(Y_T,\varepsilon/2,T)$
its cardinality. By maximality, $\mathcal{E}$ is also $(\varepsilon,T)$-spanning, so that
$S_\Gamma(Y_T,\varepsilon,T)\le \Sigma_\Gamma(Y_T,\varepsilon/2,T)$.
By construction, we have
\[
e^{-T(h+\rho)}\Sigma_\Gamma(Y_T,\varepsilon/2,T)\le \sum_{y\in \mathcal{E}}\mu(B_\Gamma(y,\varepsilon/2;T))\le 1\,.
\]
With the above inequalities, we deduce that
\[
h_{\mathrm{Katok}}^{\Gamma}(\mu) \leq h+\rho\,.
\]
As $\rho$ is arbitrary, the result follows.

\subsection{Comparison between entropies for orbifolds}

A Riemannian orbifold is said to be \emph{good} when it is the quotient of a
simply connected manifold $\tilde M$ by a discrete group of isometries
$\Gamma$: it is the setting to which all the results in the article apply
except this appendix which assumes moreover that the action of $\Gamma$ is
free, i.e., $\tilde M/\Gamma$ is a manifold. A good orbifold  $M = \tilde
M/\Gamma$ is said to be \emph{very good} when it has a subgroup $\Gamma'<
\Gamma$ of \emph{finite index} acting on $\tilde M$ without fixed point,
i.e., if $M$ has a finite covering which is a manifold.

\medskip

Theorem~\ref{theo:entropies-coincide} extends immediately to very good
orbifolds since the entropies which we consider are invariant by finite
coverings. Unfortunately, it does not extend yet to general good orbifolds
for the following reason.

We have crucially used in the proof of Step 1.\ (which is used for Step 2.)
the fact that the injectivity radius cannot decrease more than exponentially
fast along geodesics. The notion of injectivity radius on orbifold is
delicate: note that the length of the shortest geodesic loop based at $x$
goes to $0$ as $x$ approaches a singularity. There are however notions of
injectivity radius adapted to orbifolds which are automatically positive on
compact sets, such as the \emph{cone injectivity radius} considered
in~\cite[Chapter 9]{BMP03}. Nevertheless, it is unknown whether such
injectivity radius can decrease faster than exponentially along the geodesics
of an orbifolds with bounded sectional curvature. The proof
of~\eqref{eqn:rayon-inj} given for manifolds in~\cite{CGT} is based on the
study of the Riemannian heat kernel. Therefore, its adaptation to orbifolds
is delicate.
%
%
%
%


\addcontentsline{toc}{section}{References}
\bibliography{biblio}
\bibliographystyle{amsalpha}

\setlength{\parindent}{0pt}

Sébastien Gouëzel. Univ Rennes, CNRS, IRMAR UMR 6625, F-35000 Rennes, France.
\\
\href{mailto:sebastien.gouezel@univ-rennes1.fr}{sebastien.gouezel@univ-rennes1.fr}

\medskip

Felipe Riquelme. IMA, Pontificia Universidad Católica de Valpara\'iso,
Blanco Viel 596, Valpara\'iso, Chile.
\\
\href{mailto:felipe.riquelme@pucv.cl}{felipe.riquelme@pucv.cl}

\medskip

Barbara Schapira. Univ Rennes, IRMAR UMR 6625, F-35000 Rennes, France.
\\
\href{mailto:barbara.schapira@univ-rennes1.fr}{barbara.schapira@univ-rennes1.fr}

\medskip

Samuel Tapie. Université de Lorraine, Institut Elie Cartan de Lorraine.
\\
\href{mailto:samuel.tapie@univ-lorraine.fr}{samuel.tapie@univ-lorraine.fr}

\end{document}